\definecolor{bluex}{Hsb}{220, 1, 1}
\newcommand{\borderref}[2]{{\hypersetup{linkcolor=black}\hyperref[#1]{#1 #2}}}
\newcommand{\arxiv}[2][]{\ifthenelse{\equal{#1}{}}
{\href{http://arxiv.org/abs/#2}{\tt arXiv:#2}}
{\href{http://arxiv.org/abs/math/#2}{\tt arXiv:math.#1/#2}}}
\theoremstyle{plain}
\newtheorem{theorem}{Theorem}[section]
\newtheorem*{theorem*}{Theorem}
\newtheorem{mainthm}{Theorem}
\newenvironment{customthm}[1]
  {\innercustomthm}
  {\endinnercustomthm}
\newtheorem{lemma}[theorem]{Lemma}
\newtheorem{proposition}[theorem]{Proposition}
\newtheorem{corollary}[theorem]{Corollary}
\newtheorem*{corollary*}{Corollary}
\newtheorem{problem}[theorem]{Problem}
\newtheorem*{problem*}{Problem}
\newtheorem*{addendum*}{Addendum}
\theoremstyle{definition}
\newtheorem{remark}[theorem]{Remark}
\newtheorem*{remark*}{Remark}
\newtheorem{example}[theorem]{Example}
\newtheorem*{examples*}{Examples}
\newtheoremstyle{named}{}{}{\itshape}{}{\bfseries}{.}{.5em}{\thmnote{#3}}
\theoremstyle{named}
\newtheorem{namedthm}{}
\def\x{\times}
\def\but{\setminus}
\def\eps{\varepsilon}
\def\phi{\varphi}
\def\emptyset{\varnothing}
\def\CAP{\!\smallfrown\!}
\renewcommand{\:}{\colon}
\def\C{\mathbb{C}}
\def\N{\mathbb{N}}
\def\R{\mathbb{R}}
\def\Q{\mathbb{Q}}
\def\Z{\mathbb{Z}}
\def\B{\mathcal{B}}
\def\F{\mathcal{F}}
\def\K{\mathcal{K}}
\def\L{\mathcal{L}}
\def\X{\mathcal{X}}
\def\xr#1{\xrightarrow{#1}} 
\def\onehalf{1\hspace{-1.1pt}/\hspace{-0.5pt}2}
\newcommand{\xR}[2][]{\ext@arrow 0359\Rightarrowfill@{#1}{#2}}
\newcommand{\xL}[2][]{\ext@arrow 0359\Leftarrowfill@{#1}{#2}}
\DeclareMathOperator{\lk}{lk}
\DeclareMathOperator{\rk}{rk}
\DeclareMathOperator{\id}{id}
\DeclareMathOperator{\colim}{colim}
\DeclareMathOperator{\Hom}{Hom}
\newcommand{\newsym}[5]{\fontfamily{#2}\fontencoding{#1}\fontseries{#3}\fontshape{#4}\selectfont\char#5}
\newcommand{\newmathsymbol}[6]{#1{\@Pimathsymbol{#2}{#3}{#4}{#5}{#6}}}
\def\@Pimathsymbol#1#2#3#4#5{\mathchoice
  {\@Pim@thsymbol{#1}{#2}{#3}{#4}{#5}\tf@size}
  {\@Pim@thsymbol{#1}{#2}{#3}{#4}{#5}\tf@size}
  {\@Pim@thsymbol{#1}{#2}{#3}{#4}{#5}\sf@size}
  {\@Pim@thsymbol{#1}{#2}{#3}{#4}{#5}\ssf@size}}
\def\@Pim@thsymbol#1#2#3#4#5#6{\mbox{\fontsize{#6}{#6}\newsym{#1}{#2}{#3}{#4}{#5}}}
\def\varnabla{\newmathsymbol{\mathord}{LS1}{stix}{m}{n}{"28}}
\begin{document}
\title{Is every knot isotopic to the unknot?} 
\author{Sergey A. Melikhov}
\address{Steklov Mathematical Institute of Russian Academy of Sciences, Moscow, Russia}
\email{melikhov@mi-ras.ru}

\begin{abstract}
In 1974, D. Rolfsen asked: Is every knot in $S^3$ isotopic (=homotopic through embeddings) to a PL knot or, 
equivalently, to the unknot? In particular, is the Bing sling isotopic to a PL knot?
We show that the Bing sling is not isotopic to any PL knot:
\begin{enumerate}
\item by an isotopy which extends to an isotopy of $2$-component links with $\lk=1$;
\item through knots that are intersections of nested sequences of solid tori.
\end{enumerate}

There are also stronger versions of these results.
In (1), the additional component may be allowed to self-intersect, and even to get replaced by a new one
as long as it represents the same conjugacy class in $G/[G',G'']$, where $G$ is the fundamental group of 
the complement to the original component.
In (2), the ``solid tori'' can be replaced by ``boundary-link-like handlebodies'', where a handlebody 
$V\subset S^3$ of genus $g$ is called boundary-link-like if $\pi_1(\overline{S^3\but V})$ admits 
a homomorphism to the free group $F_g$ such that the composition 
$\pi_1(\partial V)\to\pi_1(\overline{S^3\but V})\to F_g$ is surjective.
\end{abstract}

\maketitle

\section{Introduction} \label{intro}
Fifty years ago%
\footnote{In his AMS Math Rewiew of \cite{Brin}, published in 1985, R. Daverman speaks of this problem as
``being around for almost 20 years''.
Although it is clear from \cite{Mi2} (1957), \cite{Fox2}*{p.~171} (1962), \cite{Gi} (1964), \cite{HS} (1966)
that Rolfsen's problem was probably known to a number of people long before it was stated explicitly in 
Rolfsen's 1974 paper, I'm not aware of its earlier explicit appearances in the literature.}
D. Rolfsen \cite{Ro1} posed the following problem.

\begin{namedthm}[Rolfsen's Problem] \label{Rolfsen's} ``All PL knots are isotopic to one another.%
\footnote{See for instance Figure 1.1 in the Burde--Zieschang textbook \cite{BZ}.}
Is this true of wild knots? A knot in the $3$-space which is not isotopic to a tame knot would have to be 
so wild as to fail to pierce a disk at each of its points.%
\footnote{This assertion of Rolfsen is easy to prove in the case where the disk is tame
(see Lemma \ref{Rolfsen-assertion}). 
A proof of the general case was found recently by R. Ancel (see \cite{An} for a sketch).}
The `Bing sling' {\rm [\cite{Bi}]} is such a candidate.''
\end{namedthm}

\begin{figure}[h]
\includegraphics[width=0.75\linewidth]{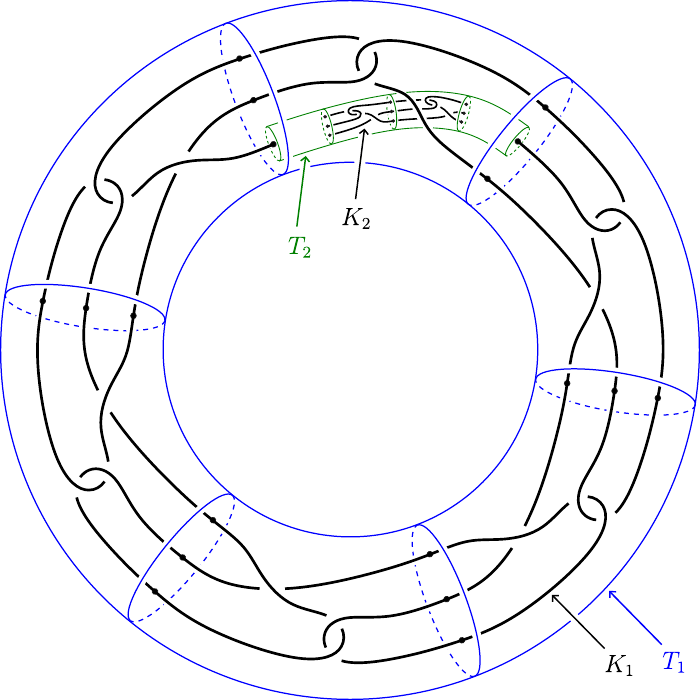}
\caption{The Bing sling.}
\label{bingsling}
\end{figure}

The ``Bing sling'' is R. H. Bing's example of a wild knot that ``pierces no disk'' \cite{Bi}.
It is the intersection of a chain of solid tori $T_1\supset T_2\supset\dots$, where $T_1$ is 
unknotted in $S^3$ and each $T_{i+1}$ is a suitable regular neighborhood of a certain PL knot 
$K_i$ in $T_i$ (see Figure ~\ref{bingsling}).
We discuss this construction and its outcome in more detail in \S\ref{whatisbingsling}.

Until recently, \borderref{Rolfsen's}{Problem} has been mostly approached with geometric methods.
D. Gillman constructed a knot that pierces no disk but lies on a disk (and hence is isotopic to the unknot);
but he also proved that no subarc of the Bing sling lies on a disk \cite{Gi}.
M. Brin constructed a knot that at each point is locally equivalent to the Bing sling, but is isotopic to the unknot \cite{Brin}.
The most striking result was obtained by C. Giffen:

\begin{namedthm}[Giffen's Theorem]\label{Giffen's} {\rm (see Appendix \ref{app})} Every knot in the solid torus $T$ which 
represents a generator of $H_1(T)$ is $I$-equivalent to the core circle of $T$.
\end{namedthm}

Two $m$-component (possibly wild) links in a $3$-manifold $M$ are called {\it $I$-equivalent} if they cobound
a topological embedding of $m$ disjoint annuli in $M\x I$.
(The embedding is not assumed to be locally flat, so this is not the same as topological concordance.)

An easy, though not entirely obvious (cf.\ \cite{Ro3}*{Question 1}) consequence of \borderref{Giffen's}{Theorem} is

\begin{corollary*} {\rm (see Appendix \ref{app})} Every knot in $S^3$ is $I$-equivalent to the unknot.
\end{corollary*}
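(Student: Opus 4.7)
The plan is to produce the required $I$-equivalence in two stages and compose them via the transitivity of $I$-equivalence: first, an $I$-equivalence from $K$ to a PL knot $K'$; then an $I$-equivalence from $K'$ to the unknot.

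For the second (PL-to-unknot) stage, given a PL knot $K' \subset S^3$, I would take a small PL meridian $\mu$ of $K'$ --- a PL unknot in $S^3 \but K'$ bounding a disk that meets $K'$ transversely in one point, so that $\lk(\mu, K') = 1$. Let $N(\mu)$ be a sufficiently thin PL tubular neighborhood of $\mu$, disjoint from $K'$. Then $V := \overline{S^3 \but N(\mu)}$ is an unknotted PL solid torus with $K' \subset \mathrm{int}(V)$, and the class $[K'] \in H_1(V) \cong \Z$ equals $\lk(K', \mu) = 1$, so $K'$ represents a generator. \borderref{Giffen's}{Theorem} applied to $K' \subset V$ then produces an $I$-equivalence in $V$, and hence in $S^3$, from $K'$ to the core of $V$, which is an unknot in $S^3$.

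For the first (wild-to-PL) stage, given an arbitrary topological knot $K \subset S^3$, I would $C^0$-approximate $K$ by a PL knot $K'$ at small Hausdorff distance, and then take a sufficiently fat PL regular neighborhood $W$ of $K'$ containing $K$ in its interior. Then $W$ is a PL solid torus (possibly knotted in $S^3$) with $K'$ as its core. Since $K$ lies close to $K'$ inside $W$, it is homotopic in $W$ to $K'$ via a short straight-line homotopy, so $[K] = [K']$ is a generator of $H_1(W) \cong \Z$. Applying \borderref{Giffen's}{Theorem} in $W$ yields an $I$-equivalence in $W \subset S^3$ from $K$ to the PL knot $K'$, and stacking the two annuli from the two stages gives the sought $I$-equivalence from $K$ to the unknot.

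The main technical step --- and the source of the ``not entirely obvious'' status of the corollary --- is the $C^0$-approximation of a possibly wild topological knot by a PL knot. This is handled classically: one subdivides $S^1$ finely enough that the image of each subarc of $K$ has small diameter in $S^3$, replaces each such subarc by the straight line segment joining its endpoints, and then slightly perturbs the vertices to attain PL general position, so that the resulting piecewise linear loop is an embedding rather than merely an immersion. Once this approximation is in hand (and chosen fine enough that $W$ can be selected to contain $K$ while remaining a regular neighborhood of $K'$), the rest of the argument is direct bookkeeping with two applications of \borderref{Giffen's}{Theorem} and the transitivity of $I$-equivalence.
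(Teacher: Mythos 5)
Your second stage and the overall ``stack two $I$-equivalences'' structure are fine (and the second stage is essentially the same unknotting-via-Giffen device the paper uses). The genuine gap is in the first stage, at the words ``take a sufficiently fat PL regular neighborhood $W$ of $K'$ containing $K$ in its interior.'' Hausdorff-closeness of $K$ to $K'$ only tells you that $K$ lies in the metric $\epsilon$-neighborhood of $K'$; that metric neighborhood is in general not a solid torus, and there is no reason why any embedded solid torus having $K'$ as its core should contain $K$. A regular neighborhood of $K'$ cannot simply be ``fattened'' to engulf a prescribed compactum while remaining a regular neighborhood: asking for such a $W$ is equivalent to asking that $K$ can be engulfed into a tubular neighborhood of its own PL approximation, which is a nontrivial taming/engulfing statement about wild knots, not a formality. (The same unproved containment is also what you use to run the ``short straight-line homotopy'' inside $W$.) This is exactly the hard content of the corollary, which the paper isolates as Proposition \ref{insertion}: every knot lies in an (even unknotted) solid torus in which it generates $H_1$. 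The paper proves it by producing an unknotted PL curve $Q$ with $\lk(Q,K)=1$ and taking $W$ to be the complement of a thin tube around $Q$; this in turn needs Lemma \ref{conway'} (any PL knot in $S^3\setminus K$ is homotopic in the complement to an unknot, relying on Shtan'ko tameness of $1$-dimensional compacta, cf.\ \cite{McM}, \cite{Bo}) and Lemma \ref{symmetry} (symmetry of the linking number for topological links, via \v Cech--Alexander duality). Your proposal contains no substitute for either ingredient.

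Relatedly, you have misidentified where the difficulty lies: the $C^0$-approximation of a wild knot by a PL embedding, which you present as ``the main technical step,'' is indeed classical and easy (general position), and is not what makes the corollary ``not entirely obvious.'' What is delicate is precisely the existence of a solid torus containing $K$ with winding number one whose core is PL (the theme that solid-torus neighborhoods, as opposed to handlebody neighborhoods, need not exist for wild knots recurs throughout the paper, e.g.\ McPherson's examples \cite{Mc}). If you supplied the existence of such a $W$ --- for instance by proving Proposition \ref{insertion} or Lemma \ref{insertion1} --- then your two-stage argument (Giffen in $W$ from $K$ to the PL core, then Giffen in the complement of a meridian tube from the PL core to the unknot) would go through; as it stands, the first application of Theorem \ref{giffen} has no licensed ambient solid torus to take place in.
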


The following was shown in a recent paper by the author.

\begin{theorem}[\cite{M21}] \label{lk0} Not every link in $S^3$ is $I$-equivalent to a PL link.
\end{theorem}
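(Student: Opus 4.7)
The plan is to exhibit an explicit $2$-component link $L \subset S^3$ built from the Bing sling $B$, together with an $I$-equivalence invariant that separates $L$ from every PL link. My first choice is $L = C \cup B$, where $B = \bigcap_i T_i$ is the Bing sling realized as the intersection of Bing's nested solid tori $T_1 \supset T_2 \supset \cdots$, and $C$ is a meridian of $T_1$, so that $\lk(C, B) = 1$. The self-similar way in which $B$ sits inside each $T_i$ should then manifest itself as a structural feature of $S^3 \but L$ that is robust under $I$-equivalence but is not available to any PL link.

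The main constructive step is to produce the separating invariant. A natural candidate is a Cochran-style iterated derived linking number: because $\lk(C, B) = 1$, the component $B$ meets any (generalized) Seifert surface for $C$ essentially once, and pushing $B$ along this surface while measuring its equivariant linking with lifts of $C$ in the infinite cyclic cover captures second-order linking information. Iterating this through Bing's tower and using its self-similarity should yield a well-defined limit value in a completion of $\Z$ (for instance in a $p$-adic or dyadic analogue $\hat\Z$). PL links admit honest locally flat Seifert surfaces at every stage and can only produce ordinary integers; so as long as the limit value coming from $L$ is genuinely transfinite (i.e.\ not an integer), the separation follows.

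The hardest part will be verifying that the chosen invariant is truly preserved under $I$-equivalence rather than merely under ambient isotopy. Since the cobounding annuli in $S^3 \x I$ are only required to be topological embeddings (not locally flat), the usual transversality and Seifert-surface arguments break down at the wild points. I would attempt to replace Seifert surfaces by $2$-cycles in appropriate abelian covers of the concordance exterior, using Alexander duality on the locally flat part and controlling the wild set either via shape-theoretic approximations of $S^3 \but B$ or via a careful limiting argument along the chain $T_i$. These are essentially the same tools one needs to compute the invariant on $L$ in the first place, so ideally the two difficulties can be addressed together. Once this machinery is in place, comparing the wild value of the invariant with the discrete set of values achievable on PL links completes the proof.
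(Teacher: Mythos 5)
There is a fatal flaw in your choice of example: the link $L=(C,\B)$, where $\B$ is the Bing sling and $C$ is a meridian of the first solid torus $T_1$, \emph{is} $I$-equivalent to a PL link, so no invariant whatsoever can separate it from PL links. Indeed $\B$ lies in a solid torus $T$ (e.g.\ $T_2$, or a slightly shrunken copy of $T_1$) that is disjoint from $C$, and $[\B]$ generates $H_1(T)$ (this is automatic for intersections of nested solid tori, cf.\ Lemma \ref{colimit}); Giffen's Theorem then gives an annulus in $T\x I$ from $\B$ to the core circle of $T$, and together with the product annulus $C\x I$ this is an $I$-equivalence from $(C,\B)$ to the Hopf link. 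This is exactly Milnor's property $(S)$, and it is the reason why the link proving Theorem \ref{lk0} in \cite{M21} is \emph{not} of this form: it has linking number $0$, a PL (unknotted) first component, and a wild second component which — precisely as a consequence of Theorem \ref{lk0} plus Giffen — does \emph{not} sit as a generator of $H_1$ of any solid torus disjoint from the first component. All the results of the present paper about the Bing sling concern isotopy and various bordism relations, not $I$-equivalence, because $I$-equivalence is killed by Giffen's construction in your setting.

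Two further points would remain even after replacing the example. First, the separating quantity is not a single ``transfinite'' limit value in a completion of $\Z$: Cochran's derived invariants $\beta_i$ are ordinary integers whenever defined (for wild links as well), and the actual obstruction in \cite{M21} is the \emph{non-rationality of the power series} $C_L(z)=\sum_i\beta_iz^i$, extended to topological links of linking number $0$ via infinite homological Seifert surfaces, versus its rationality for PL links; there is no dichotomy ``integer vs.\ non-integer value''. Second, your plan to run a Cochran-type construction at linking number $1$ faces the additional problem, documented in this paper (Theorem \ref{non-invariance} and the surrounding discussion), that for $\lk=1$ these invariants behave much less robustly — e.g.\ $C_L-C_{L'}$ need not be rational for links related by the moves one would like to allow — so the $I$-equivalence (or even isotopy-extension) invariance you hope for is genuinely problematic there, independently of the counterexample above.
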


\begin{corollary*} Not every link is $S^3$ is isotopic to a PL link.
\end{corollary*}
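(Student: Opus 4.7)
The plan is to deduce the corollary from Theorem~\ref{lk0} by showing that isotopy of links implies $I$-equivalence. Then the contrapositive of the theorem gives the claim.

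More precisely, suppose $L_0,L_1\subset S^3$ are isotopic $m$-component links. By the definition reiterated in the excerpt, an isotopy is a homotopy through embeddings, i.e.\ a continuous map $h\:L\x I\to S^3$ (where $L$ is a disjoint union of $m$ circles) such that each level $h_t\:L\to S^3$ is an embedding, with $h_0$ and $h_1$ parametrizing $L_0$ and $L_1$. The first step is to form the level-preserving ``track'' map
\[
H\:L\x I\to S^3\x I,\qquad H(x,t)=\bigl(h(x,t),\,t\bigr).
\]
This map is continuous, and it is injective: if $H(x,t)=H(x',t')$ then the $I$-coordinates force $t=t'$, and then $h_t(x)=h_t(x')$ forces $x=x'$ because $h_t$ is an embedding. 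Since $L\x I$ is compact and $S^3\x I$ is Hausdorff, $H$ is a topological embedding. Its image is a disjoint union of $m$ annuli in $S^3\x I$ cobounding $L_0\x\{0\}$ and $L_1\x\{1\}$, so $L_0$ and $L_1$ are $I$-equivalent.

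Now suppose, toward a contradiction, that every link in $S^3$ were isotopic to a PL link. Applied to the link $L$ provided by Theorem~\ref{lk0} (a link not $I$-equivalent to any PL link), this would furnish a PL link $L'$ isotopic to $L$, and hence, by the observation above, $I$-equivalent to $L$, contradicting the choice of $L$.

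There is essentially no obstacle here; the only point that deserves attention is that the notion of $I$-equivalence used in the paper requires only a \emph{topological} embedding of the annuli, not a locally flat or PL one, so no smoothing or straightening of the track of the isotopy is needed. If $I$-equivalence had been defined via locally flat annuli, the same argument would not go through without further work, since the isotopy $h$ itself need not be locally flat at each level.
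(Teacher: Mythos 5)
Your proof is correct and is exactly the argument the paper intends: the corollary is stated as immediate from Theorem \ref{lk0} because the track $(x,t)\mapsto(h_t(x),t)$ of an isotopy is a topological embedding of $m$ disjoint annuli in $S^3\x I$, so isotopic links are $I$-equivalent. Your remark that only topological (not locally flat) annuli are required is precisely why no further work is needed.
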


The proof of Theorem \ref{lk0} in \cite{M21} can be called geometric. 
It is based on Cochran's derived invariants $\beta_i$, which are extended to $I$-equivalence invariants of topological links by using 
infinite {\it homological} Seifert surfaces.
By using these, it is shown in \cite{M21} that there exist uncountably many 2-component links of linking number $0$, with unknotted
first component, which are not $I$-equivalent to each other.

By contrast, the proofs of the main results of the present paper can be called algebraic.
They are based on the following result (and its version for colored links):

\begin{theorem}[\cite{M24-1}*{Corollary \ref{part1:main1''}}] \label{finite-type} Let $v$ be a finite type invariant of PL links which 
is invariant under PL isotopy.
Then 

(a) $v$ assumes the same value on all sufficiently close PL $C^0$-approximations of any given topological link;

(b) the extension of $v$ by continuity to topological links is an invariant of isotopy.
\end{theorem}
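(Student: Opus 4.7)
My plan is to prove part (a) by iterating the Vassiliev skein relation to reduce $v(L_0) - v(L_1)$ to a finite sum of evaluations on ``localized'' singular links, and then to use PL-isotopy invariance to force each such evaluation to vanish. Part (b) then follows from (a) by a compactness argument.

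For (a), fix a topological link $L$ and two PL $C^0$-approximations $L_0, L_1$ both lying in a thin topological tubular neighborhood $N$ of $L$. Using general position inside $N$, I connect $L_0$ to $L_1$ by a generic PL homotopy; this produces a finite sequence of crossing changes $c_1,\dots,c_m$, each supported in a small ball $B_j \subset N$, interspersed with PL ambient isotopies of $S^3$ (which preserve $v$ by hypothesis). Iterating the skein relation $v(L_+) - v(L_-) = v^{(1)}(\bar{L})$, where $\bar{L}$ is the intermediate $1$-singular link and $v^{(k)}$ denotes the $k$th Vassiliev derivative, expresses $v(L_0) - v(L_1)$ as an alternating sum of values $v^{(k)}(\bar{L}_\alpha)$ on PL links with $k$ double points, each located in one of the $B_j$. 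The sum is finite because $v^{(n+1)} \equiv 0$, where $n$ is the type of $v$.

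The heart of the argument --- and the step I expect to be most delicate --- is a local-triviality lemma: each term $v^{(k)}(\bar{L}_\alpha)$ vanishes. Since $L$ itself is an embedding with no self-intersections, for sufficiently fine approximations the strands of $\bar{L}_\alpha$ passing through each ball $B_j$ form an essentially untangled configuration. A PL isotopy of $S^3$ (permissible because $v^{(k)}$ inherits PL-isotopy invariance from $v$, up to the combinatorial type of the singular set) can be used to straighten this configuration into a standard local model; an explicit evaluation, or a symmetry argument exchanging over/undercrossings on the standard model, then forces $v^{(k)}(\bar{L}_\alpha) = 0$. Summing these vanishings gives $v(L_0) = v(L_1)$.

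For part (b), given a topological isotopy $\{L_t\}_{t \in [0,1]}$ between two topological links, compactness of $[0,1]$ allows one to cover it by finitely many closed subintervals on each of which a single PL link is a common $C^0$-approximation to every $L_t$. By (a), the continuous extension of $v$ agrees with this shared PL value on each subinterval, so it is constant along the isotopy.
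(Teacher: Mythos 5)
The deduction of (b) from (a) by local constancy of the extension along the isotopy is fine, but your proof of (a) breaks at exactly the step you flag as delicate: the ``local-triviality lemma'' is false as stated, and neither of the mechanisms you propose for it can work. The value $v^{(k)}(\bar L_\alpha)$ of a Vassiliev derivative on a singular link is a global quantity: it is not determined by the configuration of the strands inside the balls $B_j$, so straightening that configuration to a ``standard local model'' and invoking an over/under symmetry of the model cannot force it to vanish. (The Whitehead link and the unlink differ by a single crossing change whose local picture is the standard pair of transversal arcs in a small ball, yet the Sato--Levine invariant $\beta_1$ --- a finite type invariant which is invariant under PL isotopy --- changes from $\pm1$ to $0$; its derivative on the intermediate singular link is nonzero despite the ``trivial'' local model.) Moreover, the assertion that for sufficiently fine approximations the strands passing through each $B_j$ are ``essentially untangled'' is precisely what fails for wild links: wildness means that at arbitrarily small scales the approximating strands are forced to clasp with the rest of the link, and no ambient PL isotopy supported near $B_j$ can undo this.

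A useful diagnostic: your argument only ever uses invariance of $v$ under ambient PL isotopy (to straighten local pictures); the stronger hypothesis of invariance under PL isotopy (equivalently, under insertion and deletion of local knots) is never genuinely used. But statement (a) is false for invariants that are merely ambient isotopy invariant: the Conway coefficient $a_2$ is of type $2$, yet arbitrarily close PL approximations of the wild knot given by an infinite connected sum of trefoils shrinking to a point realize arbitrarily large values of $a_2$. Since your argument would apply verbatim to $a_2$, it must contain a gap. Note also that the present paper does not prove Theorem \ref{finite-type} at all: it imports it from \cite{M24-1}, where the proof runs through $n$-quasi-isotopy (cf.\ \cite{MR1}): sufficiently close PL approximations of a topological link are joined by homotopies whose singular self-intersections are controlled in an iterated null-homotopy sense (a Homma--Bryant type argument), and a finite type invariant of the appropriate degree that is invariant under PL isotopy is shown to be unchanged under such homotopies. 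Both the closeness-to-an-embedding hypothesis and the full strength of PL isotopy invariance enter there in an essential, global way, which is exactly what your localization argument is missing.
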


The proof of Theorem \ref{finite-type} in \cite{M24-1} is in turn based on the notion of $n$-quasi-isotopy, which was introduced by the author \cite{MR1} 
but can be traced back to a number of older constructions (from the Penrose--Whitehead--Zeeman--Irwin trick to Casson handles), most notably 
to the Homma--Bryant proof of the Chernavsky--Miller codimension three PL approximation theorem (see \cite{MR1}*{Remarks (i)--(iii) in \S1.5}).

\begin{mainthm} \label{main1} Let $\B$ be the Bing sling, and let $Q$ be a PL knot in $S^3\but\B$ with $\lk(Q,\B)=1$.
Then $(Q,\B)$ is not isotopic to any PL link.
\end{mainthm}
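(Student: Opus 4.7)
The plan is to argue by contradiction using Theorem~\ref{finite-type}. Suppose $(Q, \B)$ is isotopic to some PL link $(Q', K')$. The strategy is to produce a finite type invariant, invariant under PL isotopy, whose value on $(Q, \B)$ cannot be matched by any PL link in its isotopy class.

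\smallskip

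By Theorem~\ref{finite-type}(b), any finite type invariant $v$ of PL $2$-component links with $\lk = 1$ that is invariant under PL isotopy extends continuously to a topological isotopy invariant, so the hypothetical isotopy forces $v(Q, \B) = v(Q', K')$. By part~(a), $v(Q, \B)$ is computable from any sufficiently close PL approximation $(Q, K_n)$; the natural choice is to take $K_n$ to be the PL pattern knot inside the $n$-th term $T_n$ of the defining nested chain $T_1 \supset T_2 \supset \cdots$ with $\bigcap T_n = \B$, so that $K_n$ is an iterated Bing-like double whose depth grows with $n$.

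\smallskip

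The crucial step is to identify an appropriate finite type invariant, or rather an infinite family $\{v_m\}$ indexed by ``depth''. The stronger version of the theorem stated in the abstract points toward invariants coming from the metabelian quotient $G/[G', G'']$ of $G = \pi_1(S^3 \setminus \B)$: they record the class of $Q$ in this quotient, which captures Alexander/Cochran-derived information about the link. The general machinery of \cite{M24-1}, built on the $n$-quasi-isotopy framework of \cite{MR1}, packages these as finite type invariants satisfying the required PL isotopy invariance. For $(Q, K_n)$ the depth-$m$ invariant with $m \le n$ should be computable directly from the Bing-doubling pattern of $K_n$ and be generically nonzero; for the fixed PL link $(Q', K')$ the analogous invariant must vanish, or obey rigid algebraic constraints, past a finite threshold determined by the combinatorial complexity of $K'$. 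Choosing $m$ past this threshold and then $n$ past the stabilization index from part~(a) yields $v_m(Q', K') = v_m(Q, \B) = v_m(Q, K_n)$ with the two ends of this chain forced to differ, the desired contradiction.

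\smallskip

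The main obstacle is the concrete construction and comparison of these invariants. PL-isotopy invariance is genuinely restrictive—since all PL knots are PL-isotopic, any single-component Vassiliev invariant like a Conway coefficient fails the hypothesis—so the invariant must exploit the $2$-component structure and the linking condition $\lk(Q, \B) = 1$ in an essential way. Verifying that iterated Bing doubling really forces the depth-$m$ invariants to persist nontrivially on PL approximations of $\B$, while no single PL link can realize the whole tower of values, is the core technical content, and rests on the detailed analysis of the metabelian Alexander-type invariants under $n$-quasi-isotopy.
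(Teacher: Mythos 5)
Your proposal reproduces the outer shell of the paper's argument (contradiction via Theorem \ref{finite-type}, evaluation on PL approximations coming from the defining tori $T_n$), but everything that actually makes the proof work is left as ``the core technical content,'' and the part you do sketch would not close. The paper's invariant is not a metabelian/Alexander-module invariant from the $G/[G',G'']$ machinery (that enters only in the stronger Theorems A$'$, C, D); for Theorem \ref{main1} it is simply the reduced shifted Conway polynomial $\bar\varnabla_{(K_1,K_2)}(z)=z^{-1}\nabla_L(z)/\bigl(\nabla_{K_1}(z)\nabla_{K_2}(z)\bigr)$, whose PL isotopy invariance follows from multiplicativity of $\nabla$ under connected sum with local knots (Corollary \ref{connected sum}) and whose coefficients are finite type. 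The three missing ingredients are: (i) the splice multiplicativity $\bar\varnabla_\Lambda=\bar\varnabla_L\,\bar\varnabla_{L'}$ for $\lk=1$ (Theorem \ref{splice2}), which converts the nested-solid-torus structure of the Bing sling into an infinite product $\bar\varnabla_{(Q,\B)}=\bar\varnabla_{Z_k}\,\bar\varnabla_{L_{k+1}}\bar\varnabla_{L_{k+2}}\cdots$ with $L_i=(B,J_{r_i})$ the ramified covers of the fake Mazur link; (ii) the explicit computation of $\Omega$ for these covers showing each factor $\bar\varnabla_{L_i}\ne 1$ (Corollary \ref{conway-MJ-degrees}); and (iii) the algebraic fact (Theorem \ref{convergence}) that such an infinite product of nontrivial rational power series is irrational provided the $r_i$ grow fast enough --- a growth condition that is built into the paper's definition of the Bing sling and that your argument never invokes, although without it the statement is in genuine doubt.

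Moreover, the contradiction you envisage --- choose a single depth-$m$ invariant $v_m$ ``past a finite threshold determined by the combinatorial complexity of $K'$'' and compare $v_m(Q',K')$ with $v_m(Q,\B)$ --- cannot work as stated: any one finite type invariant takes some integer value on $(Q,\B)$, and nothing prevents a PL link from realizing that value. The obstruction in the paper is global: for a PL link the whole power series $\bar\varnabla$ is rational (a ratio of polynomials with unit constant term), whereas for $(Q,\B)$ the infinite product above is not rational; so one must compare the entire sequence of coefficients at once, which is exactly what forces the degree bookkeeping of Theorem \ref{convergence} and the rapid-growth hypothesis on the $r_i$ (Remark \ref{growth1}). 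Your closing remark that ``no single PL link can realize the whole tower of values'' points in this direction, but as written the proposal neither constructs the invariant, nor proves its PL isotopy invariance, nor establishes nontriviality of the factors, nor supplies the irrationality criterion, so the key ideas of the proof are missing.
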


While this is a special case of the results to follow, we single it out since its proof is considerably easier.
Apart from Theorem \ref{finite-type} it uses only the Conway polynomial $\nabla_L$ (and some of its properties).
Or rather its reduced version $\nabla_{(K_1,K_2)}(z)/\nabla_{K_1}(z)\nabla_{K_2}(z)$, which is easily seen to be
an invariant of PL isotopy.

A minor elaboration on the proof of Theorem \ref{main1} yields

\begin{mainthm} \label{main2} The Bing sling is not isotopic to any PL knot through knots that are intersections of nested sequences of solid tori.
\end{mainthm}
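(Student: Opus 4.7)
The plan is to reduce Theorem~\ref{main2} to Theorem~\ref{main1} by extracting, from the nested solid-torus structure of each intermediate knot, a finite sequence of PL companion curves along which the technique of Theorem~\ref{main1} can be applied. Suppose for contradiction that $\B$ is isotopic to a PL knot $J$ through knots $K_t$ ($t \in [0,1]$) each of which is an intersection of a nested family of solid tori. For each $t$, choose a PL solid torus $V(t)$ deep enough in the nested family that $K_t \subset V(t)$ with $K_t$ generating $H_1(V(t))$. By continuity of the isotopy, $V(t)$ continues to contain $K_s$ for $s$ in an open neighborhood of $t$; compactness of $[0,1]$ then yields a partition $0 = s_0 < s_1 < \cdots < s_n = 1$ and PL solid tori $V_1, \ldots, V_n$ with PL meridians $\mu_1, \ldots, \mu_n$ such that $\mu_j \cap K_t = \emptyset$ and $\lk(\mu_j, K_t) = 1$ for all $t \in [s_{j-1}, s_j]$.

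On each subinterval, $(\mu_j, K_t)$ is a topological isotopy of $2$-component links, so by the proof of Theorem~\ref{main1}---which uses Theorem~\ref{finite-type} to extend the reduced Conway polynomial $\overline{\nabla}(L) := \nabla_L / (\nabla_{K_1}\nabla_{K_2})$ by continuity to an isotopy invariant---we obtain $\overline{\nabla}(\mu_j, K_{s_{j-1}}) = \overline{\nabla}(\mu_j, K_{s_j})$. The main technical step is the transition at each $t = s_j$, where $\mu_j$ and $\mu_{j+1}$ are two different PL companions of the same (wild) knot $K_{s_j}$. Since $K_{s_j}$ is itself an intersection of nested solid tori, one can find a PL solid torus $W \subset V_j \cap V_{j+1}$ containing $K_{s_j}$ as a core-generator; its meridian $\mu^*$ arises simultaneously as a PL cable-satellite of $\mu_j$ and of $\mu_{j+1}$, via the inclusions $W \hookrightarrow V_j$ and $W \hookrightarrow V_{j+1}$. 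The satellite formula for the reduced Conway polynomial, interpreted in the continuity extension at $K_{s_j}$, then relates $\overline{\nabla}(\mu_j, K_{s_j})$, $\overline{\nabla}(\mu^*, K_{s_j})$, and $\overline{\nabla}(\mu_{j+1}, K_{s_j})$ by explicit rational transformations.

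Chaining the per-subinterval equalities with the per-transition identifications ties $\overline{\nabla}(\mu_1, \B)$ to $\overline{\nabla}(\mu_n, J)$ via a rational relation. Since both $\mu_n$ and $J$ are PL, $\overline{\nabla}(\mu_n, J)$ is a rational function in $z$; but the Bing-sling computation behind Theorem~\ref{main1} forces $\overline{\nabla}(\mu_1, \B)$ to be a non-rational formal power series for an appropriate choice of $\mu_1$ coming from the first solid torus of the Bing sling's nested family, giving the desired contradiction. The principal obstacle is the transition analysis: one must verify that the PL cable-satellite formula for $\overline{\nabla}$ transports correctly through the continuity extension at the wild knot $K_{s_j}$, so that the change-of-companion identities remain meaningful and computable, thereby allowing the chained rational relations to propagate rationality backward from $t=1$ to $t=0$.
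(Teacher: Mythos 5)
Your proposal is correct and follows essentially the same route as the paper: your compactness partition with fixed PL companions is Lemma~\ref{dense}, your assumption that a sufficiently deep torus of the nested family contains the knot as a generator of $H_1$ is Lemma~\ref{colimit}, and your flagged transition step (change of companion at $t=s_j$ via the satellite/splice structure of a deep torus $W$) is exactly Lemma~\ref{anti-F-isotopy}, which the paper proves just as you anticipate --- by approximating the wild knot by PL knots inside $W$ and passing the splice formula (Theorem~\ref{splice2}) through the continuity extension, yielding $\bar\varnabla_{(\mu_{j+1},K_{s_j})}=R\,\bar\varnabla_{(\mu_j,K_{s_j})}$ with $R$ rational and invertible since the constant terms are the linking numbers $1$. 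Your backward propagation of rationality from the PL endpoint is equivalent to the paper's forward propagation of non-rationality from the Bing sling, where non-rationality of $\bar\varnabla_{(\mu_1,\B)}$ holds for \emph{any} PL companion with linking number $1$ by Step~II of the proof of Theorem~\ref{main1}, so no special choice of $\mu_1$ is needed.
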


Not every wild knot is the intersection of a nested sequence of solid tori.
There are at least four proofs of this fact in the literature.
\begin{enumerate}
\item J. Milnor \cite{Mi}*{Theorem 10 and assertions (1) and (2) on p.\ 303} constructed a $2$-component wild link 
which is isotopic to the unlink, but not through links satisfying property ($S$), where a $2$-component link $(Q,\K)$ is said to have 
{\it property ($S$)} if $\K$ lies in a solid torus $T$ disjoint from $Q$ and such that $\K$ represents a generator of $H_1(T)$.
His proof is based on the fundamental group.
But it is easy to see that $(Q,\K)$ satisfies property ($S$) if $\K$ is the intersection of a nested 
sequence of solid tori (see Lemma \ref{colimit}).
\item J. McPherson \cite{Mc} proved that Fox's ``remarkable simple closed curve'' is not the intersection of any nested 
sequence of solid tori.
The proof is based on the observation (found already in \cite{Br}) that the Alexander module of a wild knot can have rank $>1$.
\item S. Kojima and M. Yamasaki \cite{KY}*{assertions (1), (3) of Theorem 4 and the first line of \S7} proved that a certain 
$2$-component wild link whose first component is PL (a variation of Milnor's wild link) is isotopic to the unlink but not F-isotopic 
to any PL link.
In particular, it does not satisfy Milnor's property ($S$).
The proof is based on the observation that the knot module 
of a wild knot need not be a torsion module (which is equivalent to the observation mentioned in the previous item, 
see \cite{Kaw}*{Proposition 7.3.4(1)}).
\item Since the first component of the link in Theorem \ref{lk0} is PL, it follows from \borderref{Giffen's}{Theorem} and
Theorem \ref{lk0} that it does not satisfy Milnor's property ($S$).
\end{enumerate}

\begin{customthm}{A$'$} \label{main1'} The Bing sling is not isotopic to any PL knot by any isotopy which extends to 
an isotopy of $2$-component links with linking number $1$.
\end{customthm}

This is like Theorem \ref{main1}, except that the knot $Q$ and the terminal knot of the isotopy which $Q$ undergoes 
are now allowed to be wild.
This may seem like a minor improvement, but here is one application.
It is well-known that the Whitehead link is symmetric (i.e.\ its components may be interchanged by an ambient isotopy);
it is easy to check that its counterparts among links of linking number 1, the Mazur link and the fake Mazur link 
(see Figure \ref{mazur} below and comments in footnote \ref{zeeman} to Figure \ref{mazur-curve} below) are also symmetric.
(For a symmetric diagram of the Mazur link see \cite{Akb}*{Figure 4}.)
Even though the Bing sling is basically made up of copies of the fake Mazur link (see \S\ref{whatisbingsling}),
one consequence of Theorem \ref{main1'} is

\begin{corollary*} Let $\B$ be the Bing sling, and let $Q$ be a PL knot in $S^3\but\B$ with $\lk(Q,\B)=1$.
Then $(Q,\B)$ is not isotopic to $(\B,Q)$.%
\footnote{Such an isotopy, if it existed, would provide an elegant solution to \borderref{Rolfsen's}{Problem}.}
\end{corollary*}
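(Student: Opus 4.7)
The plan is to reduce the corollary directly to Theorem~\ref{main1'} by swapping the roles of the two components. Suppose for contradiction that $(Q,\B)$ is isotopic to $(\B,Q)$, i.e.\ that there is a continuous family of embeddings $h_t\: S^1\sqcup S^1\emb S^3$, $t\in[0,1]$, with $h_0=(Q,\B)$ and $h_1=(\B,Q)$. Restricting each $h_t$ to the first $S^1$ yields an isotopy of knots whose initial term is the PL knot $Q$ and whose terminal term is the Bing sling $\B$, while the family $h_t$ itself extends this isotopy to an isotopy of $2$-component topological links.

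The only point that needs checking is that this extended isotopy has linking number $1$ throughout. For each $t$ the homology class of $h_t|_{S^1_2}$ in $H_1(S^3\but h_t(S^1_1))\cong\Z$ (Alexander duality) is an integer that depends continuously on $t$, hence is constant and equal to $\lk(Q,\B)=1$. Reversing the parameter $t\mapsto 1-t$, we obtain an isotopy of the Bing sling to a PL knot which extends to an isotopy of $2$-component links of linking number $1$; this is precisely what Theorem~\ref{main1'} forbids, and the desired contradiction follows. There is no real obstacle here: the corollary is essentially a restatement of Theorem~\ref{main1'} in the situation where the ambient isotopy of the first component is supplied by the hypothetical symmetry of the link, and the only small observation to make is the isotopy-invariance of linking number for topological links.
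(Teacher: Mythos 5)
Your argument is correct and is exactly the reduction the paper intends: the corollary is stated as an immediate consequence of Theorem \ref{main1'}, obtained by restricting the hypothetical link isotopy to the component that ends at the Bing sling (equivalently, after your time reversal, starts there) and observing that it extends to an isotopy of $2$-component links with linking number $1$. Your continuity argument for the constancy of the linking number along the isotopy is the standard justification (cf.\ the approximation/link-homotopy facts used in the paper's Appendix), so there is nothing essential to add.
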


But the real difference between Theorems \ref{main1} and \ref{main1'} is that a minor extension of the proof of 
the latter yields

\begin{mainthm} \label{main3} The Bing sling is not isotopic to any PL knot by any isotopy which extends to 
a link homotopy of $2$-component links with linking number $1$.
\end{mainthm}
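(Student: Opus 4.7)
My plan is to obtain Theorem \ref{main3} from the proof of Theorem \ref{main1'} by promoting the obstructing finite-type invariant to one that is additionally insensitive to PL self-crossings of the auxiliary component $Q$; the rest of the argument then goes through unchanged.

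The proof of Theorem \ref{main1'} (as suggested by the text following Theorem \ref{main1}) presumably rests on extracting a coefficient of the reduced Conway polynomial $\tilde\nabla_{(Q,K)}(z) = \nabla_{(Q,K)}(z)/\bigl(\nabla_Q(z)\nabla_K(z)\bigr)$. This is a PL isotopy invariant of $2$-component PL links with $\lk=1$, its coefficients are finite-type invariants, so by Theorem \ref{finite-type} it extends by $C^0$-continuity to topological links and remains an isotopy invariant. Its value on $(Q,\B)$ is computed as a limit over the PL approximations provided by the nested solid tori $T_1\supset T_2\supset\dots$ in the Bing construction, and is then shown to lie outside the range of $\tilde\nabla$ on PL links.

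To strengthen PL isotopy invariance on $Q$ to link-homotopy invariance, I would analyze the effect on $\tilde\nabla$ of a single PL self-crossing of $Q$. By the Conway skein relation, such a crossing changes $\nabla_{(Q,K)}$ by $\pm z\,\nabla_{L_0}$, where $L_0=Q'\cup Q''\cup K$ is the $3$-component smoothing link, and changes $\nabla_Q$ by $\pm z\,\nabla_{Q'\cup Q''}$. Expanding $\tilde\nabla$ and tracking how these two corrections interact, I expect a specific low-degree coefficient (or a reduction of $\tilde\nabla$ modulo an appropriate ideal generated by these skein differences) to be cancelled simultaneously by both. That surviving coefficient would then be invariant under PL isotopy of $K$, under PL self-crossings of $Q$, and under the constancy of linking number, i.e.\ under precisely the link homotopies allowed in the theorem. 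Applying Theorem \ref{finite-type} gives its continuous extension to topological links as a link-homotopy invariant, and the Bing-sling computation from the proof of Theorem \ref{main1'} goes through verbatim to yield the contradiction.

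The main obstacle is the first step: confirming that the link-homotopy-invariant piece of $\tilde\nabla$ is still fine enough to detect the wildness of the Bing sling. If the cancellation forces one into a part of $\tilde\nabla$ too coarse to see $\B$, one would have to enhance the invariant, for example by passing to a multivariable or coloured Conway polynomial treating $Q$ and $K$ separately, and rerun the same analysis in that richer skein setting, where the extra variable should give more room for an invariant to survive both corrections while retaining sensitivity to $\B$. The linking-number hypothesis $\lk=1$ should play a crucial role throughout: it pins down the leading coefficient of $\tilde\nabla$, rigidifies the skein differences, and fixes the sole Milnor link-homotopy invariant of a $2$-component link along the deformation.
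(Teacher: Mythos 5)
Your plan follows the paper's general strategy (finite-type extension plus a reduced Conway-type series, with the two-variable/colored polynomial as the refined tool), but its central step is a genuine gap. You propose to find a low-degree coefficient of the reduced Conway polynomial, or a finite-type reduction of it, that is \emph{cancelled} by the skein corrections coming from a self-crossing of $Q$, i.e.\ an invariant that is literally link-homotopy invariant and still detects $\B$. No such refinement can exist: the linking number is the only link-homotopy invariant of two-component links, so no coefficient beyond $\lk$ survives crossing changes of $Q$; and in any case the wildness of the Bing sling is not seen by any single finite-type coefficient but by the non-rationality of an entire power series --- in the proof of Theorem \ref{main1'} this is the series $C_L$, the coefficient $P_1$ of the reduced two-variable polynomial $\varnabla_{(Q,K)}(u,v,w)/\nabla_Q(u)\nabla_K(v)$, not a coefficient of the one-variable $\bar\varnabla_L(z)$ with which you start. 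So ``make the invariant link-homotopy invariant'' is the wrong target, and the fallback you sketch (pass to the colored polynomial and hope an invariant piece survives) runs into the same obstruction.

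What the paper does instead (Lemma \ref{link homotopy}) is not to kill the skein correction but to compute it and show it is harmless. For a crossing change of $Q$ in $S^3\but\K$ one has, for any sufficiently close PL approximation $K$ of $\K$, the relation $\varnabla_{(Q_1,K)}(u,v)-\varnabla_{(Q_2,K)}(u,v)=u\,\varnabla_{(L,K)}(u,v)$ with $L$ the smoothed two-component link; dividing by $\nabla_K(v)$, differentiating in $u$ and setting $u=0$ isolates $\varnabla_{(L,K)}(0,v)/\nabla_K(v)$, and the Torres condition (Theorem \ref{torres}) evaluates this as $F_n(v)\,L_{1-n}(v)/v$, a polynomial in the Fibonacci--Lucas family depending only on the linking numbers of the two lobes with $K$ --- in particular independent of the approximation $K$, so the formula passes to the wild knot $\K$. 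Hence each self-crossing of the first component shifts $C_{(\,\cdot\,,\K)}$ by a polynomial, so \emph{rationality} of $C$ is a link-homotopy-stable property, while the splice computation from the proof of Theorem \ref{main1'} makes $C$ irrational when the second component is the Bing sling and rational when it is PL; this yields Theorem \ref{main3}. Your remark about working ``modulo an ideal generated by skein differences'' gestures at this, but that is exactly where the work lies: one must prove the skein difference is rational (indeed a fixed polynomial) and independent of the PL approximation, and this uses the two-variable polynomial essentially, since setting $u=0$ is what lets Torres determine the correction; no analogous control is available for the one-variable $\bar\varnabla$, whose correction term $z\,\nabla_{(Q',Q'',K)}/(\nabla_Q\nabla_K)$ is not pinned down by linking numbers and could a priori be as wild as the series itself.
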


We recall that a {\it link homotopy} is a homotopy of a link in which the link components remain disjoint.

The proofs of Theorems \ref{main1'} and \ref{main3} are based on a version of Theorem \ref{finite-type} for {\it colored} 
finite type invariants and on the $2$-variable Conway polynomial $\varnabla_L(u,v)$ introduced in \cite{M24-2}.
More precisely we make use of a $1$-parameter subfamily $\beta_1,\beta_2,\dots$ of the coefficients of its 
reduced version $\varnabla_{(K_1,K_2)}(u,v)/\nabla_{K_1}(u)\nabla_{K_2}(v)$, which in the case $\lk=0$ 
coincides with the sequence of Cochran's derived invariants.
However, in our case $\lk=1$ these integers $\beta_i$ exhibit a much subtler geometric behavior 
than the original Cochran's invariants, as we will see shortly.

Although the proofs of Theorems \ref{main1}, \ref{main2}, \ref{main3} were written up only in 2023, these results 
were announced in my 2005 talk at the conference ``Manifolds and their Mappings'' in Siegen, Germany.
A question which I asked in the same talk was recently answered by A. Zastrow \cite{Zast}, \cite{Zast2}:

\begin{namedthm}[Zastrow's Example]\label{Zastrow's} There exists an isotopy from the unknot to itself
which does not extend to any link homotopy of $2$-component links with linking number ~$1$.
\end{namedthm}

We include a short self-contained review of Zastrow's Example (see Example \ref{zastrow}), as it will
be used in the proof of Theorem \ref{main5}.

To counter the difficulty highlighted by Zastrow's Example, we consider relations weaker than link homotopy.

The finite derived series of a group $G$ is usually defined by $G^{(0)}=G$ and 
$G^{(n)}=[G^{(n-1)},G^{(n-1)}]$ for positive integer $n$.
But sometimes it is expanded to include half-integer terms, defined by $G^{(n+0.5)}=[G^{(n)},G^{(n-1)}]$
for positive integer $n$.
Speaking geometrically, it is well-known and easy to see 
that a loop $\varphi\:S^1\to X$ represents 
an element of $\pi_1(X)^{(n)}$ if and only if $\varphi$ bounds a map $\Gamma\to X$ of a grope of height%
\footnote{Not to be confused with gropes of {\it class} $n$, which are similarly related to 
the lower central series of $G$.}
$n$ (see \cite{COT}*{Definition 7.9 and Lemma 7.10}, where gropes of half-integer height are included).

We call $2$-component links {\it $n$-bordant} if they are equivalent under the equivalence relation generated 
by the following two relations:
\begin{enumerate}
\item link homotopy which restricts to an isotopy on the second component;
\item $(Q,\K)\sim(Q',\K)$ if $Q$ and $Q'$ represent the same conjugacy class in $G/G^{(n)}$, where 
$G=\pi_1(S^3\but\K)$.
\end{enumerate}
Obviously, links $(Q_0,\K_0)$ and $(Q_1,\K_1)$ are $n$-bordant if and only if they are related by 
a sequence of link homotopies which extend consecutive time intervals of a single isotopy $\K_t$ 
between $\K_0$ and $\K_1$, with ``jumps'' between consecutive link homotopies related by 
the relation (2).
Such a sequence of link homotopies will be called an {\it $n$-bordism} between $(Q_0,\K_0)$ and $(Q_1,\K_1)$.

By using $2$-bordism in place of link homotopy we can avoid \borderref{Zastrow's}{Example}:

\begin{customthm}{E$'$}\label{main5'} If $L$ and $L'$ are links of linking number $1$, then every isotopy between their 
second components extends to a $2$-bordism between $L$ and $L'$.
\end{customthm}

As discussed in \S\ref{d+}, from the geometric viewpoint this result was initially very surprising and hard to believe.
Eventually, there are 3 independent proofs, none of them particularly hard (one based on the commutator identities, another 
on the knot module, and a geometric one, based on Seifert surfaces).

On the other hand, Corollary to Theorem \ref{lk0} holds with 2-bordism in place of isotopy:

\begin{customthm}{D$'$}\label{main4'} 
There exists a $2$-component link in $S^3$ with linking number $0$ which is not $2$-bordant to any link
whose second component is PL.
\end{customthm}

Like the proof of Theorem \ref{lk0}, the proof of Theorem \ref{main4'} is based on Cochran's 
derived invariants $\beta_i$. 
So one could hope to obtain a similar result for links of linking number $1$ by using their $\beta_i$.
One issue with this approach (which was one reason why it took me 18 years to find time for writing up  
the results announced in 2005) was that while the geometry of Cochran's original invariants $\beta_i$ was 
well-understood since the 1980s, virtually nothing was known about the geometry of the $\beta_i$ for links 
of linking number $1$.
But finally some basic understanding has been gained \cite{M24-2}*{Theorem \ref{part2:mainmain}}), and as its application 
we obtain the following result.

\begin{mainthm} \label{main4} The Bing sling is not isotopic to a PL knot by any isotopy which
extends to a $2.5$-bordism of $2$-component links with linking number ~$1$.
\end{mainthm}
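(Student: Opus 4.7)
The proof will follow the template of Theorem \ref{main3}: use the coefficients $\beta_i$ of the reduced two-variable Conway polynomial $\varnabla_{(Q,\K)}(u,v)/\nabla_Q(u)\nabla_\K(v)$; show that they are invariant under the relevant equivalence; and show that their values on $(Q,\B)$ cannot be realized by any PL link. The one new ingredient beyond Theorem \ref{main3} is invariance of $\beta_i$ under the second relation defining $2.5$-bordism --- replacement of $Q$ by any $Q'$ representing the same conjugacy class in $G/[G',G''] = G/G^{(2.5)}$, where $G = \pi_1(S^3\but\K)$.

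I would proceed in three steps. Step 1: Each $\beta_i$ is a finite type invariant of PL links that is invariant under PL isotopy, so by Theorem \ref{finite-type} it extends by continuity to an isotopy invariant of topological links; the argument for Theorem \ref{main3} further shows that the extended invariant is preserved under link homotopies restricting to an isotopy on the second component. Step 2: Invoke the geometric description of $\beta_i$ for links of linking number one given by \cite{M24-2}*{Theorem \ref{part2:mainmain}} to conclude that $\beta_i(Q,\K)$ is determined by the conjugacy class of $Q$ in $G/G^{(2.5)}$; combined with Step 1, this yields invariance of the extended $\beta_i$ under $2.5$-bordism. Step 3: Evaluate $\beta_i$ on the PL stages $(Q,K_1),(Q,K_2),\dots$ of the Bing-sling chain by exploiting the recursive behavior of the Conway polynomial under Bing's doubling pattern, as in the proof of Theorem \ref{main1}; the resulting limit values $\beta_i(Q,\B)$ then exhibit behavior that cannot be matched by any PL link.

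The main obstacle is Step 2. For linking number $0$, invariance of Cochran's $\beta_i$ under changes of $Q$ modulo derived subgroups admits a clean interpretation via homological Seifert surfaces and the Alexander module, but for linking number $1$ the situation is genuinely more delicate, as the introduction emphasizes, and the decisive input is the recently obtained Theorem \ref{part2:mainmain} of \cite{M24-2}. Applying it here requires carefully matching the precise form of relation (2) in the $2.5$-bordism equivalence to the hypothesis of that theorem; once this matching is in place, Steps 1 and 3 largely parallel the corresponding parts of the proof of Theorem \ref{main3}.
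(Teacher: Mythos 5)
The decisive step of your plan, Step 2, asserts something that is false: the $\beta_i$ (equivalently the series $C_L=\sum\beta_iz^i$) are \emph{not} determined by the conjugacy class of $Q$ in $G/G^{(2.5)}$, and they are not invariant under $2.5$-bordism, nor even under link homotopy of the first component. The paper's Lemma \ref{link homotopy} shows that a single crossing change of $Q$ in $S^3\but\K$ (which does not change the free homotopy class, hence not the conjugacy class in any quotient of $G$) alters $C_{(Q,\K)}$ by an explicit, generally nonzero polynomial; and Theorem \ref{non-invariance} shows that for $\lk=1$ even rationality of $C_L$ can change under moves that look harmless. What \cite{M24-2}*{Theorem \ref{part2:mainmain}} yields, after substantial geometric work, is only a \emph{difference} statement: if $Q_1,Q_2$ represent the same conjugacy class in $G/[G',G'']$ (or the stronger $G/[G',T]$), then $C_{(Q_1,\K)}-C_{(Q_2,\K)}$ is a rational power series (Theorem \ref{torsion-jump2}, via Proposition \ref{torsion-jump}). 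Matching relation (2) of $2.5$-bordism to that geometric theorem is not a bookkeeping issue, as your last paragraph suggests; it requires realizing $a^{-1}b^g$ as a product of commutators $[c_i,t_i]$ with $t_i\in G''$ by an embedded surface $F$ (punctured tori joined by bands), arranging a Seifert surface of the ambient knot disjoint from $F$ (Theorem \ref{disjoint seifert}, Lemma \ref{C-complex}), and then using the torsion hypothesis of Proposition \ref{torsion-jump} — none of which your outline supplies.

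Consequently Step 3, as phrased, does not follow: since the $\beta_i$ are not preserved, you cannot argue that ``the limit values $\beta_i(Q,\B)$ cannot be matched by any PL link.'' The correct residual invariant is the rationality class of $C_L$: $C_L$ is rational whenever the second component is PL, it is non-rational when the second component is the Bing sling (via the additive splice formula, Theorem \ref{cochran-splice}, and Theorem \ref{convergence}(b) — note this is the sum version used in Theorem \ref{main1'}, not the product recursion of Theorem \ref{main1}), and each move of a $2.5$-bordism changes $C_L$ only by a rational summand (Lemma \ref{link homotopy} for the link-homotopy pieces, Theorem \ref{torsion-jump2} for the jumps). If you reformulate Steps 1--2 in these ``difference is rational'' terms, your outline becomes the paper's argument; as written, the exact-invariance claims in Steps 1 and 2 are a genuine gap, not a technicality.
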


A more precise version of Theorem \ref{main4} says that the power series $C_L(z)=\sum_{i=1}^\infty\beta_iz^i$
is rational if the second component of $L$ is PL and not rational if it is the Bing sling, 
but $C_L-C_{L'}$ is rational if $L$ and $L'$ are 2.5-bordant links of linking number $1$.
But in fact, $C_L-C_{L'}$ is also rational if $L$ and $L'$ are 2-bordant links of linking number $0$ --- and this is 
where Theorem \ref{main4'} comes from.
However, Theorem \ref{main5'} and Theorem \ref{non-invariance} imply that $C_L-C_{L'}$ need not be rational for 
2-bordant links $L$ and $L'$ of linking number $1$.%
\footnote{This observation refutes a premature announcement from my 2005 talk in Siegen.
At that time I wrongly believed that Cochran's theorem on additivity of the $\beta_i$ under band connected sum 
\cite{Co1}*{Theorem 5.4} extends to links of any linking number.}
A further blow is provided by the following example.

\begin{mainthm}\label{main5} There exists an isotopy from the unknot to itself which does not extend to 
any $2.5$-bordism of $2$-component links with linking number ~$1$.
\end{mainthm}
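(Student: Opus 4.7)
My plan is to argue by contradiction, drawing the candidate isotopy from the data underlying the non-invariance of $C_L$ under 2-bordism noted in the footnote after Theorem \ref{main5}.

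That footnote, combining Theorem \ref{main5'} with Theorem \ref{non-invariance}, supplies a pair of 2-bordant links $L_0, L_1$ of linking number $1$ with $C_{L_0}(z) - C_{L_1}(z) \notin \Z(z)$.  By Theorem \ref{main5'}, the 2-bordism is an extension of some isotopy of the second components.  I would first arrange (by pre- and post-concatenating with inverse and trivial isotopies, or by picking the Theorem~\ref{non-invariance} witnesses carefully) that the second components of both $L_0$ and $L_1$ are the standard unknot $U_0$ and that the underlying isotopy $\K_t$ is a loop of unknots.  I take $\K_t$ as the candidate isotopy in Theorem \ref{main5}.

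Suppose for contradiction that $\K_t$ extends to a 2.5-bordism with some endpoints $L_0' = (Q_0', U_0)$ and $L_1' = (Q_1', U_0)$.  By the sharper form of Theorem \ref{main4}, $C_{L_0'} - C_{L_1'} \in \Z(z)$.  The heart of the argument is to show that the residue of $C_{L_0} - C_{L_1}$ in $\Z[[z]]/\Z(z)$ depends only on the isotopy $\K_t$ and not on the particular companions: it should be realizable as an intrinsic monodromy invariant of the loop $\K_t$ of unknots, extracted from the action on $G^{(2)}/G^{(2.5)}$ via \cite{M24-2}*{Theorem \ref{part2:mainmain}}, where $G = \pi_1(S^3\but U_0)$.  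Granted such companion-independence, the residue computed from $(L_0', L_1')$ must equal the residue computed from $(L_0, L_1)$; but the former lies in $\Z(z)$ while the latter does not, a contradiction.

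The principal obstacle is precisely this companion-independence: the non-rational part of $C_L - C_{L'}$ along a 2-bordism extending $\K_t$ should depend only on the loop of unknots, not on where the bordism begins.  To establish it I would unpack the commutator-identities proof of Theorem \ref{main5'} outlined in \S\ref{d+} together with the derived-series decomposition of $C_L$ from \cite{M24-2}*{Theorem \ref{part2:mainmain}}, isolating the $G^{(2)}/G^{(2.5)}$-contribution to $C_L$ that is sensitive to monodromy of the second component but invisible to 2.5-bordism.  Should this intrinsic monodromy prove elusive from the Theorem \ref{non-invariance} witnesses alone, a back-up plan is to graft Zastrow's Example onto $\K_t$ to stiffen the monodromy obstruction universally across all choices of companion.
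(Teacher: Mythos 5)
There is a genuine gap, and it sits exactly where you locate it: the ``companion-independence'' of the non-rational residue is not merely unproven, it is contradicted in spirit by the very data you start from. Theorem \ref{non-invariance} produces two links $L'=(K',X)$ and $L^*=(K^*,X)$ with the \emph{same} second component $X$ --- hence $2$-bordant over the \emph{constant} isotopy of $X$ --- with $C_{L^*}$ rational and $C_{L'}$ not rational. So already for a fixed isotopy of the second component the residue of $C$ modulo rational power series depends on the choice of the companion; it cannot be promoted to an ``intrinsic monodromy invariant'' of the isotopy alone, and no argument is offered for why passing from $2$- to $2.5$-bordism would restore such independence. Moreover, your witnesses live over the wild knot $X$, not over a loop of unknots: the reduction ``arrange that the second components are the standard unknot and the isotopy is a loop of unknots'' is precisely where all the difficulty of Theorem \ref{main5} lies, and it is not carried out. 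Finally, for the unknot $U_0$ the group $G=\pi_1(S^3\setminus U_0)\simeq\Z$ has $G^{(2)}=1$, so the proposed monodromy action on $G^{(2)}/G^{(2.5)}$ is vacuous; the group that matters is that of the complement of the wild knot occurring at an interior time of the isotopy.

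For comparison, the paper's proof of Theorem \ref{main5} involves no rationality or Conway-polynomial argument at all. Examples \ref{2.5a}--\ref{2.5c} construct a specific wild knot $\K$ (a Fox--Artin-type knot modeled on the trefoil) with two meridional disks $D_1,D_2$ such that $\partial D_1$ and $\partial D_2$ represent distinct conjugacy classes in $\pi/\pi^{(2.5)}$, where $\pi=\pi_1(S^3\setminus\K)$; the non-conjugacy is verified in the explicit $2.5$-solvable quotient $H\rtimes_\phi\Z$, $H$ the discrete Heisenberg group, using the fibered structure of the trefoil complement. The desired isotopy is then the Zastrow-type isotopy of Example \ref{zastrow} run with this $\K$ and these two disks: unknot $\to\K\to$ unknot, unknotting through $D_1$ on one side and through $D_2$ on the other. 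If it extended to a $2.5$-bordism, the deformation retraction of Lemma \ref{complement-dr} would push the link-homotopy pieces into $S^3\setminus\K$, while the jumps (conjugacy modulo the $2.5$-th derived term of the group at the jump time) map, under inclusion into the complement of the trace of the isotopy, to conjugacy modulo $\pi^{(2.5)}$; hence $\partial D_1$ and $\partial D_2$ would become conjugate in $\pi/\pi^{(2.5)}$, contradicting Example \ref{2.5c}. Your ``back-up plan'' of grafting Zastrow's Example points in this direction, but without the essential ingredient --- a wild knot whose meridians are non-conjugate modulo $\pi^{(2.5)}$ --- it cannot close the argument: the free-homotopy obstruction used for Rolfsen's wild knot in Example \ref{zastrow} is exactly the kind of obstruction that $2$-bordism (Theorem \ref{main5'}) already defeats.
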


\begin{remark}\label{torsion-remark}
The proof of Theorem \ref{main4} shows that it can be improved by replacing $G/[G',G'']$, which occurs in the definition
of $2.5$-bordism, with $G/[G',T]$, where $T$ is preimage in $G'$ of the torsion submodule of the link module $G'/G''$.
Unfortunately, Theorem \ref{main5} can also be improved in the same way (see Proposition \ref{torsion-free}).
\end{remark}

Let us now turn to our final result, which applies the techniques of Theorem \ref{main4} 
to the setting where we refrain from inserting a new component, like in Theorem \ref{main2}.
As noted above, a wild knot need not be the intersection of a nested sequence of solid tori.
But it is well-known that not only every wild knot, but every $1$-dimensional compact ANR subset $X$ of $S^3$ is the intersection of 
a nested sequence of handlebodies \cite{McM}*{Lemmas 2, 3} (see also \cite{Kir}*{Theorem}).%
\footnote{Indeed, $X$ is certainly the intersection of a nested sequence of closed polyhedral neighborhoods $M_1\supset M_2\supset\dots$.
Each $M_i$ has a handle decomposition with $0$-, $1$- and $2$-handles.
To eliminate the $2$-handles it suffices to ``puncture'' them, i.e.\ to make their cocore 1-disks disjoint from $\K$.
This can always be done since $X$ is tame in the sense of Shtan'ko (or in other words has embedding dimension 1, see \cite{Sh1})
by \cite{McM}*{Lemma 3} (see also \cite{Bo}*{Satz 4}, \cite{Sh2}*{Theorem 4}).
In fact, a $1$-dimensional compact subset $X$ of $S^3$ is the intersection of a nested sequence of {\it thin} handlebodies (i.e.\ such that 
for each $\epsilon>0$ almost all of the handlebodies admit $\epsilon$-retractions onto graphs) if and only if $X$ is tame in the sense of 
Shtan'ko (see \cite{Bo2}*{p.\ 176}).}
Before we discuss these nested sequences, let us recall one classical fragment of the theory of knotted handlebodies.

\begin{theorem} \label{retractable} 
Let $V$ be a genus $g$ PL handlebody in $S^3$ and let $W=\overline{S^3\but V}$. 
The following are equivalent:
\begin{enumerate}
\item $W$ admits a retraction onto a PL embedded wedge of $g$ circles;
\item $\pi_1(W)$ admits an epimorphism onto the free group $F_g$;
\item $\pi_1(W)/\gamma_\omega\pi_1(W)$ is a free group;%
\footnote{$\gamma_\omega G$ denotes the intersection of the finite lower central series $\gamma_n G$
of the group $G$, where $\gamma_1G=G$ and $\gamma_{n+1}G=[\gamma_nG,G]$.}
\item $W$ contains pairwise disjoint properly%
\footnote{An embedding $g\:P\to Q$ between manifolds with boundary is called {\it proper} if 
$g^{-1}(\partial Q)=\partial P$.}
PL embedded surfaces $\Sigma_1,\dots,\Sigma_g$ whose union is non-separating in $W$.
\end{enumerate}
\end{theorem}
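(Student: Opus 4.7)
The plan is to establish (1) $\Rightarrow$ (2) $\Leftrightarrow$ (3), (2) $\Rightarrow$ (4), and (4) $\Rightarrow$ (1). The first implication is immediate: a retraction $r\colon W\to\vee_g S^1$ induces a surjection $r_*\colon\pi_1(W)\twoheadrightarrow F_g$.

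For (2) $\Leftrightarrow$ (3) I appeal to Stallings' theorem on the lower central series. By Alexander duality, $H_1(W)\cong\Z^g$ and $H_2(W)=0$. An epimorphism $\phi\colon\pi_1(W)\twoheadrightarrow F_g$ is then automatically an isomorphism on $H_1$ (a surjection between free abelian groups of equal rank), and trivially surjective on $H_2$; Stallings therefore yields $\pi_1(W)/\gamma_n\pi_1(W)\cong F_g/\gamma_n F_g$ for every $n\ge 1$, and together with the residual nilpotence of $F_g$ this gives $\ker\phi=\gamma_\omega\pi_1(W)$, hence $\pi_1(W)/\gamma_\omega\pi_1(W)\cong F_g$, a free group. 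Conversely, the abelianization of $\pi_1(W)/\gamma_\omega\pi_1(W)$ equals $H_1(W)\cong\Z^g$, so if this quotient is free it must be $F_g$, and the canonical projection realizes (2).

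For (2) $\Rightarrow$ (4), which I regard as the main technical step, the statement is the analogue for handlebody complements of the Stallings--Gutierrez characterization of boundary links. I would realize $\phi$ by a PL map $f\colon W\to\vee_g S^1$ (using that the wedge is a $K(F_g,1)$), place $f$ in general position with respect to $g$ interior points $p_1,\ldots,p_g$ (one per loop), and set $\Sigma_i:=f^{-1}(p_i)$. The nontrivial point is to arrange that $\cup\Sigma_i$ is non-separating. For this I would execute Stallings' compression procedure: whenever some $\Sigma_i$ has a spurious component, surjectivity of $\phi$ provides a disk in $W$ along which to surger $\Sigma_i$, reducing its component count without creating intersections with the other $\Sigma_j$ and without changing the homotopy class of $f$. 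At the terminal stage, each $\Sigma_i$ is connected and $W_0:=W\setminus(\Sigma_1\cup\cdots\cup\Sigma_g)$ is connected. Controlling the topology of these transverse preimages is where I expect the real work to lie.

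For (4) $\Rightarrow$ (1) I use the surfaces to construct simultaneously an epimorphism $\pi_1(W)\twoheadrightarrow F_g$ and a PL-embedded dual wedge of circles, then upgrade the resulting $\pi_1$-splitting to an honest retraction. Choose a basepoint $p\in W_0$; two-sidedness of each $\Sigma_i$ (orient $W$) gives the Seifert--van Kampen / HNN presentation
\[
\pi_1(W)\cong\langle \pi_1(W_0),\,t_1,\ldots,t_g\mid t_i\,(\iota_i^+)_*(\alpha)\,t_i^{-1}=(\iota_i^-)_*(\alpha),\ \alpha\in\pi_1(\Sigma_i),\ 1\le i\le g\rangle,
\]
where $\iota_i^\pm\colon\Sigma_i\hookrightarrow W_0$ are the two boundary inclusions. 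Setting $\phi|_{\pi_1(W_0)}=1$ and $\phi(t_i)=x_i$ defines a surjection $\phi\colon\pi_1(W)\twoheadrightarrow F_g=\langle x_1,\ldots,x_g\rangle$, since each defining relation collapses to a trivial identity in $F_g$. Realize each $t_i$ by a simple PL loop $\gamma_i$ based at $p$ crossing $\Sigma_i$ transversally once and missing the other $\Sigma_j$; by general position we arrange $\gamma_i\cap\gamma_j=\{p\}$, so $W_g:=\gamma_1\cup\cdots\cup\gamma_g$ is a PL-embedded wedge of $g$ circles in $W$. Using that $\vee_g S^1$ is a $K(F_g,1)$, realize $\phi$ by a PL map $r\colon W\to W_g$; because $r_*[\gamma_i]=\phi(t_i)=x_i$, the composition $W_g\hookrightarrow W\xrightarrow{r}W_g$ induces the identity on $\pi_1$ and is therefore PL-homotopic to $\id_{W_g}$. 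The PL homotopy extension property for the cofibration $W_g\hookrightarrow W$ then modifies $r$ into a genuine retraction onto $W_g$, completing the proof.
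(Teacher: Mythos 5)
Most of your proposal is sound, and it is genuinely different from the paper's treatment: the paper proves Theorem \ref{retractable} purely by citation, quoting (1)$\Leftrightarrow$(2) from Jaco--McMillan \cite{JM}, (2)$\Leftrightarrow$(4) from Jaco \cite{J2}*{Theorem 2.1}, and (2)$\Rightarrow$(3) again from \cite{JM}, whereas you argue directly. Your (1)$\Rightarrow$(2) is immediate; your (2)$\Leftrightarrow$(3) via Stallings' theorem (using $H_1(W)\cong\Z^g$, recorded in the paper as Lemma \ref{handlebody}(a), the fact that a surjective endomorphism of $\Z^g$ is an isomorphism, $H_2(F_g)=0$, and residual nilpotence of $F_g$) is correct and is essentially what lies behind the citation; and your (4)$\Rightarrow$(1), via the graph-of-groups presentation obtained by cutting along the $\Sigma_i$, asphericity of the wedge, and the homotopy extension property, is a complete argument modulo routine conventions (if some $\Sigma_i$ is disconnected you get one stable letter per component, all sent to $x_i$, and the surfaces should be taken two-sided for the HNN description; nothing changes).

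The genuine gap is (2)$\Rightarrow$(4), which is exactly the result of \cite{J2} that the paper leans on, and your sketch does not prove it. Setting $\Sigma_i=f^{-1}(p_i)$ for a transverse $f$ realizing $\phi$, the problem is to modify $f$ within its homotopy class until $W\setminus(\Sigma_1\cup\dots\cup\Sigma_g)$ is connected, and ``surjectivity of $\phi$ provides a disk in $W$ along which to surger $\Sigma_i$, reducing its component count'' is not a mechanism that accomplishes this: a compression along a disk never decreases the number of components (it increases it when the compressing curve separates), it is not what removes spurious closed pieces (spheres or components cutting off complementary regions), and you give neither a complexity that strictly decreases nor a termination argument, nor a reason such a disk exists. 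The actual argument has to exploit the epimorphism in an essential, quantified way --- e.g.\ by factoring $\phi$ through the dual graph of the decomposition induced by $f^{-1}(\{p_1,\dots,p_g\})$ and then amalgamating or discarding superfluous pieces by homotopies of $f$ across complementary regions on which $f$ is inessential --- and this cannot be a formality: $H_1(W)\cong\Z^g$ holds for \emph{every} handlebody complement, yet (4) fails for the non-HBL-like handlebodies referenced in the paper's Examples, so whatever procedure you run must visibly use $\pi_1$-surjectivity and not just duality. As written, the central implication is asserted rather than proved; either supply this Stallings/Jaco-type argument in detail or, as the paper does, quote \cite{J2}*{Theorem 2.1}.
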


\begin{proof} This is similar to Smythe's characterizations of homology boundary links \cite{Sm}.
Formally, (1)$\Leftrightarrow$(2) is a special case of \cite{JM}*{Corollary to Theorem 2}
and (2)$\Leftrightarrow$(4) is a special case of \cite{J2}*{Theorem 2.1}.
Finally, (2)$\Rightarrow$(3) follows from \cite{JM}*{Corollary to Theorem 1} and
the converse is obvious (since the rank of a free group is determined by its abelianization).
\end{proof}

\begin{theorem} \label{boundary-retractable}
Let $V$ be a genus $g$ PL handlebody in $S^3$ and let $W=\overline{S^3\but V}$. 
The following are equivalent:
\begin{enumerate}
\item $W$ admits a retraction onto a PL embedded wedge of $g$ circles contained in $\partial W$;
\item $\pi_1(W)$ admits a homomorphism to the free group $F_g$ such that the composition
$\pi_1(\partial W)\to\pi_1(W)\to F_g$ is surjective;
\item $\pi_1(W)/\gamma_\omega\pi_1(W)$ is a free group and $\pi_1(\partial W)$ maps surjectively onto it;
\item $W$ contains pairwise disjoint properly PL embedded surfaces $\Sigma_1,\dots,\Sigma_g$ such that 
$\partial\Sigma_1\cup\dots\cup\partial\Sigma_g$ is non-separating in $\partial W$;
\item $W$ admits a map onto a handlebody $U$ extending a homeomorphism $\partial W\to\partial U$.
\end{enumerate}
\end{theorem}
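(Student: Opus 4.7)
The plan is to model the proof on Theorem \ref{retractable}, tracking the extra ``boundary'' condition throughout. I would take (2) as the central hub and prove (1) $\Leftrightarrow$ (2) $\Leftrightarrow$ (3), (2) $\Leftrightarrow$ (4), and (2) $\Leftrightarrow$ (5). The equivalence (2) $\Leftrightarrow$ (3) reduces immediately to the corresponding equivalence in Theorem \ref{retractable}, since any homomorphism to the residually nilpotent group $F_g$ factors uniquely through $\pi_1(W)/\gamma_\omega\pi_1(W)$, so surjectivity of the restriction from $\pi_1(\partial W)$ translates directly between the two formulations. The implication (1) $\Rightarrow$ (2) is obvious. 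For (2) $\Rightarrow$ (1), Theorem \ref{retractable} first supplies a retraction $r_0 \colon W \to Y_0$ onto some PL wedge $Y_0 \subset W$ realizing the given epimorphism $\rho \colon \pi_1(W) \to F_g$; using surjectivity of $\rho|_{\pi_1(\partial W)}$, I would PL-embed a new wedge $Y \subset \partial W$ whose circles represent free generators of $F_g$ under $\rho$, and then homotope $r_0$ to a retraction $r \colon W \to Y$ via the cofibration property of the PL pair $(W,Y)$.

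For (1) $\Rightarrow$ (4), I would take a regular value $p_i$ in the interior of each circle of $Y$ and set $\Sigma_i = r^{-1}(p_i)$; these are pairwise disjoint properly PL embedded surfaces with $\partial\Sigma_i = (r|_{\partial W})^{-1}(p_i)$. Since $r|_{\partial W} \colon \partial W \to Y$ is itself a retraction, the pullback $(r|_{\partial W})^*$ on $H^1(-;\Z/2)$ is split injective, so the classes $[\partial\Sigma_i] \in H_1(\partial W;\Z/2)$, being Poincar\'e dual to the image of a $\Z/2$-basis of $H^1(Y;\Z/2)$, are $\Z/2$-linearly independent; by the standard criterion for disjoint simple closed curves on a closed orientable surface this is exactly non-separation of $\bigcup\partial\Sigma_i$ in $\partial W$. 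For (4) $\Rightarrow$ (2), I would collapse a bicollar of each $\Sigma_i$ to the $i$-th circle of $\bigvee_g S^1$ and the complement to the wedge point, producing a map $f \colon W \to \bigvee_g S^1$. Lifting $\Z/2$-independence of the $[\partial\Sigma_i]$ to $\Z$-independence in $H_1(\partial W;\Z)$ and using non-degeneracy of the intersection form on the closed orientable surface $\partial W$, I would find loops $\gamma_1,\dots,\gamma_g \subset \partial W$ with $\gamma_i \cdot \partial\Sigma_j = \delta_{ij}$; then $(f|_{\partial W})_*[\gamma_i]$ is the $i$-th standard generator of $H_1(\bigvee_g S^1) = \Z^g$, and because $F_g$ is free the induced surjection on abelianizations lifts to a surjection on $\pi_1$.

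Finally, (5) $\Rightarrow$ (2) follows by composing $W \to U$ with the isomorphism $\pi_1(U) \cong F_g$ and noting that $\pi_1(\partial U) \twoheadrightarrow \pi_1(U)$ for any handlebody. For (2) $\Rightarrow$ (5), I would fix a PL handlebody $U$ of genus $g$ and a homeomorphism $\phi \colon \partial W \to \partial U$ chosen so that the kernel of $\pi_1(\partial W) \to F_g$ matches the kernel of $\pi_1(\partial U) \to \pi_1(U)$ (possible by transitivity of the mapping class group of $\partial U$ on standard Lagrangian systems of simple closed curves), then extend $\phi$ over $W$ using that $U$ is a $K(F_g,1)$, and perturb in the interior to make the map surjective. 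The main obstacle I anticipate is the step (4) $\Rightarrow$ (2): going from bare non-separation of the union $\bigcup\partial\Sigma_i$ to a full dual basis of loops $\gamma_i$ requires lifting $\Z/2$-independence to $\Z$-independence and then using primitivity of these classes within the Lagrangian $\ker(H_1(\partial W) \to H_1(W))$. The ``onto'' clause in (5) is a secondary but genuine difficulty, as it does not follow automatically from the existence of an extension $W \to U$.
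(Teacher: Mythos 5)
Your overall architecture (hub at (2), with (5)$\Rightarrow$(2) and (2)$\Leftrightarrow$(3) handled algebraically) is reasonable, and it is genuinely different from the paper, which does not reprove anything here: it quotes \cite{Lam}*{Theorem 2} for (4)$\Leftrightarrow$(5), \cite{JM}*{Theorem 3} for (5)$\Rightarrow$(1)$\Rightarrow$(4)$\Rightarrow$(5), and \cite{J1} together with Theorem \ref{retractable} for the group-theoretic conditions. But your self-contained version has a central gap: in both (2)$\Rightarrow$(1) and (2)$\Rightarrow$(5) you simply posit that the epimorphism $\pi_1(\partial W)\to F_g$ can be realized geometrically --- by a PL embedded wedge of $g$ circles in $\partial W$ whose circles map to free generators, respectively by a homeomorphism $\partial W\to\partial U$ matching its kernel with the kernel of the handlebody quotient $\pi_1(\partial U)\to\pi_1(U)$. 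Neither follows from general position nor from transitivity of the mapping class group on Lagrangian systems of curves: elements of a surface group cannot in general be represented by an embedded wedge, and the kernel of your epimorphism is not a priori carried by any system of simple closed curves. This geometric realization of free quotients is exactly the substantive content that the paper delegates to Jaco--McMillan and Jaco (as Theorem \ref{retractable} delegates its analogue to \cite{J2}); without it your hub (2) is not connected to (1) or (5).

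Two further steps fail as written. In (1)$\Rightarrow$(4), $\partial\Sigma_i=(r|_{\partial W})^{-1}(p_i)$ is in general a multi-component $1$-manifold, and independence of the total classes $[\partial\Sigma_i]$ does not give non-separation of the union: by Lemma \ref{separation} one needs the classes of \emph{all components} to be independent, and a preimage can contain parallel components whose classes cancel. (The clean route to (4), with connected boundaries, is from (5): pull back meridian disks of $U$ under a map that is a homeomorphism on the boundary, as in \cite{JM} and in the proof of Theorem \ref{boundary-retractable2}.) In (4)$\Rightarrow$(2), the closing inference ``a surjection on abelianizations lifts to a surjection on $\pi_1$ because $F_g$ is free'' is false: $\langle x,\,[x,y]y\rangle\le F_2$ surjects onto $\Z^2$ yet is proper, since its image under $x\mapsto(12)$, $y\mapsto(123)\in S_3$ is $\langle(12)\rangle$. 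Surjectivity of $\pi_1(\partial W)\to F_g$ for your collapse map is in fact true, but it needs a geometric argument --- e.g.\ use that $\partial W\setminus(\partial\Sigma_1\cup\dots\cup\partial\Sigma_g)$ is connected to spell an arbitrary word of $F_g$ by a loop crossing the curves in the prescribed pattern; likewise your dual loops $\gamma_i$ with $\gamma_i\cdot\partial\Sigma_j=\delta_{ij}$ require the classes to span a direct summand (Lemma \ref{scc}(c) with Lemma \ref{handlebody}), not merely to be $\Z$-independent, which is the very point you flag at the end but do not resolve. (Your worry about the ``onto'' clause in (5) is the one non-issue: a map extending a boundary homeomorphism has degree $\pm1$ and is automatically surjective.)
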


As noted in \cite{BeF}, using Perelman's solution of the Poincar\'e Conjecture one can 
reformulate some of the equivalent conditions of Theorem \ref{boundary-retractable}
in a more rigid way (see Theorem \ref{boundary-retractable2}).
A further condition equivalent to those of Theorem \ref{boundary-retractable}, in terms of Dehn surgery,
is given in \cite{OS}.

\begin{proof} This is similar to Smythe's characterizations of boundary links \cite{Sm}.
More formally, a proof of (4)$\Leftrightarrow$(5) is discussed in \cite{Lam}*{Theorem 2}, 
and (5)$\Leftrightarrow$(1) is established in \cite{JM}*{Theorem ~3}, whose proof proceeds 
by showing that (5)$\Rightarrow$(1)$\Rightarrow$(4)$\Rightarrow$(5).
The equivalence of (1) and (3) follows from that in Theorem \ref{retractable} along with
\cite{J1}*{Theorems 2, ~3}.
The equivalence of (2) and (3) follows from the proof of that in Theorem \ref{retractable}.
\end{proof}

Knotted handlebodies satisfying any of the equivalent conditions of Theorem \ref{boundary-retractable}
will be called {\it boundary-link-like} handlebodies; while those satisfying the conditions of Theorem \ref{retractable} 
can be called {\it HBL-like}.

\begin{examples*}
It is easy to see that every genus $1$ handlebody is boundary-link-like.
Moreover, given a boundary-link-like handlebody, it will remain boundary-link-like if we add any local knot to it.
A regular neighborhood in $\R^3$ of a connected graph in $\R^2\x\{0\}$ is always a boundary-link-like handlebody.
More generally, if $G\subset\R^3$ is an embedded graph which contains a boundary link $L$ whose complement in $G$ is 
a tree which is disjoint from some collection of disjoint Seifert surfaces for $L$, then a regular neighborhood of $G$ is 
a boundary-link-like handlebody. 
(See \cite{Su}*{Example 4.4} for a specific example.)

Examples of handlebodies that are not boundary-link-like but HBL-like; or not HBL-like can be found in
\cite{Lam}, \cite{JM}, \cite{J1}, \cite{Su}, \cite{BeF}*{\S8}, \cite{HT}.
\end{examples*}

\begin{mainthm} \label{main6} The Bing sling is not isotopic to any PL knot through knots that are intersections of nested sequences of 
boundary-link-like handlebodies.
\end{mainthm}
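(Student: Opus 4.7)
The plan is to extend the given isotopy $\{\K_t\}_{t\in[0,1]}$ through intersections of nested boundary-link-like handlebodies to an isotopy of $2$-component links with linking number~$1$, and then invoke Theorem~\ref{main1'} to derive a contradiction. This parallels the strategy of Theorem \ref{main2}, with boundary-link-like handlebodies replacing solid tori via Theorem \ref{boundary-retractable}.

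Suppose for contradiction that such an isotopy exists, with $\K_0=\B$, $\K_1$ PL, and each $\K_t=\bigcap_i V_i(t)$. The key step is to construct a PL companion $Q_t$ with $\lk(Q_t,\K_t)=1$ varying PL-isotopically in~$t$. By Theorem \ref{boundary-retractable}(1), each complement $W_i(t)=\overline{S^3\but V_i(t)}$ retracts PL onto a wedge of $g_i$ circles $\ell_1,\dots,\ell_{g_i}$ in $\partial V_i(t)$; these form a basis of $H_1(W_i(t))\cong\Z^{g_i}$ dual to the meridians of $V_i(t)$, so $\lk(\ell_j,\K_t)$ equals the $j$-th coordinate of $[\K_t]\in H_1(V_i(t))$. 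Whenever this class is primitive, I pick integers $a_j$ with $\sum_j a_j\lk(\ell_j,\K_t)=1$ and realize $\sum_j a_j[\ell_j]\in H_1(W_i(t))$ by a PL knot $Q_t\subset W_i(t)\subset S^3\but\K_t$.

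To make $Q_t$ vary PL-isotopically in~$t$, I subdivide $[0,1]$ into finitely many closed subintervals on which the chosen boundary-link-like handlebody $V_i(t)$ and its accompanying retraction can be taken to depend PL-isotopically on~$t$ — available by compactness, since the nested sequences may be refined locally as needed. At each breakpoint, the two adjacent candidates for $Q_t$ represent the same class in $H_1(S^3\but\K_t)\cong\Z$, and hence are joinable by a PL isotopy of knots disjoint from $\K_t$. The resulting isotopy $\{(Q_t,\K_t)\}$ of $2$-component links has linking number~$1$ throughout and extends the original isotopy of knots, contradicting Theorem \ref{main1'}.

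The principal technical difficulty is ensuring that $[\K_t]$ is primitive in $H_1(V_i(t))$ for some~$i$, uniformly in~$t$. At the endpoints this is direct: the Bing sling has degree $1$ in each of its defining solid tori, and a PL knot is primitive in any solid tubular neighborhood. For intermediate~$t$ the boundary-link-like hypothesis is essential, as by Theorem \ref{boundary-retractable}(5) each $W_i(t)$ maps onto a standard handlebody extending a boundary homeomorphism; this transports the primitivity question to a standard model where it can be verified directly and, if necessary, arranged by passing to a finer $V_{i+1}(t)$, since $\bigcap_i V_i(t)=\K_t$ ensures that arbitrarily fine primitive approximations of $[\K_t]$ become available.
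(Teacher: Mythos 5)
Your overall strategy --- extend the isotopy $\K_t$ to an isotopy of $2$-component links of linking number $1$ and quote Theorem \ref{main1'} --- is not the paper's route, and it runs into exactly the obstruction that the paper is organized around. The decisive gap is at your ``breakpoints'': you assert that two PL companions $Q$, $Q'$ disjoint from $\K_{t_i}$ with $\lk=1$ ``represent the same class in $H_1(S^3\but\K_{t_i})\simeq\Z$, and hence are joinable by a PL isotopy of knots disjoint from $\K_{t_i}$.'' Being homologous in the complement does not imply being isotopic there; it does not even imply being freely homotopic there (Lemma \ref{non-meridian} exhibits a knot of linking number $1$ not freely homotopic to a meridian, and Example \ref{2.5c} gives meridians of a wild knot whose classes differ already in $\pi/\pi^{(2.5)}$). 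The strongest general statement available is Theorem \ref{second commutator} (same conjugacy class mod the second commutator subgroup), which yields only a $2$-bordism, not an isotopy or a link homotopy; and Zastrow's Example together with Theorem \ref{main5} shows that isotopies of the second component genuinely can fail to extend to link homotopies or even to $2.5$-bordisms with $\lk=1$. So the bridge from your piecewise-constant companions to a single isotopy of links is missing, and without it Theorem \ref{main1'} cannot be invoked. A secondary but also real problem is your claim that the handlebodies $V_i(t)$ and their retractions ``can be taken to depend PL-isotopically on $t$'': the hypothesis of Theorem \ref{main6} provides a nested sequence separately for each fixed $t$, with no continuity in $t$, so only the Lemma \ref{dense}-type statement (a fixed companion works on a short time interval, by compactness) is justified.

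For comparison, the paper never extends the isotopy. It keeps the piecewise-constant companions of Lemma \ref{dense} and controls the \emph{invariant} across the jumps: the boundary-link-like hypothesis is used (via Theorem \ref{disjoint seifert} and Theorem \ref{alexander-module}, i.e.\ Corollary \ref{main6''}) to show that the knot module of each $\K_t$ is a torsion module; then Theorem \ref{second commutator} plus Theorem \ref{torsion-jump2} show that replacing one companion by the next changes the power series $C_{(Q,\K_t)}$ only by a rational power series, while $C_{(Q,\B)}$ is irrational and $C_{(Q,K)}$ is rational for $K$ PL (Corollary \ref{main6'}). If you want to salvage your approach you would have to prove that, under the boundary-link-like hypothesis, the companions at a breakpoint are actually isotopic (or at least link-homotopic, to use Theorem \ref{main3}) in the complement of $\K_{t_i}$; nothing in your argument, or in the paper, supports that, and the examples above indicate it should fail.
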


The proof consists of two steps.
\begin{enumerate}
\item If a knot $\K$ is the intersection of a nested sequence of boundary-link-like handlebodies, then its knot module%
\footnote{That is, $H_1(\widetilde{S^3\but\K})$. It is closely related to the Alexander module, see \cite{Kaw}*{Proposition 7.1.2(1)}.} 
is a torsion module (Corollary \ref{main6''}).
\item The Bing sling is not isotopic to any PL knot through knots whose knot module
is a torsion module (Corollary \ref{main6'}).
\end{enumerate}

The proof of (1) in fact works under a weaker hypothesis.
Instead of assuming that each handlebody $V$ in the given nested sequence is boundary-link-like, it suffices to assume that $[\K]\in H_1(V)$ 
can be represented by a PL knot $K\subset\partial V$ which bounds a Seifert surface $\Sigma$ in $W:=\overline{S^3\but V_i}$ such that 
each knot in $\Sigma$ is null-homologous in $W$.
However, we show that in the case where $V$ has genus $2$, this apparently weaker hypothesis is in fact equivalent to the hypothesis
that $V$ is boundary-link-like. 
This provides yet another characterization of boundary-link-like handlebodies in the case of genus $2$ (Theorem \ref{disjoint seifert}).

\begin{remark} J. McPherson \cite{Mc} constructed for any $g\ge 1$ a wild knot which is the intersection of a nested sequence of handlebodies 
of genus $g+1$, but not of those of genus $g$.
In fact, if $\K$ is a wild knot whose knot module has rank $\ge 1$ and which contains a tame arc, then, as noted in \cite{Br},
a connected sum $\K_\infty$ of a null-sequence of copies of $\K$ has knot module of infinite rank; then by \cite{Mc}*{Theorem 2} 
$\K_\infty$ is not the intersection of any nested sequence of handlebodies of bounded genus.
\end{remark}

\section{Some basic examples}

\begin{lemma} \label{Rolfsen-assertion}
Let $K$ be a knot in $S^3$ which meets an embedded PL disk $D^2$ in a single point $p$ of $D^2$
so that $D^2$ locally separates $K$ at $p$.%
\footnote{This means that for every sufficiently small neighborhood $U$ of $p$, not all connected components of 
$(K\cap U)\but\{p\}$ lie in the same component of $U\but D^2$. 
Let us note that this condition implies that $p\notin\partial D^2$.}
Then $K$ is isotopic to the unknot.
\end{lemma}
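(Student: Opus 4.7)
The plan is to exploit the PL structure of $D^2$ to reduce the assertion to an isotopy statement about a proper arc in a closed $3$-ball.

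First I would cut $S^3$ open along $D^2$. Since $D^2$ is a PL $2$-disk in $S^3$, the complement $S^3\but D^2$ is an open $3$-ball; its standard manifold-with-boundary completion yields a closed PL $3$-ball $\bar B$ together with a quotient map $q\:\bar B\to S^3$ that is a homeomorphism on $\operatorname{int}\bar B$ and folds $\partial\bar B$ two-to-one onto $D^2$ along $\partial D^2$. The arc $K\but\{p\}$ lifts uniquely to a proper arc $\alpha$ in $\operatorname{int}\bar B$. For sufficiently small neighborhoods $U\ni p$, the only components of $(K\cap U)\but\{p\}$ that approach $p$ in the $S^1$-parameter of $K$ are the two branches immediately adjacent to $p$, so the local separation hypothesis forces these two branches to lie on opposite sides of $D^2$. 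Hence the two ends of $\alpha$ approach the two distinct preimages $p^+,p^-\in q^{-1}(p)\subset\partial\bar B$, and $\alpha$ extends to a proper embedded arc in $\bar B$ from $p^+$ to $p^-$.

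The heart of the argument is to construct an isotopy of $\alpha$, through proper embeddings of an arc in $\bar B$ with endpoints pinned at $p^\pm$, to the straight chord $\delta$ from $p^+$ to $p^-$. I would choose a nested family of PL $3$-ball neighborhoods $B_t\subset\bar B$ of $p^+$, shrinking to $\{p^+\}$ as $t\to 1$, push the possibly wild initial portion of $\alpha$ into $B_t$ by a deformation fixing $p^-$, and replace the complementary long tail of $\alpha$ by a PL arc in $\bar B\but\operatorname{int}B_t$ chosen continuously in $t$ so as to converge to $\delta$. Descending this isotopy through $q$ gives an isotopy of $K$ to the embedded PL circle $q(\delta)$ in $S^3$, which bounds a meridional disk of a bicollar of $D^2$ and is therefore unknotted.

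The delicate point is to guarantee that at every time $t$ the shrunken portion of $\alpha$ inside $B_t$ together with the PL tail in $\bar B\but\operatorname{int}B_t$ form a single embedded arc. This is arranged by keeping the wild portion strictly inside $\operatorname{int}B_t$ at each $t$, so the PL tails can be drawn in the complement $\bar B\but\operatorname{int}B_t$ --- itself a PL $3$-ball with $p^-$ on its boundary --- which visibly admits a continuous family of embedded PL paths from any chosen basepoint on $\partial B_t$ to $p^-$. The local separation hypothesis enters only to guarantee $p^+\ne p^-$; without it, $q(\delta)$ would degenerate to a single point rather than an honest embedded circle.
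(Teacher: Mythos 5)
Your reduction -- cutting $S^3$ along the PL disk to obtain a closed ball $\bar B$ in which $K$ becomes a proper arc $\alpha$ with distinct endpoints $p^+,p^-$, then collapsing the wildness into a ball shrinking to a point while a tame tail converges to a chord, and finally descending through $q$ -- is exactly the paper's route (there the ball is round, the squeeze is the homothety $v\mapsto(1-t)v$, and the tails are straight segments). But the way you describe the squeeze conceals a genuine gap: you speak of pushing ``the possibly wild initial portion of $\alpha$'' into $B_t$ and of ``replacing the complementary long tail of $\alpha$ by a PL arc.'' Nothing in the hypothesis confines the wildness of $K$ to a neighborhood of $p$: $K$ may be wild at points far from $p$ (for instance at a Cantor set spread over the whole knot), so the ``complementary long tail'' is in general itself a wild arc, and swapping it for a PL arc is not an isotopy move -- producing a continuous family of embeddings that equals $\alpha$ at the initial time and has a PL tail immediately afterwards is essentially the assertion being proved. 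So the delicate point is not only embeddedness at each fixed $t$, which you address, but the continuity of the family, above all at the start, where the discarded wild tail would have to be matched.

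The fix is precisely what the paper does: discard nothing. Take the time-$t$ arc to be the image of \emph{all} of $\alpha$ under a homeomorphism of $\bar B$ onto $B_t$ fixing $p^+$ (in the paper, the homothety onto the ball of radius $1-t$), together with a newly appended tame arc from the displaced image of $p^-$ on $\partial B_t$ to $p^-$, chosen to converge to the chord $\delta$. Then each stage is visibly embedded (the squeezed copy meets $\partial B_t$ only at $p^+$ and at the displaced endpoint), the family is continuous including at $t=1$, where the squeezed copy degenerates to the point $p^+$, and the rest of your outline goes through; with this reading your proposal coincides with the paper's proof. A smaller remark: your deduction that the two branches of $K$ adjacent to $p$ lie on opposite sides is not quite immediate, since the separation hypothesis constrains all components of $(K\cap U)\setminus\{p\}$, not just the adjacent ones; one must also exclude that only far strands of $K$ witness the separation, e.g.\ by observing that a subarc of $K$ with both endpoints close to $p$ and disjoint from $D^2$ cannot join the two sides of a small standard ball around $p$, as it would have to travel around $\partial D^2$, contradicting continuity of $K$ at $p$.
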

 
\begin{proof}
We may identify $(S^3,D^2)$ with the quotient of $(B^3,\,\partial B^3)$ by $\rho|_{\partial B^3}$, 
where $B^3$ is the unit $3$-ball in $\R^3$ and $\rho\:\R^3\to\R^3$ is the reflection $(x,y,z)\mapsto(-x,y,z)$.
The preimage of $K$ in $B^3$ is an arc $J_0$ with endpoints in $\partial B^3$, which may be assumed to be $(\pm1,0,0)$.
Let $J_1=[-1,1]\x\{0\}\x\{0\}$ and for $0<t<1$ let $J_t$ be the union of $([-1,-1+t]\cup [1-t,1])\x\{0\}\x\{0\}$
with the image of $J_0$ under the homothety $H_t\:\R^3\to\R^3$, $v\mapsto(1-t)v$.
It is easy to see that the family of arcs $J_t$ is parameterized by an isotopy and the image of this isotopy in $S^3$
is an isotopy from $K$ to an unknot.
\end{proof}

\begin{example}[Zastrow's Example] \label{zastrow}
Let $K_{\onehalf}$ be Rolfsen's wild knot \cite{Ro1}*{Figure 1}, which can be described
as the visible outline of a standard drawing of Alexander's horned sphere 
(cf.\ \cite{DV}*{Figure 2.11}) --- in other words, an equatorial circle of the horned sphere
which passes through all its wild points.
Thus $K_{\onehalf}$ has a Cantor set $C$ of wild points, and its complement $K_{\onehalf}\but C$ consists
of countably many tame arcs.
Let $A_0$ and $A_1$ be the two ``longest'' ones of all these arcs, and let $D_i$ be a small embedded PL disk
which meets $K_{\onehalf}$ in a single point $x\in A_i$ so that $D_i$ locally separates $K$ at $x$.
Let us orient each $D_i$ so that $\lk(K_{\onehalf},\partial D_i)=+1$.
By applying the construction in the proof of Lemma \ref{Rolfsen-assertion} to the disk $D_1$ 
we obtain an isotopy from $K_{1\!/\!2}$ to an unknot, which will be the second half $K_{[\onehalf,\,1]}$ 
of the desired isotopy $K_t$.
Its first half $K_{[0,\,\onehalf]}$ will be the time-reversed version of the isotopy from $K_{\onehalf}$ to an unknot 
yielded by the same construction in the proof of Lemma \ref{Rolfsen-assertion}, this time applied to the disk $D_0$.
Thus $K_i$ for $i=0,1$ is an unknot disjoint from $\partial D_i$, and moreover $(K_i,\partial D_i)$ 
is the Hopf link.

\begin{lemma} \label{complement-dr}
$X:=S^3\x I\but\bigcup_{t\in I} K_t\x\{t\}$ deformation retracts onto $(S^3\but K_{\onehalf})\x\{\frac12\}$.
\end{lemma}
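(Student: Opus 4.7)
The plan is to exploit the explicit (homothety-based) form of the isotopy from Lemma \ref{Rolfsen-assertion} and write down a closed-form deformation retraction. Treat the two halves $t\in[0,\onehalf]$ and $t\in[\onehalf,1]$ of the isotopy separately: each is an instance of the construction of Lemma \ref{Rolfsen-assertion} applied to one of the disks $D_0,D_1$. It suffices to produce, for each half, a deformation retraction of the corresponding complement onto $(S^3\but K_{\onehalf})\x\{\onehalf\}$, since both retractions restrict to the identity on this slice and hence glue.

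Focus on the half $t\in[\onehalf,1]$; reparameterize by $s=2t-1\in[0,1]$ and work in the ball picture $(S^3,D_1)\cong(B^3,\partial B^3)/\rho$, where $K_t$ lifts to $J_s=H_s(J_0)\cup(\text{two tails on the }x\text{-axis})$ with $H_s(x)=(1-s)x$. Write $T=\bigcup_{s\in[0,1]}J_s\x\{s\}\subset B^3\x[0,1]$ for the lifted track; then $(S^3\x[\onehalf,1])\but\bigcup K_t\x\{t\}$ is the quotient of $(B^3\x[0,1])\but T$ by $\rho$ on $\partial B^3\x[0,1]$. Decompose $T=T_{\text{cone}}\cup T_L\cup T_R$, where $T_{\text{cone}}$ is the wild cone swept by $H_s(J_0)$ (with apex $(0,0,0,1)$) and $T_L,T_R$ are the two PL $2$-triangles swept by the tails (both contained in $\{y_2=y_3=0\}$).

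Define $F_u(y,s)=(\phi_u(y,s),\,(1-u)s)$ by
\[
\phi_u(y,s)=\begin{cases}
\dfrac{1-(1-u)s}{1-s}\,y, & |y|\le 1-s, \\[4pt]
\dfrac{(1-u)|y|+u}{|y|}\,y, & |y|\ge 1-s.
\end{cases}
\]
The two formulas agree on $|y|=1-s$ (both give $(1-(1-u)s)\,y/|y|$), so $F_u$ is continuous; and since $\phi_u$ is a scalar multiple of $y$, it is $\rho$-equivariant and descends to the quotient. At $u=0$, $F_0=\id$; at $u=1$, $F_1$ lands in $(B^3\but J_0)\x\{0\}$ and fixes this slice throughout. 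Geometrically, on the inner cone $F$ inverts the homothety to unshrink $J_{(1-u)s}$ back toward $J_0$, while on the outer region it radially pushes $y$ toward $\partial B^3$.

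The crux --- and the main obstacle --- is verifying that $F_u$ preserves the complement of $T$. On the inner cone, one checks that $F_u(y,s)\in T_{\text{cone}}$ iff $y/(1-s)\in J_0$ iff $(y,s)\in T_{\text{cone}}$, by the very definition of $T_{\text{cone}}$ via $H_s$. On the outer region, $\phi_u$ rescales $y$ by a positive factor, preserving the coordinates $y_2,y_3$, while $T_L\cup T_R$ sits entirely in $\{y_2=y_3=0\}$. Continuity at the cone apex $(0,0,0,1)\in T$ --- a limit point of the complement only from the outer region --- is automatic from the outer formula, which has no $1-s$ in its denominator. The analogous retraction on $[0,\onehalf]$ constructed symmetrically using $D_0$ then glues at $t=\onehalf$ to yield the desired deformation retraction.
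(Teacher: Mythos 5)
Your proposal is correct and follows essentially the same route as the paper's proof: reduce each half of the isotopy to the ball picture of Lemma \ref{Rolfsen-assertion}, write down an explicit radial/time-rescaling strong deformation retraction of the track complement onto the $s=0$ slice which fixes $\partial B^3$ in the space coordinate (hence is $\rho$-equivariant and descends to the quotient), and glue the two halves along the middle slice. The only difference is the exact formula --- the paper uses the single expression $r_t(v,1-s)=\big(\min(\tfrac{s_t}{s},\tfrac1{\|v\|})\,v,\,1-s_t\big)$ with $s_t=(1-t)s+t$, whereas you split into the regions $|y|\le 1-s$ and $|y|\ge 1-s$ --- an inessential variation (and the remaining sub-cases you leave tacit, e.g.\ that an outer-region image cannot hit the shrunken cone, follow from the same one-line norm comparison).
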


\begin{proof} In the notation of the proof of Lemma \ref{Rolfsen-assertion}, it suffices to construct 
a deformation retraction $r_t$ of $B^3\x I\but\bigcup_{t\in I} J_t\x\{t\}$ onto $(B^3\but J_0)\x\{0\}$ 
which restricts to a $(\rho\x\id_I)$-equivariant deformation retraction of
$(\partial B^3\but\partial J_0)\x I$ onto $(\partial B^3\but\partial J_0)\x\{0\}$.
(Note that $\partial J_t=\partial J_0$ for all $t\in I$.)
It can be defined by $r_t(v,1-s)=\big(\min(\frac{s_t}{s},\frac{1}{||v||})v,\,1-s_t\big)$, where $s_t$ denotes
the linear motion from $s_0=s$ to $s_1=1$, that is, $s_t=(1-t)s+t$.
Let us note that $r_t$ is the restriction of a deformation retraction $R_t$ of $B^3\x I\but\{0\}\x\{1\}$ onto 
$B^3\x\{0\}$ which is defined by the same formula.
This $R_t$ also restricts to a deformation retraction of the cone $C:=\bigcup_{t\in [0,1)} (1-t)\mathring B^3\x\{t\}$ 
onto the open unit ball $\mathring B^3\x\{0\}$, and to a deformation retraction of $B^3\x I\but(C\cup \{0\}\x\{1\})$
onto $\partial B^3\x\{0\}$.
\end{proof}

Suppose that there exists a homotopy $J_t$ of a knot $J_0$ such that $\lk(J_0,K_0)=1$ and 
$J_t\cap K_t=\emptyset$ for each $t\in I$.
Since $K_0$ and $K_1$ are unknots, we may assume that $J_0=\partial D_0$ and $J_1=\partial D_1$.
The homotopy $J_t\x\{t\}$ between $J_0\x\{0\}$ and $J_1\x\{1\}$ has its values in $X$ and
extends by using the cylinders $J_0\x[0,\frac12]$ and $J_1\x[\frac12,1]$ (which also lie in $X$)
to a homotopy $J_t^+$ with values in $X$ between $J_0\x\{\frac12\}$ and $J_1\x\{\frac12\}$.
The image of $J_t^+$ under the deformation retraction then yields a homotopy $J'_t$ 
in $S^3\but K_{\onehalf}$ between $J_0$ and $J_1$.
The image of $J_t'$, being disjoint from $K_{\onehalf}$, is in particular disjoint from its subset $C$, so it
must also be disjoint from $K_{\onehalf}\cup N_n$ for some neighborhood $N_n$ of $C$, which can be chosen
to be a union of $2^n$ disjoint $3$-balls.

Now let us recall that $K_{\onehalf}$ was an equatorial circle of the horned sphere, which in turn is 
the boundary of Alexander's horned ball $B$.
If $H$ is an unknotted $1$-handle in $B$ connecting its polar caps and disjoint from the ball $N_0$,
then it is easy to see that $(B\cup N_n)\but H$ deformation retracts onto $K_{\onehalf}\cup N_n$.
Moreover, the deformation retraction extends to a homotopy of $S^3$ which restricts to an isotopy
on the complement of $(B\cup N_n)\but H$.
Hence $\pi_1\big(S^3\but(K_{\onehalf}\cup N_n)\big)\simeq\pi_1\Big(S^3\but\big((B\cup N_n)\but H)\Big)
\simeq\Z*\pi_1\big(S^3\but(B\cup N_n)\big)$, where the free factor $\Z$ is generated by the class $x$ 
of a loop passing through the hole $H$ drilled in the horned ball $B$.
Let also $m\in\pi_1\big(S^3\but(B\cup N_n)\big)$ be the image of the class of a meridian of 
the solid torus $B\cup N_0$.
Then $J_0$ and $J_1$ are in the free homotopy classes of $x$ and $xm$ respectively.
If they are freely homotopic, then $xm=x^g$ for some $g\in\pi_1\big(S^3\but(B\cup N_n)\big)$,
or equivalently $m=x^{-1}x^g$.
Thus $m$ belongs to the normal closure of the free factor $\Z$ of
$\Z*\pi_1\big(S^3\but(B\cup N_n)\big)$.
On the other hand, $m$ in fact belongs to the other free factor $\pi_1\big(S^3\but(B\cup N_n)\big)$.
It follows from the normal form theorem for free products (see e.g.\ \cite{Hat}*{\S1.2} or \cite{MKS}*{Theorem 4.1}) 
that this can happen only if $m=1$ \cite{MKS}*{Exercise 4.1.4}.
But it is well-known that the image of $m$ in $\pi_1(S^3\but B)$ is nontrivial \cite{BF}.
\end{example}

\begin{example} It is not hard to show that every PL isotopy of the first component of the Hopf link 
extends to a PL isotopy of the entire link (see \cite{HZ}*{Theorem 2}; compare \cite{Zast}*{Example 2.2}).
On the other hand:
\end{example}

\begin{proposition} For $n=0,1$ there exists a PL link of linking number $n$ and 
a PL isotopy of its first component which does not extend to a topological isotopy
of the link. 
\end{proposition}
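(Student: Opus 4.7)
The plan is to exhibit, for each $n\in\{0,1\}$, a PL link $L=(K,J)$ of linking number $n$ together with a PL isotopy $\{K_t\}_{t\in I}$ of the first component which cannot be matched by any topological isotopy of the link. Since the preceding example notes that every PL isotopy of the first component of the Hopf link already extends to a PL isotopy of the whole Hopf link, the links used here must be different from the Hopf link: their genuine entanglement will be the source of the obstruction.

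I would first reformulate the extension problem geometrically in $S^3\x I$. The PL isotopy $\{K_t\}_{t\in I}$ gives a PL-embedded annulus $A_K=\bigcup_{t\in I}K_t\x\{t\}\subset S^3\x I$. A topological extension $\{(K_t,J_t)\}$ of the link exists if and only if there is a topologically embedded annulus $A_J\subset S^3\x I$ with $\partial A_J=J\x\{0,1\}$ consisting of the time-slices $J_t\x\{t\}$, disjoint from $A_K$. Thus the question becomes whether such an $A_J$ exists in the complement $(S^3\x I)\but A_K$.

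I would then choose $(K,J)$ and $\{K_t\}$ so that $J\x\{0\}$ and $J\x\{1\}$ represent different conjugacy classes in $\pi_1\bigl((S^3\x I)\but A_K\bigr)$ (under the natural identifications coming from the product structure on $\partial(S^3\x I)$), thereby ruling out any such $A_J$. For $n=1$, a natural candidate is to take $K$ a PL knot of non-trivial type (e.g.\ a trefoil) and $J$ an unknotted PL meridional companion with $\lk(K,J)=1$; the PL isotopy $\{K_t\}$ realizes a ``spinning'' of $K$ about its own axis that acts non-trivially on the meridional conjugacy class carried by $J$. For $n=0$ an analogous construction works using the Whitehead link (or the Bing double of the unknot), the spinning isotopy being chosen to rotate $K$ so as to drag along a non-trivial element of the link group.

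The main obstacle will be to compute the action of $\{K_t\}$ on the relevant conjugacy class of $\pi_1\bigl((S^3\x I)\but A_K\bigr)$ and to argue that the resulting discrepancy cannot be absorbed even by a topologically wild annulus $A_J$. Because $A_K$ is PL, the complement has a tractable fundamental group (computable via a Wirtinger-type presentation from the PL structure of $A_K$), so the homotopy obstruction survives passage to the topological category; this mirrors the fundamental-group computation at the end of Example \ref{zastrow}, with Alexander's horned ball replaced by the regular neighborhood of $A_K$ in $S^3\x I$.
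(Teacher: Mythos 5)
Your reduction to level-preserving annuli in $S^3\times I$ is fine up to one crucial point, and that point breaks the argument: an extension of the given isotopy of the first component is an isotopy $(K_t,J_t)$ in which only $J_0=J$ is prescribed; the terminal knot $J_1$ may be any knot disjoint from $K_1$ (with the same linking number). So the boundary condition $\partial A_J=J\times\{0,1\}$ is wrong, and "$J\times\{0\}$ and $J\times\{1\}$ represent different conjugacy classes in $\pi_1\big((S^3\times I)\setminus A_K\big)$" is not an obstruction to anything: every conjugacy class of the terminal complement with the right image in $H_1$ is realized by some embedded $J_1$. Worse, the specific isotopies you propose --- a ``spinning'' of a trefoil, or of a component of the Whitehead link, about an axis --- are ambient PL isotopies, and every ambient isotopy of the first component extends tautologically by dragging $J$ along with the ambient homeomorphisms; for such an isotopy the track complement is just $(S^3\setminus K_0)\times I$, so there is no $\pi_1$-obstruction to compute. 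By Hudson--Zeeman (cf.\ \cite{HZ}) a PL isotopy can fail to be ambient, hence can possibly fail to extend, only if it changes the knot type, so a non-extendable example must be of that kind.

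What the paper actually does is exploit exactly such an isotopy: $K_0$ is a trefoil and $K_t$ is the standard shrinking isotopy to the unknot $K_1$. The two ingredients you are missing are (i) the deformation retraction of $X:=S^3\times I\setminus\bigcup_t K_t\times\{t\}$ onto the \emph{initial} slice $(S^3\setminus K_0)\times\{0\}$ (the cube version of Lemma \ref{complement-dr}), and (ii) the fact that the terminal complement has abelian fundamental group, so \emph{whatever} the terminal knot $J_1$ is, it is null-homotopic in $S^3\setminus K_1$ when $n=0$, and freely homotopic to a meridian when $n=1$. Pushing this constraint through $X$ and the retraction forces the same conclusion for $J_0$ in $S^3\setminus K_0$; one then gets a contradiction by choosing $J_0$ null-homologous but not null-homotopic (case $n=0$), respectively of linking number $1$ but not freely homotopic to a meridian of the trefoil, which exists by Lemma \ref{non-meridian} (case $n=1$). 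In other words, the obstruction lives in the initial complement and is created by making the terminal knot trivial, not by comparing the two ends of a loop of embeddings; this version of the argument moreover excludes extensions by mere link homotopies, not just isotopies.
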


\begin{proof}[Proof for $n=0$] 
The desired link consists of any nontrivial PL knot $K_0$ (for example, the trefoil)
and any PL knot $J_0$ in $S^3\but K_0$ which is null-homologous but not null-homotopic 
in $S^3\but K_0$.
Let $K_t$ be the usual PL isotopy from $K_0$ to an unknot $K_1$ (given, for instance, 
by a PL version of the proof of Lemma \ref{Rolfsen-assertion}, where the round ball 
$B^3$ is replaced by the cube $[-1,1]^3$).
Suppose that there exists a homotopy $J_t$ such that $J_t\cap K_t=\emptyset$ for each $t\in I$.
Since $K_1$ is the unknot and $J_1$ is null-homologous in $S^3\but K_1$, it is
null-homotopic in $S^3\but K_1$. Hence $J_0\x\{0\}$ is null-homotopic in
$X:=S^3\x I\but\bigcup_{t\in[0,1]} K_t\x\{t\}$. 
But it is easy to see that $X$ deformation retracts onto $(S^3\but K_0)\x\{0\}$
(by a version of the construction in the proof of Lemma \ref{complement-dr} where $B^3$ 
is replaced by $[-1,1]^3$ and hence the Euclidean norm by the $l_\infty$ norm). 
Hence $J_0$ is null-homotopic in $S^3\but K_0$, which is a contradiction.
\end{proof}

\begin{proof}[Proof for $n=1$] This is similar to the proof for $n=0$, if we additionally
use the following well-known lemma.

\begin{lemma} \label{non-meridian}
There exists a PL knot $K$ and a PL knot $J$ in $S^3\but K$ which has linking number 1
with $K$ but is not freely homotopic to a meridian%
\footnote{A meridian of $K$ is the boundary of a small embedded PL 2-disk which intersects $K$ in precisely one 
point and locally separates $K$ at this point.} 
of $K$. 
\end{lemma}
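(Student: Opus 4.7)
The plan is to take $K$ to be a PL trefoil knot in $S^3$ and $J$ to be an embedded simple closed PL curve on the peripheral torus $\partial N(K)$ in the slope $(1,1)$, so that in $\pi_1(\partial N(K))$ the curve $J$ represents $m\ell$, where $m$ is the meridian and $\ell$ the preferred (null-homologous) longitude of $K$.  By construction $J$ is a PL simple closed curve in $S^3\but K$ with
\[
\lk(J,K)=\lk(m,K)+\lk(\ell,K)=1+0=1,
\]
so it only remains to show that the free homotopy class of $J$ in $S^3\but K$ --- namely the conjugacy class of $m\ell$ in $G:=\pi_1(S^3\but K)$ --- is not that of a meridian.

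For this I plan a one-line Dehn-surgery argument.  First, by the Wirtinger presentation, $G$ is normally generated by a single meridian, so $\langle\langle m\rangle\rangle=G$.  If $m\ell$ were conjugate to $m$, the two would share the same normal closure in $G$, and therefore $G/\langle\langle m\ell\rangle\rangle$ would be trivial.  But this quotient is precisely $\pi_1$ of the closed $3$-manifold obtained from $S^3\but K$ by Dehn filling along the slope $m+\ell$, i.e.\ by $(+1)$-surgery on $K$.

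Second, I would invoke the classical identification of $(+1)$-surgery on the trefoil with the Poincar\'e homology sphere $\Sigma(2,3,5)$, whose fundamental group is the binary icosahedral group of order $120$.  Since this group is very much nontrivial, we obtain the required contradiction, and hence $J$ is not freely homotopic to a meridian of $K$.

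The main (and really the only) obstacle is simply locating a specific nontrivial knot and specific surgery slope whose filled manifold is known by elementary means to have nontrivial $\pi_1$; the trefoil with $(1,1)$-slope was chosen precisely so that one can cite the classical Poincar\'e-sphere computation and avoid any appeal to Property P (Kronheimer--Mrowka).  As a fallback, if one prefers a purely group-theoretic proof, one could instead take $K$ to be a hyperbolic knot (for instance the figure-eight) and use malnormality of the peripheral $\mathbb{Z}^2\subset G$ to separate $m$ from $m\ell$ directly at the level of conjugacy classes in $G$.
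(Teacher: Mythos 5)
Your argument is correct, but it is a genuinely different route from the paper's. The paper also takes $K$ to be the trefoil, but works purely group-theoretically: it simplifies the Wirtinger presentation to $\left<p,q\mid p^3=q^2\right>$, passes to the free-product quotient $(\Z/3)*(\Z/2)$, and uses the normal form theorem for free products to show that the explicit class $x^2y^{-1}$ (of linking number $1$) is cyclically reduced with six alternations between the factors, hence not conjugate to the meridian $x=p^{-1}q$, which has only two. You instead realize the class $m\ell$ by the $(1,1)$-curve on $\partial N(K)$ and observe that conjugacy of $m\ell$ with $m$ would force $\langle\langle m\ell\rangle\rangle=\langle\langle m\rangle\rangle=G$, i.e.\ $+1$-surgery on the trefoil would be simply connected, contradicting the classical identification of that surgery with a Brieskorn sphere. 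This is sound: killing the meridian does trivialize any knot group, the filled manifold's $\pi_1$ is exactly $G/\langle\langle m\ell\rangle\rangle$, and $\pm1$-surgery on either trefoil is Seifert fibered ($\Sigma(2,3,5)$ or $\Sigma(2,3,7)$ depending on chirality and sign), so $\pi_1\ne 1$ in all cases --- only the specific label ``Poincar\'e sphere'' requires choosing the chirality appropriately, which you are free to do since the lemma is existential. What each approach buys: yours is short and conceptual but outsources the key nontriviality to the classical surgery computation (in effect a Property-P statement for the trefoil, fortunately available without Kronheimer--Mrowka); the paper's is elementary and self-contained (only the free-product normal form theorem), and, more importantly for this paper, its explicit element and method are what get upgraded in Examples \ref{2.5a}--\ref{2.5c} to distinguish conjugacy classes in solvable quotients $\pi/\pi^{(2.5)}$ --- a refinement your surgery argument does not see, since it only detects non-conjugacy in the full group. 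Your hyperbolic/malnormality fallback would also work for the figure-eight knot, but again only at the level of $G$ itself.
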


There are several proofs of this Lemma in the literature: a geometric proof in \cite{Ts}*{Theorem 3.9}
and several algebraic proofs in \cite{SWW}.
(See also Example \ref{2.5b} below, which provides a not so easy proof of a stronger assertion.)
For completeness, here is a simplified exposition of a proof found in \cite{SWW}*{p.\ 5 of the arXiv version}.

\begin{proof} Let $K$ be the trefoil knot. A Wirtinger presentation of $\pi_1(S^3\but K)$
is \[\left< x,y,z\mid z=x^{-1}yx,\ y=z^{-1}xz,\ x=y^{-1}zy \right>.\] As usual, one of the relations
is redundant. By expressing $z$ from the first relation and substituting for $z$ in
the second relation, we simplify the presentation as $\left< x,y\mid xyx=yxy \right>$. From
the latter relation we get $(xy)(xy)(xy)=(xyx)(yxy)=(xyx)(xyx)$, or $p^3=q^2$, where
$p=xy$ and $q=xyx$. Conversely, we can recover $x$ and $y$ as $x=p^{-1}q$ and $y=q^{-1}p^2$,
so the presentation further simplifies as $\left< p,q \mid p^3=q^2 \right>$.

This group surjects onto free product $(\Z/p)*(\Z/q)=\left< p,q \mid p^3=q^2=1 \right>$. 
We will use the well-known theorem about normal form of cyclically reduced elements of a free product 
(see e.g.\ \cite{MKS}*{Theorem 4.2}). 
The element $x^2y^{-1}=p^{-1}qp^{-1}qp^{-2}q$ of the free product is cyclically reduced and
alternates 6 times between the free factors $\Z/p$ and $\Z/q$. Hence it is not conjugate to the meridian
$x=p^{-1}q$ (which alternates only 2 times between the free factors),
and therefore it is not conjugate to any meridian (because all meridians are
conjugate to each other). Hence any knot $J$ in the complement of the trefoil $K$
representing the conjugacy class of $x^2y^{-1}$ is not freely homotopic to a meridian of $K$.
\end{proof}
\end{proof}

\begin{example}
The following theorem was proved in \cite{MR1}*{Theorem 1.3 + the proof of Theorem 2.2}: 
there exists a PL knot in the solid torus $S^1\times D^2$ which is homotopic, but not topologically 
isotopic to the core circle (by an isotopy within the solid torus).
\end{example}

\section{Proof of Theorem \ref{main5'}}\label{d+}

Theorem \ref{main5'} follows from Lemma \ref{dense} and Theorem \ref{second commutator}.

\begin{lemma} \label{dense} Let $X_t$ be an isotopy of a knot, $t\in[0,1]$.
Then there exist numbers $0=t_0<\dots<t_n=1$ and PL knots $K_0,\dots,K_{n-1}$ such that each $K_i$ 
is disjoint from $X_t$ for all $t\in [t_i,t_{i+1}]$, and also $\lk(K_i,X_{t_i})=1$.

Moreover, the $t_i$ may be chosen to belong to a given dense subset $S$ of $[0,1]$.
\end{lemma}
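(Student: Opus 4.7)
The plan is to produce, for each $t_{*}\in[0,1]$, a single PL simple closed curve $K(t_{*})$ and an open neighborhood $U_{t_{*}}\ni t_{*}$ such that $K(t_{*})$ is disjoint from $X_{s}$ for every $s\in U_{t_{*}}$ and $\lk(K(t_{*}),X_{t_{*}})=1$. Once this ``local certification'' is in place, the desired partition and the knots $K_{i}$ will drop out of the Lebesgue number lemma applied to the open cover $\{U_{t}\}_{t\in[0,1]}$ of $[0,1]$, with the intermediate partition points chosen from the dense set $S$; the equality $\lk(K_{i},X_{t_{i}})=1$ will then follow from homotopy invariance of linking number under the sub-isotopy.

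For the local step, I would first invoke Alexander duality (in \v{C}ech form, valid for arbitrary compacta and hence for the possibly wild $X_{t_{*}}$) to obtain $H_{1}(S^{3}\setminus X_{t_{*}})\cong\Z$, and represent the generator by a PL loop in the open PL $3$-manifold $S^{3}\setminus X_{t_{*}}$. Since the expected dimension of self-intersections of a PL map $S^{1}\to M^{3}$ is $2\cdot 1-3<0$, PL general position turns this loop into a PL embedded simple closed curve $K(t_{*})\subset S^{3}\setminus X_{t_{*}}$ of the same homology class. Compactness of $K(t_{*})$ in the open complement of $X_{t_{*}}$ yields $\epsilon>0$ with $N_{\epsilon}(K(t_{*}))\cap X_{t_{*}}=\emptyset$, and uniform continuity of the isotopy map $(s,t)\mapsto X_{t}(s)$ on $S^{1}\times[0,1]$ gives an open $U_{t_{*}}\ni t_{*}$ on which $X_{s}\subset N_{\epsilon/2}(X_{t_{*}})$, so in particular $X_{s}\cap K(t_{*})=\emptyset$ for all $s\in U_{t_{*}}$.

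For the global assembly, compactness of $[0,1]$ and the Lebesgue number lemma yield $\eta>0$ such that every subinterval of $[0,1]$ of length $<\eta$ lies in some $U_{t}$. Using density of $S$, pick $0=t_{0}<t_{1}<\dots<t_{n}=1$ with $t_{1},\dots,t_{n-1}\in S$ and $t_{i+1}-t_{i}<\eta$, and for each $i$ a witness $s_{i}$ with $[t_{i},t_{i+1}]\subset U_{s_{i}}$; set $K_{i}:=K(s_{i})$. Disjointness of $K_{i}$ from $X_{t}$ for $t\in[t_{i},t_{i+1}]$ is then immediate, and because $t_{i},s_{i}\in U_{s_{i}}$, the restriction of the isotopy to the sub-interval connecting $t_{i}$ to $s_{i}$ is a free homotopy in $S^{3}\setminus K_{i}$ from $X_{t_{i}}$ to $X_{s_{i}}$, so $\lk(K_{i},X_{t_{i}})=\lk(K_{i},X_{s_{i}})=1$.

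The only place where the potential wildness of the knots $X_{t}$ intrudes is the realization of a generator of $H_{1}(S^{3}\setminus X_{t_{*}})$ by a PL embedded simple closed curve, which is handled cleanly by Alexander duality together with PL general position in a $3$-manifold. All the remaining ingredients (compactness of the parameter interval, uniform continuity of the isotopy, density of $S$, and homotopy invariance of linking number) are routine, so I do not expect any serious obstacle beyond correctly packaging these standard pieces.
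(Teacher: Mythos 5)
Your argument is correct and is essentially the paper's own proof: both cover $[0,1]$ by open intervals on each of which a fixed PL knot having linking number $1$ with the moving knot stays disjoint from it, extract a finite subcover by compactness, and place the partition points $t_i$ (taken in the dense set $S$) inside these intervals, with $\lk(K_i,X_{t_i})=1$ following from homotopy invariance of the class of $X_t$ in $H_1(S^3\setminus K_i)$. The one step you leave implicit is that a PL representative of a generator of $H_1(S^3\setminus X_{t_*})$ satisfies $\lk(K(t_*),X_{t_*})=1$ in the required sense, which for a possibly wild $X_{t_*}$ rests on the symmetry of the linking number (Lemma \ref{symmetry}); the paper likewise invokes the existence of such a knot without further detail at this point.
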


\begin{proof}
If $S$ is not given, let $S=\Q\cap[0,1]$.
For each $s\in S$ let $Q_s$ be some PL knot disjoint from $X_s$ and such that $\lk(Q_s,X_s)=1$.
Then there exists an $\eps_s>0$ such that $Q_s$ is disjoint from $X_t$ for each $t\in (s-\eps_s,\,s+\eps_s)\cap[0,1]$.
Since $S$ is dense in $[0,1]$, the intervals $J_s:=(s-\eps_s,\,s+\eps_s)\cap[0,1]$ for $s\in S$ form 
an open cover of $[0,1]$.
Then it has a finite subcover $J_{s_1},\dots,J_{s_n}$.
By discarding those of the $J_{s_i}$ that can be discarded we may assume that none of the $J_{s_i}$ can be discarded.
Then each $J_{s_i}\cap J_{s_{i+1}}$ is nonempty; let $t_i$ be some point in it.
It remains to set $t_0=1$, $t_n=1$ and $K_i=Q_{s_i}$.
\end{proof}

\begin{theorem} \label{second commutator}
Let $\K$ be a knot in $S^3$ and let $Q_1$, $Q_2$ be PL knots in $S^3\but\K$ such that $\lk(Q_i,K)=1$.
Then $Q_1$ and $Q_2$ represent the same conjugacy class in the quotient of $\pi_1(S^3\but\K)$ modulo
the second commutator subgroup.
\end{theorem}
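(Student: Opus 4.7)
The plan is to translate the problem into the semidirect product structure of the metabelian quotient $\bar G := G/G''$ of $G=\pi_1(S^3\setminus\K)$. Since $G/G'\cong\Z$ is generated by a meridian class $m$, the group $\bar G$ fits in an extension $1\to A\to\bar G\to\Z\to 1$, where $A=G'/G''$ is the Alexander module of $\K$, a $\Z[t,t^{-1}]$-module with $t$ acting by conjugation by $m$. The hypothesis $\lk(Q_i,\K)=1$ means that the conjugacy class of $Q_i$ in $\bar G$ projects to $m\in G/G'$, so it is represented by an element of the form $ma_i$ with $a_i\in A$.

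First, I will carry out the routine calculation in $\bar G$ showing that for any $h=m^k b$ with $k\in\Z$ and $b\in A$ one has
\[ h\,(m a_2)\,h^{-1}\;=\;m\bigl(t^k a_2+t^{k-1}(1-t)b\bigr). \]
In particular, $ma_1$ and $ma_2$ are conjugate in $\bar G$ as soon as $a_1-a_2\in(1-t)A$ (take $k=0$, $b$ with $(1-t)b=t(a_1-a_2)$, or equivalently $h=mb$ with $(1-t)b=a_1-a_2$). The theorem therefore reduces to the ideal-theoretic assertion $A=(1-t)A$.

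The main step is then to verify this identity. I plan to read it off the Milnor exact sequence
\[ 0\to A/(1-t)A\to H_1(S^3\setminus\K;\,\Z)\to\Z\to 0 \]
associated with the infinite cyclic cover of the knot complement, in which the right-hand map records the linking number with $\K$. For a knot both outer terms are $\Z$ generated by $m$, and the right-hand map is an isomorphism, forcing $A/(1-t)A=0$; equivalently, this is the classical identity $\Delta_\K(1)=\pm 1$. The only part that requires care is the explicit conjugation formula in $\bar G$; the module-theoretic vanishing is a standard fact about knot complements, so no serious obstacle is anticipated. The net content of the proof is that every loop of linking number one with $\K$ is conjugate to a meridian modulo~$G''$.
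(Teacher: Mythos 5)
Your proposal is correct and is essentially the paper's own module-theoretic proof of this theorem: the same passage to $G/G''\simeq A\rtimes\Z$ and the same reduction of the conjugacy statement to solving $(1-t)g=a_1-a_2$ in the knot module $A$, the only difference being that you obtain surjectivity of $1-t$ on $A$ from the Milnor--Wang sequence of the infinite cyclic cover (using that $H_1(S^3\setminus\K)\cong\Z$ via linking number, which for a wild $\K$ is Alexander duality with \v Cech cohomology), whereas the paper quotes an Alexander-duality lemma asserting that $1-t$ acts as an automorphism. One caveat: the aside ``equivalently, $\Delta_\K(1)=\pm1$'' is not literally available for a wild knot (its module need not be torsion), but your exact-sequence argument does not rely on it.
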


Geometrically, this theorem is very surprising.%
\footnote{The author has tried to disprove it for a week for a certain wild knot $K$
by finding an appropriate epimorphism of $\pi_1(S^3\but\K)$ onto a finite group, and exhausted 
all finite groups of order $\le 100$ in this search.}
In the geometric language it says that the connected sum $Q_1\#_b (-Q_2)$ along some band disjoint from $\K$ 
bounds a singular grope of height $2$ in $S^3\but\K$.
It is not hard to construct a wild knot $\K$, using Fox--Artin wild arcs or Fox's 
``remarkable simple closed curve'', which has two wild points, and its two 
tame arcs $A$, $B$ are ``linked'' with each other so that (in contrast to 
Zastrow's example, see Example \ref{zastrow}) any meridian of $A$ visually appears 
to be highly unlikely to cobound a grope of height 2 with any meridian of $B$.
But Theorem \ref{metabelian} says that such a grope does exist!%
\footnote{A. Zastrow was able to explicitly construct such a grope for a certain wild knot 
satisfying the above specifications (private communication, August 2023).
It is different from the wild knot of the previous footnote.
This had been before I found the geometric proof of Theorem \ref{second commutator} (see Theorem \ref{boundary}).}

\begin{lemma} \label{metabelian} If $G$ is a metabelian group with $G/G'\simeq\Z$, 
and $a,b\in G$ are such that the abelianization map $G\to G/G'$ sends $a$ and $b$
to the same generator of $\Z$, then $a$ is conjugate to $b$.
\end{lemma}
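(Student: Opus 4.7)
The plan is to reduce the conjugacy statement to surjectivity of the endomorphism $1-t$ of the abelian group $M := G'$, where $t$ denotes conjugation by a chosen lift of the generator of $G/M \simeq \Z$.

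Since $M$ is abelian (by metabelianness) and $\Z$ is free, the extension $1 \to M \to G \to \Z \to 1$ splits, giving an identification $G \simeq M \rtimes \Z$. I would then compute commutators directly from the semidirect-product multiplication law: every commutator lies in the $M$-factor, and the set of all commutators generates precisely $(1-t)M$. Hence $G' = [G,G] = (1-t)M$, and since $G' = M$ by definition, we obtain $(1-t)M = M$, i.e., $1-t$ is surjective on $M$.

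Given $a, b \in G$ mapping to the same generator of $G/M$, I would choose the splitting so that $a$ itself is the distinguished lift; then $b = m \cdot a$ for some $m \in M$. For a candidate conjugator $g \in M$, a one-line computation in the semidirect product yields $g a g^{-1} = ((1-t)g) \cdot a$. Hence the equation $g a g^{-1} = b$ reduces to $(1-t)g = m$, which is solvable by surjectivity of $1-t$.

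The main obstacle is the identification $[G,G] = (1-t)M$; this is a short commutator computation, but it is the step that converts the group-theoretic hypothesis $G/G' \simeq \Z$ into the module-theoretic surjectivity of $1-t$ that makes the conjugation equation solvable. Once it is in place, everything else is a direct semidirect-product calculation.
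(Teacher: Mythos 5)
Your proof is correct, and it takes a recognizably different route from the paper's own proof of Lemma \ref{metabelian}, which never chooses a splitting: there one works inside $G$ with commutator identities (including the metabelian Hall--Witt identity) to show that the subgroup $[a,G']$ is normal, that each of its elements is a single commutator $[a,c]$, and that $G'=[a,G']$; then $a^{-1}b=[a,c]=a^{-1}a^c$ gives $b=a^c$. Since in additive notation $[a,c]$ is exactly $(1-t^{\pm1})c$, and $(1-t)G'=(1-t^{-1})G'$, the subgroup $[a,G']$ is precisely your $(1-t)M$, so both arguments hinge on the same fact --- surjectivity of $c\mapsto[a,c]$ on $G'$ --- established by different bookkeeping: you via the normal form in $G\simeq G'\rtimes\Z$ (every commutator lands in $(1-t)M$, and conversely), the paper via identities valid in any metabelian group without picking coordinates. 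Your route is in fact closer in spirit to the paper's second, module-theoretic proof of Theorem \ref{second commutator}, which solves the same equation $(1-t)g=a^{-1}b$; but there the invertibility of $1-t$ is imported from topology (Lemma \ref{torsion}, via Alexander duality), whereas you observe that the hypothesis $M=G'=[G,G]$ combined with $[G,G]=(1-t)M$ already forces $1-t$ to be surjective, which is all the equation needs. That is a genuinely useful remark: it shows the ``knot module'' style proof works abstractly for any metabelian $G$ with $G/G'\simeq\Z$, with surjectivity replacing bijectivity. Two small points to make explicit in a writeup: conjugation by any lift of the generator induces the same automorphism $t$ of the abelian group $G'$ (two lifts differ by an element of $G'$, which acts trivially), so taking $a$ itself as the distinguished lift is harmless and the $t$ in $G'=(1-t)G'$ is the same $t$ as in your conjugation equation; and you do use that $a$ and $b$ map to the \emph{same} generator, since that is what puts $a^{-1}b$ in $M$.
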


\begin{proof} The idea of this proof comes from \cite{BCR}*{proof of Theorem 2.3}.

Let us consider the subgroup $[a,G']$ of $G$ generated by all 
commutators of the form $[a,c]$ where $c\in G'$.%
\footnote{Hereafter $[x,y]=x^{-1}y^{-1}xy$ and $x^y=y^{-1}xy$.}
Using the identity $[x,yz]=[x,z][x,y]\big[[x,y],z\big]$ and the hypothesis that $G'$ is abelian 
it is easy to see that $[a,c][a,d]=[a,cd]$ and $[a,c]^{-1}=[a,c^{-1}]$ for $c,d\in G'$. 
Hence every element of $[a,G']$ (and not just its generators) can be represented 
in the form $[a,c]$ for some $c\in G'$. 
On the other hand, using the identities $x^y=x[x,y]$ and $[x,y]=[y,x]^{-1}$ and 
the metabelian Hall--Witt identity $\big[[x,y],z\big]\big[[y,z],x\big]\big[[z,x],y\big]=1$,%
\footnote{The usual Hall--Witt identity $\big[[x,y],z^x\big]\big[[y,z],x^y\big]\big[[z,x],y^z\big]=1$ 
holds in all groups; the first factor $\big[[x,y],z^x\big]=\big[[x,y],z[z,x]\big]=
\big[[x,y],[z,x]\big]\big[[x,y],z\big]\Big[\big[[x,y],z\big],[z,x]\Big]$
simplifies as $\big[[x,y],z\big]$ in metabelian groups, and similarly for the second and third ones.}
we get $[a,c]^g=[a,c]\big[[a,c],g\big]=[a,c]\big[[g,a],c\big]^{-1}\big[[c,g],a\big]^{-1}=[a,c]\big[a,[c,g]\big]$.
Hence $[a,G']$ is normal in $G$.

Given $g,h\in G$, we have $g=a^m c$ and $h=a^n d$ for some $m,n\in\Z$ and $c,d\in G'$. 
Using the identities $[x,yz]=[x,z][x,y]\big[[x,y],z\big]$ and $[xy,z]=[x,z]\big[[x,z],y\big][y,z]$ we get
$[g,h]=[a^m,d][c,a^n]=[a^m,d][a^n,c]^{-1}$. 
Then using the identity $[xy,z]=[x,z]^y[y,z]$ and the normality of $[a,G']$ we get that 
$[g,h]\in [a,G']$. 
Hence $G'=[a,G']$. 
Since $a^{-1}b\in G'=[a,G']$, we have $a^{-1}b=[a,c]=a^{-1}a^c$ for some $c\in G'$. 
Hence $b=a^c$.
\end{proof}

Theorem \ref{second commutator} is a special case of Lemma \ref{metabelian}.
But in the geometric context there is also a more illuminating proof of Theorem \ref{second commutator},
based on the knot module.

Let $K$ be a knot in $S^3$.
Let $X=S^3\but K$ and let $p\:\tilde X\to X$ be the infinite cyclic covering.
The group $H_1(X)\simeq\Z\simeq\left<t\mid\,\right>$ acts on $\tilde X$ by covering transformations.

\begin{lemma} \label{torsion} 
The action by $t-1$ on $H_1(\tilde X)$ and on $H_2(\tilde X)$ is by automorphisms.
\end{lemma}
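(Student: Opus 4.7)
The plan is to apply the Wang exact sequence of the infinite cyclic covering $p\colon\tilde X\to X$. This sequence arises from the short exact sequence of singular chain complexes $0\to C_*(\tilde X)\xrightarrow{t-1}C_*(\tilde X)\xrightarrow{p_\#}C_*(X)\to 0$ (subdividing singular simplices in $X$ so that they lift, to make $p_\#$ surjective), and it has the form
\[\cdots\to H_n(\tilde X)\xrightarrow{t-1}H_n(\tilde X)\xrightarrow{p_*}H_n(X)\xrightarrow{\partial}H_{n-1}(\tilde X)\to\cdots.\]
The lemma will then fall out of the cases $n=1,2$ together with a small low-degree computation of $H_*(X)$.

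The first step is therefore to collect the needed homology of $X$ itself. Since $K$ is a compact subspace of $S^3$ homeomorphic to $S^1$, Alexander duality gives $\tilde H_i(X)\cong\tilde{\check H}^{2-i}(K)$, so that $H_1(X)\cong\Z$ and $H_2(X)=0$; moreover $H_3(X)=0$ because $X$ is a non-compact $3$-manifold, being an open subset of $S^3$. These statements use only the topology of the circle, so no tameness hypothesis on $K$ is required --- which is the whole point, since we will want to apply the lemma when $K$ is the Bing sling or a similar wild knot.

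The second step is to extract the conclusion from the Wang sequence in the relevant degrees:
\[0=H_2(X)\to H_1(\tilde X)\xrightarrow{t-1}H_1(\tilde X)\to H_1(X)\to H_0(\tilde X)\xrightarrow{t-1}H_0(\tilde X),\]
\[0=H_3(X)\to H_2(\tilde X)\xrightarrow{t-1}H_2(\tilde X)\to H_2(X)=0.\]
The second line yields both injectivity and surjectivity of $t-1$ on $H_2(\tilde X)$ at once. The first line immediately gives injectivity on $H_1(\tilde X)$; for surjectivity I would use that $\tilde X$ is connected (the cover corresponds to the commutator subgroup of $\pi_1(X)$), so $H_0(\tilde X)=\Z$ and $t-1$ acts as zero on it. Exactness then forces the map $H_1(X)\to H_0(\tilde X)$ to surject, hence be an isomorphism $\Z\to\Z$, whence the image of $p_*\colon H_1(\tilde X)\to H_1(X)$ vanishes --- which is exactly surjectivity of $t-1$ on $H_1(\tilde X)$.

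I expect no serious obstacle here; the only delicate point is step one for wild $K$, which is handled by using Čech cohomology in Alexander duality and by noting that $X$ remains an honest open $3$-manifold no matter how wild $K$ is.
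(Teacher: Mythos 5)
Your argument is correct, and it is in essence the argument the paper has in mind: the paper's proof is just a citation of the PL-knot case in the companion paper \cite{M24-2} together with the remark that the only input needing care for wild knots is Alexander duality, which holds via \v Cech cohomology --- exactly the duality step you isolate. What you add is a self-contained write-up of that transfer: the Milnor--Wang exact sequence of the infinite cyclic cover, $\cdots\to H_n(\tilde X)\xrightarrow{t-1}H_n(\tilde X)\to H_n(X)\to H_{n-1}(\tilde X)\to\cdots$, combined with $H_1(X)\cong\Z$, $H_2(X)=0$ (\v Cech Alexander duality, valid for an arbitrary, possibly wild, embedded circle) and $H_3(X)=0$, plus connectedness of $\tilde X$ to kill $p_*$ on $H_1$; this is the standard proof and it works verbatim for topological knots, so your route buys independence from the companion paper at the cost of spelling out the chain-level sequence. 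One small inaccuracy: no subdivision is needed to make $p_\#\colon C_*(\tilde X)\to C_*(X)$ surjective --- every singular simplex lifts because $\Delta^n$ is simply connected --- and it is precisely this, together with the freeness of $C_*(\tilde X)$ over $\Z[t^{\pm1}]$ (choose one lift of each simplex), that gives the short exact sequence of chain complexes you use; as written, ``subdividing so that they lift'' would replace $C_*(X)$ by a subdivided complex and slightly obscure the identification of the quotient, so it is better to drop that parenthetical.
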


\begin{proof} The proof for PL knots found in \cite{M24-2}*{Lemma \ref{part2:torsion}(a)}
also works for topological knots, since so does the Alexander duality (using \v Cech cohomology;
see for instance \cite{M2}*{Theorem 4.3} or \cite{M00}*{Theorem 27.1(a)}).
\end{proof}

\begin{proof}[Proof of Theorem \ref{second commutator}]
Let $G=\pi/\pi''$, where $\pi=\pi_1(S^3\but K)$.
Let $a,b\in G$ be some members of the conjugacy classes represented by $[Q_1]$ and $[Q_2]$.
Let $A=\pi'/\pi''$.
Since $\pi/\pi'\simeq\Z$ is a free group, the short exact sequence
$1\to A\to G\to\Z\to 1$ splits, and so $G$ is a semidirect product $G\simeq A\rtimes_\phi\Z$ 
with respect to some automorphism $\phi$ of $A$; namely, $\phi(g)=g^a$.
Clearly, $A$ with this action of $\Z$ is the knot module $H_1(\tilde X)$; 
more precisely, $\phi(g)=t\cdot g$ in the additive notation, where $t$ is 
the image of $a$ in $G/G'\simeq\Z$.
Moreover, the action by $1-t$ is an automorphism of $A$ (see Lemma \ref{torsion}(a)).

We want to show that $a^g=b$ for some $g\in A$.
This can be rewritten as $a^{-1}g^{-1}ag=a^{-1}b$, or $(g^{-1})^ag=c$, where $c=a^{-1}b\in A$.
In the additive notation, this becomes $g-t\cdot g=c$, or $(1-t)g=c$.
But since the action by $1-t$ is invertible, such a $g$ does exist.
\end{proof}

The second proof of Theorem \ref{second commutator} can actually be turned into a geometric proof
(see Theorem \ref{boundary} below) by using the techniques of the following lemma.

\begin{lemma}\cite{AADG} \label{C-complex}
Let $L=(K_1,K_2)$ be a PL link with $\lk(L)=1$.
Then there exist Seifert surfaces $F_i$ for $K_i$, intersecting transversely and such that 
$F_1\cap F_2$ is a single arc with one endpoint in $K_1$ and another in $K_2$.
\end{lemma}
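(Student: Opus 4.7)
The plan is to begin with arbitrary transversely intersecting Seifert surfaces $F_1,F_2$ for $K_1,K_2$ and to modify them in stages until the intersection $F_1\cap F_2$ has the desired form, the modifications being driven by the hypothesis $\lk(K_1,K_2)=1$.

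First, I would reduce $|F_1\cap K_2|$ to a single point by tubing cancelling pairs. The algebraic intersection of $F_1$ with $K_2$ equals $\lk(K_1,K_2)=1$, so the geometric count is odd and, whenever it exceeds $1$, contains two points $a,b$ of opposite sign. Joining them by an arc $\alpha\subset K_2$, I would modify $F_1$ by removing small disks around $a$ and $b$ and gluing in a thin annulus following $\alpha$, pushed slightly off $F_1$ to one side of $K_2$. The result is again a Seifert surface for $K_1$, now meeting $K_2$ in two fewer points. Iterating achieves $F_1\cap K_2=\{q\}$. An entirely analogous, independent construction applied to $F_2$ produces $F_2\cap K_1=\{p\}$.

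Next, note that $F_1\cap F_2$ is a compact $1$-manifold properly embedded in each $F_i$ with boundary
\[
(F_1\cap F_2)\cap(K_1\cup K_2)=(F_2\cap K_1)\cup(F_1\cap K_2)=\{p,q\}.
\]
Hence its components consist of a single arc from $p$ to $q$ together with finitely many closed circles. The remaining task is to eliminate these circles by an innermost-disk surgery. Pick a circle $C$ innermost on $F_2$; it bounds a disk $D\subset F_2$ with $\operatorname{int}(D)\cap F_1=\emptyset$. Since $p$ lies on the arc component (not on a circle) and $q\in\partial F_2$ lies outside $D$, one verifies that $p\notin D$: were $p\in\operatorname{int}(D)$, the arc, being a subset of $F_1\cap F_2$, would have to lie locally in $\operatorname{int}(D)\cap F_1$, contradicting innermostness. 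Similarly $D\cap K_j=\emptyset$ for $j=1,2$. Now surger $F_1$: cut along $C$ and cap with two parallel push-offs of $D$ into the two sides of $F_2$, keeping the component containing $K_1$. This yields a new Seifert surface $F_1'$ for $K_1$ with one fewer circle of intersection with $F_2$, still meeting $K_2$ only at $q$ (the push-offs miss $K_2$). Iterating drives the circle count to zero.

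The main obstacle I anticipate is bookkeeping: the tubing in the first step may itself introduce new circles in $F_1\cap F_2$, and the push-offs in the surgery step must be chosen carefully enough in the normal direction to $F_2$ to avoid meeting $K_1\cup K_2$ or recreating previously eliminated circles. These issues are orthogonal to the main idea and can be handled by shrinking the relevant collar neighborhoods, but making the local geometry precise is the technical heart of the argument.
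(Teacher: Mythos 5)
The first half of your argument (tubing $F_1$ along subarcs of $K_2$ to cut $|F_1\cap K_2|$ down to one point, and symmetrically for $F_2\cap K_1$) is essentially the paper's argument, modulo the small precision that the cancelling pair must be \emph{adjacent} on $K_2$ (the paper takes one of the arcs into which the intersection points divide $K_2$ whose two endpoints have opposite signs); if the connecting arc passes through other intersection points, the tube you attach would cross $F_1$ and the new surface would fail to be embedded. This is fixable and you flag it yourself.

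The genuine gap is in your elimination of the closed components of $F_1\cap F_2$. Your innermost-disk surgery presupposes that some circle $C$ of $F_1\cap F_2$ bounds a disk $D\subset F_2$ with interior disjoint from $F_1$. But $F_2$ is a Seifert surface, typically of positive genus, and a circle of intersection can be essential in $F_2$ (and in $F_1$): for instance a single non-separating curve on $F_2$. In that case no circle bounds a disk in $F_2$ at all, so there is no ``innermost'' circle in the sense you need, and the compression step cannot even start. Nothing in the hypotheses forces the intersection circles to be null-homotopic in either surface, so this is not a bookkeeping issue but a missing idea. The paper gets around it without ever compressing: it connects a closed component $C$ to an interior point of the arc $J$ by an arc $A$ lying in $F_1$, disjoint from the rest of $F_1\cap F_2$ and from $K_1$, and then attaches a thin tube to $F_2$ along $A$. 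This replaces $J$ and $C$ by their connected sum, i.e.\ absorbs the circle into the arc, reducing the number of closed components by one regardless of how the circles sit homotopically in the surfaces. The one point that needs care there is the choice of sides: since $J$ is non-separating in $F_1$, the arc $A$ can be chosen so that both of its ends approach $F_2$ from the same side, which is what makes the tubed surface embedded. If you want to keep a surgery-style argument you would first have to prove that the circles can be made inessential, which is not true in general; so you should replace your third step by a tubing argument of this kind.
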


The proof is quite simple, so we include it for the reader's convenience.

\begin{proof} Let $F_i$ be an arbitrary Seifert surface for $K_i$.
We may assume that $F_1$ and $F_2$ intersect transversely.
Since $\lk(L)=1$, there are $n$ negative and $n+1$ positive intersection points in $F_1\cap K_2$ 
for some non-negative integer $n$.
If $n>0$, at least one of the arcs into which these intersection points split $K_2$ has endpoints of opposite signs.
By attaching a thin tube to $F_1$ going along this arc we eliminate these two intersection points.
By continuing in this fashion we eliminate all of the intersection points in $F_1\cap K_2$ but one.
By similarly attaching tubes to $F_2$ going along arcs of $K_1$ we eliminate all intersection points
in $F_2\cap K_1$ except one.
The two remaining points in $\partial(F_1\cap F_2)$ must cobound an arc $J$ in $F_1\cap F_2$.
If $F_1\cap F_2$ also contains closed components, then there is one, call it $C$, such that some 
point of $C$ can be joined to an interior point of $J$ by an arc $A$ in $F_1$ which is otherwise 
disjoint from $F_1\cap F_2$ and from $K_1$.
Since $J$ is non-separating in $F_1$, one can choose $A$ so that both its ends approach $F_2$
from the same side.
Then by attaching a thin tube to $F_2$ going along $A$ we will replace $J$ and $C$ by their connected sum.
By continuing in this fashion we will eliminate all closed components of $F_1\cap F_2$.
\end{proof}

\begin{theorem} \label{boundary}
Let $\K$ be a knot in $S^3$ and let $Q_1$, $Q_2$ be smooth knots in $S^3\but\K$ such that $\lk(Q_i,K)=1$.
Then the connected sum $Q_1\#_b(-Q_2)$ along some band disjoint from $\K$ bounds a Seifert surface $F$
in $S^3\but\K$ such that each knot in $F$ is null-homologous in $S^3\but\K$.
\end{theorem}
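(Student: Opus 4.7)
The strategy is to realize geometrically the invertibility argument from the second proof of Theorem \ref{second commutator}: the key input is that $1-t$ acts invertibly on $H_1(\tilde X)$, where $\tilde X \to X := S^3 \setminus \K$ is the infinite cyclic cover. I would first reduce to the case where $\K$ is PL by approximating a wild $\K$ by sufficiently close PL knots (so $Q_1, Q_2$ still lie in the complement); for PL $\K$, applying the tubing technique of Lemma \ref{C-complex} separately to each pair $(\K, Q_i)$ produces a Seifert surface $F_i$ for $Q_i$ meeting $\K$ transversely in a single point $p_i$.

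Next, I would form a naive cobordism $F^0 = (F_1 \setminus D_1) \cup A_0 \cup (F_2 \setminus D_2)$ in $X$, where $D_i \subset F_i$ is a small disk around $p_i$ and $A_0$ is the annulus on $\partial N(\K)$ joining the meridian boundaries $\partial D_1$ and $\partial D_2$. After attaching a thin band $b_0$ this gives a Seifert surface for $J_0 := Q_1 \#_{b_0} (-Q_2)$ in $X$. However $F^0$ contains the core meridian of $A_0$, a meridian of $\K$ with linking number $\pm 1$, so the required property fails for $F^0$; equivalently, the lift $\tilde J_0 \subset \tilde X$ represents a generally nonzero class in $H_1(\tilde X)$.

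The fix is to adjust the band rather than the tube: replace $b_0$ by $b = b_0 \cdot \gamma$ for a null-homologous loop $\gamma \subset X$ based at an endpoint of $b_0$ on $Q_2$. A direct computation in the cover (exactly as in the algebraic manipulation at the end of the second proof of Theorem \ref{second commutator}) gives $[\tilde J] - [\tilde J_0] = t^k(t-1)[\tilde\gamma]$ in $H_1(\tilde X)$, for a fixed integer $k$ depending only on where $b_0$ sits in the cover. Since $1-t$ acts invertibly on $H_1(\tilde X)$ by Lemma \ref{torsion}, and since lifts of null-homologous loops generate $H_1(\tilde X)$, we may choose $\gamma$ so that $[\tilde J]=0$. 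Then $\tilde J$ bounds an embedded Seifert surface $\tilde F$ in the $3$-manifold $\tilde X$, and after standard innermost-disk surgery on $\tilde F$ inside $\tilde X$ we may assume $\tilde F \cap t^k\tilde F = \varnothing$ for all $k\neq 0$; projecting, $F := p(\tilde F)$ is an embedded Seifert surface for $J$ in $X$, and every loop in $F$ lifts to $\tilde F \subset \tilde X$ and is therefore null-homologous in $X$.

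The main obstacle is the final embeddedness step: arranging $\tilde F$ to be disjoint from its nontrivial translates in the noncompact $\tilde X$ while keeping $\partial \tilde F = \tilde J$. The geometric content of Lemma \ref{C-complex} is what lets one avoid going to $\tilde X$ altogether, by replacing the abstract innermost-disk surgery with a concrete tubing of $F^0$ along an explicit loop $\delta \subset X$ whose lift realizes $(1-t)^{-1}[\tilde\mu]$, where $\tilde\mu$ is the lift of the offending meridian of $A_0$; translating the algebraic inverse of $1-t$ into a specific tube-and-handle modification of $F^0$ is the technical heart of the argument, and it is precisely the C-complex geometry that makes this translation possible.
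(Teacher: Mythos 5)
Your reduction and your endgame both have problems, and the decisive step is left unproved. First, the PL-approximation reduction is not legitimate: a Seifert surface constructed in the complement of a close PL approximation $K$ of a wild $\K$ has no reason to be disjoint from $\K$ itself, and the wild case is exactly the case of interest; likewise meridional annuli on $\partial N(\K)$ presuppose a tubular-neighborhood structure that a wild knot need not have. (This reduction is also unnecessary for your cover computation, since Lemma \ref{torsion} is stated for topological knots.) Second, and more seriously, the step you yourself flag as the ``main obstacle'' is the whole theorem: a Seifert surface $F\subset X=S^3\but\K$ with every knot in $F$ null-homologous in $X$ is exactly a Seifert surface that lifts to $\tilde X$, i.e.\ one whose lift is disjoint from all its deck translates. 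Declaring that ``standard innermost-disk surgery'' achieves $\tilde F\cap t^k\tilde F=\emptyset$ for all $k\ne 0$ is not justified: any surgery on $\tilde F$ simultaneously changes its translates, new intersections appear, and there is no monotone complexity argument --- this is the classical difficulty with equivariant cut-and-paste, and nothing in your sketch addresses it. Your fallback (``tube $F^0$ along a loop realizing $(1-t)^{-1}[\tilde\mu]$'') cannot work either: $F^0$ contains a meridian of $\K$, so $H_1(F^0)\to H_1(X)\simeq\Z$ is onto, and attaching tubes only enlarges the family of curves carried by the surface; it can never remove the offending meridian. One must produce a genuinely different surface, not a tubed modification of $F^0$. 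The homological bookkeeping you do in the cover (adjusting the band by $\gamma$ with $(1-t)[\tilde\gamma]=-[\tilde J_0]$, using invertibility of $1-t$) is fine as far as it goes, but it only arranges $[\tilde J]=0$ in $H_1(\tilde X)$; converting that into an \emph{embedded} surface downstairs is precisely what is missing.

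For comparison, the paper avoids the cover entirely. It takes a Seifert surface $S$ for the link $Q_1\cup(-Q_2)$ in $S^3\but\K$ and a smooth $h$-Seifert surface $\Sigma$ for the (possibly wild) knot $\K$ from \cite{M21}; after tubing as in Lemma \ref{C-complex} the intersection $S\cap\Sigma$ is reduced to a single arc $J$ joining the two points $Q_i\cap\Sigma$, and the band is then \emph{defined} as a regular neighborhood $b$ of $J$ in $S$. The surface $F=\overline{S\but b}$ is disjoint from $\Sigma$ by construction, so every knot in $F$ has zero linking number with $\K$ and is therefore null-homologous in $S^3\but\K$. The invertibility of $1-t$ never enters; the freedom in the choice of band is exploited geometrically (the band is dictated by the intersection arc), which is exactly the mechanism your proposal needs but does not supply.
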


\begin{proof} Since the link $L:=Q_1\cup(-Q_2)$ in null-homologous in $S^3\but\K$, it bounds 
a smooth Seifert surface $S$ in $S^3\but\K$.%
\footnote{This follows similarly to \cite{M21}*{Lemma 2.1} and \cite{Sa}*{\S1}. 
In more detail, let $N$ be a tubular neighborhood of $L$ in $S^3\but\K$ and let $L_+\subset\partial N$ 
be a zero pushoff of $L$, which is null-homologous in $S^3\but L$.
Since $L_+$ is also null-homologous in $S^3\but\K$, it is null-homologous in $S^3\but(\K\cup L)$ 
(by the Mayer--Vietoris exact sequence), and hence also in $\overline{S^3\but N}\but\K$.
Then by Lemma \ref{steenrod-realization} $L_+$ bounds an orientable smooth surface in 
$\overline{S^3\but N}\but\K$.
Using the obvious pair of annuli cobounded by $L$ and $L_+$ in $N$, this extends to a smooth
Seifert surface of $L$ in $S^3\but\K$.}
Let $\Sigma$ be a smooth $h$-Seifert surface bounded by $\K$ in $S^3$ (see \cite{M21}*{Lemma 2.1}).
We may assume that $S$ and $\Sigma$ intersect transversely. 
Since $\lk(Q_i,\K)=1$, by attaching tubes to $\Sigma$ as in the proof of Lemma \ref{C-complex} 
we can eliminate all its intersections with $Q_i$ except one, call it $p_i$.
The two remaining intersection points $p_1$ and $p_2$ must cobound an arc $J$ in $\Sigma\cap S$.
If $\Sigma\cap S$ also contains closed components, then by attaching tubes to $\Sigma$ along arcs in $S$ 
as in the proof of Lemma \ref{C-complex}, using that $J$ is non-separating in $S$, we can make them 
absorbed into $J$.
Now $S\cap\Sigma$ consists of a single arc $J$.
Let $b$ be its regular neighborhood in $S$.
Then $Q:=Q_1\#_b(-Q_2)$ bounds the Seifert surface $F:=\overline{S\but b}$.
By construction $F$ is disjoint from $\Sigma$.
Hence every knot in $F$ has zero linking number with $\K$.
\end{proof}

\section{Proof of Theorem \ref{main5}}

We recall that a group $G$ is called $n$-solvable if $G^{(n)}=1$.
Let us note that $1$-solvable, $1.5$-solvable and $2$-solvable groups are also known as abelian, nilponent of class 2 and metabelian, respectively.

\begin{example} \label{2.5a}
There exist a finitely presented $2.5$-solvable group $G$ such that $G/G'\simeq\Z$
and $a,b\in G$ such that the abelianization map $G\to G/G'$ sends $a$ and $b$ to the same generator of $\Z$,
but $a$ is not conjugate to $b$.

First suppose that $G$ is the desired group.
Since $G/G'\simeq\Z$, we have a short exact sequence $1\to G'\to G\to\Z\to 1$, which splits since $\Z$ is free.
Hence $G\simeq G'\rtimes_\phi\Z$ for some automorphism $\phi$ of $G'$.
On the other hand, saying that $G$ is $2.5$-solvable, that is, $[G',G'']=1$, amounts to saying that 
$H:=G'$ is $1.5$-solvable, that is, $[H,H']=1$ (in other words, $H$ nilpotent of class $2$).

Thus it makes sense to seek $G$ in the form $G=H\rtimes_\phi\Z$ for some automorphism $\phi$ of $H$, where
$H$ is nilpotent of class $2$.
If we show that $G'=H$, then we will get that $[G',G'']=[H,H']=1$, that is, $G$ is $2.5$-solvable.
Let $\bar\phi$ be the automorphism of the abelianization $H/H'$ determined by $\phi$.

\begin{lemma} If $\bar\phi-\id\:H/H'\to H/H'$ is surjective, then $G'=H$.
\end{lemma}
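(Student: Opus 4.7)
The plan is to prove $G' = H$ by two containments. The inclusion $G' \subseteq H$ is immediate from the semidirect product structure, since $G/H \cong \Z$ is abelian. For the reverse inclusion, I will use that $H' \subseteq G'$ automatically (because $H$ is a subgroup of $G$), and then show that the image of $G'$ in the quotient $H/H'$ is all of $H/H'$. Together these two facts give $G' = H$.

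To produce enough commutators, let $t \in G$ be the generator of the $\Z$-factor, so that conjugation by $t$ acts on $H$ as $\phi$ (i.e.\ $tht^{-1} = \phi(h)$). For an arbitrary $h \in H$, compute
\[
[t, h] = t^{-1} h^{-1} t h = \phi^{-1}(h)^{-1} h \in H,
\]
using the paper's convention $[x, y] = x^{-1} y^{-1} x y$. Passing to $H/H'$ and writing it additively, with $\bar h$ for the class of $h$ and $\bar\phi$ for the induced automorphism, this reduces to
\[
\overline{[t, h]} = (\id - \bar\phi^{-1})(\bar h) = \bar\phi^{-1}\bigl((\bar\phi - \id)(\bar h)\bigr).
\]

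Since $\bar\phi$ is an automorphism of $H/H'$ and $\bar\phi - \id$ is surjective by hypothesis, the map $\bar h \mapsto \overline{[t, h]}$ is a surjection $H/H' \to H/H'$. Therefore every class in $H/H'$ is represented by a commutator of the form $[t, h]$, which is an element of $G'$. Combined with $H' \subseteq G'$, this gives $G' = H$.

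The whole argument is a short computation once the semidirect product action and the commutator convention are fixed; the only bookkeeping point to be careful about is to reduce $[t, h]$ mod $H'$ in a direction that lets the hypothesis on $\bar\phi - \id$ be invoked --- but since $\bar\phi$ is invertible on $H/H'$, surjectivity of $\bar\phi - \id$ and of $\id - \bar\phi^{-1}$ are equivalent, so no real obstacle arises. Note that the class-$2$ nilpotence of $H$ is not needed for this particular lemma; it will only enter later when one verifies that $G$ is actually $2.5$-solvable, i.e.\ that $[G', G''] = 1$.
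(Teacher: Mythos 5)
Your proof is correct, but it takes a genuinely different route from the paper's. The paper realizes $G$ as $\pi_1$ of the mapping torus $MT$ of a map $BH\to BH$ inducing $\phi$ and reads the conclusion off the homology exact sequence of the pair $(MT,BH)$ (equivalently, Bieri's/Wang sequence): surjectivity of $\bar\phi-\id$ forces the map $H_1(BH)\to H_1(MT)$ to vanish and $H_1(MT)\simeq\Z$, i.e.\ $H\subseteq G'$ and $G/G'\simeq\Z$, and it then concludes $H=G'$ because $G/G'$ is a quotient of $G/H\simeq\Z$ and $\Z$ is Hopfian. You instead argue purely algebraically: $G'\subseteq H$ since $G/H\simeq\Z$ is abelian, and conversely every class of $H/H'$ is the class of a commutator $[t,h]\in G'\cap H$, because modulo $H'$ this commutator is $(\id-\bar\phi^{-1})(\bar h)=\bar\phi^{-1}\bigl((\bar\phi-\id)(\bar h)\bigr)$ and this map is surjective by hypothesis plus invertibility of $\bar\phi$; together with $H'\subseteq G'$ this yields $H\subseteq G'$. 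Your computation is essentially the algebraic content behind the Wang/Bieri sequence, so the two arguments prove the same fact, but yours is more elementary and self-contained: it avoids the mapping torus (and the aspherical-total-space discussion in the paper's footnote) and replaces the Hopfian step by the trivial inclusion $G'\subseteq H$. The only discrepancy is notational: with the paper's convention for $H\rtimes_\phi\Z$ one has $t^{-1}ht=\phi(h)$, whereas you take $tht^{-1}=\phi(h)$; as you yourself note, this is immaterial since $\bar\phi$ is an automorphism of $H/H'$, so surjectivity of $\bar\phi-\id$ and of $\id-\bar\phi^{-1}$ (or $\bar\phi^{-1}-\id$) are equivalent.
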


\begin{proof}
Geometrically, $G=\pi_1(MT,pt)$, where $MT$ is the mapping torus of a self-map 
$\Phi\:(BH,pt)\to (BH,pt)$ of the classifying space $BH=K(H,1)$ representing the automorphism $\phi$
(see Lemma \ref{mapping-torus}(b)).
Using the isomorphism $H_i(MT,BH)\simeq H_i(BH\x I,\,BH\x\partial I)\simeq H_{i-1}(BH)$,
the exact sequence of the pair $(MT,BH)$ can be written as
\[H_1(BH)\xr{\bar\phi-\id} H_1(BH)\xr{\subset_*}H_1(MT)\to H_0(BH)\xr{\id-\id} H_0(BH).\]
Since $\bar\phi-\id$ is surjective, the map $H_1(BH)\to H_1(MT)$ is trivial, and it follows that 
$H_1(MT)\simeq\Z$.
In other words, the map $H/H'\to G/G'$ is trivial, and also $G/G'\simeq\Z$.%
\footnote{Algebraically, the same conclusions can be similarly derived from Bieri's long exact sequence 
\cite{Bie}: $H_1(H)\xr{\bar\phi-\id}H_1(H)\to H_1(G)\to H_0(H)\xr{\id-\id} H_0(H)$.
In fact, the two long exact sequences are the same.
Indeed, since $\Phi$ is a homotopy equivalence, $MT$ is the total space of 
an approximate fibration over $S^1$ \cite{CD}.
Consequently, since $S^1$ is aspherical, so is $MT$.
Alternatively, with the of choice $H$ that will interest us below, $BH$ can be chosen to be a closed 
$3$-manifold (the quotient of the continuous Heisenberg group by the discrete Heisenberg group), and 
$\Phi$ can be chosen to be a homeomorphism \cite{KrL}*{Theorem 0.7}.
In this case $MT$ will be the total space of a (genuine) fibration over $S^1$.}
It follows from the first observation that $H$ lies in $G'$.
Then $G/G'$ is a quotient of $G/H$; but both are isomorphic to $\Z$, which is a Hopfian group, 
so we get that $H=G'$.
\end{proof}

\begin{lemma} \label{mapping-torus}
(a) If $\left<x_1,\dots,x_n\mid r_1,\dots,r_m\right>$ is a presentation of a group $H$ and $\phi$
is an automorphism of $H$, then a presentation of $H\rtimes_\phi\Z$ is given by
\[\left<x_1,\dots,x_n,t\mid r_1=1,\dots,r_m=1,\,x_1^t=\phi(x_1),\dots,x_n^t=\phi(x_n)\right>.\]

(b) Let $X$ be a CW complex, $b\in X$ a point and $f\:(X,b)\to (X,b)$ a map such that $f_*\:\pi_1(X,b)\to\pi_1(X,b)$ is an isomorphism.
Let $T_f$ be the mapping torus of $f$, that is, $T_f=X\x [0,1]/\sim$, where $(x,0)\sim \big(f(x),1\big)$.
Then $\pi_1(T_f,b)\simeq\pi_1(X,b)\rtimes_{f_*}\Z$.
\end{lemma}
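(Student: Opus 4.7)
The plan is to treat (a) and (b) as standard formal computations, since the lemma is really a bookkeeping statement connecting the topological construction of the mapping torus to the algebraic construction of a semidirect product.

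For part (a), I would invoke the universal property of the presentation. Let $\Gamma$ denote the group defined by the displayed presentation. First I would construct a homomorphism $\Gamma \to H\rtimes_\phi\Z$ by sending $x_i\mapsto (x_i,0)$ and $t\mapsto (e,1)$; one checks that all the relators $r_j$ map to the identity (since $\langle x_1,\dots,x_n\mid r_j\rangle$ presents $H$) and that the relation $x_i^t=\phi(x_i)$ also holds in the target by the very definition of the semidirect product with the convention $x^t=t^{-1}xt$. Conversely, every element of $H\rtimes_\phi\Z$ has a unique normal form $h\cdot t^n$ with $h\in H$ and $n\in\Z$, and I would define a map $H\rtimes_\phi\Z\to\Gamma$ by choosing for each $h$ a word in the $x_i$ representing it and sending $(h,n)\mapsto h\cdot t^n$. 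Well-definedness amounts to checking that the defining relations of $H\rtimes_\phi\Z$ (both the relators $r_j$ and the conjugation rule $t^{-1}ht=\phi(h)$) hold in $\Gamma$, which is immediate from the given presentation. The two maps are mutually inverse on generators, hence inverse. The only delicate point is keeping the convention $x^t=t^{-1}xt$ straight throughout.

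For part (b), I would apply van Kampen's theorem to the standard open cover of the mapping torus. Let $U$ be the image in $T_f$ of $X\times[0,\tfrac{2}{3})$ and let $V$ be the image of $X\times(\tfrac{1}{3},1]$ together with $X\times[0,\tfrac{1}{12})$ glued via the relation $(x,0)\sim(f(x),1)$. Then $U$ deformation retracts onto $X\times\{0\}$, the set $V$ deformation retracts onto (a translate of) $X\times\{\tfrac{1}{2}\}$, and $U\cap V$ has two connected components, one deformation retracting onto a copy $X_0$ of $X$ near level $0$, the other onto a copy $X_{1/2}$ near level $\tfrac{1}{2}$. The inclusion $X_{1/2}\hookrightarrow V$ induces the identity on $\pi_1$, while $X_0\hookrightarrow V$ is (up to homotopy in $V$) the map $f$, inducing $f_*$; both inclusions into $U$ are the identity on $\pi_1$ after choosing a basepoint path through $U$. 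Van Kampen then yields the HNN-type presentation
\[
\pi_1(T_f,b)=\langle\,\pi_1(X,b),\,t\,\mid\,t^{-1}xt=f_*(x)\text{ for }x\in\pi_1(X,b)\,\rangle,
\]
where $t$ is represented by a loop running once around the $[0,1]$-direction. Since $f_*$ is assumed to be an automorphism, by part (a) this presentation is exactly $\pi_1(X,b)\rtimes_{f_*}\Z$.

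The only nontrivial step is the identification of the two inclusions $X_0\hookrightarrow V$ and $X_{1/2}\hookrightarrow V$ and verifying that their induced maps on $\pi_1$ differ by $f_*$; this is the content of the identification $(x,0)\sim(f(x),1)$ used to form $T_f$, and the main care is in fixing basepoints and a path in $V$ between the two copies of $X$ so that the comparison on $\pi_1$ is really $f_*$ and not $f_*^{-1}$ or a conjugate thereof. Once that orientation is set correctly, the rest is formal.
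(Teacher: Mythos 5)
Your part (a) is the same routine verification as in the paper: you check that the listed relations hold in $H\rtimes_\phi\Z$ and that, conversely, the relation $x_i^t=\phi(x_i)$ on generators forces $h^t=\phi(h)$ for all $h\in H$ and hence the semidirect-product multiplication law; packaging this as a pair of mutually inverse homomorphisms (using that the relators $r_j$ die in $\Gamma$, so $H\to\Gamma$ is well defined) is correct and equivalent. For part (b), however, you take a genuinely different route. The paper does not run Seifert--van Kampen on an open cover of $T_f$ at all: it replaces $X$ (up to homotopy) by a complex with a single $0$-cell, takes $f$ cellular, observes that $T_f$ is then a CW complex with one $(n+1)$-cell for each $n$-cell of $X$, and reads off a presentation of $\pi_1(T_f)$ from the $2$-skeleton (generators the $1$-cells of $X$ plus the new $1$-cell $t$; relators the $2$-cells of $X$ plus, for each $1$-cell $x_i$, the new $2$-cell giving $x_i^t=f_*(x_i)$), after which (a) finishes the job. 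Your argument instead covers $T_f$ by two open sets whose intersection has two components, each a copy of $X$, and extracts the HNN-type presentation $\langle\pi_1(X),t\mid t^{-1}xt=f_*(x)\rangle$ directly, then invokes (a). Both are standard and correct. Your approach works with the given $X$ and $f$ without cellular approximation or collapsing to one $0$-cell, but it needs the version of van Kampen that tolerates a disconnected intersection (groupoid/multiple-basepoint form, or the usual graph-of-groups bookkeeping) and the basepoint-path care you rightly flag when identifying which inclusion induces $f_*$; note that since $f_*$ is an automorphism, $H\rtimes_{f_*}\Z\cong H\rtimes_{f_*^{-1}}\Z$, so the isomorphism claimed in the lemma is insensitive to that orientation choice. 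The paper's CW route avoids those van Kampen subtleties entirely, at the mild cost of cellular approximation and the (standard) fact that homotoping $f$ and replacing $X$ by a homotopy-equivalent complex does not change the homotopy type of $T_f$; both arguments, like the lemma's intended use, implicitly take $X$ connected.
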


This is well-known. Part (a) says that ``a semidirect product with $\Z$ is special kind of an HNN-extension'' and 
part (b) is related to the assertion that ``an HNN-extension is the fundamental group of a certain graph of groups''.

\begin{proof}[Proof. (a)] It follows from the ``inner'' and ``outer'' definitions of a semidirect product that $H$ and 
$\Z=\left<t\mid\ \right>$ can be identified with subgroups of $H\rtimes_\phi\Z$, that every element of $H\rtimes_\phi\Z$ 
can be expressed in the form $t^kh$ for some $h\in H$ and $k\in\Z$, and that $t^kh\cdot t^lh'=t^{k+l}\phi^l(h)h'$ for 
$h,h'\in H$ and $k,l\in\Z$.
In particular, $h^t=\phi(h)$ for all $h\in H$, and it follows that the relations $x_i^t=\phi(x_i)$ and $r_j=1$ 
do hold in $H\rtimes_\phi\Z$.
Conversely, if $h^t=\phi(h)$ holds for $h=x_i$, then it clearly holds for all $h\in H$, which in turn implies
the multiplication law, $t^kh\cdot t^lh'=t^{k+l}\phi^l(h)h'$.
\end{proof}

\begin{proof}[(b)] We may assume that $b$ is a $0$-cell of $X$ and there are no other $0$-cells, and that $f$ is a cellular map.
Then $T_f$ can be identified with the CW complex obtained from $X$ by attaching one $(n+1)$-cell corresponding to 
each $n$-cell of $X$ in the obvious way; namely, $D^n\xr{e}X$ yields $D^n\x I\xr{e\x\id_I}X\x I\to T_f$.
It is a well-known consequence of the Seifert--van Kampen theorem that a presentation of $\pi_1(X,b)$ is determined by 
the $2$-skeleton of $X$ (with generators given by the $1$-cells and relators by the $2$-cells).
A presentation of $\pi_1(T_f,b)$ is similarly determined by the $2$-skeleton of $T_f$, and now the assertion follows from (a).
\end{proof}

Now let, in fact, $H$ be the {\it free} nilpotent group of class $2$ on two generators, which has the presentation
$\left<x,y\mid \big[[x,y],x\big]=\big[[x,y],y\big]=1\right>$ and is also known as the discrete Heisenberg group.
By using the commutator collecting process (see \cite{Hall}*{\S11}) it is easy to see that every element of $H$ 
can be uniquely written as $x^ly^m[y,x]^n$ for some $l,m,n\in\Z$, and 
$x^ly^m[y,x]^n\cdot x^{l'}y^{m'}[y,x]^{n'}=x^{l+l'}y^{m+m'}[y,x]^{n+n'+ml'}$.
It follows that $\big(x^ly^m[y,x]^n\big)^{-1}=x^{-l}y^{-m}[y,x]^{-n+lm}$.

Let $\phi\:H\to H$ be given%
\footnote{Since $H$ is a free nilpotent group of class $2$, every map of the set of its generators to a class $2$ nilpotent group $K$
extends to a homomorphism $H\to K$.}
by $\phi(x)=y^{-1}$ and $\phi(y)=yx$.
Let us note that $\phi$ is an isomorphism, with $\phi^{-1}$ given by $\phi^{-1}(y)=x^{-1}$ and $\phi^{-1}(x)=xy$.
The abelianization $\bar\phi\:H/H'\to H/H'$ of $\phi$ is given by the matrix
$\big(\begin{smallmatrix}0 & 1\\ -1 & 1\end{smallmatrix}\big)$.
Therefore $\bar\phi-\id\:H/H'\to H/H'$ is given by the matrix
$\big(\begin{smallmatrix}-1 & 1\\ -1 & 0\end{smallmatrix}\big)$, and so is also an isomorphism.

It is easy to see that $\phi([y,x])=[yx,y^{-1}]=[x,y^{-1}]=[y,x]^{y^{-1}}$, which equals $[y,x]$ in $H$.
Hence $\phi\big(x^ly^m[y,x]^n\big)=y^{-l}(yx)^m[y,x]^n=x^my^{m-l}[y,x]^{n-lm+m(m+1)/2}$, 
where $(yx)^k=x^ky^k[y,x]^{k(k+1)/2}$ by using the commutator collecting process.

The semidirect product $G=H\rtimes_\phi\Z$ consists of all products of the form $t^kh$, where $h\in H$ and $k\in\Z$, 
and satisfies the relation $h^t=\phi(h)$, or equivalently $ht=t\phi(h)$ for all $h\in H$ (see the proof of 
Lemma \ref{mapping-torus}(a)).

Finally, let us show that $a:=t$ and $b:=txy$ are not conjugate by any $g\in G$.
We have $g=t^kx^ly^m[y,x]^n$ for some $k,l,m,n\in\Z$.
Then 
\begin{multline*}
a^g=t^{t^kx^ly^m[y,x]^n}=x^{-l}y^{-m}[y,x]^{-n+ml}\,t\,x^ly^m[y,x]^n=t\phi\big(x^{-l}y^{-m}[y,x]^{-n+lm}\big)x^ly^m[y,x]^n\\
=tx^{-m}y^{-m+l}[y,x]^{-n+lm-(-l)(-m)+(-m)(-m+1)/2}x^ly^m[y,x]^n=tx^{l-m}y^l[y,x]^{l(l-m)+m(m-1)/2}.
\end{multline*}
If $a^g=b$, then $l-m=1$, $l=1$ and $l(l-m)+m(m-1)/2=0$.
The first two equations imply $m=0$, and then the third equation implies $1=0$.
\end{example}

\begin{figure}[h]
\includegraphics{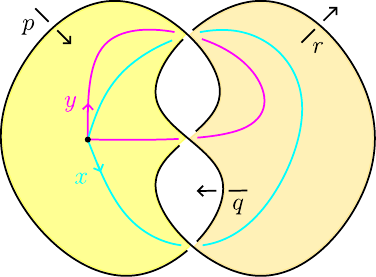}
\caption{}
\label{trefoil}
\end{figure}

\begin{example} \label{2.5b}
The fundamental group $\pi:=\pi_1(S^3\but Q)$ of the trefoil knot $Q$ contains elements $a,b$ which map 
to the same generator of $H_1(S^3\but Q)\simeq\Z$ but their images in $\pi/\pi^{(2.5)}$ are not conjugate.

Indeed, the trefoil is well-known to be a fibered knot, more precisely $S^3\but Q$ fibers over $S^1$ with 
fiber being the punctured torus (see \cite{Ro-book}*{pp.\ 327--332}, see also \cite{BZ}*{5.12}).
Hence $\pi\simeq\pi'\rtimes_\psi\Z$, where the commutator subgroup $\pi'$ is isomorphic to the free group 
$F_2=\left<x,y\mid\right>$, and $\psi$ is its automorphism induced by the monodromy self-homeomorphism 
of the fiber (see \cite{BZ}*{5.4}, see also \cite{Ro-book}*{p.\ 332}).
In order to prepare for the next example, we recall a computation of the monodromy $\psi$ (this computation
is a slight variation of \cite{BZ}*{5.15}).
Figure \ref{trefoil} shows a Seifert surface $S$ of the minimal genus for the trefoil $K$; loops $x$, $y$ in $S$ 
whose homotopy classes freely generate $\pi_1(S)$; and the generators of the Wirtinger presentation 
$\left<p,q,r\mid pq^{-1}=r^{-1}p,\, qr^{-1}=p^{-1}q,\, rp^{-1}=q^{-1}r\right>$ for $\pi$.
(The first relation corresponds to the upper crossing, the second relation to the middle crossing and
the third relation to the lower crossing in the figure.
As is well-known, any one of the relations is redundant.)
It is easy to see from the figure that $x=r^{-1}p$ and $y=p^{-1}q$.
By using the relations we also get $x=pq^{-1}$ and $y=qr^{-1}$.
Let us note that $\psi$ depends on an identification of $\Z$ with a subgroup of $\pi$; 
if, for instance, the generator $t$ of $\Z$ is identified with $p$, then $\psi(g)=g^p$.
We have $x^p=p^{-1}pq^{-1}p=q^{-1}p=y^{-1}$ and $y^p=p^{-1}qr^{-1}p=yx$.
Thus $\psi(x)=y^{-1}$ and $\psi(y)=yx$.

Since the automorphism $\phi$ of the previous example is given by the same formula, we obtain a homomorphism
$f\:\pi\to G$, where $G$ is the group of the previous example.
Clearly $f$ is surjective.
Since $G$ is $2.5$-solvable, $f$ factors through the quotient $\pi/\pi^{(2.5)}$.%
\footnote{In fact, the epimorphism $g\:\pi/\pi^{(2.5)}\to G$ is an isomorphism.
Indeed, since $\pi^{(2.5)}=[\pi'',\pi']=[F_2',F_2]$, the image of $F_2$ in $\pi/\pi^{(2.5)}$
is the free nilpotent group $H$ of class $2$, and it follows that $\ker g=1$.}
Now the assertion follows from the previous example.
\end{example}

\begin{figure}[h]
\includegraphics[width=\linewidth]{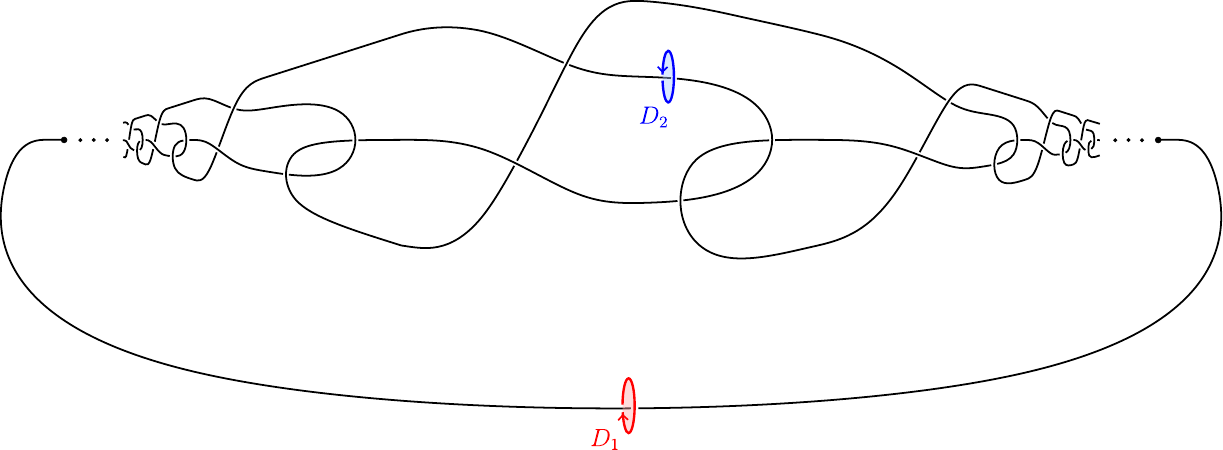}
\caption{}
\label{fox-artin-trefoil}
\end{figure}

\begin{example} \label{2.5c}
There exists a wild knot $\K\subset S^3$ and its meridians $\partial D_1$, $\partial D_2$%
\footnote{That is, each $\partial D_i$ bounds an embedded PL disk $D_i\subset S^3$ which meets $\K$ in 
a single point and locally separates it at that point.} 
such that the conjugacy classes in $\pi:=\pi_1(S^3\but\K)$ corresponding to
$\partial D_1$ and $\partial D_2$ have different images in $\pi/\pi^{(2.5)}$.

The wild knot $\K$ is shown in Figure \ref{fox-artin-trefoil}.
It is the union of two closed arcs with common endpoints: a tame arc $A$ (which intersects the disk $D_1$) 
and a wild arc $A'$ (which intersects the disk $D_2$).
Since $A$ is tame, $S^3\but A$ is homeomorphic to $\R^3$, which is in turn homeomorphic to the infinite cyclic cover 
of $S^3\but B$, where $B$ is an unknot.
Moreover, it is easy to see that the composite homeomorphism can be chosen so that $\K\but A$ gets identified
with the preimage under the covering map $S^3\but A\to S^3\but B$ of the knot $Q\subset S^3\but B$ which 
is shown on the left hand side of Figure \ref{mazur-trefoil}.
Let $L=B\cup Q$.
The images of $\partial D_i$ under the covering map are the knots $J_i$ in $S^3\but L$, which
are also shown on the left hand side of Figure \ref{mazur-trefoil}.
Hence it suffices to show that the conjugacy classes in $\bar\pi:=\pi_1(S^3\but L)$
corresponding to $J_1$ and $J_2$ have different images in $\bar\pi/\bar\pi^{(2.5)}$.
For this it in turn suffices to show that the conjugacy classes in $\hat\pi:=\pi_1(S^3\but Q)$
corresponding to $J_1$ and $J_2$ have different images in $\hat\pi/\hat\pi^{(2.5)}$.
The knot $Q$ is easily seen to be equivalent to the trefoil, and the right hand side of Figure \ref{mazur-trefoil} 
shows the images of $J_1$ and $J_2$ under the equivalence.
Then in the notation of the previous example (Example \ref{2.5b}), $J_2$ can be represented by $p$ and $J_1$ by $p^2r^{-1}$ in
$\pi_1(S^3\but Q)$.
Since $p^2r^{-1}=pxy$, in the notation of Example \ref{2.5a} these correspond to $t$ and $txy$.
Thus the assertion follows from the previous two examples.
\end{example}

\begin{figure}[h]
\includegraphics[width=\linewidth]{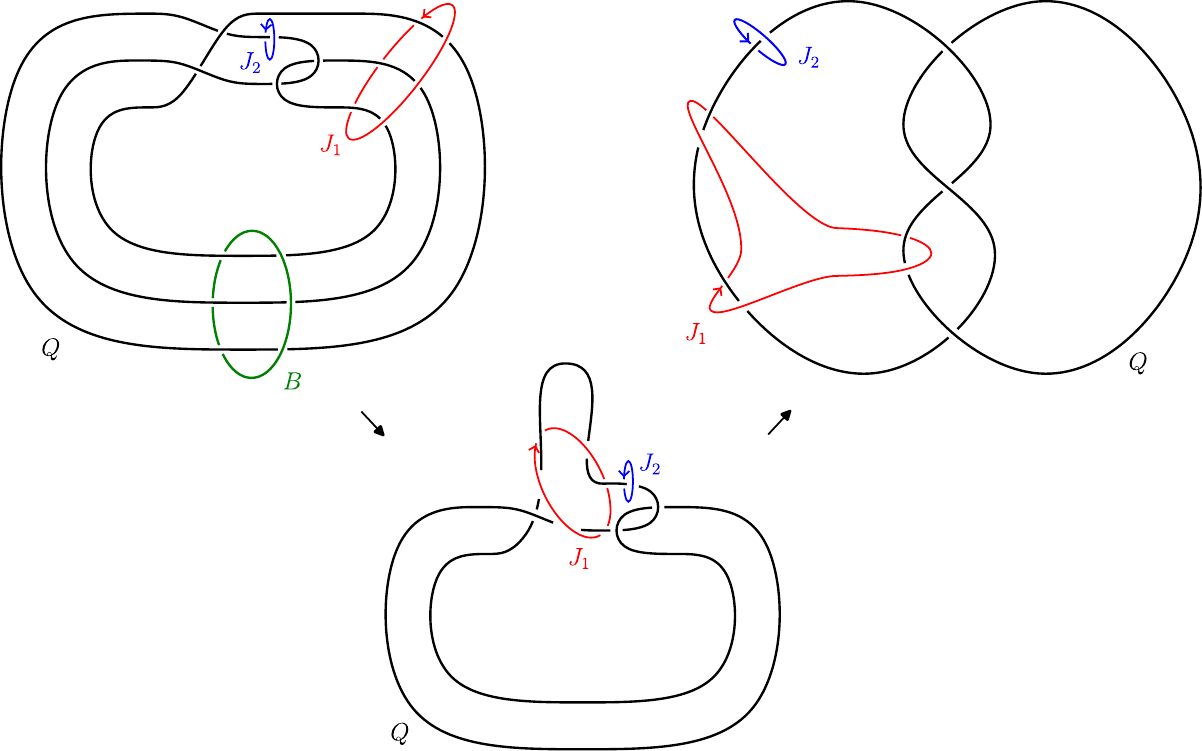}
\caption{}
\label{mazur-trefoil}
\end{figure}

\begin{proof}[Proof of Theorem \ref{main5}] Theorem \ref{main5} is proved by the construction of Zastrow's Example 
(see Example \ref{zastrow}) if instead of Rolfsen's wild knot and its meridional disks $D_0$ and $D_1$ we use the wild knot 
of Example \ref{2.5c} and its meridional disks $D_1$ and $D_2$.
\end{proof}

The strengthening of Theorem \ref{main5} stated in Remark \ref{torsion-remark} follows from the proof of Theorem ~\ref{main5},
taking into account the following observation.

\begin{proposition} \label{torsion-free}
The knot module $H_1(\widetilde{S^3\but\K})$ of the wild knot $\K$ shown in Figure \ref{fox-artin-trefoil}
is $\Z[t^{\pm1}]$-torsion free.
\end{proposition}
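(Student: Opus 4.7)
The plan is to identify $\widetilde{S^3\but\K}$ with the universal abelian cover $\bar X$ of $S^3\but L$, where $L=B\cup Q$ is the PL link used to construct $\K$ in Example \ref{2.5c}, so that $H_1(\widetilde{S^3\but\K})$ becomes the Alexander module of $L$ viewed as a $\Lambda=\Z[t^{\pm 1}]$-module via $t=t_Q$.

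First I would establish the identification. Example \ref{2.5c} realizes $S^3\but\K$ as the infinite cyclic cover of $S^3\but L$ corresponding to $m_B$, and a meridian of $\K$ projects to a meridian of $Q$; the claim $\widetilde{S^3\but\K}=\bar X$ reduces to the commutator equality $[\ker m_B,\ker m_B]=[\pi_1(S^3\but L),\pi_1(S^3\but L)]$, which in turn follows from the Wang sequence for the cover $S^3\but\K\to S^3\but L$: using $H_2(S^3\but\K)=0$ (Alexander duality) and $H_1(S^3\but\K)\cong\Z$, one reads off that $t_B$ acts trivially on $H_1(S^3\but\K)$.

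Next I would compute $H_1(\bar X)$ as a $\Lambda_2=\Z[t_B^{\pm 1},t_Q^{\pm 1}]$-module via Fox calculus on a Wirtinger presentation of $\pi_1(S^3\but L)$ read off from Figure \ref{mazur-trefoil}, then describe its $\Lambda$-module structure by restriction of scalars along $\Lambda\hookrightarrow\Lambda_2$, $t\mapsto t_Q$.

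The main step is to verify $\Lambda$-torsion-freeness. Since by Lemma \ref{torsion}(a) $(t-1)$ acts invertibly on $H_1(\bar X)$, any $\Lambda$-torsion must come from a polynomial coprime to $(t-1)$. The essential point is then to show that, in the Alexander module of $L$, the only $\Lambda_2$-annihilators of individual elements involve nonzero factors of $(t_B-1)$, so that after restriction to $\Lambda$ they cease to annihilate anything nontrivial. This is the delicate step: it depends on the Mazur-type structure of the link $L$ in Figure \ref{mazur-trefoil} in an essential way, since for a generic link with the same linking number and component types, trefoil-type $\Lambda$-torsion would persist.
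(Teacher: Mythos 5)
Your first step --- identifying $\widetilde{S^3\but\K}$ with the universal abelian cover of $S^3\but L$, $L=B\cup Q$, so that the knot module becomes a $\Z[t_B^{\pm1},t_Q^{\pm1}]$-module restricted to $\Z[t^{\pm1}]$ via $t=t_Q$ --- is exactly how the paper's proof begins, and your Wang-sequence justification of it would go through. But the proposal stops precisely where the content of the proposition starts. You never produce a presentation of the module (Fox calculus is announced, not carried out), and the ``main step'' is only named, with the admission that it is ``the delicate step'' depending on the specific link; that step \emph{is} the proposition, so what you have is an outline, not a proof. Worse, the criterion you state is not the right one. The paper computes (from an explicit periodic Seifert surface) that the module is cyclic over $\Z[s^{\pm1},t^{\pm1}]$ ($s=t_B$, $t=t_Q$) with the single relator $(1-t^{-1})s+(2t+2t^{-1}-3)+(1-t)s^{-1}$, so the annihilator of the generator is the principal ideal on this relator --- which is \emph{not} divisible by $t_B-1$ (setting $s=1$ gives $t+t^{-1}-1\ne0$). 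So ``the only $\Lambda_2$-annihilators of individual elements involve nonzero factors of $(t_B-1)$'' is false as literally stated; and if you instead mean ``every annihilating polynomial genuinely involves $t_B$'', that is merely a restatement of $\Z[t^{\pm1}]$-torsion-freeness, not a reduction of it.

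What actually closes the argument is either (i) the observation that the relator, viewed as a quadratic in $s$ over $\Z[t^{\pm1}]$, has coefficients $1-t^{-1}$, $2t+2t^{-1}-3$, $1-t$ with unit gcd, hence no nonunit factor lying in $\Z[t^{\pm1}]$, so no nonzero element of the cyclic module is killed by a nonzero polynomial in $t$ alone; or (ii) the paper's route: rewrite the module over $\Z[t^{\pm1}]$ with generators $x_i=s^i\tilde\alpha$, invoke Lemma \ref{torsion}(a) to invert $1-t$, and observe that over $\Z[t^{\pm1}][\frac1{1-t}]$ the relations let one solve for $x_{i+1}$ and $x_{i-1}$, so the module is free of rank two over that localization and hence $\Z[t^{\pm1}]$-torsion free. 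Note also that your use of Lemma \ref{torsion}(a) (``torsion must come from a polynomial coprime to $t-1$'') does no work toward the conclusion; in the paper that lemma is what licenses the localization in (ii). Either way, the explicit computation of the module from Figure \ref{fox-artin-torsion} (or an equivalent Fox-calculus computation) and the ensuing divisibility/freeness verification are indispensable and are missing from your proposal.
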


\begin{proof} As explained in Example \ref{2.5c}, $\K$ contains a closed tame arc $A$ such that the pair $(S^3\but A,\,\K\but A)$
is homeomorphic to $\big(\widetilde{S^3\but B},\,p^{-1}(Q)\big)$, where $L=B\cup Q$ is the $2$-component link in $S^3$ shown in 
Figure \ref{mazur-trefoil} (on the left) and $p\:\widetilde{S^3\but B}\to S^3\but B$ is the infinite cyclic covering.
It follows that the infinite cyclic covering $q\:\widetilde{S^3\but\K}\to S^3\but\K$ is fiberwise equivalent to the projection
$\widetilde{S^3\but L}\to\big(\widetilde{S^3\but B}\big)\but p^{-1}(Q)$ from the universal abelian covering of $S^3\but L$ to its 
infinite cyclic covering which extends to the infinite cyclic covering of $S^3\but B$.
Thus $H_1(\widetilde{S^3\but\K})$ is a $\Z[s^{\pm1},t^{\pm1}]$-module, where $s$ and $t$ are the positive generators of
the deck transformation groups of the coverings $p$ and $q$, respectively.

The infinite cyclic covering $\widetilde{S^3\but B}$ can be regarded as a copy of $\R^3$, which contains 
the periodic open knot $p^{-1}(Q)$.
Figure \ref{fox-artin-torsion} shows a periodic Seifert surface $F$ of this open knot in this $\R^3$, such that 
$F=p^{-1}(\Sigma\but B)$, where $\Sigma$ is a Seifert surface for $Q$ which meets $B$ transversely in one point.
The identifying homeomorphism between $S^3\but A$ and $\widetilde{S^3\but B}$ can be chosen so that $\F:=F\cup A$ is 
a Seifert surface for $\K$ in $S^3$.
Let us note that $S^3\but\F$ is identified with $\R^3\but F$.

\begin{figure}[h]
\includegraphics[width=\linewidth]{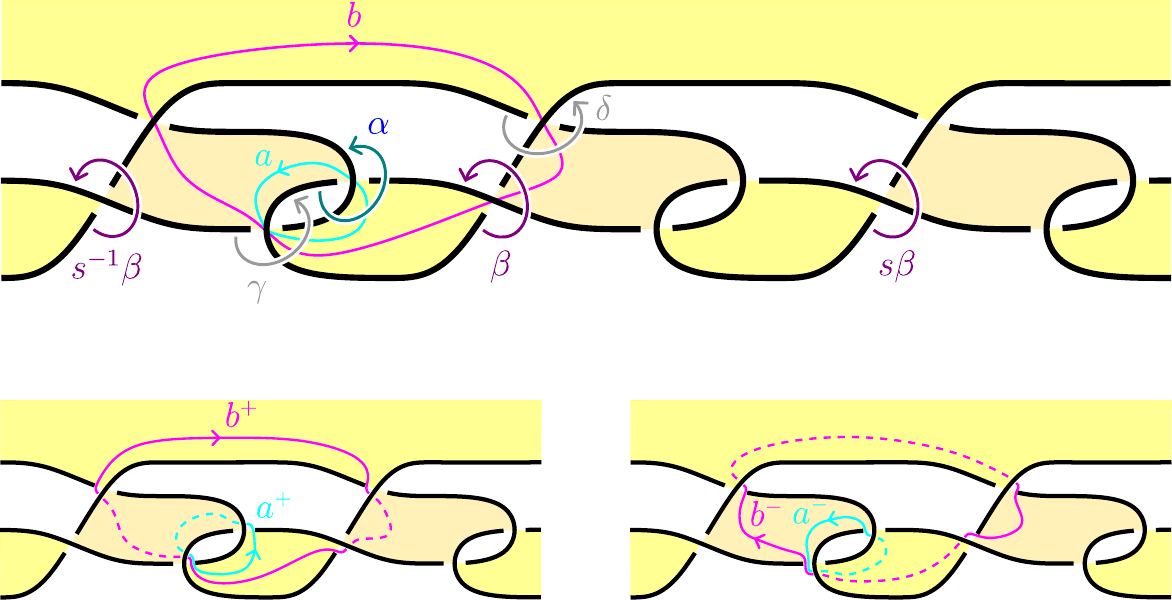}
\caption{}
\label{fox-artin-torsion}
\end{figure}

The upper half of Figure \ref{fox-artin-torsion} shows representative cycles for classes $a,b\in H_1(F)$ and for classes 
$\alpha,\beta,\gamma,\delta\in H_1(\R^3\but F)$.
Clearly, a basis in $H_1(F)$ over $\Z[s^{\pm1}]$ is given by $a$ and $b$.
It follows from the Alexander duality (see \cite{M24-2}*{\S\ref{part2:przytycki-yasuhara}}) that a basis of $H_1(\R^3\but F)$ over $\Z[s^{\pm1}]$ 
is given by $\alpha$ and $\beta$.
It is easy to see that
\[\begin{aligned}
\alpha+\gamma&=\beta\\
\beta+\delta&=s\beta.
\end{aligned}\tag{$*$}\]
Now let $a^\pm,b^\pm\in H_1(\R^3\but F)$ be the classes of the parallel $\pm$-pushoffs of representative cycles of $a$ and $b$.
Their representative cycles are shown in the lower half of Figure \ref{fox-artin-torsion}.
It is easy to see that
\[\begin{aligned}
a^+&=\alpha\\
a^-&=-\gamma
\end{aligned}\qquad\qquad\qquad\qquad
\begin{aligned}
b^+&=\beta+\gamma-s^{-1}\beta\\
b^-&=\beta-\delta.
\end{aligned}\]
Using ($*$), we can eliminate $\gamma$ and $\delta$:
\[\quad\begin{aligned}
a^+&=\alpha\\
a^-&=\alpha-\beta
\end{aligned}\qquad\qquad\qquad\quad
\begin{aligned}
b^+&=(2-s^{-1})\beta-\alpha\\
b^-&=(2-s)\beta.
\end{aligned}\tag{$**$}\]
Let $\tilde\F$ be some fixed lift of $\F\but\K$ in the infinite cyclic cover $\widetilde{S^3\but\K}$
and let $\tilde a^\pm$, $\tilde b^\pm$, $\tilde\alpha$ and $\tilde\beta$ be the classes in $H_1(\widetilde{S^3\but\K})$
of the lifts of some representative cycles of $a^\pm$, $b^\pm$, $\alpha$ and $\beta$ respectively lying between 
$\tilde\F$ and $t\tilde\F$, where $t$ is the positive generator of the group of deck transformations of $\widetilde{S^3\but\K}$.
Then $\tilde a^\pm$ and $\tilde b^\pm$ can be expressed in terms of $\tilde\alpha$ and $\tilde\beta$ similarly to ($**$),
and a presentation of $H_1(\widetilde{S^3\but\K})$ as a $\Z[s^{\pm1},t^{\pm1}]$-module is given by
$\big\langle\tilde\alpha,\tilde\beta\ \big|\ \tilde a^-=t\tilde a^+,\ \tilde b^-=t\tilde b^+\big\rangle$;
that is,
\[\big\langle\tilde\alpha,\tilde\beta\ \big|\ (\tilde\alpha-\tilde\beta)=t\tilde\alpha,\ 
(2-s)\tilde\beta=t(2-s^{-1})\tilde\beta-t\tilde\alpha\big\rangle.\]
From the first relation we have $\tilde\beta=(1-t)\tilde\alpha$.
Substituting this into the second relation and dividing by $t$, we get
$(1-t^{-1})(s-2)\tilde\alpha+(1-t)(s^{-1}-2)\tilde\alpha+\tilde\alpha=0$.
Expanding out, we arrive at the following presentation of $H_1(\widetilde{S^3\but\K})$ as a $\Z[s^{\pm1},t^{\pm1}]$-module:
\[\big\langle\tilde\alpha\ \big|\ (1-t^{-1})s\tilde\alpha+(1-t)s^{-1}\tilde\alpha+(2t+2t^{-1}-3)\tilde\alpha=0\big\rangle.\]
Writing $x_i=s^i\tilde\alpha$, we get a presentation of $H_1(\widetilde{S^3\but\K})$ as a $\Z[t^{\pm1}]$-module:
\[\big\langle x_i,\ i\in\Z\ \big|\ (1-t^{-1})x_{i+1}+(1-t)x_{i-1}+(2t+2t^{-1}-3)x_i=0,\ i\in\Z\big\rangle.\tag{$*{*}*$}\]
Since the multiplication by $1-t$ in $H_1(\widetilde{S^3\but\K})$ is an isomorphism (see Lemma \ref{torsion}(a)), this abelian group
is also a module over the ring $\Z[t^{\pm}][\frac1{1-t}]$ (that is, the localization of $\Z[t^{\pm1}]$ with respect to 
the multiplicatively closed set $\{(1-t)^i\mid i=0,1,2,\dots\}$).
The same presentation ($*{*}*$) works for $H_1(\widetilde{S^3\but\K})$ as a module over $\Z[t^{\pm}][\frac1{1-t}]$.
But over this ring the relation can be rewritten as
\[x_{i+1}=-(1-t^{-1})\big((1-t)x_{i-1}+(2t+2t^{-1}-3)x_i\big)\] and also as
\[x_{i-1}=-(1-t)\big((1-t^{-1})x_{i+1}+(2t+2t^{-1}-3)x_i\big),\] and it follows that
$H_1(\widetilde{S^3\but\K})$ is a free module of rank two over $\Z[t^{\pm}][\frac1{1-t}]$, freely generated
by $x_0$ and $x_1$ (for instance).
In particular, this implies that $H_1(\widetilde{S^3\but\K})$ contains no $\Z[t^{\pm1}]$-torsion.
\end{proof}

\section{Exactly what is the Bing sling?} \label{whatisbingsling}

Let $K_0$ be an unknot is $S^3$, and let $T_1$ be its regular neighborhood in $S^3$.
Let $J$ be a copy of the fake%
\footnote{\label{zeeman}It differs from the original Mazur curve \cite{Maz} in one crossing, the one which is marked by an asterisk in Figure \ref{mazur-curve}.
See \cite{MR1}*{\S2.2} for a discussion of one curve versus another.
See also \cite{Akb} for a discussion of the Mazur curve versus Zeeman's curve, which is a mirror image of the fake Mazur curve.} 
Mazur curve in the unknotted solid torus $T_1$ (see Figure \ref{mazur-curve}).
Given an $i\in\{1,2,\dots\}$ and assuming that a solid torus $T_i$ in $S^3$ has been defined, let $K_i$ be the preimage of $J$ under 
an $r_i$-fold cover $p_i\:T_i\to T_1$ and let $T_{i+1}$ be a regular neighborhood $K_i$ in $T_i$.
(In Figure ~\ref{bingsling}, $r_1=6$.)
We will always assume the following hypothesis:
\begin{enumerate}
\item[(0)]
each $p_i$ is untwisted, in the sense of being regular homotopic (with values in $T_1$) to the pullback 
$T_1\to T_1$ of the orientation-preserving $r_i$-fold cover $S^1\to S^1$ along the projection $T_1\simeq S^1\x D^2\to S^1$.
\end{enumerate}

\begin{figure}[h]
\includegraphics[width=0.33\linewidth]{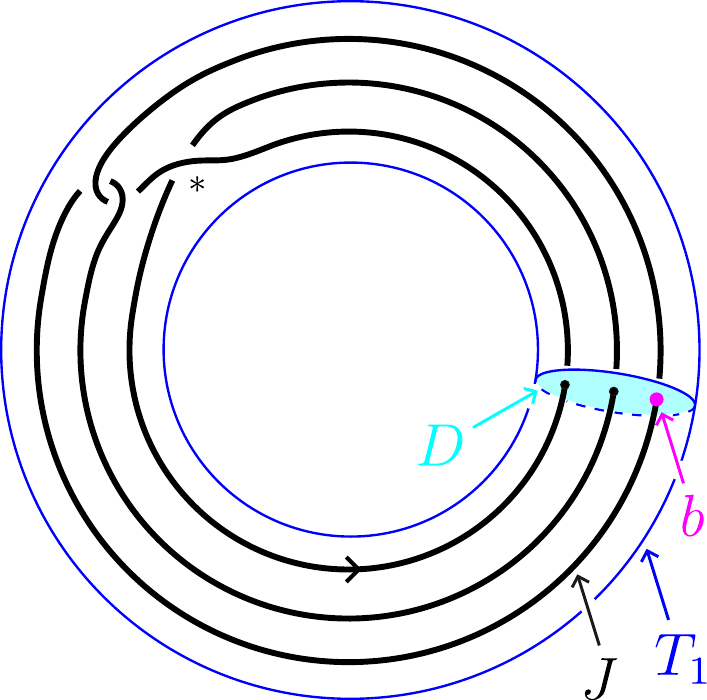}
\caption{The fake Mazur curve.}
\label{mazur-curve}
\end{figure}

While Bing himself did not explicitly discuss in his paper \cite{Bi} any of the choices involved, it is not hard to see, 
as explained in the book of R. Daverman and G. Venema \cite{DV}*{\S2.8.5}, that the numbers $r_i$, the covers $p_i$ 
(as specific maps, not just up to a fiberwise homeomorphism) and the regular neighborhoods $T_i$ can be chosen so that 
$\bigcap_{i=1}^\infty T_i$ is homeomorphic to $S^1$ and thus we get an embedding $\B$ of $S^1$.
However, with their approach (which is based on a general lemma \cite{DV}*{Proposition 2.2.2} 
that the uniform limit of a sequence of embeddings will be an embedding if they converge rapidly enough)
it seems to be challenging to describe any one resulting sequence $r_1,r_2,\dots$ in closed terms,
and it appears that the equivalence class of $\B$ (up to ambient isotopy) will depend not only on 
the integers $r_i$, but also on metric properties of the maps $p_i$.
Which could lead some readers to ask: ``OK, but exactly what do you mean by the Bing sling?
Is it a well-defined topological object? Can it be described explicitly enough that one can evaluate
at least one algebraic invariant on it and obtain an answer in closed form?''

We will take a different, more tailor-made approach, which presumably settles these questions in a satisfactory way.
Let us note that the meridional disk $D$ of $T_1$ shown in Figure \ref{mazur-curve} has $r_i$ distinct lifts $D_{ij}$ 
into $T_i$, which partition $T_i$ into $r_i$ (closed) fundamental domains $\Phi_{ij}$ of the group of deck transformations 
of the cover $p_i$.
(Figure \ref{bingsling} hints at the six disks $D_{1j}$ and the six cylinders $\Phi_{1j}$, as well as five of the disks $D_{2j}$ and
four of the cylinders $\Phi_{2j}$.)
Suppose that 
\begin{enumerate}
\item each $\Phi_{ij}$ is of diameter $\le\epsilon_i$, where $\epsilon_n\to 0$ as $n\to\infty$;
\item each $\Phi_{ij}\cap T_{i+1}$ has only $3$ connected components and each of them is a union of $q_i$ 
of the $\Phi_{i+1,\,k}$'s, where $q_i:=\frac{r_{i+1}}{3r_i}$ is required to be an integer.
\end{enumerate}
(In Figure \ref{bingsling}, $q_1=4$.)

\begin{theorem} \label{bingslinglemma} Suppose that conditions (0), (1) and (2) hold.
Then

(a) the knots $K_i$, when suitably parameterized, uniformly converge to an embedding $\B$ of $S^1$ whose image is $\bigcap_{i=1}^\infty T_i$;

(b) the equivalence class of $\B$ (up to ambient isotopy) depends only on the integers $r_i$.

\noindent
Moreover,

(c) given a sequence of positive integers $r_1,r_2,\dots$ such that each $\frac{r_{i+1}}{3r_i}$ is an integer greater than $1$,
there exists a sequence of $K_i$ and $T_i$ as above, satisfying (0), (1) and (2).
\end{theorem}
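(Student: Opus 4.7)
The plan is an inductive construction exploiting the combinatorial refinement pattern imposed by conditions (0)--(2). Since $J$ meets the meridional disk $D$ of $T_1$ in $3$ geometric points (algebraically in $1$), each $K_i$ meets every $D_{i,j}$ in $3$ points and thus has exactly $3$ arcs in each $\Phi_{i,j}$. Together with condition (2) this yields compatible nested partitions of $S^1$: at level $i$, partition $S^1$ into $3r_i$ consecutive arcs labelled by (fundamental domain, arc-within-domain) pairs reflecting the cyclic traversal order of $K_i$; at level $i+1$, divide each level-$i$ arc into $3q_i$ sub-arcs corresponding to the $3q_i$ consecutive arcs of $K_{i+1}$ lying inside the corresponding component of $\Phi_{i,j}\cap T_{i+1}$.

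For part (a), parameterize each $K_i$ via the level-$i$ partition so that, for every $s\in S^1$, the points $K_i(s)$ and $K_{i+1}(s)$ lie in a common $\Phi_{i,j}$. Condition (1) then gives $d(K_i(s),K_{i+1}(s))\le\epsilon_i$, so $\{K_i\}$ is uniformly Cauchy and converges to a continuous map $\B\:S^1\to\bigcap_iT_i$. Injectivity of $\B$ follows because distinct $s,s'\in S^1$ lie in distinct level-$i$ arcs for some $i$, and either those arcs are in different $\Phi_{i,j}$'s, or by condition (2) they refine into disjoint components of $\Phi_{i,j}\cap T_{i+1}$; in either case the nested fundamental domains for $s$ and $s'$ remain disjoint thereafter, forcing $\B(s)\ne\B(s')$. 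The inclusion $\bigcap T_i\subseteq\B(S^1)$ follows because any point of the intersection determines a nested sequence of fundamental domains of shrinking diameter, picking out a unique parameter.

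For part (b), given two data sets $(T_i,p_i,K_i)$ and $(T_i',p_i',K_i')$ with the same $\{r_i\}$, inductively construct ambient homeomorphisms $h_i\:S^3\to S^3$ carrying the fundamental-domain decomposition of $T_i$ to that of $T_i'$. The base case uses that any two unknotted solid tori in $S^3$, together with a choice of meridional disk, are ambient isotopic. The induction step modifies $h_i$ inside each $\Phi_{i,j}'$ to align the three components of $h_i(T_{i+1})\cap\Phi_{i,j}'$ with those of $T_{i+1}'\cap\Phi_{i,j}'$; this is possible because on both sides these components are regular neighborhoods of combinatorially identical arcs of $K_i'$, and the per-component fundamental-domain counts $q_i$ agree. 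Since each modification is supported in a set of diameter $\le\max(\epsilon_i,\epsilon_i')$, we have $\|h_{i+1}-h_i\|_{\sup}\le\max(\epsilon_i,\epsilon_i')$ and similarly for the inverses, so the $h_i$ and $h_i^{-1}$ converge uniformly to mutually inverse homeomorphisms. Concatenating the incremental modifications with increasing speed realizes the limit as an ambient isotopy carrying $\B$ to $\B'$.

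For part (c), construct $T_i$ and $p_i$ inductively. At step $i+1$, take $T_{i+1}\subset T_i$ to be a regular neighborhood of $K_i$ so thin that it intersects each $\Phi_{i,j}$ in exactly $3$ components (possible because the $3$ arcs of $K_i$ in $\Phi_{i,j}$ are disjoint compact sets), and choose an untwisted $r_{i+1}$-fold cover $p_{i+1}\:T_{i+1}\to T_1$ so that each such component contains exactly $q_i$ of the new fundamental domains $\Phi_{i+1,k}$ (this amounts to placing the $r_{i+1}$ lifts of $D$ correctly along $K_i$). The hypothesis $q_i\ge 2$ is essential: it allows $p_{i+1}$ to strictly refine the fundamental-domain decomposition along $T_{i+1}$, so that by further thinning the tube one can arrange $\mathrm{diam}(\Phi_{i+1,k})\le\epsilon_{i+1}$ for any preassigned $\epsilon_{i+1}>0$. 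The hard part, I expect, is reconciling the combinatorial constraint (2) with the metric shrinkage (1) at each inductive step, i.e., verifying that $p_{i+1}$ can be simultaneously chosen to respect the three-component decomposition and an arbitrary preassigned diameter bound.
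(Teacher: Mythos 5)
Your plan for (a) rests on a claim that is false: no choice of parameterizations makes $K_i(s)$ and $K_{i+1}(s)$ lie in a common $\Phi_{i,j}$ for every $s$. The clasp pattern of the fake Mazur curve forces $K_{i+1}$, while traversing a component $C$ of $\Phi_{i,j}\cap T_{i+1}$, to poke out of $\Phi_{i,j}$ into the adjacent level-$i$ domains near the two ends of $C$; in particular the $3q_i$ arcs of $K_{i+1}$ inside $C$ are \emph{not} consecutive along $K_{i+1}$ (its traversal interleaves them with arcs lying in the neighbouring components). One can make the obstruction precise by counting crossings of the level-$i$ disks: $K_i$ crosses them $3r_i$ times per period, while $K_{i+1}$ crosses them $9r_i$ times (three points on each of the three level-$(i+1)$ interface disks inside each $D_{ij}$); if the two knots always lay in a common closed domain, then off finitely many parameter values they would lie in the same open domain, forcing their sequences of disk-crossings to coincide, which is impossible. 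This is exactly why the paper proves the weaker but sufficient containment $\tilde k_l(J_{ij})\subset\Psi_{ij}=\Phi_{i,j-1}\cup\Phi_{ij}\cup\Phi_{i,j+1}$ — and, crucially, for \emph{all} $l>i$, not just $l=i+1$. That last point also repairs a second gap in your Cauchy argument: condition (1) only gives $\epsilon_i\to0$, not $\sum\epsilon_i<\infty$, so the telescoping bound $d(K_i,K_l)\le\epsilon_i+\dots+\epsilon_{l-1}$ proves nothing (take $\epsilon_i=1/i$); the paper instead gets $d(k_i,k_l)\le3\epsilon_i$ uniformly in $l$ from the nesting $\Psi_{i+1,k}\subset\Psi_{ij}$. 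Your outline of (b) is essentially the paper's argument (the paper finishes by extending $H$ to $S^3$ and invoking the Alexander trick rather than speeding up the concatenated modifications), so that part is fine modulo details.

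In (c) there is a more serious misconception: you assert that, after choosing $T_{i+1}$ and $p_{i+1}$, "one can arrange $\mathrm{diam}(\Phi_{i+1,k})\le\epsilon_{i+1}$ for any preassigned $\epsilon_{i+1}>0$" by thinning the tube. This is false: the $r_{i+1}$ is prescribed in advance, each $\Phi_{i+1,k}$ contains a subarc of the fixed core $K_i$, and since the total length of $K_i$ is distributed among only $r_{i+1}$ domains, the largest domain has diameter bounded below by roughly $\mathrm{length}(K_i)/r_{i+1}$ no matter how thin the tube is. The entire content of (c) is the quantitative race between these two quantities: drawing the fake Mazur curve so that each passage multiplies length by at most $3+\epsilon$, one gets $\mathrm{length}(K_i\cap\Phi_{ij})\lesssim(3+\epsilon)^{i-1}/r_i$, while the hypothesis $q_i\ge2$ gives $r_{i+1}\ge6r_i$, hence $r_i\ge6^{i-1}$, so the diameters decay like $(3.5/6)^{i-1}$ and condition (1) holds. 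You flag exactly this as "the hard part" but do not address it, and the role you assign to $q_i\ge2$ (allowing a strict refinement) is not the right one — refinement holds already for $q_i=1$, in which case the construction fails because length and $r_i$ both grow by a factor of $3$ and the domains never shrink.
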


The approach in the proof of (a) will still work, albeit with more tedious details, if instead of (2) we assume the following 
weaker hypothesis:

\begin{enumerate}
\item[(2$'$)] each $\Phi_{ij}\cap T_{i+1}$ has only $3$ connected components and each of them is a union of some of
the $\Phi_{i+1,\,k}$'s.
\end{enumerate}

However, it is expected that (b) will be false if we replace (2) with (2$'$).%
\footnote{Let us note a potential  connection with Rolfsen's Problem: if (b) were true when (2) is replaced with (2$'$), then 
this would provide, in particular, an isotopy from the Bing sling (as defined more precisely below) to a somewhat 
differently looking wild knot, which might be easier to isotope to the unknot by some geometric construction.}

Although (c) tells us that for Bing's construction to work, the sequence $r_1,r_2,\dots$
need not be particularly rapidly growing after all (for instance, $r_i=6^{i-1}$ will do), we will nevertheless need to assume in 
the present paper (apart form the present section) that it is quite rapidly growing, because of our use of Theorem \ref{convergence}.
To be specific, the proofs of Theorems \ref{main1} and \ref{main2} work under the hypothesis that each $r_{i+1}\ge (r_i^2+3r_i)^2$ 
(see Remark \ref{growth1}), and the proofs of Theorems \ref{main3}, \ref{main4} and \ref{main6} work under the slightly 
weaker hypothesis $r_{i+1}\ge(r_i^2-r_i+1)^2+1$ (see Remark \ref{growth2}).
For instance, $r_i=3^{5^{i-1}}$ satisfies all our hypotheses.

Despite the fact that the wild knot $\B$ produced by Bing's construction depends on the choice of the integers $r_i$, it is customary 
in the literature to speak of {\it the} Bing sling%
\footnote{The first appearance of this terminology that I'm aware of is in R. H. Fox's list 
of problems in knot theory \cite{Fox2}*{p.~171}.} 
(rather than many different Bing slings) and we will keep with this tradition, in order to avoid peripheral details in statements of theorems.
But this forces us to include the condition of rapid growth of the $r_i$ in the definition of ``the Bing sling''.
Thus {\it the Bing sling} is defined for the purposes of this paper by conditions (0), (1), (2) and the condition that
each $r_{i+1}\ge (r_i^2+3r_i)^2$.

\begin{proof}[Proof of Theorem \ref{bingslinglemma}. (a)] 
Let us make the notation $D_{ij}$, $\Phi_{ij}$ more precise (so far they are defined only up to a permutation
of the second indices).
First we choose an arbitrary lift $D_1$ of $D$ in $T_1$.
To continue, we need the basepoint $b$ shown in Figure \ref{mazur-curve} (one of the three intersection points 
of $D$ with the fake Mazur curve $J$).
Assuming that a lift $D_i$ of $D$ in $T_i$ has been defined, it contains a unique lift $b_i$ of $b$ in $T_i$.
This $b_i$ lies in $K_i$ and hence also in $D_i\cap T_{i+1}$.
Then by (2) there is a unique lift $D_{i+1}$ of $D$ in $T_{i+1}$ which contains $b_i$.
Let us note that the sequence of disks defined in this way is nested: $D_1\supset D_2\supset\dots$.
Moreover, since by (1) their diameters tend to zero, $\bigcap_{i=1}^\infty D_i$ consists of a single point $b_\infty$, which is also the limit
of the sequence $b_1,b_2,\dots$.
Now let $D_{i0}=D_i$.
Assuming that a lift $D_{ij}$ of $D$ in $T_i$ has been defined, we define $D_{i,\,j+1}$ 
to be the next one in the positive direction (determined by the orientation of the fake Mazur curve $J$).
Thus $D_{ij}=D_{i,\,j+r_i}$, and using this formula we also define $D_{ij}$ for all $j<0$.
Also let $\Phi_{ij}$ be the fundamental domain bounded by $D_{ij}$ and $D_{i,\,j+1}$,
and let $b_{ij}$ be the unique lift of $b$ in $T_i$ that lies in $D_{ij}$.
(Thus $b_{i0}=b_i$.)

The fake Mazur curve can be parameterized by an embedding $k\:S^1\to T_1$, which sends $0\in S^1=\R/\Z$ onto $b$.
Let $\tilde k$ be the composition $\R\xr{q}\R/\Z=S^1\xr{k} T_1$.
We may assume that $\tilde k$ sends $\frac13$ and $\frac23$ onto the other two points of intersection of $J$ and $D$.
The embedding $k$, when precomposed with the $r_i$-fold cover $S^1\to S^1$, $s\mapsto r_is$, lifts to an embedding 
$k_i\:S^1\to T_i$ which parameterizes $K_i$. 
We may assume that $k_i(0)=b_i$.
Let $\tilde k_i$ be the composition $\R\xr{q}S^1\xr{k_i} T_1$.
Then $\tilde k_i\big(\frac{j}{r_i}\big)=b_{ij}$.
Moreover, it is clear from Figure ~\ref{mazur-curve} that
$\tilde k_i\big(\frac{j+\text{\sfrac13}}{r_i}\big)$ lies in the same disk $D_{ij}$ which contains $b_{ij}$,
whereas $\tilde k_i\big(\frac{j+\text{\sfrac23}}{r_i}\big)$ lies in the disk $D_{i,\,j+1}$ which contains 
$b_{i,\,j+1}=\tilde k_i\big(\frac{j+1}{r_i}\big)$.
Hence $\tilde k_i$ sends $\big[\frac{j+\text{\sfrac13}}{r_i},\,\frac{j+\text{\sfrac23}}{r_i}\big]$ onto an arc in $\Phi_{ij}$ with 
one endpoint in $D_{ij}$ and another in $D_{i,\,j+1}$.
Also, $\big[\frac{j}{r_i},\,\frac{j+\text{\sfrac13}}{r_i}\big]$ must go onto an arc in $\Phi_{i,\,j-1}$ with both endpoints in $D_{ij}$;
and $\big[\frac{j+\text{\sfrac23}}{r_i},\,\frac{j+1}{r_i}\big]$ onto an arc in $\Phi_{i,\,j+1}$ with both endpoints in $D_{i,\,j+1}$.
Thus $\tilde k_i$ sends each arc $J_{ij}:=\big[\frac{j}{r_i},\,\frac{j+1}{r_i}\big]$ onto an arc in the interior of
$\Psi_{ij}:=\Phi_{i,\,j-1}\cup\Phi_{ij}\cup\Phi_{i,\,j+1}$.

Next, we have $\frac{j}{r_i}=\frac{3jq_i}{r_{i+1}}$, and consequently
$J_{ij}=\big[\frac{j}{r_i},\,\frac{j+1}{r_i}\big]=
\big[\frac{3jq_i}{r_{i+1}},\,\frac{3(j+1)q_i}{r_{i+1}}\big]=
J_{i+1,\,3jq_i}\cup\dots\cup J_{i+1,\,3(j+1)q_i-1}$.
Hence $\tilde k_{i+1}(J_{ij})$ lies in $\Psi_{i+1,\,3jq_i}\cup\dots\cup\Psi_{i+1,\,3(j+1)q_i-1}$, which
is the same as $\Phi_{i+1,\,3jq_i-1}\cup\dots\cup\Phi_{i+1,\,3(j+1)q_i}$.
But it follows from (2) and from our choice of the lift $D_{0,\,i+1}$ that the latter union 
lies in $\Psi_{ij}$.%
\footnote{In more detail, $\Phi_{i+1,\,k}$ lies in $\Phi_{i,\,j-1}$ if $3jq_i\le k<(3j+1)q_i$;
in $\Phi_{i,\,j+1}$ if $(3j+2)q_i\le k<(3j+3)q_i$; and in $\Phi_{ij}$ if
$(3j+1)q_i\le k<(3j+2)q_i$ as well as for $k=3jq_i-1$ and $k=(3j+3)q_i$.}
Arguing by induction, it is easy to get from this that each $\tilde k_l$ for $l>i$ also sends $J_{ij}$ 
into $\Psi_{ij}$.
Hence $d(k_i,k_l)$ for $l>i$ is bounded above by the maximum of the diameters of $\Psi_{i1},\dots,\Psi_{ir_i}$, which by (1)
does not exceed $3\epsilon_i$.
Since $C^0(S^1,T_1)$ is complete, $k_1,k_2,\dots$ uniformly converge to a continuous map $\B\:S^1\to T_1$.

Let us show that $\B$ is injective (since $S^1$ is compact, this will imply that $\B$ is a homeomorphism onto its image).
First let us note that $r_i\ge 3^{i-1}$ by (2).
Therefore, given distinct points $x,y\in [0,1)$, there exists an $i$ such that $x\in J_{ij}$ and $y\in J_{ik}$ where 
$j-k\not\equiv 0,\pm1,\pm2,\pm 3\pmod{r_i}$.
Then $\Psi_{ij}$ and $\Psi_{ik}$ are disjoint, but $\tilde k_l(x)\in\tilde k_l(J_{ij})\subset\Psi_{ij}$ and 
$\tilde k_l(y)\in\tilde k_l(J_{ik})\subset\Psi_{ik}$ for each $l\ge i$.
Writing $\bar x=q(x)\in S^1$, we get $\B(\bar x)\in\Psi_{ij}$ and $\B(\bar y)\in\Psi_{ik}$.
Thus $\B(\bar x)\ne\B(\bar y)$.

Let us show that the image of $\B$ equals $\bigcap_{i=1}^\infty T_i$.
The images of $k_i,k_{i+1},\dots$ lie in $T_i$, and hence so does that of $\B$.
Therefore the image of $\B$ lies in $\bigcap_{i=1}^\infty T_i$.
Let us prove the converse inclusion.
Since each $T_i$ is a union of the fundamental domains $\Phi_{ij}$, every point $y\in\bigcap_{i=1}^\infty T_i$ 
is the intersection of some sequence of the form $\Phi_{1j_1}\supset\Phi_{2j_2}\supset\dots$.
Let us pick some points $x_i\in J_{ij_i}$.
Then $\tilde k_l(x_i)\in\Psi_{ij_i}$ for each $l\ge i$ and hence $\B(\bar x_i)\in\Psi_{ij_i}$ for each $i$.
Also $y\in\bigcap_{i=1}^\infty\Psi_{ij_i}$, and it follows from (1) that $\B(\bar x_i)\to y$.
But the image of $\B$ is compact (it is homeomorphic to $S^1$) and hence closed.
Thus it contains $y$.
\end{proof}

\begin{proof}[(b)] We assume familiarity with the proof of (a).
Let $\B$ and $\B'$ be two embeddings as in (a) corresponding to the same sequence $r_1,r_2,\dots$.
Let $T_i$, $p_i$, $\Phi_{ij}$ and $\Psi_{ij}$ be as above, and $T'_i$, $p'_i$, $\Phi'_{ij}$ and $\Psi'_{ij}$
be defined similarly.
Let us note that $T_1'=T_1$ by construction.
Since both $p_1$ and $p'_1$ are untwisted $r_1$-fold covers, there is a homeomorphism $h_1\:T_1\to T_1$ 
which sends each $\Phi'_{1j}$ onto $\Phi_{1j}$ and extends to an orientation preserving PL homeomorphism 
$S^3\to S^3$.
Next, it follows from (2), using that both $p_2$ and $p'_2$ are untwisted $r_2$-fold covers,
that there is a homeomorphism $h_2\:T_1\to T_1$ which keeps $\partial T_1$ fixed, preserves each $\Phi_{1j}$ 
and sends each $h_1(\Phi'_{2j})$ onto $\Phi_{2j}$.
Similarly, there is a homeomorphism $h_3\:T_1\to T_1$ which keeps $\overline{T_1\but T_2}$ fixed, preserves
each $\Phi_{2j}$ and sends each $h_2\circ h_1(\Phi'_{3j})$ onto $\Phi_{3j}$.
Continuing in the same fashion, we obtain homeomorphisms $h_4,h_5,\dots\:T_1\to T_1$.
Let $H_k=h_k\circ\dots\circ h_1$.
Then $H_k(\Phi'_{ij})=\Phi_{ij}$ for all $i\le k$ and $H_{k+1}$ disagrees with $H_k$ only
on the interior of $T'_k$.
Also $h_l\circ\dots\circ h_{k+1}$ preserves each $\Phi_{kj}$ for $l>k$, and hence by (1) 
$H_l$ is $\eps_k$-close to $H_k$ for all $l>k$.
Since $C^0(T_1,T_1)$ is complete, $H_1,H_2,\dots$ uniformly converge to a continuous map $H\:T_1\to T_1$.

For each $k$ the restriction of $H$ to $\overline{T_1\but T'_k}$ is a finite composition of homeomorphisms, 
hence a homeomorphism onto $\overline{T_1\but T_k}$.
Therefore $H$ restricts to a homeomorphism between $T_1\but\B'(S^1)$ and $T_1\but\B(S^1)$.
Also we have $H(\Phi'_{ij})=\Phi_{ij}$ for all $i$ and $j$.
It follows that $H(\Psi'_{ij})=\Psi_{ij}$ for all $i$ and $j$.
On the other hand, since each $r_i\ge 3^{i-1}$, every point $x\in [0,1)$ is the intersection of 
some sequence of the form $J_{1,j_1}\supset J_{2,j_2}\supset\dots$.
Then $\B(\bar x)$ lies in the intersection of the sequence $\Psi_{1,j_1}\supset\Psi_{1,j_2}\supset\dots$
and in fact equals this intersection by (1).
Similarly $\B'(\bar x)=\bigcap_{i=1}^\infty\Psi'_{ij_i}$.
Hence $H$ sends $\B'(\bar x)$ onto $\B(\bar x)$. 
Thus $H\circ\B'=\B$.
In particular, this implies that $H$ restricts to an injective map $\B'(S^1)\to\B(S^1)$.
Since it also restricts to an injective map $T_1\but\B'(S^1)\to T_1\but\B(S^1)$, we get
that $H$ is injective.
Since $T_1$ is compact, $H$ is a homeomorphism.
Since $h_1$ extends to an orientation preserving PL homeomorphism of $S^3$ and each
$h_i$ for $i>1$ keeps $\partial T_1$ fixed, $H$ also extends to an orientation preserving 
homeomorphism $\bar H\:S^3\to S^3$, which is PL outside $T_1$.
Then $\bar H$ is isotopic to the identity by the Alexander trick.
\end{proof}

\begin{proof}[(c)] 
Let us fix some $\eps>0$.
By scaling we may assume that the length of the unknot $K_0$ equals $1$.
Then the untwisted $r_1$-fold cover $p_1\:T_1\to T_1$ may be chosen so that each $\Phi_{1j}\cap K_0$ 
is an arc of length $\frac{1}{r_1}$.
Hence, as long as the regular neighborhood $T_1$ of $K_0$ is chosen to be thin enough, 
the diameter of each $\Phi_{1j}$ will be bounded above by $\frac{1+\epsilon}{r_1}$.

On the other hand, if the regular neighborhood $T_1$ of $K_0$ is chosen to be thin enough and 
the fake Mazur curve $J$ is drawn appropriately in $T_1$ (with a sufficiently small clasp), 
then we may assume that its parameterization $k\:S^1\to J\subset T_1$ (see the proof of (a)) 
is $\epsilon$-close to the map $S^1\xr{f}S^1=K_0\subset T_1$, where $f$ is defined as follows.
Let $\tilde f\:[0,1]\to\R$ be defined by $f(0)=0$, $f(\frac16)=-\frac12$, $f(\frac56)=\frac32$,
$f(1)=1$ and extending linearly.
(Thus $f(\frac13)=0$, $f(\frac12)=\frac12$ and $f(\frac23)=1$.)
Then the composition $[0,1]\xr{\tilde f}\R\xr{q}\R/\Z=S^1$ factors as 
$[0,1]\subset\R\xr{q}S^1\xr{f}S^1$ for a unique map $f$.
The identification $S^1=K_0$ is chosen so that $f(0)=b$ (the point $b$ is shown in 
Figure \ref{mazur-curve}).
Clearly, the length of the loop $S^1\xr{f}S^1=K_0\subset T_1$ equals $3$.
While this does not formally imply any bound on the length of $J$, it follows that
$J$ {\it can} be drawn so that its length does not exceed $3+\epsilon$.

The map $S^1\xr{f}S^1=K_0\subset T_1$ precomposed with the $r_1$-fold cover $S^1\to S^1$, 
$s\mapsto r_1s$, lifts with respect to the cover $p_1$ to a map $f_1\:S^1\to K_0\subset T_1$.
We may assume that the parameterization $k_1\:S^1\to K_1\subset T_1$ of $K_1$ (see the proof of (a)) 
is $\epsilon$-close to this $f_1$.
The loop $f_1$ is of length $3$, and the three paths that each $\Phi_{1j}$ 
cuts out of this loop are of total length $\frac{3}{r_1}$.
(These three paths have equal lengths.)
It follows that $p_1$ can be chosen so that the three arcs in $\Phi_{1j}\cap K_1$ 
are of total length $\le\frac{3+\epsilon}{r_1}$.
Now the untwisted $r_2$-fold cover $p_2\:T_2\to T_1$ may certainly be chosen to satisfy
the case $i=1$ of condition (2), i.e.\ so that each $\Phi_{1j}\cap T_2$ has only three
components and each of them is a union of $q_1$ of $\Phi_{2,k}$'s.
Then we may as well choose $p_2$ so that each $\Phi_{2,k}\cap K_1$ is an arc of length
$\le\frac{3+\epsilon}{3q_1r_1}=\frac{3+\epsilon}{r_2}$.
Hence, as long as the regular neighborhood $T_2$ of $K_1$ is chosen to be thin enough, 
the diameter of each $\Phi_{2,k}$ will be bounded above by $\frac{(3+\epsilon)+\epsilon}{r_2}$.

Now suppose inductively that for some $i$ each $\Phi_{ij}\cap K_{i-1}$ is an arc of length
$\le\frac{(3+\epsilon)^{i-1}}{r_i}$.
The map $S^1\xr{f}S^1=K_0\subset T_1$ precomposed with the $r_i$-fold cover $S^1\to S^1$, 
$s\mapsto r_is$, lifts with respect to the untwisted $r_i$-fold cover $p_i\:T_i\to T_1$ to a map 
$f_i\:S^1\to K_{i-1}\subset T_i$.
Since the arc $\Phi_{ij}\cap K_{i-1}$ is of length $\le\frac{(3+\epsilon)^{i-1}}{r_i}$, 
the three paths that $\Phi_{ij}$ cuts out of $f_i$ are of total length 
$\le\frac{3(3+\epsilon)^{i-1}}{r_i}$.
It follows that $p_i$ can be chosen so that the three arcs in $\Phi_{ij}\cap K_i$ 
are of total length $\le\frac{3(3+\epsilon)^{i-1}+\epsilon}{r_i}\le\frac{(3+\epsilon)^i}{r_i}$.
Arguing as before, we get that $T_{i+1}$ and $p_{i+1}$ can be chosen so that each
$\Phi_{i+1,\,j}\cap K_i$ is an arc of length 
$\le\frac{(3+\epsilon)^i}{3q_ir_i}=\frac{(3+\epsilon)^i}{3r_{i+1}}$
and the diameter of each $\Phi_{i+1,\,j}$ is bounded above by $\frac{(3+\epsilon)^i+\epsilon}{r_{i+1}}$.

Now setting $\epsilon=0.5$, we have proved that there exists a sequence of $K_i$ and $T_i$ with each
$\Phi_{ij}$ of diameter $\le\frac{3.5^{i-1}+0.5}{r_i}\le\frac{4^{i-1}}{r_i}$.
Since $r_1\ge 1$ and by the hypothesis each $\frac{r_{i+1}}{3r_i}\ge 2$, each $r_i\ge 6^{i-1}$.
Then condition (1) holds with $\epsilon_i=(4/6)^{i-1}$.
\end{proof}

\section{Conway potential function}\label{conway}

For an $m$-component PL link $L$ in $S^3$ let $\Omega_L(x_1,\dots,x_m)$ denote its Conway potential function.
Several explicit constructions of $\Omega_L$ are known (see references in the introduction of \cite{M24-2}).
We have $\Omega_L(x_1,\dots,x_m)\in\Z[x_1^{\pm1},\dots,x_m^{\pm1}]$ for $m>1$. 
Also $\Omega_L(x,\dots,x)=\dfrac{\nabla_K(x-x^{-1})}{x-x^{-1}}$, where $\nabla_K(z)\in\Z[z]$ is called the Conway polynomial.
If the components of $L$ are colored in $n$ colors according to a coloring function $c\:\{1,\dots,m\}\to\{1,\dots,n\}$, 
then $\Omega_L(x_1,\dots,x_n)$ denotes $\Omega_L(x_{c(1)},\dots,x_{c(m)})$.

Let us recall Conway's identities I (involving strands of the same color) and 
II (involving strands of possibly distinct colors):%
\footnote{To be precise, (\ref{conwayI}) appears in the papers by Kauffman \cite{Kau} and 
Hartley \cite{Ha} (and in a preliminary form also in the paper by Alexander \cite{Al}) and differs in a sign 
from the identity announced by Conway \cite{Con}.}
\[\Omega_{\includegraphics[width=0.6cm]{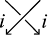}}-\Omega_{\includegraphics[width=0.6cm]{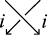}}
=(x_i-x_i^{-1})\,\Omega_{\includegraphics[width=0.6cm]{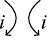}}
\tag{I}\label{conwayI}\]
\[\Omega_{\includegraphics[width=0.6cm]{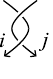}}+\Omega_{\includegraphics[width=0.6cm]{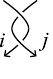}}
=(x_ix_j+x_i^{-1}x_j^{-1})\,\Omega_{\includegraphics[width=0.6cm]{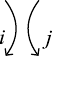}}
\tag{II}\label{conwayII}\]
We will also frequently use the formula%
\footnote{Here the component that is shown entirely need not be the only component of color $j$.
In fact, the case of the formula where it is the only component of color $j$ easily implies the general case.}
for a connected sum with the Hopf link:
\[\Omega_{\includegraphics[height=0.9cm]{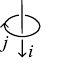}}
=(x_i-x_i^{-1})\cdot\Omega_{\ \includegraphics[height=0.9cm]{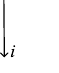}}
\tag{$\Phi$}\label{conwayPhi}\]
A basis for computation is given by%
\footnote{A link $L$ in $S^3$ is called {\it split} if there exists a PL $3$-ball in $S^3$ containing a nonempty 
collection of components of $L$ but not all of them, and disjoint from the remaining components of $L$.} 
\[\Omega_L=0\text{ for every split link $L$}\tag{S}\label{split}\] and by the value of $\Omega$ on the Hopf link:
\[\Omega_{\includegraphics[height=0.9cm]{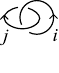}}=1.\tag{H}\label{hopf}\]
We refer to \cite{Ci0} or \cite{Ha} for the proofs of the relations 
(\ref{conwayI}), (\ref{conwayII}),  (\ref{conwayPhi}), (\ref{split}) and (\ref{hopf}).%
\footnote{Proofs of (\ref{conwayI}) and (\ref{conwayII}) from the definition of \cite{Ci0} can also be found in \cite{Coo2}*{\S7.9}.
The relation (\ref{split}) is proved only in a special case in \cite{Ci0}, but the general case is proved similarly.
The proof of (\ref{split}) from the definition of \cite{Ha}, which is omitted in \cite{Ha}, can be based on the fact that one
of the relations in the Wirtinger presentation of a link group is redundant (see \cite{Ha}*{(2.2)}),
and consequently for a suitable plane diagram of a split link there are two redundant relations.
The relation (\ref{conwayPhi}) is not proved in \cite{Ha} but can be easily verified by the methods of \cite{Ha}.}
In the case of two colors, it is easy to calculate $\Omega$ by using these five relations as axioms \cite{Na}*{pp.\ 166--168}.
The case of more than two colors is more complicated.%
\footnote{In the case of three colors $\Omega$ can be calculated by using these five axioms, Conway's third identity (which 
involves four terms, each with three strands) and the value of $\Omega$ on the Borromean rings \cite{Na}.
In the general case several axiomatic characterizations of $\Omega$ are known, but they are either non-local \cite{Tu0}
or contain a local identity that involves six or more terms \cite{Jia}, \cite{MuJ}.}

Let us note that in presence of (\ref{conwayPhi}), an equivalent form of (\ref{hopf}) is given by the value of $\Omega$ on the unknot:
\[\Omega_{\includegraphics[height=0.65cm]{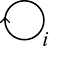}}=\frac{1}{x_i-x_i^{-1}}.\]
Also (\ref{hopf}), (\ref{conwayII}) and (\ref{split}) easily imply
\[\Omega_{\includegraphics[height=0.9cm]{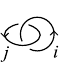}}=-1\tag{H$'$}\label{hopf'}\]
and similarly (\ref{conwayPhi}), (\ref{conwayII}) and (\ref{split}) imply
\[\Omega_{\includegraphics[height=0.9cm]{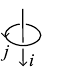}}
=-(x_i-x_i^{-1})\cdot\Omega_{\ \includegraphics[height=0.9cm]{Phip.pdf}}
\tag{$\Phi'$}\label{conwayPhi'}\]

\begin{example} \label{mazur-conway} 
Let us compute $\Omega_M$, where $M=(J,B)$ is the fake
Mazur link (see Figure \ref{mazur}).
\begin{figure}[h]
\includegraphics{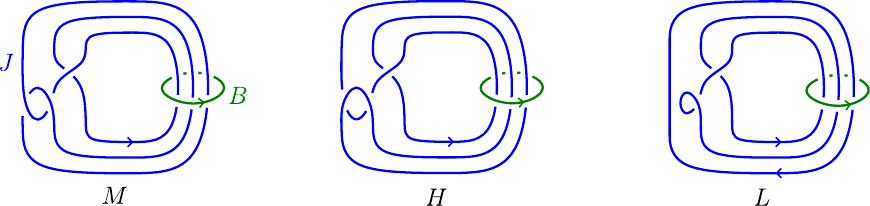}
\caption{The fake Mazur link $M$ and a computation of $\Omega_M$ (beginning).}
\label{mazur}
\end{figure}

$M$ can be made into the Hopf link $H$ by one crossing change (see Figure \ref{mazur}).
Then by (\ref{conwayI}) we have $\Omega_M-\Omega_H=-(x-x^{-1})\Omega_L$, where $L$ is the $3$-component link shown
in Figure~ \ref{mazur}.
Hence $\Omega_M=1-(x-x^{-1})\Omega_L$.
By (\ref{conwayPhi'}) we have $\Omega_L=-(y-y^{-1})\Omega_{H_2}$, where $H_2$ is shown in Figure \ref{mazur2}.
By (\ref{conwayII}) we have $\Omega_{H_2}+\Omega_U=(xy+x^{-1}y^{-1})\Omega_H$, where $U$ is the $2$-component unlink 
(see Figure \ref{mazur2}), whence $\Omega_{H_2}=xy+x^{-1}y^{-1}$. 
Thus we obtain \[\Omega_M(x,y)=1+(xy+x^{-1}y^{-1})(x-x^{-1})(y-y^{-1}).\]
\end{example}
\begin{figure}[h]
\includegraphics{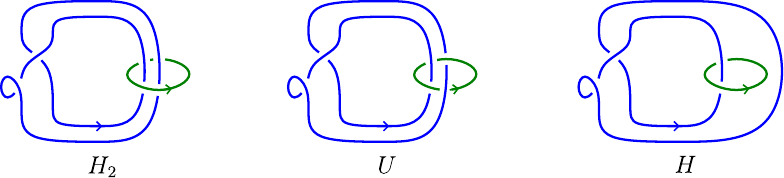}
\caption{The computation of $\Omega_M$ (conclusion).}
\label{mazur2}
\end{figure}

\begin{remark}\label{conway-sign}
Up to a sign, $\Omega_L$ is determined by the multi-variable Alexander polynomial $\Delta_L$.%
\footnote{Namely, $\Omega_L(x_1,\dots,x_m)=\pm x_1^{k_1}\dots x_m^{k_m}\Delta_L(x_1^2,\dots,x_m^2)$ for $m>1$ and
$(x-x^{-1})\Omega_K(x)=\pm x^k\Delta_K(x^2)$ for a knot $K$,
where the integers $k,k_1,\dots,k_m$ are uniquely determined by the symmetry relation
$\Omega_L(x_1,\dots,x_m)=(-1)^m\Omega_L(x_1^{-1},\dots,x_m^{-1})$.}
The sign can be recovered for knots (using Theorem \ref{alexander}); for $2$-component links it is determined by 
the sign of the linking number when it is nonzero (using Corollary \ref{lk}), and in general by the sign of any 
shortest non-vanishing $\bar\mu$-invariant of $L$ of the type $\bar\mu(1\dots 12\dots 2)$ \cite{Tr2}*{8.3}.
However no such description of the sign of $\Omega_L$ is known for links of $3$ or more components 
(compare \cite{Le2}*{1.2, 1.6}, \cite{Tr2}*{8.2}, \cite{TY}*{1.8}, \cite{KLW}).

This sign might seem to be a minor issue, but it is indispensable for a recursive computation of $\Omega_L$ 
(see Example \ref{mazur-conway}), and at the same time it is, in a sense, a much subtler invariant than 
the multi-variable Alexander polynomial.
Indeed, $\Delta_L$ is an invariant of the $\Z[\pi_L/\pi'_L]$-module $\pi'_L/\pi''_L$, where 
$\pi_L=\pi_1(S^3\but L)$ and $G'=[G,G]$.%
\footnote{Namely, $\Delta_L$ is the gcd of all $n\x n$ minors of a presentation matrix of this module, 
where $n$ is the number of generators.
Note that $\Z[\pi_L/\pi'_L]\simeq\Z[H_1(S^3\but L)]\simeq\Z[t_1^{\pm1},\dots,t_m^{\pm1}]$ and 
$\pi'_L/\pi''_L\simeq H_1(\widetilde{S^3\but L})$, 
where $\tilde X$ denotes the universal abelian cover of $X$.}
By contrast, the sign of $\Omega_L$ is not an invariant of $\pi_L$, and not even of $S^3\but L$, because it 
generally depends on the orientations of $S^3$ and of the components of $L$ (see \cite{Ha}*{5.6, 5.7}).
Nevertheless, the definition of $\Omega_L$ in terms of Seifert surfaces \cite{Ci0} deals (as explained 
in \cite{CiF}) with a certain presentation of the same module $G'/G''$; and the two definitions of $\Omega_L$ 
in terms of the Fox calculus \cite{Ha}, \cite{BC}*{\S2} deal with certain presentations of a closely 
related module.%
\footnote{Namely, $H_1(\widetilde{S^3\but L},\,p^{-1}(b))$, where $b\in S^3\but L$ and 
$p\:\widetilde{S^3\but L}\to S^3\but L$ is the covering map.
See \cite{Kaw} concerning the relation between the two modules.}
All three presentations have to be chosen very carefully, because interchanging any two relators would have
the effect of changing the sign of $\Omega_L$; and if adding a redundant relator $1$ is allowed,
then any two relators can be interchanged, as rows in the presentation matrix, by a repeated application 
of the operation of adding a multiple of a row to another row (see \cite{Za}*{p.\ 118}).
Therefore it is not surprising that the definition of $\Omega_L$ that appears to be the closest to being
``conceptual'' \cite{Tu0} proceeds by redoing everything in terms of determinants.
\end{remark}

\begin{problem} (a) Can $\Omega_L$ be constructed as an invariant of the $\Z[\pi_L/\pi'_L]$-module 
$\pi'_L/\pi''_L$ endowed with some additional structure?

(b) Is there a ``conceptual'' proof (as opposed to a verification of the axioms) that the two definitions 
of $\Omega_L$ in terms of the Fox calculus \cite{Ha}, \cite{BC}*{\S2} and the third one in terms of 
Seifert surfaces \cite{Ci0} are all equivalent to each other?
\end{problem}

\section{Some known tools for computing $\Omega$}

\begin{theorem}[Torres--Hartley \cite{Ha}] \label{torres}
Let $L=(K_1,\dots,K_m,K)$ be a PL link, where $m\ge 1$, and let $l_i=\lk(K,K_i)$. 
Let $L'=(K_1,\dots,K_m)$.
Then \[(x_1^{l_1}\cdots x_m^{l_m}-x_1^{-l_1}\cdots x_m^{-l_m})\,\Omega_{L'}(x_1,\dots,x_m)=
\Omega_L(x_1,\dots,x_m,1).\]
In particular, if all $l_i=0$, then $\Omega_L(x_1\dots,x_m,1)=0$; and if $m=1$ and $l_1=1$, 
then $\Omega_L(x,1)=\nabla_{K_1}(x-x^{-1})$.
\end{theorem}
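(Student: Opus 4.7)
The plan is to derive the formula from the Fox-calculus construction of $\Omega_L$ used in \cite{Ha}. Fix an oriented planar diagram of $L$ and a Wirtinger presentation $\langle g_1, \ldots, g_n \mid r_1, \ldots, r_{n-1}\rangle$ of $\pi_L = \pi_1(S^3 \setminus L)$, organized so that the meridians belonging to $K$ appear as the last generators $g_{p+1}, \ldots, g_n$ and the Wirtinger relations along $K$ appear as the last relations. Write $y := x_{m+1}$. The Alexander matrix $M$, obtained by abelianizing the Jacobian of Fox derivatives via $\pi_L \to \Z^{m+1} = \langle x_1, \ldots, x_m, y\rangle$, is an $(n{-}1) \times n$ matrix over $\Z[x_1^{\pm 1}, \ldots, x_m^{\pm 1}, y^{\pm 1}]$; up to a sign and a monomial factor determined by the symmetry $\Omega_L(x^{-1}) = (-1)^{m+1}\Omega_L(x)$, $\Omega_L$ is a maximal minor of $M$.

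Now specialize $y = 1$. The Wirtinger relations along $K$ have the shape $g_{i+1} = w_i g_i w_i^{-1}$ for certain words $w_i$ in $g_1, \ldots, g_p$, so their Fox derivatives, once abelianized, become differences of the images of consecutive meridians of $K$ multiplied by conjugating monomials in $x_1, \ldots, x_m$. At $y = 1$ these rows can be used to perform row/column operations on $M$ that clear all but one of the meridian columns of $K$ together with all but one of the corresponding rows. The surviving row and column pertain to a chosen meridian $\mu$ of $K$ linked back to itself via the longitude $\lambda_K$, whose image in $\Z^{m+1}$ is $x_1^{l_1}\cdots x_m^{l_m}$; this residual $1\times 1$ block contributes the factor $(x_1^{l_1}\cdots x_m^{l_m} - x_1^{-l_1}\cdots x_m^{-l_m})$. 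The remaining $(n{-}n_K{-}1)\times(n{-}n_K)$ block is an Alexander matrix for the Wirtinger presentation of $\pi_{L'}$, and expanding the minor along the residual row/column produces
\[\Omega_L(x,1) = (x_1^{l_1}\cdots x_m^{l_m} - x_1^{-l_1}\cdots x_m^{-l_m})\,\Omega_{L'}(x)\]
up to an overall sign.

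The main obstacle is pinning down this sign: the Hartley normalization of $\Omega$ depends on several orientation-related choices (the orientation of $S^3$, the orientations of the components, the choice of deleted row and column in $M$, the choice of base crossing in the Wirtinger diagram), and the row/column operations above can a priori introduce sign ambiguities. This is the delicate part of \cite{Ha}; for our proof, the sign is fixed by matching against the Hopf-link base case $(m, l_1) = (1, 1)$, where (\ref{hopf}) gives $\Omega_L = 1$ while the formula predicts $(x_1 - x_1^{-1})\cdot\Omega_{\text{unknot}} = (x_1 - x_1^{-1})\cdot(x_1 - x_1^{-1})^{-1} = 1$, and then propagated to arbitrary configurations via (\ref{conwayI}) and (\ref{conwayII}). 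The two particular cases stated in the theorem follow at once: when all $l_i = 0$ the prefactor vanishes; and when $m = 1$ and $l_1 = 1$, one obtains $\Omega_L(x,1) = (x - x^{-1})\,\Omega_{K_1}(x) = \nabla_{K_1}(x - x^{-1})$ from $\Omega_K(x) = \nabla_K(x - x^{-1})/(x - x^{-1})$.
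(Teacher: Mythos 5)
First, note that the paper does not prove this statement at all: it is quoted as a known result of Hartley \cite{Ha} (the Torres condition for the Conway potential function), so there is no internal proof to match your argument against; your proposal has to stand on its own as a complete proof, and as written it does not.

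The genuine gap is exactly the step you flag and then dispose of too quickly: the sign (more precisely, the full normalization). The Fox-calculus manipulation you sketch is the classical Torres argument and it only yields the identity up to a unit $\pm x_1^{k_1}\cdots x_m^{k_m}$, since a maximal minor of the Alexander matrix computes $\Delta_L$ (times the image of a deleted generator minus $1$) only up to units, and passing to Hartley's symmetrized $\Omega_L$ requires tracking precisely the monomial and sign conventions that your row/column operations scramble. That normalization is the entire content of the theorem beyond the classical Torres formula --- as Remark \ref{conway-sign} of this paper stresses, the sign of $\Omega_L$ is not even an invariant of the module $\pi_L'/\pi_L''$ and depends delicately on how the presentation is set up. Your proposed fix, checking the Hopf link and ``propagating via (I) and (II)'', is not a valid argument: relation (I) only changes self-crossings of a single component, so it cannot connect an arbitrary $(m+1)$-component link to the base case; relation (II) relates $L$ to a link in which two components are fused, so the inductive statement changes shape (different $m$, different coloring, different linking numbers) and no induction scheme is actually set up; and for three or more colors the relations (I), (II), (S), (H), ($\Phi$) do not even determine $\Omega$ (the paper says so explicitly), so they certainly cannot pin down a possibly $L$-dependent sign $\epsilon_L$ in your identity. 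To close the gap you would need either Hartley's careful bookkeeping of the Fox-calculus normalization (choice of deleted relator, base crossing, symmetrization monomial) through the specialization $y=1$, or an argument via a sign-rigid construction of $\Omega_L$ such as Cimasoni's geometric one; a single numerical check plus an appeal to skein relations does not suffice.
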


In the case $m=0$ we have

\begin{theorem}[Alexander \cite{Al}] \label{alexander} If $K$ is a PL knot, then $\nabla_K(0)=1$.
\end{theorem}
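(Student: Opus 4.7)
The plan is to show, via identity (\ref{conwayI}), that the value $\nabla_K(0)$ is invariant under crossing changes on $K$, and then to invoke the elementary fact that any PL knot can be reduced to the unknot by a finite sequence of crossing changes; the value on the unknot is then read off directly from the axioms.

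Write $z := x - x^{-1}$. Applying (\ref{conwayI}) to a pair of knots $K_+, K_-$ that differ at a single crossing, with the sole strand coloring them both by a single color $x$, and denoting by $L_0$ the oriented smoothing of that crossing (a $2$-component link), one obtains
$$\Omega_{K_+}(x) - \Omega_{K_-}(x) = z\,\Omega_{L_0}(x,x).$$
Multiplying through by $z$ and using the identity $z\,\Omega_K(x) = \nabla_K(z)$ from the formula for knots stated just before Example \ref{mazur-conway} on the left-hand side, this rearranges to
$$\nabla_{K_+}(z) - \nabla_{K_-}(z) = z \cdot \bigl(z\,\Omega_{L_0}(x,x)\bigr).$$
Since $\Omega_{L_0}(x_1, x_2) \in \Z[x_1^{\pm 1}, x_2^{\pm 1}]$ for the $2$-component link $L_0$ by the opening of Section \ref{conway}, the expression $z\,\Omega_{L_0}(x, x)$ is a genuine Laurent polynomial in $x$, so its value at $x = 1$ is a well-defined integer. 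Specializing $x = 1$ (equivalently $z = 0$) makes the prefactor $z$ on the right-hand side vanish, giving
$$\nabla_{K_+}(0) = \nabla_{K_-}(0).$$

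Thus $\nabla_K(0)$ is a crossing-change invariant of PL knots. A standard elementary argument (given any planar diagram of $K$, fix a base point and walk around $K$, flipping each crossing the first time it is approached from below, so that the resulting knot descends through every crossing) shows that any PL knot can be unknotted by a finite sequence of crossing changes, whence $\nabla_K(0) = \nabla_{\mathrm{unknot}}(0)$. Finally, the display immediately preceding Example \ref{mazur-conway} gives $\Omega_{\mathrm{unknot}}(x) = 1/(x - x^{-1})$, so $\nabla_{\mathrm{unknot}}(z) = z \cdot \Omega_{\mathrm{unknot}}(x) = 1$ as an identity of polynomials in $z$, and in particular $\nabla_{\mathrm{unknot}}(0) = 1$.

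The only delicate point is ensuring that the right-hand side of the skein relation actually vanishes at $z = 0$; this requires knowing that the factor multiplying $z$ has no pole at $x = 1$, which is automatic from the integrality statement $\Omega_L \in \Z[x_1^{\pm 1}, \ldots, x_m^{\pm 1}]$ for $m > 1$ cited above. Beyond that, no real obstacle presents itself: the proof is a textbook induction on the unknotting number.
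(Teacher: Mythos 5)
Your proof is correct and is essentially the paper's own argument: the paper's proof likewise deduces from identity (\ref{conwayI}) that $\nabla_K(0)$ is the same for all knots (the crossing-change/unknotting argument you spell out) and then evaluates the unknot using (\ref{hopf}) and (\ref{conwayPhi}), which is exactly the source of the display $\Omega_{\mathrm{unknot}}=1/(x-x^{-1})$ you invoke. You merely make explicit the details (integrality of $\Omega_{L_0}$ at $x=1$ and the descending-diagram unknotting) that the paper leaves implicit.
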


\begin{proof} (\ref{conwayI}) implies that $\nabla_K(0)$ is the same for all knots $K$.
By (\ref{hopf}) and (\ref{conwayPhi}) $\nabla_{\includegraphics[height=0.65cm]{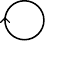}}=1$.
\end{proof}

\begin{example}\label{fibonacci-lucas}
Let $L=(K_1,K_2)$ be a $2$-component link and $n=\lk(L)$.
By Theorem~ \ref{torres} \[\Omega_L(x,1)=
\frac{x^n-x^{-n}}{x-x^{-1}}\nabla_{K_1}(x-x^{-1}).\]
Here \[x^n-x^{-n}=\begin{cases}
(x+x^{-1})F_n(x-x^{-1})&\text{if $n$ is even,}\\
L_n(x-x^{-1})&\text{if $n$ is odd,}
\end{cases}\]
where $F_n(z)$ is the $n$th Fibonacci polynomial, defined for $n\ge 0$ by $F_0(z)=0$, $F_1(z)=1$ and 
$F_{n+1}(z)=zF_n(z)+F_{n-1}(z)$, and $L_n(z)$ is the $n$th Lucas polynomial, defined for $n\ge 0$ by 
the same recursion $L_{n+1}(z)=zL_n(z)+L_{n-1}(z)$ with different initial values $L_0(z)=2$, $L_1(z)=z$.
The definitions are extended for negative indexes by $F_{-n}(z)=(-1)^{n+1}F_n(z)$ and $L_{-n}(z)=(-1)^nL_n(z)$.
\end{example}

\begin{corollary} \label{lk} If $L$ is a $2$-component link, then $\Omega(1,1)=\lk(L)$.
\end{corollary}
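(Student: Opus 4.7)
The plan is to derive this directly from Example \ref{fibonacci-lucas}, which already furnishes the formula
\[\Omega_L(x,1) \;=\; \frac{x^n - x^{-n}}{x - x^{-1}}\,\nabla_{K_1}(x - x^{-1}),\]
where $n = \lk(L)$. The right-hand side is a genuine Laurent polynomial in $x$ (the quotient is $L_n(x-x^{-1})$ or $(x+x^{-1})F_n(x-x^{-1})$ according to the parity of $n$, and $\nabla_{K_1}$ is a polynomial), so the substitution $x=1$ is legitimate. Since $\Omega_L \in \Z[x^{\pm1},y^{\pm1}]$ for a $2$-component link, the expression $\Omega_L(1,1)$ on the left is also well-defined.

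First I would evaluate the rational factor at $x=1$. For $n \ge 1$ we have the identity
\[\frac{x^n - x^{-n}}{x - x^{-1}} = x^{n-1} + x^{n-3} + \cdots + x^{-(n-1)},\]
a Laurent polynomial with $n$ summands, each evaluating to $1$ at $x=1$, so the value there is $n$. For $n \le -1$ the same computation with a sign flip (or the antisymmetry of the numerator in $n$) gives $n$. For $n=0$ the quotient is identically zero, so the value is $0 = n$ as well. In all cases the first factor contributes $n$ at $x=1$.

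Next I would handle the Conway polynomial factor. Substituting $x=1$ in $\nabla_{K_1}(x - x^{-1})$ gives $\nabla_{K_1}(0)$, which equals $1$ by Alexander's theorem (Theorem \ref{alexander}). Multiplying the two contributions yields $\Omega_L(1,1) = n \cdot 1 = \lk(L)$, as claimed.

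I do not anticipate any real obstacle: the statement is essentially a corollary of the Torres--Hartley specialization formula (Theorem \ref{torres}) combined with the normalization $\nabla_K(0)=1$ for knots. The only point worth verifying carefully is that the formula of Example \ref{fibonacci-lucas}, although written as a ratio, makes sense as a Laurent polynomial so that evaluation at $x=1$ is unambiguous; this is immediate from the Fibonacci/Lucas presentation recorded in that example.
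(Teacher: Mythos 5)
Your proof is correct and follows essentially the same route as the paper: both evaluate the Torres--Hartley specialization recorded in Example \ref{fibonacci-lucas} at $x=1$, noting that $\frac{x^n-x^{-n}}{x-x^{-1}}$ is a Laurent polynomial with $n$ terms and that $\nabla_{K_1}(0)=1$ by Theorem \ref{alexander}. Your extra care with the cases $n\le 0$ and with well-definedness of the evaluation is a harmless elaboration of the paper's one-line argument.
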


\begin{proof} 
$\dfrac{x^n-x^{-n}}{x-x^{-1}}=x^{-(n-1)}+x^{-(n-3)}+\dots+x^{n-1}$, which contains $n=\lk(L)$ terms.
\end{proof}

Let $L=(K_1,\dots,K_m,K)$ and $L'=(Q,Q_1,\dots,Q_n)$ be PL links in $S^3$, where $Q$ is unknotted.
Let $N$ and $N'$ be regular neighborhoods of $K$ and $Q$ in the complements to the other components,
and let $E'$ be the solid torus $\overline{S^3\but N'}$.
Let $h\:E'\to N$ be a homeomorphism that sends a longitude of $E'$ (which is a meridian of $N'$) 
onto a longitude of $N$.
Then $\Lambda:=\big(K_1,\dots,K_m,h(Q_1),\dots,h(Q_n)\big)$ is called the {\it splice} of $L$ and $L'$.

\begin{theorem}[Cimasoni \cite{Ci}] \label{splice} Let $\Lambda$ be the splice of 
$L=(K_1,\dots,K_m,K)$ and $L'=(Q,Q_1,\dots,Q_n)$, and let $\mu_i=\lk(K,K_i)$ and $\nu_i=\lk(Q,Q_i)$.
Assume that $n\ge 1$, and either $m\ge 1$ or at least one of the $\nu_i$ is nonzero.
Then \[\Omega_\Lambda(x_1,\dots,x_m,y_1,\dots,y_n)=
\Omega_L(x_1,\dots,x_m,y_1^{\nu_1}\cdots y_n^{\nu_n})\,\Omega_{L'}(x_1^{\mu_1}\cdots x_m^{\mu_m},y_1,\dots,y_n).\]
\end{theorem}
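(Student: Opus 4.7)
The plan is to establish the formula by a skein-theoretic induction on the combined complexity of $(L,L')$, exploiting the axiomatic characterization of $\Omega$ afforded by Conway's identities (\ref{conwayI}), (\ref{conwayII}), the Hopf-summand rule (\ref{conwayPhi}), the split-link axiom (\ref{split}), and the Hopf evaluation (\ref{hopf}). The central geometric observation is that splicing is a local operation: $\Lambda$ is obtained by gluing a tubular neighborhood of $Q$ in $S^3$ to the exterior of $K$ in the environment of $L$, so any diagram modification performed on $L$ in a region disjoint from a neighborhood of $K$ induces the same modification on $\Lambda$, and likewise for $L'$ away from $Q$. In particular, no component of $L\setminus K$ ever crosses a component of $L'\setminus Q$ in a diagram of $\Lambda$, so every Conway-type skein move on $\Lambda$ restricts either to a move purely on the $L$-side or purely on the $L'$-side.

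First I would verify that both sides of the proposed identity transform identically under (\ref{conwayI}), (\ref{conwayII}) and (\ref{conwayPhi}) applied to crossings away from $K$ and $Q$. The point is that the exponents $\mu_i$ and $\nu_j$ are linking numbers, invariant under the skein moves in question, so the substitutions $y_1^{\nu_1}\cdots y_n^{\nu_n}$ and $x_1^{\mu_1}\cdots x_m^{\mu_m}$ are unaffected and linearity propagates each skein identity through the product on the right-hand side. Compatibility with (\ref{split}) is similar: a splitting of $L$ that separates $K$ from some $K_i$ induces a splitting of $\Lambda$, and both sides vanish.

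Using these moves, I would reduce $L$ and $L'$ to a controlled base case in which the ``peripheral'' components are disjoint unions of Hopf partners of $K$ and $Q$, realizing the linking numbers $\mu_i$ and $\nu_j$. Both sides of the identity can then be computed directly using (\ref{hopf}), (\ref{conwayPhi}), (\ref{split}); the Torres formula (Theorem \ref{torres}) provides a convenient cross-check by verifying the match under all specializations $y_j=1$ and $x_i=1$, which reduces to the previously-known lower-arity versions.

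The hard part will be keeping track of signs. As emphasized in Remark \ref{conway-sign}, the sign of $\Omega_L$ is a delicate invariant not recoverable from the multi-variable Alexander polynomial and dependent on orientation conventions, so matching it on both sides of the splice formula is nontrivial. A cleaner alternative, which I would fall back on if the skein bookkeeping becomes unwieldy, is to use the C-complex definition of $\Omega$ from \cite{Ci0}, \cite{CiF}: a C-complex for $L$ and one for $L'$ can be spliced along an annulus in the gluing torus to produce a C-complex for $\Lambda$, and the resulting generalized Seifert matrix decomposes into two blocks, one contributed by each of $L$ and $L'$ with the appropriate variable substitution, making multiplicativity manifest at the level of determinants and resolving the sign question intrinsically.
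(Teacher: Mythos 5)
The paper itself gives no proof of Theorem \ref{splice}: it is quoted from Cimasoni \cite{Ci}, whose argument runs through Turaev's identification of the Conway potential function with a sign-refined Reidemeister torsion of the link exterior and the multiplicativity of torsion under gluing along the splicing torus, together with a careful sign analysis. So your proposal is necessarily a different route, and unfortunately its main plan has a genuine gap. A skein-theoretic induction on (\ref{conwayI}), (\ref{conwayII}), (\ref{conwayPhi}), (\ref{split}), (\ref{hopf}) cannot reduce a general $L$ and $L'$ to ``Hopf partner'' base cases, because these five relations do \emph{not} determine $\Omega$ for links coloured with three or more colours: this is stated explicitly in the paper (already for three colours one needs Conway's third identity and the value on the Borromean rings \cite{Na}, and in general only non-local or six-term characterizations are known \cite{Tu0}, \cite{Jia}, \cite{MuJ}). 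Since $\Lambda$, $L$ and $L'$ carry $m+n$ independent variables, the induction has no reason to terminate at configurations computable from the five axioms, so ``reduce to a controlled base case'' is exactly the step that fails. A secondary error is the geometric claim that no component of $L\setminus K$ ever crosses a component of $L'\setminus Q$ in a diagram of $\Lambda$: the spliced components satisfy $\lk\bigl(K_i,h(Q_j)\bigr)=\mu_i\nu_j$, which is generally nonzero, so such crossings are unavoidable and skein moves on $\Lambda$ do not all restrict to one side. You would also need to track how the right-hand side behaves when a smoothing merges components (changing the $\mu_i$, $\nu_j$ and hence the substituted variables), which is glossed over.

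Your fallback via C-complexes is closer in spirit to a workable alternative (and to the Seifert-surface definition of $\Omega$ in \cite{Ci0}, \cite{CiF}), but as written it is only a hope: a C-complex surface for a spliced component $h(Q_j)$ with $\nu_j\neq 0$ cannot be confined to the solid torus around $K$, so the generalized Seifert matrix of a spliced C-complex does not visibly split into an $L$-block and an $L'$-block, and it is not explained how the substitutions $y_1^{\nu_1}\cdots y_n^{\nu_n}$ and $x_1^{\mu_1}\cdots x_m^{\mu_m}$ would emerge from such a block structure; even the Torres formula (Theorem \ref{torres}), a much weaker statement, is nontrivial in that framework. Given Remark \ref{conway-sign}, the sign bookkeeping you defer is precisely the delicate part, and the torsion-theoretic proof of \cite{Ci} is the known way to control it. To salvage your approach you would either have to carry out the C-complex splice computation in detail or switch to the torsion argument.
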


Theorem \ref{splice} can be used to reprove the relation (\ref{conwayPhi}), by representing the connected sum 
of a link $L$ with the Hopf link as a splice of $L$ with a connected sum of two Hopf links (whose $\Omega$
can be computed directly). 
In turn, (\ref{conwayPhi}) and Theorem \ref{splice} easily yield a formula for an arbitrary connected sum:

\begin{corollary}[Cimasoni \cite{Ci}, \cite{Ci0}] \label{connected sum}
Let $\Lambda=(K_1,\dots,K_m,\,K\#Q,\,Q_1,\dots,Q_n)$ be the connected sum of PL links $L=(K_1,\dots,K_m,K)$ and $L'=(Q,Q_1,\dots,Q_n)$,
where $L'$ is disjoint from some $3$-ball containing $L$, along an arbitrary band joining $K$ and $Q$ and otherwise disjoint from $L$ and $L'$.
Then \[\Omega_\Lambda(x_1,\dots,x_m,y,y_1,\dots,y_n)=(y-y^{-1})\Omega_L(x_1,\dots,x_m,y)\,\Omega_{L'}(y,y_1,\dots,y_n).\]
In particular, when $n=0$, we have 
$\Omega_\Lambda(x_1,\dots,x_n,y)=\Omega_L(x_1,\dots,x_n,y)\,\nabla_Q(y-y^{-1})$ and $\nabla_\Lambda(z)=\nabla_L(z)\,\nabla_Q(z)$.
\end{corollary}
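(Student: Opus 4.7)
The plan is to realize $\Lambda$ as the splice of $L$ with an auxiliary link $L^{*}$, and then to combine Theorem~\ref{splice} with the Hopf-clasp identity~(\ref{conwayPhi}). Introduce a small unknotted circle $\mu$ placed in a tiny ball adjacent to $Q$, disjoint from $Q_1,\dots,Q_n$, and chosen so that $\mu$ is a meridian of $Q$; let $L^{*}=(\mu,Q,Q_1,\dots,Q_n)$. The two tasks are: first, to check that the splice of $L$ along $K$ with $L^{*}$ along $\mu$ is ambient isotopic to $\Lambda$; and second, to use (\ref{conwayPhi}) to evaluate $\Omega_{L^{*}}$ in terms of $\Omega_{L'}$.

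For the first task, recall that this splice removes a tubular neighborhood $N$ of $K$ from $S^3$ and glues in the solid torus $E=\overline{S^3\but N_\mu}$ (where $N_\mu$ is a tubular neighborhood of $\mu$) via a homeomorphism sending longitudes to longitudes. Since $\lk(\mu,Q)=1$, the knot $Q\subset E$ wraps once around the core of $E$; in fact $Q$ sits in $E$ as the standard connected-sum pattern, namely a push-off of the core of $E$ with a small localized ``bump'' isotopic to $Q$. Hence after splicing, the image of $Q$ becomes $K\#Q$; the components $Q_1,\dots,Q_n$ can be ambient-isotoped back to their positions in $L'$, and $K_1,\dots,K_m$ are untouched, so the splice equals $\Lambda$ up to ambient isotopy.

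Now I would apply Theorem~\ref{splice}. Assign color $y$ to the component $Q$ of $L^{*}$ (which becomes $K\#Q$ in $\Lambda$), and let $z$ denote the color of $\mu$. With the linking data $\nu_Q=1$, $\nu_{Q_i}=0$, and $\mu_i=l_i:=\lk(K_i,K)$, the hypothesis of Theorem~\ref{splice} is satisfied since $\nu_Q\ne 0$, and the splice formula reads
\[\Omega_\Lambda(x_1,\dots,x_m,y,y_1,\dots,y_n)=\Omega_L(x_1,\dots,x_m,y)\cdot\Omega_{L^{*}}\bigl(x_1^{l_1}\cdots x_m^{l_m},\,y,\,y_1,\dots,y_n\bigr).\]
Since $L^{*}$ is obtained from $L'$ by attaching the meridional Hopf clasp $\mu$ to the component $Q$ of color $y$, the identity~(\ref{conwayPhi}) gives
\[\Omega_{L^{*}}(z,y,y_1,\dots,y_n)=(y-y^{-1})\,\Omega_{L'}(y,y_1,\dots,y_n),\]
independently of $z$. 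Substituting this back yields the corollary; the specializations to $n=0$ and to $\nabla_\Lambda$ then follow by replacing $\Omega_{(Q)}(y)$ by $\nabla_Q(y-y^{-1})/(y-y^{-1})$ and likewise for $L$. I expect the main obstacle to be the geometric identification of the splice with $\Lambda$---in particular, the verification that $Q\subset E$ really realizes the standard connected-sum pattern rather than some more exotic once-wrapping satellite pattern; the rest of the argument is a formal substitution.
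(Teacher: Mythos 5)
Your proposal is correct and is essentially the paper's own proof: the auxiliary link $L^{*}=(\mu,Q,Q_1,\dots,Q_n)$ is exactly the paper's $L''=(B,Q,Q_1,\dots,Q_n)$ with $B$ a meridian of $Q$, whose potential is evaluated by (\ref{conwayPhi}), and the result then follows by recognizing $\Lambda$ as the splice of $L$ with this link and applying Theorem~\ref{splice}. The only difference is presentational: you spell out (and rightly flag) the geometric identification of the splice with the band-connected sum, which the paper dismisses as ``easily seen''.
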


\begin{proof} Let $L''=(B,Q,Q_1,\dots,Q_m)$, where $B$ is the boundary of a small embedded disk intersecting $Q$ transversely 
in one interior point with positive sign (that is, so that $\lk(B,Q)=+1$) and otherwise disjoint from $L'$.
By (\ref{conwayPhi}) $\Omega_{L''}(x,y,y_1,\dots,y_n,y)=(y-y^{-1})\Omega_{L'}(y,y_1,\dots,y_n)$.
On the other hand, $\Lambda$ is easily seen to be the splice of $L$ and $L''$, and the assertion follows from Theorem \ref{splice}.
\end{proof}

Theorem \ref{splice} can also be used to obtain a formula for adding several push-offs of a component to the link 
(compare related formulas in \cite{Tu0}, \cite{Ci}):

\begin{corollary} \label{pushoff}
Let $L=(K_1,\dots,K_m,K)$ be a PL link and let $l_i=\lk(K,K_i)$. 
Let $L_+=(K_1,\dots,K_m,K_+^1,\dots,K_+^n)$, where $n\ge 1$ and $K_+^1,\dots,K_+^n$ are pairwise disjoint 
parallel pushoffs of $K$.
Then \[\Omega_{L_+}(x_1,\dots,x_m,y_1,\dots,y_n)=(x_1^{l_1}\cdots x_m^{l_m}-x_1^{-l_1}\cdots x_m^{-l_m})^{n-1}\,
\Omega_L(x_1,\dots,x_m,y_1\cdots y_n).\]
\end{corollary}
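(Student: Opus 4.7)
The strategy is to exhibit $L_+$ as the splice of $L$ with a universal ``keychain'' link and then evaluate the Conway function of the latter in closed form. Let $L'_n=(Q,Q_1,\dots,Q_n)$ be the link in which $Q$ is an unknot in $S^3$ and $Q_1,\dots,Q_n$ are parallel $0$-framed pushoffs of the core circle of the complementary solid torus $E'=\overline{S^3\setminus N(Q)}$; thus $\lk(Q,Q_i)=1$ and $\lk(Q_i,Q_j)=0$ for $i\ne j$. By construction of the splice, the homeomorphism $h\colon E'\to N(K)$ sends the core of $E'$ to the preferred longitude of $K$, so each $h(Q_i)$ is a $0$-framed parallel pushoff $K_+^i$ of $K$; hence the splice of $L$ with $L'_n$ along the distinguished components $K$ and $Q$ is precisely $L_+$. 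With $\mu_i=l_i$ and $\nu_i=1$, the hypothesis of Theorem~\ref{splice} holds (since $\nu_1=1\ne 0$), and it yields
\[\Omega_{L_+}(x_1,\dots,x_m,y_1,\dots,y_n)=\Omega_L(x_1,\dots,x_m,\,y_1\cdots y_n)\,\Omega_{L'_n}\bigl(x_1^{l_1}\cdots x_m^{l_m},\,y_1,\dots,y_n\bigr).\]

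It remains to prove
\[\Omega_{L'_n}(w,y_1,\dots,y_n)=(w-w^{-1})^{n-1},\]
which I would do by induction on $n$. The base case $n=1$ is the Hopf link, whose Conway function is $1$ by (\ref{hopf}). For the inductive step, observe that $L'_n$ is isotopic to the connected sum of $L'_{n-1}$ (along its component $Q$) with a Hopf link $H=(Q',Q_n)$ placed in a disjoint $3$-ball, along a band otherwise disjoint from both summands: since $Q$ and $Q'$ are unknots sitting in separated $3$-balls, the component types, unknottedness, and pairwise linking numbers of the result all match those of $L'_n$, so the two links are isotopic. Applying Corollary~\ref{connected sum} with $\Omega_H=1$ gives
\[\Omega_{L'_n}(w,y_1,\dots,y_n)=(w-w^{-1})\,\Omega_{L'_{n-1}}(w,y_1,\dots,y_{n-1}),\]
which closes the induction.

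Combining the two displays produces the desired formula. The only step that is not a direct citation of a previously established identity is the geometric identification of $L_+$ as the splice of $L$ with $L'_n$, and within it the observation that $h$ carries the parallel pushoffs of the core of $E'$ to $0$-framed parallel pushoffs of $K$; this hinges entirely on the convention, built into the splice construction, that a longitude of $E'$ (i.e.\ a meridian of $N'$) maps to the preferred longitude of $N(K)$. With that convention in place, everything else is a matter of matching component orderings when invoking Theorem~\ref{splice} and Corollary~\ref{connected sum}, and of keeping track of which linking numbers appear as exponents in which variables.
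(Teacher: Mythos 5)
Your overall route coincides with the paper's: you exhibit $L_+$ as the splice of $L$ with the ``keychain'' link (your $L'_n$ is exactly the paper's $H_+$, the Hopf link with its second component replaced by $n$ parallel pushoffs), check the hypothesis of Theorem~\ref{splice} via $\nu_1=1$, and evaluate the Conway function of the keychain to be $(w-w^{-1})^{n-1}$. Your framing bookkeeping for the splice identification (the longitude of $E'$ is the $0$-framed longitude of its core, and $h$ matches it with the preferred longitude of $N$, so the $Q_i$ go to $0$-framed parallel pushoffs of $K$) is correct and makes explicit what the paper leaves implicit.

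The one step that is not acceptable as written is the inductive identification $L'_n\cong L'_{n-1}\# H$: you justify it by saying that the component knot types, unknottedness and pairwise linking numbers of the band sum agree with those of $L'_n$, ``so the two links are isotopic.'' That inference is invalid in general --- the Whitehead link agrees with the two-component unlink in all of these data without being isotopic to it --- so the inductive step is unsupported as stated. The identification itself is true and easy to repair: since you only need it for \emph{some} band (Corollary~\ref{connected sum} allows an arbitrary band), choose a standard untwisted band disjoint from the meridian disks bounded by the $Q_i$, after which the band sum is visibly the keychain link with $n$ rings; alternatively argue that after the band sum the $Q_i$ are $n$ disjoint meridians of the unknot $Q\# Q'$, and any such system is ambient isotopic to the standard one. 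Better still, and this is what the paper does, you can bypass the connected-sum induction altogether: the $Q_i$ are meridian circles around the strand $Q$, so relation~(\ref{conwayPhi}) peels them off one at a time until only the Hopf link remains, giving $\Omega_{L'_n}(w,y_1,\dots,y_n)=(w-w^{-1})^{n-1}$ directly from (\ref{hopf}).
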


\begin{proof} Let $H=(C,B)$ be the Hopf link, and let $H_+=(C,B_+^1,\dots,B_+^n)$, where $B_+^1,\dots,B_+^n$ 
are pairwise disjoint parallel pushoffs of $B$.
Then $L_+$ is the splice of $L$ and $H_+$, and $\Omega_{H_+}(u,v_1,\dots,v_n)=(u-u^{-1})^{n-1}$ by using (\ref{conwayPhi}).
\end{proof}

Let us note that for $n=0$ Corollary \ref{pushoff} does not make sense as stated (because of a potential division by $0$),
but this case is addressed by Theorem \ref{torres}.

\begin{proposition}[Hartley \cite{Ha}]\label{orientation}
If $L'$ is obtained from $L$ by reversing the orientation of the first component, then
$\Omega_{L'}(x_1,x_2,\dots,x_n)=-\Omega_L(x_1^{-1},\dots,x_n)$.
\end{proposition}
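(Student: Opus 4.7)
The plan is to follow Hartley \cite{Ha} and use the explicit Fox-calculus construction of $\Omega_L$. Fix a diagram $D$ of $L$ with arcs $a_1,\ldots,a_N$ labeled by Wirtinger generators $x_1,\ldots,x_N$ (each tagged with the color of its arc) and relators $r_1,\ldots,r_{N-1}$. Each $x_j$ abelianizes to $t_{c(j)}$. The Alexander matrix $A=(\partial r_i/\partial x_j)^{\mathrm{ab}}$ determines $\Omega_L$ as (a suitably normalized) maximal minor, with normalizing monomial $\pm x_1^{k_1}\cdots x_n^{k_n}$ (after the substitution $t_i=x_i^2$) dictated by the symmetry $\Omega_L(x_1^{-1},\ldots,x_n^{-1})=(-1)^n\Omega_L(x_1,\ldots,x_n)$ from Remark \ref{conway-sign}.

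Passing from $L$ to $L'$ reverses orientation of every color-$1$ arc, which in the Wirtinger presentation corresponds to replacing each color-$1$ generator $x_j$ by $x_j^{-1}$ in every relator. The effect on Fox derivatives is governed by the identities
\[
\partial_x(w^{-1})=-w^{-1}\partial_x w,\qquad \partial_x(uw)=\partial_x u+u\cdot\partial_x w.
\]
A direct calculation shows that the Alexander matrix $A'$ of the modified presentation is obtained from $A$ by the substitution $t_1\mapsto t_1^{-1}$, multiplication of each column indexed by a color-$1$ arc by $-t_1^{-1}$, and multiplication of certain rows by units in $\Z[t_1^{\pm1},\ldots,t_n^{\pm1}]$ (all of which pair up and cancel out after passage to the minor).

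Taking the chosen minor and comparing the normalizing monomials for $L$ and $L'$ (which differ by a monomial that absorbs precisely the column and row multipliers above), one obtains an identity of the form $\Omega_{L'}(x_1,\ldots,x_n)=\varepsilon\cdot\Omega_L(x_1^{-1},x_2,\ldots,x_n)$ for some sign $\varepsilon\in\{\pm1\}$ that is independent of $L$. To pin down $\varepsilon=-1$, it suffices to check the identity on a single example; the positive Hopf link is the most convenient: $\Omega_L=1$ by (\ref{hopf}), and reversing one component yields the negative Hopf link with $\Omega_{L'}=-1$ by (\ref{hopf'}), giving $\varepsilon=-1$ as claimed.

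The main obstacle is the bookkeeping in the middle step: one has to track the parity of column-sign multipliers coming from the identity $\partial_x(w^{-1})=-w^{-1}\partial_x w$, together with the exponent shifts absorbed by the normalization monomial. Once the conventions of \cite{Ha} are carefully unpacked, the identity reduces to an elementary determinantal calculation; the axiomatic route via the skein relations (\ref{conwayI}), (\ref{conwayII}) together with (\ref{split}), (\ref{hopf}), (\ref{conwayPhi}) is less clean, since reversing color $1$ at a mixed crossing simultaneously flips the crossing sign and the oriented smoothing, so (\ref{conwayII}) does not pull back to itself under the naive substitution.
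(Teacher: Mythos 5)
The paper offers no proof of this proposition at all: it is quoted from Hartley \cite{Ha} (his 5.6--5.7, as Remark \ref{conway-sign} indicates), so the only meaningful comparison is with that source, whose Fox-calculus route you are indeed reconstructing. The difficulty is that your middle step --- the only nontrivial part --- is asserted rather than carried out, and the device you use to close it does not work. Re-expressing the Wirtinger data of the reoriented diagram multiplies each colour-$1$ column of the Alexander matrix by a unit of the form $-t_1^{-1}$ (and forces a re-matching of Hartley's conventions about which relator and which generator column are deleted), so after passing to the minor the two sides differ by a unit whose sign is, on the face of it, $(-1)^{c}$ times further row signs, where $c$ is the number of colour-$1$ columns surviving in the minor --- a quantity that depends on the diagram. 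Your statement that these factors are ``absorbed precisely'' by the normalizing monomial is exactly the claim that the total sign discrepancy is independent of $L$; that is the entire content of Hartley's argument, and it cannot be assumed and then calibrated on the Hopf link. The single-example check is valid only after universality of the sign has been established.

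A second, related point: you say the normalization is ``dictated by the symmetry'' relation. As Remark \ref{conway-sign} of the paper stresses, the symmetry relation determines only the exponents $k_1,\dots,k_n$, never the sign; the sign of $\Omega_L$ is fixed by Hartley's specific recipe (ordering and deletion conventions in the presentation), not by any property of the Alexander module, and is strictly finer information than $\Delta_L$. Consequently the normalized minors for $L$ and $L'$ cannot be compared without reproducing that recipe --- precisely the bookkeeping you defer. What your outline does give essentially for free is the weaker statement $\Omega_{L'}(x_1,\dots,x_n)=\pm\,\Omega_L(x_1^{-1},x_2,\dots,x_n)$ with a sign a priori depending on $L$; the proposition is the assertion that this sign is always $-1$, and that step is missing from the proposal.
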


Corollary \ref{pushoff}, Theorem \ref{torres} and Proposition \ref{orientation} imply

\begin{corollary} \label{annulus}
Let $L=(K_1,\dots,K_m,K)$ be a PL link, let $L'=(K_1,\dots,K_m,K,-K_+)$, where $-K_+$ is a parallel
pushoff of $K$ with orientation reversed, and let $l_i=\lk(K,K_i)$. 
Then \[\Omega_{L'}(x_1,\dots,x_m,y,y)=-(x_1^{l_1}\cdots x_m^{l_m}-x_1^{-l_1}\cdots x_m^{-l_m})^2\,\Omega_{L\but K}(x_1,\dots,x_m).\]
\end{corollary}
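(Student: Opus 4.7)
The plan is to deduce the formula by chaining three of the facts just stated: Corollary \ref{pushoff} (with $n = 2$), Proposition \ref{orientation}, and Theorem \ref{torres}. The geometric bridge is the auxiliary link $L_+ = (K_1,\dots,K_m,K_+^1,K_+^2)$, in which both added components are coherently oriented parallel pushoffs of $K$; the link $L'$ of the statement is then nothing but $L_+$ with the orientation of $K_+^2$ reversed.

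Applying Corollary \ref{pushoff} to $L_+$ with $n=2$ yields
\[\Omega_{L_+}(x_1,\dots,x_m,y_1,y_2)=(x_1^{l_1}\cdots x_m^{l_m}-x_1^{-l_1}\cdots x_m^{-l_m})\,\Omega_L(x_1,\dots,x_m,y_1y_2),\]
while Proposition \ref{orientation}, applied to the last component (the stated form is for the first component, but the symmetry of $\Omega_L$ under permutation of its arguments lets us apply it to any component), gives
\[\Omega_{L'}(x_1,\dots,x_m,y_1,y_2)=-\Omega_{L_+}(x_1,\dots,x_m,y_1,y_2^{-1}).\]
The crucial observation is that after combining these two identities and specializing $y_1=y_2=y$, the argument $y_1y_2^{-1}$ collapses to $1$, producing
\[\Omega_{L'}(x_1,\dots,x_m,y,y)=-(x_1^{l_1}\cdots x_m^{l_m}-x_1^{-l_1}\cdots x_m^{-l_m})\,\Omega_L(x_1,\dots,x_m,1).\]
Theorem \ref{torres} then evaluates $\Omega_L(x_1,\dots,x_m,1)$ as $(x_1^{l_1}\cdots x_m^{l_m}-x_1^{-l_1}\cdots x_m^{-l_m})\,\Omega_{L\but K}(x_1,\dots,x_m)$, and substituting this in gives the claimed formula.

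I do not anticipate a genuine obstacle: the argument is essentially a three-line composition of already established results. The only thing requiring care is the order of operations, namely first substituting $y_2\mapsto y_2^{-1}$ (which is what incorporates the sign change from Proposition \ref{orientation}) and only afterwards specializing the two pushoff variables to a common value $y$, so that the Torres--Hartley evaluation at $1$ is triggered and produces the second factor of $x_1^{l_1}\cdots x_m^{l_m}-x_1^{-l_1}\cdots x_m^{-l_m}$.
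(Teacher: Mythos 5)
Your proposal is correct and is exactly the argument the paper intends: the paper gives no written proof beyond the remark that Corollary \ref{annulus} follows from Corollary \ref{pushoff}, Theorem \ref{torres} and Proposition \ref{orientation}, and your three-step composition (pushoff formula with $n=2$, orientation reversal on the last component via the symmetry of $\Omega$ under relabeling of components, then specialization $y_1=y_2=y$ so that Torres--Hartley applies at $y_1y_2^{-1}=1$) fills in precisely that chain, with the signs and both factors of $x_1^{l_1}\cdots x_m^{l_m}-x_1^{-l_1}\cdots x_m^{-l_m}$ coming out as claimed.
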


Corollary \ref{annulus} and Conway's identity \ref{conwayI} imply

\begin{corollary} \label{whitehead}
Let $L=(K_1,\dots,K_m,K)$ be a PL link, let $L'=(K_1,\dots,K_m,W)$ be its possibly twisted Whitehead
doubling along the last component, and let $l_i=\lk(K,K_i)$.
Then \[\Omega_{L'}(x_1,\dots,x_m,y)=\pm(y-y^{-1})(x_1^{l_1}\cdots x_m^{l_m}-x_1^{-l_1}\cdots x_m^{-l_m})^2\,\Omega_{L\but K}(x_1,\dots,x_m).\]
\end{corollary}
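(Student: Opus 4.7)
The plan is to apply Conway's identity (\ref{conwayI}) at one of the two clasp crossings that define the Whitehead double $W$, and then invoke Corollary~\ref{annulus}. I would first write $W$ as a band sum of $(K,-K_+)$ --- the oppositely oriented boundary components of an annulus neighborhood $A$ of $K$ with the prescribed framing --- along a clasped band lying close to $A$; the clasp contributes two self-crossings of $W$ of the same sign, while any twisting just adds extra crossings inside $A$ that play no role at the clasp.

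Fixing one such clasp crossing, I apply (\ref{conwayI}) there. Let $\tilde L$ denote the link with that crossing reversed, and $L'_0$ the link obtained by the oriented smoothing. For $\tilde L$, the clasp now has two crossings of opposite sign, which cancel by Reidemeister~II; the modified last component becomes the band sum of $K$ and $-K_+$ along a flat band in $A$, and bounds the disk obtained from $A$ by deleting the band slice. This disk lies in a regular neighborhood of $K$ disjoint from $K_1,\dots,K_m$, so $\tilde L$ is split and $\Omega_{\tilde L}=0$ by (\ref{split}).

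For $L'_0$, the two strands at any clasp crossing run antiparallel through the band (one traverses from $K$ to $-K_+$, the other in reverse), and the oriented smoothing of such an antiparallel self-crossing disconnects $W$ into two components. The remaining clasp crossing becomes a self-crossing of one of these two new components, bounding a small local loop that is removable by Reidemeister~I. After this simplification, $L'_0 = (K_1,\dots,K_m,K,-K_+)$ with both $K$ and $-K_+$ colored by $y$; Corollary~\ref{annulus} then gives
\[\Omega_{L'_0}(x_1,\dots,x_m,y,y)=-(x_1^{l_1}\cdots x_m^{l_m}-x_1^{-l_1}\cdots x_m^{-l_m})^2\,\Omega_{L\but K}(x_1,\dots,x_m),\]
and substituting $\Omega_{\tilde L}=0$ and this formula into (\ref{conwayI}) isolates $\Omega_{L'}$ and yields the claimed identity, the $\pm$ absorbing the sign of the clasp and the twisting parameter.

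The main subtlety is the identification of $L'_0$: one must trace the antiparallel oriented smoothing carefully to verify that $W$ genuinely splits into the two parallel copies $K$ and $-K_+$, and that the surviving clasp crossing is absorbed as a removable Reidemeister~I kink rather than contributing genuine extra linking between the two resulting components.
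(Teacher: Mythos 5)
Your skein argument at a clasp crossing is exactly the route the paper intends: its entire proof is the remark that Corollary \ref{annulus} and identity (\ref{conwayI}) imply the statement, and your verification that the crossing-changed link is split (the unclasped double bounds the cut-open band, an embedded disk inside a tubular neighborhood of $K$ disjoint from $K_1,\dots,K_m$, so (\ref{split}) gives $\Omega=0$) and that the oriented smoothing recovers the two-component annulus boundary after one Reidemeister~I move is carried out correctly. For the \emph{untwisted} double your proof is complete.

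For the twisted case, however, there is a small but genuine gap. After smoothing, the last two components are $K$ and the reversed $f$-framed pushoff $-K_f$, where $f$ is the twisting parameter, so $\lk(K,-K_f)=-f$; this is \emph{not} the link to which Corollary \ref{annulus} applies, since that corollary is deduced from Corollary \ref{pushoff}, whose splice proof (longitude onto longitude) forces the preferred (zero-framed) pushoff -- and the formula of Corollary \ref{pushoff} is visibly false for other framings, as one sees by evaluating at $y_2=1$ via Theorem \ref{torres}. In particular the twisting cannot be ``absorbed into the $\pm$'': it changes the isotopy type of the smoothed link, not a sign. The repair is short: changing the framing by $\pm1$ amounts to a crossing change between the two $y$-colored components at a twist crossing, and the oriented smoothing there fuses them into the boundary of the annulus cut open along that band, a curve bounding an embedded disk inside the tubular neighborhood of $K$; so the smoothed link is split, (\ref{split}) gives $\Omega=0$, and identity (\ref{conwayI}) shows that $\Omega_{(K_1,\dots,K_m,K,-K_f)}(x_1,\dots,x_m,y,y)$ is independent of $f$ and hence equals the value $-(x_1^{l_1}\cdots x_m^{l_m}-x_1^{-l_1}\cdots x_m^{-l_m})^2\,\Omega_{L\but K}$ supplied by Corollary \ref{annulus}. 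With that one extra observation your argument proves the corollary as stated, twists included.
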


\begin{theorem}[Murasugi--$\dots$--Miyazawa \cite{Mi}]\label{miyazawa} Let $(B,K_1,\dots,K_m)$ be an $(m+1)$-component PL link, 
where $B$ is the unknot, and let $\tilde K_j$ denote the link $p_r^{-1}(K_j)$,
where $p_r\:(S^3,B)\to (S^3,B)$ is the $r$-fold cover ramified over $B$.
Then \[\Omega_{(\tilde K_1,\dots,\tilde K_m)}(x_1,\dots,x_m)=i^{(r-1)(1-\lambda)}
\Omega_{(K_1,\dots,K_m)}(x_1,\dots,x_m)\prod_{j=1}^{r-1}\Omega_{(B,K_1,\dots,K_m)}(\xi^j,x_1,\dots,x_m),\]
where $i=\sqrt{-1}$, $\xi=e^{\pi i/r}$ and $\lambda=\lk(B,K_1)+\dots+\lk(B,K_m)$.
\end{theorem}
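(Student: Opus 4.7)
The plan is to derive this formula from a C-complex / generalized Seifert matrix presentation of $\Omega$, combined with the cyclic symmetry of the branched cover and a discrete Fourier diagonalization; this is the standard pattern for ``root-of-unity'' formulas relating Alexander-type invariants under cyclic branched covers (as in the classical Fox formula for $\Delta$ of a cyclic branched cover).

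First, since $B$ is unknotted, I would span it by a disk $D$ and pick Seifert surfaces $F_1,\dots,F_m$ for $K_1,\dots,K_m$ meeting $D$ and each other only in clasps, so that $C := D\cup F_1\cup\dots\cup F_m$ is a C-complex for $(B,K_1,\dots,K_m)$. The associated Cimasoni-type matrix $A(y,x_1,\dots,x_m)$ computes $\Omega_{(B,K_1,\dots,K_m)}$ up to the standard $(y-y^{-1})$-style normalization. Under $p_r$, the branch locus is exactly $D$, so $\tilde S^3 = S^3$ is obtained by cutting $S^3$ along $D$ and gluing $r$ copies cyclically; the preimages $p_r^{-1}(F_j)$ then assemble into a C-complex $\tilde C$ for $\tilde L = (\tilde K_1,\dots,\tilde K_m)$ on which the deck group $\Z/r$ acts by cyclic permutation of the $r$ sheets.

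The central algebraic step is then that the C-complex matrix $\tilde A(x_1,\dots,x_m)$ inherits a block-circulant structure of order $r$ from this cyclic action. Block-diagonalizing over $\C$ by a discrete Fourier transform yields $\det\tilde A = \prod_{j=0}^{r-1}\det A^{(j)}$, where $A^{(j)}$ is obtained from $A$ by formally specializing the ``$B$-variable'' $y$ to the $2r$-th root of unity $\xi^j$ (with $\xi^{2j}$ being the corresponding character value on the deck group, and the extra square root encoded in $\xi$ reflecting the fact that $\Omega$ uses $x$ with $x^2 = t$ rather than $t$). The $j=0$ block is precisely the C-complex matrix for $(K_1,\dots,K_m)$ obtained by deleting the $D$-block, so it contributes $\Omega_L(x_1,\dots,x_m)$; for $j=1,\dots,r-1$, the block $A^{(j)}$ agrees, after the normalization, with $A$ evaluated at $y=\xi^j$, contributing $\Omega_{(B,K_1,\dots,K_m)}(\xi^j,x_1,\dots,x_m)$.

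The hardest part, and the main obstacle, is the scalar prefactor $i^{(r-1)(1-\lambda)}$, which does not appear in the analogous multiplicative Fox formula for $\Delta$. It arises from three sources that need careful bookkeeping: (i) the normalization differences between the raw determinant of a C-complex matrix and $\Omega$ itself introduce factors of $(\xi^j - \xi^{-j})$ at each twisted block; (ii) the well-known evaluation of $\prod_{j=1}^{r-1}(\xi^j - \xi^{-j})$ via the cyclotomic identity $\prod_{j=1}^{r-1}(1-\xi^{2j}) = r$, together with $\xi = e^{\pi i/r}$, yields powers of $i$; (iii) the dependence on $\lambda = \sum\lk(B,K_j)$ enters from the orientation conventions for lifted components and from Torres--Hartley (Theorem \ref{torres}) applied at $y=\xi^j$, using Corollary \ref{lk} to read off the linking-number contribution. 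Once the diagonalization is in place, it suffices to pin down the resulting scalar by checking the two extreme cases $r=2$ and $\lambda\in\{0,1\}$, which reduce to classical facts about double branched covers and thereby fix the exponent $(r-1)(1-\lambda)$ uniquely.
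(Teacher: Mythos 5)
The paper does not prove this statement at all --- it is quoted from Miyazawa's paper \cite{Mi} (building on Murasugi's work on periodic links), so there is no in-paper argument to compare yours against; your sketch has to stand on its own, and as it stands it has genuine gaps. The strategy (equivariant C-complex in the branched cover, block-diagonalization by characters of the deck group, factorization of the determinant) is indeed the classical route for Fox-type formulas, but the two load-bearing steps are only asserted. First, the claimed block-circulant structure is not automatic: the preimage $p_r^{-1}(F_j)$ of a Seifert surface meeting the branching disk $D$ is a single $\Z/r$-invariant surface, not $r$ disjoint copies permuted by the deck group, so the circulant form must be extracted from a cut-along-$D$ decomposition with explicit choices of lifts; those choices are exactly where the root-of-unity and square-root ambiguities (your ``$y=\xi^j$ versus $y=-\xi^j$'' issue, since the characters naturally give $y^2=\zeta^j$) enter, and you do not resolve them. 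Second, your bookkeeping of the untwisted block is off: by Torres--Hartley (Theorem \ref{torres}), $\Omega_{(B,K_1,\dots,K_m)}(1,x_1,\dots,x_m)=(x_1^{l_1}\cdots x_m^{l_m}-x_1^{-l_1}\cdots x_m^{-l_m})\,\Omega_{(K_1,\dots,K_m)}(x_1,\dots,x_m)$, so the $j=0$ factor is not ``precisely'' $\Omega_{(K_1,\dots,K_m)}$, and you must account for where that linking-number factor is absorbed.

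The most serious gap is the prefactor $i^{(r-1)(1-\lambda)}$, which is the entire content of the theorem beyond the Alexander-polynomial-level statement (where units are invisible). Your plan to ``pin down the resulting scalar by checking $r=2$ and $\lambda\in\{0,1\}$'' is not a proof: nothing in the sketch establishes a priori that the discrepancy between the two sides is a fourth root of unity depending on $(r,\lambda)$ only, let alone through an exponent of the parametric form $a(r-1)+b(r-1)\lambda$, so finitely many spot checks cannot determine it. To make this step honest you would have to carry the normalization factors $(\xi^j-\xi^{-j})$, the sign conventions of the C-complex/Hartley determinant, and the orientation conventions for the lifted components through the diagonalization explicitly (this is exactly the delicate sign bookkeeping discussed in Remark \ref{conway-sign}), which is what Miyazawa's argument actually does; as written, the key identity is assumed rather than derived.
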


\begin{corollary} \label{periodic}
Let $M=(J,B)$ be a $2$-component link with linking number $1$ and let
$p_r\:(S^3,B)\to (S^3,B)$ be the $r$-fold cover ramified over $B$.
Let $J_r=p_r^{-1}(J)$ and $M_r=(J_r,B)$.
Let $\xi=e^{\pi i/r}$.
Then

(a) $\Omega_{M_r}(x,y)=\prod_{j=0}^{r-1}\Omega_M(x,y^{\frac1r}\xi^j)$;

(b) $\Omega_{J_r}(x)=\frac{1}{x-x^{-1}}\prod_{j=0}^{r-1}\Omega_M(x,\xi^j)$.
\end{corollary}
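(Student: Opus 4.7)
The plan is to derive both parts from the Murasugi--Miyazawa formula (Theorem \ref{miyazawa}) applied to a ``thickened'' downstairs link, then strip away the added pushoffs by repeated application of Corollary \ref{pushoff}.

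For part (a), I would apply Theorem \ref{miyazawa} with $m=2$ to the downstairs link $(J,B_+)$, where $B_+$ is a $0$-framed parallel pushoff of $B$ (so $\lk(B,B_+)=0$). Since $\lambda=\lk(B,J)+\lk(B,B_+)=1$, the phase $i^{(r-1)(1-\lambda)}$ is trivial; the preimage of $J$ is the connected knot $J_r$, and the preimage of $B_+$ is a collection $B_+^{(1)},\dots,B_+^{(r)}$ of pairwise parallel $0$-pushoffs of $B$ upstairs. Miyazawa's formula then reads
\[\Omega_{(J_r,B_+^{(1)},\dots,B_+^{(r)})}(x,y,\dots,y)=\Omega_{(J,B_+)}(x,y)\prod_{j=1}^{r-1}\Omega_{(B,J,B_+)}(\xi^j,x,y).\]

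The key step is to recognize both sides, up to the factor $(x-x^{-1})^{r-1}$, as products of values of $\Omega_M$ and $\Omega_{M_r}$. Upstairs, since $\lk(B_+^{(k)},J_r)=1$ for each $k$, Corollary \ref{pushoff} rewrites the left hand side as $(x-x^{-1})^{r-1}\,\Omega_{M_r}(x,y^r)$. Downstairs, the same corollary with $n=1$ gives $\Omega_{(J,B_+)}=\Omega_M$, and with $n=2$ gives $\Omega_{(J,B,B_+)}(x,y_1,y_2)=(x-x^{-1})\,\Omega_M(x,y_1y_2)$; reordering components of the latter yields $\Omega_{(B,J,B_+)}(\xi^j,x,y)=(x-x^{-1})\,\Omega_M(x,\xi^jy)$. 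Substituting into Miyazawa's formula, the factors of $(x-x^{-1})^{r-1}$ cancel, and one obtains $\Omega_{M_r}(x,y^r)=\prod_{j=0}^{r-1}\Omega_M(x,\xi^jy)$; substituting $y\mapsto y^{1/r}$ gives (a).

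For part (b) the quickest route is to specialize (a) at $y=1$. Torres--Hartley (Theorem \ref{torres}) for $M_r=(J_r,B)$ gives $\Omega_{M_r}(x,1)=(x^{l}-x^{-l})\,\Omega_{J_r}(x)$ with $l=\lk(B,J_r)$; the geometric point to check is that $l=1$, not $r$, because $B$ upstairs bounds a Seifert disk obtained as a single sheet of the branched lift of a Seifert disk for $B$ downstairs, and the unique intersection of $J$ with that downstairs disk has exactly one preimage in the chosen sheet. Dividing (a) at $y=1$ by $x-x^{-1}$ then yields (b). (Alternatively, (b) falls out of Theorem \ref{miyazawa} directly in the case $m=1$, $K_1=J$, combined with the Torres--Hartley identity $\Omega_J(x)=\Omega_M(x,1)/(x-x^{-1})$.)

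The subtlest point is the upstairs application of Corollary \ref{pushoff}: one must verify that the $r$ lifts of $B_+$ really form $r$ pairwise parallel $0$-pushoffs of $B$ upstairs, and that each of them has linking number $1$ with $J_r$. This follows from a direct analysis of the branched covering on $\partial N(B)$, which is $r$-to-$1$ in the meridional direction and $1$-to-$1$ in the longitudinal direction, so that a $0$-framed longitude downstairs lifts to $r$ parallel longitudes upstairs; sheet counting in the lifted Seifert disks then gives $\lk(B_+^{(k)},J_r)=1$.
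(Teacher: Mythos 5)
Your proof of (a) is essentially identical to the paper's: the same auxiliary $0$-framed pushoff $B_+$ of $B$, the same application of Theorem \ref{miyazawa} to the downstairs link $(B,J,B_+)$, and the same use of Corollary \ref{pushoff} both downstairs and upstairs, followed by cancellation of $(x-x^{-1})^{r-1}$ (the paper leaves the checks $\lambda=1$ and $\lk(B,J_r)=1$ implicit, which you spell out correctly). For (b) your primary route --- specializing (a) at $y=1$ and applying Theorem \ref{torres} upstairs with the verification $\lk(B,J_r)=1$ --- is the route the paper explicitly acknowledges but sets aside in favor of the direct Miyazawa--Torres computation, which you also give parenthetically; both are sound.
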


\begin{proof}[Proof. (a)]
Let $M_+=(J,B,B_+)$ be obtained from $M$ by adding a parallel pushoff $B_+$ of $B$.
By Corollary \ref{pushoff} $\Omega_{M_+}(x,y,z)=(x-x^{-1})\Omega_M(x,yz)$.
Let $M_r^+$ be the $2$-colored link $(J_r,B_r)$, where $B_r=p_r^{-1}(B_+)$.
Then by Theorem \ref{miyazawa}
$\Omega_{M_r^+}(x,y)=\Omega_M(x,y)\prod_{j=1}^{r-1}\Omega_{M_+}(x,\xi^j,y)$.
Hence $\Omega_{M_r^+}(x,y)=(x-x^{-1})^{r-1}\prod_{j=0}^{r-1}\Omega_M(x,y\xi^j)$. 
But clearly $B_r=(B_+^1,\dots,B_+^r)$, the link of $r$ parallel pushoffs of $B$.
Hence by Corollary \ref{pushoff} $\Omega_{M_r^+}(x,y)=(x-x^{-1})^{r-1}\Omega_{M_r}(x,y^r)$.
Thus we get that $\Omega_{M_r}(x,y)=\prod_{j=0}^{r-1}\Omega_M(x,y^{\frac1r}\xi^j)$.
\end{proof}

\begin{proof}[(b)] Although (b) follows from (a) and Theorem \ref{torres}, it is easier to prove it
by observing that it is a direct consequence of Theorems \ref{miyazawa} and \ref{torres}.
\end{proof}

\begin{remark} Explicit computations may be aided by the following congruence \cite{Ms2}*{Proposition 4.2}.
If $P(x,y)\in\Z[x^{\pm1},y^{\pm1}]$%
\footnote{$P$ has no negative exponents in \cite{Ms2}, but it is easy to see that this hypothesis is redundant.}
and $r=p^nm$, where $p$ is a prime, $n>0$ and $(m,p)=1$, then
$\prod_{j=0}^{r-1}P(x,e^{2\pi j/r})\equiv\Big(\prod_{k=0}^{m-1}P(x,e^{2\pi k/m})\Big)^{p^n}\pmod p$.
\end{remark}

\section{Computing $\Omega$ for ramified covers of the fake Mazur link} \label{geometry}

The following notation will be used throughout this section.
Let $M=(J,B)$ be the fake Mazur link (see Figure \ref{mazur}) and let $p_r\:(S^3,B)\to (S^3,B)$ be the $r$-fold cover ramified over $B$.
Let $J_r=p_r^{-1}(J)$ and $M_r=(J_r,B)$.

\begin{lemma} \label{conway-MJ} Let $\zeta=e^{2i\pi/r}$.
Then

(a) $\Omega_{M_r}(x,y)=\prod_{j=0}^{r-1}\Big(1+(xy^{\frac2r}\zeta^j-x^{-1}y^{-\frac2r}\zeta^{-j})(x-x^{-1})-(x-x^{-1})^2\Big)$;

(b) $\Omega_{J_r}(x)=\frac{1}{x-x^{-1}}\prod_{j=0}^{r-1}\Big(1+(x\zeta^j-x^{-1}\zeta^{-j})(x-x^{-1})-(x-x^{-1})^2\Big)$.
\end{lemma}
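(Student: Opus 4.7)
The plan is to combine the explicit formula for $\Omega_M$ computed in Example \ref{mazur-conway} with the general ramified-cover formula of Corollary \ref{periodic}, and then simplify the resulting expression.

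First I would recall that Example \ref{mazur-conway} gives
\[\Omega_M(x,y)=1+(xy+x^{-1}y^{-1})(x-x^{-1})(y-y^{-1}).\]
Before substituting, I would perform one algebraic simplification in preparation for the cover: expanding
\[(xy+x^{-1}y^{-1})(y-y^{-1})=xy^{2}-x^{-1}y^{-2}-(x-x^{-1}),\]
so that
\[\Omega_M(x,y)=1+(xy^{2}-x^{-1}y^{-2})(x-x^{-1})-(x-x^{-1})^{2}.\]
The crucial point here is that after rewriting, the variable $y$ appears only through $y^{2}$; this is what makes the substitution $y\mapsto y^{1/r}\xi^{j}$ land cleanly in $\Z[x^{\pm 1},y^{\pm 1}]$ with $\zeta=\xi^{2}=e^{2\pi i/r}$.

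Next, for part (a) I would invoke Corollary \ref{periodic}(a), which gives
\[\Omega_{M_r}(x,y)=\prod_{j=0}^{r-1}\Omega_M\bigl(x,\,y^{1/r}\xi^{j}\bigr).\]
Substituting $y\mapsto y^{1/r}\xi^{j}$ in the rewritten form above, and using $(y^{1/r}\xi^{j})^{2}=y^{2/r}\zeta^{j}$, each factor becomes
\[1+\bigl(xy^{2/r}\zeta^{j}-x^{-1}y^{-2/r}\zeta^{-j}\bigr)(x-x^{-1})-(x-x^{-1})^{2},\]
yielding formula (a) at once. For part (b) I would apply Corollary \ref{periodic}(b), which differs from (a) only by the prefactor $1/(x-x^{-1})$ and the substitution $y^{1/r}\xi^{j}\mapsto \xi^{j}$ (so $y^{2/r}\zeta^{j}$ becomes $\zeta^{j}$); this immediately gives formula (b).

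There is no serious obstacle: the only thing to watch is the fractional power $y^{1/r}$, which a priori lives outside $\Z[x^{\pm1},y^{\pm1}]$, and the appearance of the primitive $2r$th root of unity $\xi$ rather than the $r$th root $\zeta$. Both issues are resolved simultaneously by the preliminary rewriting that collects $y$ into $y^{2}$; after that, the product is manifestly a polynomial in $x^{\pm 1}$, $y^{\pm 2/r}$, and $\zeta$, and symmetry under $j\mapsto -j$ (or equivalently, Galois invariance) ensures the final expression is a Laurent polynomial in $x$ and $y$ as required by Theorem \ref{torres}-type considerations, consistent with $\Omega_{M_r}$ being well-defined.
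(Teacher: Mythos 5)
Your proposal is correct and follows essentially the same route as the paper: rewrite $\Omega_M(x,y)=1+(xy^2-x^{-1}y^{-2})(x-x^{-1})-(x-x^{-1})^2$ from Example \ref{mazur-conway} so that $y$ enters only through $y^2$, and then apply Corollary \ref{periodic}(a),(b), with $\xi^2=\zeta$ handling the passage from the $2r$th to the $r$th root of unity. The paper's proof is just this argument stated more tersely, so there is nothing to add.
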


\begin{proof} By Example \ref{mazur-conway} 
\[\Omega_M(x,y)=1+(xy+x^{-1}y^{-1})(x-x^{-1})(y-y^{-1})=1+(xy^2-x^{-1}y^{-2})(x-x^{-1})-(x-x^{-1})^2.\]
Now the assertion follows from Corollary \ref{periodic}.
\end{proof}

\begin{proposition}\label{conway-MJ'} 
(a) $\nabla_{J_r}(z)=r\prod_{k=1}^{r-1}\Big(\frac1{\zeta^{2k}-\zeta^k}-z^2\Big)$, where $\zeta=e^{2\pi i/r}$.
\medskip

(b) $\Omega_{M_r}(x,y)=\nabla_{J_r}(x-x^{-1})+(x-x^{-1})^r(y-y^{-1})\big((-1)^{r+1}x^ry+x^{-r}y^{-1}\big)$.
\medskip

(c) $x^ry+(-1)^{r+1}x^{-r}y^{-1}=(y-y^{-1})F_{r+1}(x-x^{-1})+(x^{-1}y+xy^{-1})F_r(x-x^{-1})$.
\medskip

(d) $\nabla_{M_r}(z)=z\nabla_{J_r}(z)+(-1)^{r+1}z^{r+2}L_{r+1}(z)$.
\end{proposition}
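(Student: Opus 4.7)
The four parts proceed together from the explicit product formulas of Lemma~\ref{conway-MJ}. Statement (a) is a closed form for $\nabla_{J_r}$; (b) computes $\Omega_{M_r}(x,y)$ as a correction added to $\nabla_{J_r}(x-x^{-1})$; (c) is an algebraic Fibonacci-type identity; (d) follows from (b) by setting $y=x$ and using the standard relation $\nabla_{M_r}(z)=(x-x^{-1})\,\Omega_{M_r}(x,x)$ for $2$-component links. The key algebraic observation throughout is the factorization
\[
1+(x\zeta^j-x^{-1}\zeta^{-j})z-z^2=(x^{-1}\zeta^{-j}+z)(x\zeta^j-z),
\]
which equals $1$ when $j=0$ (so that $u_0=x-x^{-1}=z$).

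For (a), since $\nabla_{J_r}(z)\in\Z[z]$ is even (as the Conway polynomial of a knot) and has degree at most $2(r-1)$ by the product formula, write $\nabla_{J_r}(z)=P(z^2)$ with $\deg P\le r-1$. To locate the roots of $P$: for each $k\in\{1,\dots,r-1\}$, pick $x$ satisfying $x^2=1-\zeta^{-k}$; then $z=x-x^{-1}=-x^{-1}\zeta^{-k}$, which makes the $k$-th factor of Lemma~\ref{conway-MJ}(b) vanish, and one computes $z^2=\zeta^{-2k}/(1-\zeta^{-k})=1/(\zeta^{2k}-\zeta^k)=\alpha_k$, so $P(\alpha_k)=0$. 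Alexander's Theorem~\ref{alexander} gives $P(0)=\nabla_{J_r}(0)=1$. The standard cyclotomic identities $\prod_{k=1}^{r-1}(1-\zeta^k)=r$ and $\prod_{k=1}^{r-1}\zeta^k=(-1)^{r-1}$ combine to yield $\prod_k(\zeta^{2k}-\zeta^k)=r$, hence $\prod_k\alpha_k=1/r$, which pins down the leading constant in $P(t)=r\prod_k(\alpha_k-t)$.

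For (b), apply the product formula $\prod_{j=0}^{r-1}(w-a\zeta^j)=w^r-a^r$ to the factorized forms of Lemma~\ref{conway-MJ}(a) and (b), obtaining
\[
\Omega_{M_r}(x,y)=(-1)^r(z^r-x^ry^2)(z^r-(-1)^rx^{-r}y^{-2}),\quad \nabla_{J_r}(z)=(-1)^r(z^r-x^r)(z^r-(-1)^rx^{-r}).
\]
The constant terms (both equal to $(-1)^{2r}=1$) and the $z^{2r}$ terms cancel upon subtraction, leaving $(-1)^{r+1}z^r\bigl[x^r(y^2-1)+(-1)^rx^{-r}(y^{-2}-1)\bigr]$; the identities $y^2-1=y(y-y^{-1})$ and $y^{-2}-1=-y^{-1}(y-y^{-1})$ factor out $(y-y^{-1})$ and produce exactly the bracket claimed in (b). For (c), both sides satisfy the three-term recurrence $U_{r+1}-U_{r-1}=zU_r$: on the left this uses $x^{r+1}-x^{r-1}=zx^r$ and $x^{-r-1}-x^{-r+1}=-zx^{-r}$; on the right it follows from $F_{n+1}-F_{n-1}=zF_n$. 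The base cases $r=0,1$ are checked directly. For (d), setting $y=x$ in (b) gives $\Omega_{M_r}(x,x)=\nabla_{J_r}(z)+z^{r+1}\bigl[(-1)^{r+1}x^{r+1}+x^{-r-1}\bigr]$; since $L_n(z)=x^n+(-1)^nx^{-n}$, the bracket equals $(-1)^{r+1}L_{r+1}(z)$, and multiplying by $z$ yields the claim.

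The main subtlety is in (a): the values $\alpha_k$ need not all be distinct (for instance $\alpha_1=\alpha_5=-1$ when $r=6$, since $\alpha_k=\alpha_{k'}$ iff $\zeta^k+\zeta^{k'}=1$), so the Vandermonde-based uniqueness argument requires care. However no three $\alpha_k$ coincide (otherwise $\zeta^{k_1}+\zeta^{k_2}=\zeta^{k_1}+\zeta^{k_3}=1$ forces $\zeta^{k_2}=\zeta^{k_3}$), and each pairwise coincidence $\alpha_k=\alpha_{k'}$ contributes multiplicity exactly~$2$ to both $\nabla_{J_r}$ (via two distinct vanishing factors corresponding to $k$ and $k'$) and to $r\prod_k(\alpha_k-z^2)$, preserving the identity.
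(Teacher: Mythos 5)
Your derivations of (b), (c) and (d) are correct and take a genuinely different (and in places cleaner) route than the paper: from the factorization $1+(A-A^{-1})z-z^2=(A^{-1}+z)(A-z)$ you collapse the products of Lemma \ref{conway-MJ} into the closed forms $\Omega_{M_r}=(-1)^r(z^r-x^ry^2)\big(z^r-(-1)^rx^{-r}y^{-2}\big)$ and $\nabla_{J_r}=(-1)^r(z^r-x^r)\big(z^r-(-1)^rx^{-r}\big)$, and (b) drops out by subtraction, with no need for the parity-of-$y$-exponents argument or the Torres condition that the paper uses to identify the $y$-free part. Your (c) is a self-contained two-term-recurrence induction where the paper cites an external lemma, and your (d) bypasses (c) entirely by putting $y=x$ in (b) and using $L_n(z)=x^n+(-1)^nx^{-n}$; all of these check out.

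Part (a), however, is not complete as written. Two steps are asserted rather than proved. First, the bound $\deg\nabla_{J_r}\le 2(r-1)$ does not follow from the product formula at face value, because the individual factors are not polynomials in $z$; you need either to pair the $j$ and $r-j$ factors into genuine degree-$4$ polynomials in $z$ (this is exactly what the paper does) or to check the cancellation of the $x^{\pm 2r}$ terms in your closed form. Second, and more seriously, the multiplicity claim is not established: evaluating at the single point $x_0$ with $x_0^2=1-\zeta^{-k}$ can only show that $P(\alpha_k)=0$, never that $\alpha_k$ is a double root when $\alpha_k=\alpha_{k'}$; and in fact at that $x_0$ the second factor that vanishes is the factor $x\zeta^{j}-z$ with $j=r-k'$, not a factor "corresponding to $k'$", so even the stated mechanism is off, and one must also rule out $z_0=\pm 2i$ (or invoke evenness) to transfer $x$-root multiplicity to $z$-root multiplicity. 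The conclusion is true, but the cleanest repair within your own framework avoids root-counting altogether: verify directly that
\[
\frac{1}{\zeta^{2k}-\zeta^k}-z^2=-x^{-2}\Big(x^2-(1-\zeta^{-k})\Big)\Big(x^2-\tfrac{1}{1-\zeta^{-k}}\Big),
\qquad z=x-x^{-1},
\]
(the two roots in $x^2$ have product $1$ and sum $2+\alpha_k$), multiply over $k=1,\dots,r-1$, and compare with the factorization of $\prod_{j=1}^{r-1}(x^{-1}\zeta^{-j}+z)(x\zeta^j-z)$ into the same quadratics in $x^2$, using $\prod_{k=1}^{r-1}(1-\zeta^k)=r$ for the constant; this proves (a) as an identity of Laurent polynomials in $x$ with no degree or multiplicity bookkeeping. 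Alternatively, follow the paper's pairing of conjugate factors.
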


Here $F_r$ and $L_r$ are the Fibonacci and Lucas polynomials (see Example \ref{fibonacci-lucas}).

\begin{proof}[Proof. (a)] We start from the expression of Lemma \ref{conway-MJ}(b) and multiply out the factors corresponding 
to $j$ and to $r-j$, for $j=1,\dots,\lfloor\frac{r-1}2\rfloor$.
There remain: 
\begin{itemize}
\item an additional factor corresponding to $j=0$, but it equals 1; and
\item an additional factor corresponding to $\frac{r}{2}$ when $r$ is even.
\end{itemize}
The result is:
\[\nabla_{J_r}(z)=
\begin{cases}\phantom{(1-2z^2)}\prod_{k=1}^n\big(1+2z^2(1+c_k-2c_k^2)+2z^4(1-c_k)\big)&\text{ if }r=2n+1;\\
(1-2z^2)\prod_{k=1}^{n-1}\big(1+2z^2(1+c_k-2c_k^2)+2z^4(1-c_k)\big)&\text{ if }r=2n,
\end{cases}\]
where $c_k=\cos\frac{2\pi k}{r}$. 
Using the trigonometric identities $\cos(2\phi)=1-2\sin^2\phi$
and $\sin(3\phi)=3\sin\phi-4\sin^3\phi$ this formula can be simplified as follows:
\[\nabla_{J_r}(z)=
\begin{cases}\phantom{(1-2z^2)}\prod_{k=1}^n\big(1+4\sin(k\alpha)\sin(3k\alpha)z^2 +4\sin^2(k\alpha)z^4\big)&\text{ if }r=2n+1;\\
(1-2z^2)\prod_{k=1}^{n-1}\big(1+4\sin(k\alpha)\sin(3k\alpha)z^2 +4\sin^2(k\alpha)z^4\big)&\text{ if }r=2n,
\end{cases}\]
where $\alpha=\pi/r$.
Let $x=2\sin(k\alpha)z^2$, $\lambda=\sin(3k\alpha)$.
Then $1+2\lambda x+x^2=(r_+-x)(r_-+x)$, where $r_\pm=-\lambda\pm\sqrt{\lambda^2-1}=-\sin(3k\alpha)\pm i\cos(3k\alpha)=
i\big(i\sin(3k\alpha)\pm\cos(3k\alpha)\big)$.
Thus $r_+=ie^{3k\alpha i}$ and $r_-=ie^{(\pi-3k\alpha)i}=ie^{3(\pi-k\alpha)i}=ie^{3(r-k)\alpha i}$.
On the other hand, $\sin(k\alpha)=\sin(\pi-k\alpha)=\sin\big((r-k)\alpha\big)$.
It follows that
\[\nabla_{J_r}(z)=\prod_{k=1}^{r-1}\big(ie^{3k\alpha i}-2\sin(k\alpha)z^2\big).\]
Now using the trigonometric identity%
\footnote{Let $\zeta=e^{2\pi i/r}$ and $\xi=e^{\pi i/r}$.
Then
$\prod_{k=1}^{r-1}\sin(k\alpha)=\prod_{k=1}^{r-1}\frac{\xi^k-\xi^{-k}}{2i}
=(2i)^{1-r}\xi^{r(r-1)/2}\prod_{k=1}^{r-1}(1-\xi^{-2k})=(2i)^{1-r}e^{(r-1)\pi i/2}\prod_{k=1}^{r-1}(1-\zeta^{-k})
=2^{1-r}\prod_{k=1}^{r-1}(1-\zeta^k)$.
On the other hand, by dividing both sides of $x^r-1=\prod_{k=0}^{r-1}(x-\zeta^k)$ by $x-1$ we get
$1+x+\dots+x^{r-1}=\prod_{k=1}^{r-1}(x-\zeta^k)$, and by substituting $x=1$ we get
$r=\prod_{k=1}^{r-1}(1-\zeta^k)$.}
$\prod_{k=1}^{r-1}\sin(k\alpha)=\frac{r}{2^{r-1}}$ we get
\[\nabla_{J_r}(z)=r\prod_{k=1}^{r-1}\Big(\frac{ie^{3k\alpha i}}{2\sin(k\alpha)}-z^2\Big).\]
Finally, $\frac{ie^{3k\alpha i}}{2\sin(k\alpha)}=\frac{-e^{3k\alpha i}}{e^{k\alpha i}-e^{-k\alpha i}}=\frac1{e^{-4k\alpha i}-e^{-2k\alpha i}}$.
We have $e^{-2k\alpha i}=e^{(2\pi-2k\alpha)i}=e^{2(r-k)\alpha i}$ and similarly $e^{-2k\alpha i}=e^{4(r-k)\alpha i}$.
Hence 
\[\nabla_{J_r}(z)=r\prod_{k=1}^{r-1}\Big(\frac1{e^{4k\alpha i}-e^{2k\alpha i}}-z^2\Big).\]
\end{proof}

\begin{proof}[(b)] Since $\lk(M_r)=1$, the exponent of $y$ in every term of $\Omega_{M_r}(x,y)$
must be even \cite{M24-2}*{Lemma \ref{part2:4.1}}. 
Each of the $r$ factors of the product in Lemma \ref{conway-MJ}(a) is a sum of monomials of three types: 
$cx^k$, $cx^ky^{\frac2r}$ and $cx^ky^{-\frac2r}$, where $k\in\Z$ and $c\in\C$. 
A product of $r$ such monomials $m_0\cdots m_{r-1}$ can have a nonzero even integer exponent of $y$ 
only when all $m_i$ are of the second type or all $m_i$ are of the third type.
Since $\zeta^0\zeta^1\cdots\zeta^{r-1}=\zeta^{r(r-1)/2}=(-1)^{r+1}$, the sum of all such products is
$(-1)^{r+1}x^ry^2(x-x^{-1})^r$ in the first case and $-x^{-r}y^{-2}(x-x^{-1})^r$ 
in the second case.
Thus $\Omega_{M_r}(x,y)=\big((-1)^{r+1}x^ry^2-x^{-r}y^{-2}\big)(x-x^{-1})^r+L(x)$ for some $L(x)\in\Z[x^{\pm1}]$.
To eliminate $L(x)$ let us note that $\Omega_{M_r}(x,1)=\big((-1)^{r+1}x^r-x^{-r}\big)(x-x^{-1})^r+L(x)$.
Hence we get
$\Omega_{M_r}(x,y)-\Omega_{M_r}(x,1)=(x-x^{-1})^r\big((-1)^{r+1}x^r(y^2-1)+x^{-r}(1-y^{-2})\big)=
(x-x^{-1})^r(y-y^{-1})((-1)^{r+1}x^ry+x^{-r}y^{-1})$.
Now the assertion follows from Theorem \ref{torres}.
\end{proof}

\begin{proof}[(c)]
This follows from the formula $\{x^nM\}=F_n(\{x\})\{xM\}+F_{n-1}(\{x\})\{M\}$, which in turn follows from \cite{M24-2}*{Lemma \ref{part2:identities}(a)}.
\end{proof}

\begin{proof}[(d)] This follows from (b) and (c) using that $L_{r+1}(z)=zF_{r+1}(z)+2F_r(z)$.
\end{proof}

\begin{corollary} \label{conway-MJ-degrees}
$\nabla_{M_r}\ne z\nabla_{J_r}$.
\end{corollary}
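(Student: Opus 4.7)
The plan is to read the corollary off directly from part (d) of Proposition \ref{conway-MJ'}. That identity gives
\[
\nabla_{M_r}(z) - z\nabla_{J_r}(z) = (-1)^{r+1} z^{r+2}\, L_{r+1}(z),
\]
so proving the corollary reduces to verifying that the right-hand side is not the zero polynomial.

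For this it is enough to show that the Lucas polynomial $L_{r+1}(z)$ is not identically zero. I would argue this by induction on $n$ using the recursion $L_{n+1}(z) = z L_n(z) + L_{n-1}(z)$ with initial data $L_0(z)=2$, $L_1(z)=z$: an easy induction shows that $L_n(z)$ is monic of degree $n$ for $n\ge 1$ (and has constant term $2$ when $n=0$), hence is nonzero. Then $z^{r+2} L_{r+1}(z)$ is a nonzero polynomial (in fact of degree $2r+3$), and multiplying by the nonzero scalar $(-1)^{r+1}$ preserves this.

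There is really no obstacle here: the corollary is a trivial consequence of the preceding proposition once one knows that Lucas polynomials are nonzero, which is immediate from their defining recursion. The only modest bookkeeping issue is making sure that the indexing is correct (that $L_{r+1}$, not $L_r$ or some other term, appears), which is already ensured by part (d).
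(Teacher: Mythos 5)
Your proposal is correct and is essentially the paper's argument: the paper also deduces the corollary as an immediate consequence of Proposition \ref{conway-MJ'}, phrasing it as a degree count ($\deg\nabla_{M_r}=2r+3$ while $\deg z\nabla_{J_r}=2r-1$), which amounts to the same observation that the discrepancy term $(-1)^{r+1}z^{r+2}L_{r+1}(z)$ in part (d) is a nonzero polynomial. Your verification that $L_{r+1}$ is monic of degree $r+1$ is exactly the fact underlying that degree computation, so nothing further is needed.
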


\begin{proof} It is easy to see from Proposition \ref{conway-MJ'} that $\nabla_{M_r}(z)$ 
is a polynomial of degree $2r+3$, and $\nabla_{J_r}(z)$ is a polynomial of degree $2r-2$.
\end{proof}

Proposition \ref{conway-MJ'}(a) gives a description of the polynomial $\nabla_{J_r}$ in terms of its roots.
But we are more interested in its coefficients, which do not seem to be easily obtainable from
this description (except for specific low values of $r$).
To get closer to understanding the coefficients of $\nabla_{J_r}$ we will take a different approach.

\begin{proposition}
\[\sum_{r=1}^\infty\nabla_{J_r}(z)x^r=\frac{x}{1+z^2(x-x^2)}\Big(\frac{x}{1-x}+\frac1{1+z^2x}\Big).\]
\end{proposition}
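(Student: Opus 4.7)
My plan is to reduce $\nabla_{J_r}(z)$ to the compact closed form $(-1)^rz^{2r}+1-(a^r+b^r)$ for a distinguished conjugate pair $a,b$ depending only on $z$, then sum the resulting three geometric-type contributions, and verify the identity by clearing denominators.

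Writing $z=y-y^{-1}$ and $\zeta=e^{2\pi i/r}$, I first observe by direct expansion that each factor in Lemma~\ref{conway-MJ}(b) factorizes as
\[1+(y\zeta^j-y^{-1}\zeta^{-j})(y-y^{-1})-(y-y^{-1})^2=\bigl(1-(1-\zeta^j)y^2\bigr)\bigl(1-(1-\zeta^{-j})y^{-2}\bigr).\]
Combined with the cyclotomic identity $\prod_{j=0}^{r-1}(A+B\zeta^j)=A^r-(-B)^r$ (immediate from $\prod_j(t-\zeta^j)=t^r-1$), applied with $A=1-u$, $B=u$ and evaluated at $u=y^{\pm2}$, this gives
\[\nabla_{J_r}(z)=\bigl((1-y^2)^r-(-y^2)^r\bigr)\bigl((1-y^{-2})^r-(-y^{-2})^r\bigr).\]
Now set $a=1-y^2$ and $b=1-y^{-2}$. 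One checks $a+b=ab=-z^2$, as well as $(1-y^2)(-y^{-2})=b$, $(-y^2)(1-y^{-2})=a$, and $(-y^2)(-y^{-2})=1$, so on expansion each of the four $r$th-power products collapses into one of $(ab)^r,\,a^r,\,b^r,\,1$, yielding
\[\nabla_{J_r}(z)=(ab)^r-a^r-b^r+1=(-1)^rz^{2r}+1-s_r,\qquad s_r:=a^r+b^r.\]
Since $s_r$ is symmetric in the two roots $a,b$ of $t^2+z^2t-z^2=0$, it is a polynomial in $z$, as it should be.

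The three resulting contributions to the generating function are then geometric-type. The first two sum to $\frac{-z^2x}{1+z^2x}+\frac{x}{1-x}$, while the identity $(1-ax)(1-bx)=1+z^2(x-x^2)$ gives
\[\sum_{r\ge1}s_rx^r=\frac{ax}{1-ax}+\frac{bx}{1-bx}=\frac{(a+b)x-2abx^2}{1+z^2(x-x^2)}=\frac{-z^2x(1-2x)}{1+z^2(x-x^2)},\]
so
\[\sum_{r\ge1}\nabla_{J_r}(z)\,x^r=\frac{-z^2x}{1+z^2x}+\frac{x}{1-x}+\frac{z^2x(1-2x)}{1+z^2(x-x^2)}.\]
It is then a routine rational-function check that this equals the stated right-hand side: clearing denominators over the common denominator $(1-x)(1+z^2x)\bigl(1+z^2(x-x^2)\bigr)$, both numerators collapse to $x(1+z^2x^2)$, using $(1+z^2x)^2=1+2z^2x+z^4x^2$ along the way.

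The only non-routine step is spotting the factorization of each factor in Lemma~\ref{conway-MJ}(b) as $\bigl(1-(1-\zeta^j)y^2\bigr)\bigl(1-(1-\zeta^{-j})y^{-2}\bigr)$; once that trick is in hand, the cyclotomic product identity instantly collapses the product to a two-term symmetric expression in the conjugate algebraic quantities $a,b$, and the remainder is generating-function bookkeeping for the geometric series of $a^r+b^r$.
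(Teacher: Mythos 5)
Your proof is correct, but it takes a genuinely different route from the paper's. I checked the key steps: the factorization of each factor of Lemma \ref{conway-MJ}(b) as $\bigl(1-(1-\zeta^j)y^2\bigr)\bigl(1-(1-\zeta^{-j})y^{-2}\bigr)$, the relations $a+b=ab=-z^2$, the resulting closed form $\nabla_{J_r}(z)=(-1)^rz^{2r}+1-(a^r+b^r)$ (which reproduces $\nabla_{J_1}=1$, $\nabla_{J_2}=1-2z^2$, $\nabla_{J_3}=1+3z^4$, in agreement with Proposition \ref{conway-MJ'}(a) and with the paper's recursion), and the final rational-function identity, whose common numerator is indeed $x(1+z^2x^2)$. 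The paper, by contrast, does not return to Lemma \ref{conway-MJ} at this point: it proves the proposition by a purely skein-theoretic recursion, applying the identity (\ref{conwayI}) and the relation (\ref{conwayPhi}) to the diagrams $A_n,\dots,F_n$ of Figure \ref{recursion} built from repeats of the Fox--Artin tangle, which produces a linear system of recurrences for $\nabla_{A_n},\nabla_{B_n},\nabla_{C_n}$ that is then solved by generating functions. Your argument instead extracts from the ramified-covering product (hence ultimately from Theorem \ref{miyazawa}) a closed formula for $\nabla_{J_r}$ in terms of the power sums of the roots $a,b$ of $t^2+z^2t-z^2$, after which the summation is three geometric-type series and the denominator $1+z^2(x-x^2)=(1-ax)(1-bx)$ appears for a transparent reason. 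What each approach buys: the paper's recursion is diagrammatic, independent of the covering formula, and (as its footnote observes) the same system with other initial data also governs $\Omega_{M_n}(x,y)$; your route yields a sharper intermediate result --- an explicit closed form for $\nabla_{J_r}$ itself, complementing the root description of Proposition \ref{conway-MJ'}(a) and making the degree $2r-2$ and leading coefficient $(-1)^{r+1}r$ immediate.
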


This result is not used in the present paper, but it might be useful for further attacks on the Rolfsen Problem.

\begin{figure}[h]
\includegraphics[width=\linewidth]{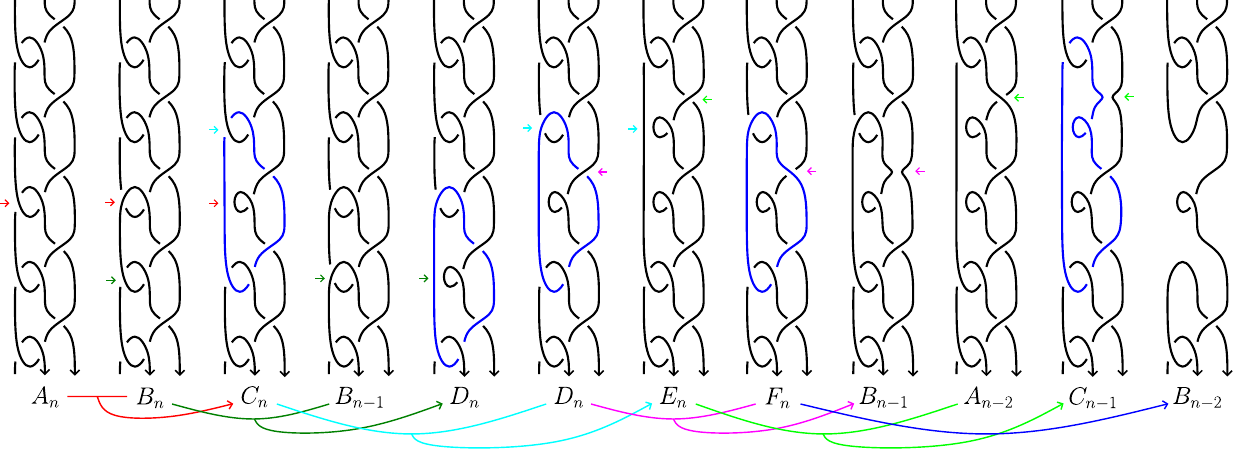}
\caption{Recursion for $\nabla_{A_n}$.}
\label{recursion}
\end{figure}
\begin{proof} Let $A_n=J_n$ and let $B_n$, $C_n$, $D_n$, $E_n$ and $F_n$ be its modifications shown in Figure~ \ref{recursion}.
Here $A_n$ involves $n$ repeats of the Fox--Artin tangle:
\begin{center}
\includegraphics{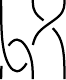}
\end{center}
and so is defined for $n\ge 0$; $B_n$ and $C_n$ each involve $n-1$ repeats of the same tangle, and so are defined for $n\ge 1$; and
$D_n$, $E_n$ and $F_n$ each involve $n-2$ repeats of the same tangle, and so are defined for $n\ge 2$.
From Conway's first relation (\ref{conwayI}) and the relation (\ref{conwayPhi}) we obtain
\[\begin{aligned}
\nabla_{A_n}&=\nabla_{B_n}-z\nabla_{C_n}, &n\ge 1\\
\nabla_{B_n}&=\nabla_{B_{n-1}}-z\nabla_{D_n}, &n\ge 2\\
\nabla_{C_n}&=\nabla_{D_n}-z\nabla_{E_n}, &n\ge 2\\
\nabla_{D_n}&=\nabla_{F_n}+z\nabla_{B_{n-1}}, &n\ge 2\\
\nabla_{E_n}&=\nabla_{A_{n-2}}+z\nabla_{C_{n-1}}, &n\ge 2\\\
\nabla_{F_n}&=-z\nabla_{B_{n-2}}, &n\ge 3.\!\!
\end{aligned}\]
It is easy to see from Figure \ref{recursion-start} that $B_1$ is the unknot (whence $\nabla_{B_1}=1$), 
$C_1$ is the two-component unlink (whence $\nabla_{C_2}=0$) and each of $C_2$ and $D_2$ is the Hopf link (whence $\nabla_{C_2}=z$ and
$\nabla_{B_2}=\nabla_{B_1}-z\nabla_{D_2}=1-z^2$).
\begin{figure}[h]
\includegraphics[width=\linewidth]{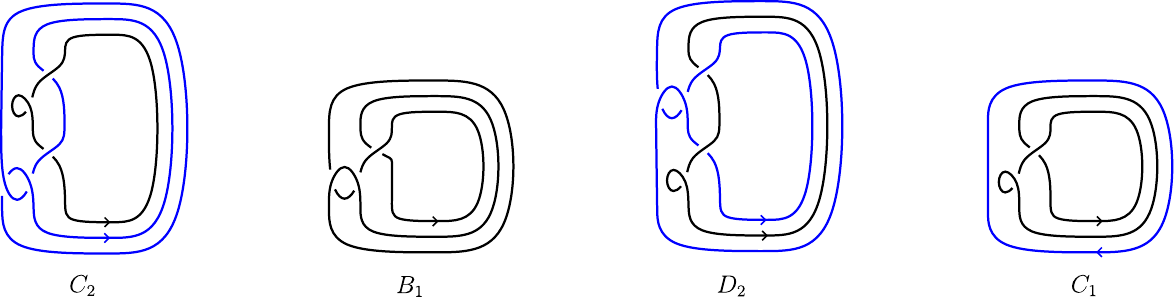}
\caption{Beginning of the recursion.}
\label{recursion-start}
\end{figure}

Setting $a_n=\nabla_{A_n}$, $b_n=\nabla_{B_n}$ and $c_n=\nabla_{C_n}$, we obtain a system of recurrences
\[\begin{aligned}
a_n&=b_n-zc_n, &n\ge 1\\
b_n&=(1-z^2)b_{n-1}+zb_{n-2}, &n\ge 3\\
c_n&=z(b_{n-1}-2b_{n-2})-z^2(c_{n-1}-c_{n-2}), &n\ge 3
\end{aligned}\]
with initial conditions $b_1=1$, $b_2=1-z^2$, $c_1=0$ and $c_2=z$.%
\footnote{Note that the same system of recurrences with $z=x-x^{-1}$ and with a different set 
of initial conditions applies also to $a_n=\Omega_{M_n}(x,y)$.}
Solving the second and third equations, we obtain 
\[\begin{aligned}
b_n&=1-z^2+z^4-z^6+\dots+(-z^2)^{n-1}\\
\sum_{n=1}^\infty c_nx^n&=\frac{1}{1+z^2(x-x^2)}\sum_{n=2}^\infty z\big(b_{n-1}-2b_{n-2}\big)x^n,
\end{aligned}\] 
where $b_0=0$.
Using that $2b_{n-2}-b_{n-1}=b_{n-1}-2(-z^2)^{n-2}$ for $n\ge 2$, we get
\[\sum_{n=1}^\infty a_nx^n=
\frac{1}{1+z^2(x-x^2)}\Bigg(\sum_{n=1}^\infty b_n x^n\Big(1+z^2(x-x^2)\Big)+
z^2\sum_{n=2}^\infty\Big(b_{n-1}-2(-z^2)^{n-2}\Big)x^n\Bigg).\]
Now observe that $b_n+z^2b_{n-1}=1$ and $z^2\sum_{n=1}^\infty b_nx^n(x-x^2)=
z^2\sum_{n=1}^\infty(b_n-b_{n-1})x^{n+1}=z^2\sum_{n=1}^\infty(-z^2)^{n-1}x^{n+1}=-\sum_{n=2}^\infty(-z^2)^{n-1}x^n$.
Hence we get
\[\sum_{n=1}^\infty a_nx^n=\frac{1}{1+z^2(x-x^2)}
\Bigg(\sum_{n=1}^\infty x^n-\sum_{n=2}^\infty(-z^2)^{n-1}x^n+2\sum_{n=2}^\infty(-z^2)^{n-1}x^n\Bigg).\]
Thus
\[\sum_{n=1}^\infty\nabla_{J_n}(z)x^n=\frac{x}{1+z^2(x-x^2)}\Big(\frac{1}{1-x}+\frac1{1+z^2x}-1\Big).\]
\end{proof}

\section{Formal limits of rational power series}

The following is well-known, though not necessarily in the form of this particular assertion.

\begin{lemma} \label{recurrence} A formal power series $R=r_0+r_1x+r_2x^2+\dots\in\Z[[x]]$ satisfies a linear 
recurrence of the form $r_k=q_1r_{k-1}+\dots+q_nr_{k-n}$ for each $k>m$, where $q_1,\dots,q_n\in\Z$
(here in the case $n>m+1$ we set $r_{-1}=r_{-2}=\dots=r_{m+1-n}=0$), if and only if $R=P/Q$, where 
$P,Q\in\Z[x]$, $\deg Q\le n$ and $\deg P\le m$.
\end{lemma}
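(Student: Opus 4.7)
The plan is to prove both directions by the same coefficient-matching identity, with the polynomial $Q(x) := 1 - q_1 x - q_2 x^2 - \cdots - q_n x^n$ acting as the bridge between the recurrence and the rational representation.

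For the $(\Rightarrow)$ direction I would take $Q$ as above, so $Q \in \Z[x]$ with $Q(0) = 1$ and $\deg Q \le n$. Defining $P := Q \cdot R$, the coefficient of $x^k$ in $P$ is
\[
[x^k]\, P \;=\; r_k - q_1 r_{k-1} - \cdots - q_n r_{k-n},
\]
with $r_j := 0$ for $j < 0$, consistent with the convention in the statement. The hypothesized recurrence makes this vanish whenever $k > m$, so $P$ is a polynomial of degree at most $m$ with integer coefficients, and $R = P/Q$ as required.

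For the $(\Leftarrow)$ direction, starting from $R = P/Q$ with $P, Q \in \Z[x]$, $\deg P \le m$, $\deg Q \le n$, I would first reduce to $Q(0) = 1$: any factor of $x$ in $Q$ is automatically a factor of $P = QR$ (since $R \in \Z[[x]]$), so we may cancel it and assume $Q(0) \ne 0$; a short Fatou-type observation, exploiting that $R$ has integer coefficients, then lets us further normalize to $Q(0) = \pm 1$ without enlarging the degree bounds, after which a sign flip gives $Q(0) = 1$. Writing $Q = 1 - q_1 x - \cdots - q_n x^n$ and equating coefficients of $x^k$ in $QR = P$, for $k > m$ the right side vanishes and we read off $r_k = q_1 r_{k-1} + \cdots + q_n r_{k-n}$. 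The one real obstacle is this normalization step; the rest is a one-line coefficient comparison symmetric between the two directions.
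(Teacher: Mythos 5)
Your coefficient-matching argument with $Q=1-q_1x-\dots-q_nx^n$ is exactly the paper's proof in both directions; the only point where you diverge is the normalization step in the ($\Leftarrow$) direction. In the paper, $R=P/Q$ is read as a quotient in $\Z[[x]]$ with $Q$ a unit (this is the standing convention for rational power series there, and it is how Lemma \ref{recurrence} is applied in Theorem \ref{convergence}, where $Q_n(0)=\pm1$ is part of the hypothesis), so $Q(0)=\pm1$ comes for free and the normalization is nothing more than a sign change --- no cancellation of $x$'s and no Fatou-type input is needed. If instead you insist on the weaker reading, where one only knows $QR=P$ with $P,Q\in\Z[x]$ and no unit condition on $Q$, then your ``short Fatou-type observation'' is precisely Fatou's lemma (together with the remark that the lowest-terms fraction divides any representation, which is what preserves the degree bounds); that statement is true, but it is a genuine classical theorem rather than an observation, and your proposal asserts it without proof, so under that reading this is the one unproven step. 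Either adopt the paper's convention that $Q\in\Z[[x]]^\times$, in which case your argument collapses to the paper's, or supply an actual proof of the Fatou normalization.
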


\begin{proof} Suppose that $R$ is of the form $P/Q$, where $P$ and $Q$ are as above.
Since $Q$ is invertible in $\Z[[x]]$, we have $Q(0)=\pm1$; by changing the signs of $P$ and $Q$ 
we may assume that $Q(0)=1$.
Let us write $Q=1-q_1x-\dots-q_nx^n$.
Then 
\begin{multline*}
P=QR=(1-q_1x-\dots-q_nx^n)(r_0+r_1x+\dots)=r_0+(r_1-q_1r_0)x+(r_2-q_1r_1-q_2r_0)x^2\\
+\dots+(r_n-q_1r_{n-1}\dots-q_nr_0)x^n+(r_{n+1}-q_1r_{n+2}-\dots-q_nr_1)x^{n+1}+\dots.
\tag{$*$}
\end{multline*}
Since $\deg P\le m$, the coefficients at $x^i$ for $i>m$ must be zero, so we obtain the desired recurrence relation.

Conversely, suppose that the said recurrence relation is satisfied for $k>m$, and let $Q=1-q_1x-\dots-q_nx^n$.
Then $P:=QR$ is a polynomial of degree $\le m$ (see ($*$)).
\end{proof}

\begin{theorem} \label{convergence}
Let $P_1,Q_1,P_2,Q_2,\dots\in\Z[x]$, where each $Q_n\in\Z[[x]]^\times$ (in other words, $Q_n(0)=\pm1$).
Suppose that the $P_i$ converge%
\footnote{In the usual topology on $\Z[[x]]$, that is, its topology as the inverse limit of 
the discrete rings $\Z[x]/(x^i)$.
More explicitly, the condition $P_i\to 0$ is saying the for each $k$ there exists an $n$
such that for each $i>n$, $P_i$ is divisible by $x^k$.} 
to $0$ in $\Z[[x]]$ but each $P_i\ne 0$.
Then there exists a function $f\:\N\to\N$ such that for any increasing sequence $r_1,r_2,\dots$
satisfying $r_{i+1}\ge f(r_i)$ for each $i$, 

(a) the infinite product%
\footnote{That is, the limit of the finite products in the same topology on $\Z[[x]]$. 
Its existence is clearly guaranteed by the condition $P_i/Q_i\to 0$.} 
$R:=\Bigg(1+\dfrac{P_{r_1}}{Q_{r_1}}\Bigg)\Bigg(1+\dfrac{P_{r_2}}{Q_{r_2}}\Bigg)\cdots$;

(b) the infinite sum%
\footnote{That is, the limit of the finite sums in the same topology on $\Z[[x]]$.
Its existence is clearly guaranteed by the condition  $P_i/Q_i\to 0$.} 
$R:=\dfrac{P_{r_1}}{Q_{r_1}}+\dfrac{P_{r_2}}{Q_{r_2}}+\dots$%

\noindent
is not a rational power series.
\end{theorem}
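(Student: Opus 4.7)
The plan is to derive a contradiction from assumed rationality of $R$ by comparing $R$ with its partial sums (case (b)) or partial products (case (a)), denoted $S_i$. The key algebraic step is the following degree-vs-valuation lemma: if $R = P/Q$ and $S = P^*/Q^*$ are rational power series in $\Z[[x]]$ with $Q(0),\,Q^*(0) = \pm 1$, and $v(R - S) > \max(\deg P + \deg Q^*,\,\deg P^* + \deg Q)$, then $R = S$. Indeed $R - S = (PQ^* - P^*Q)/(QQ^*)$, with $QQ^*$ a unit in $\Z[[x]]$, so the polynomial $PQ^* - P^*Q$ would have valuation exceeding its own degree bound and must vanish.

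I will apply this with $S := S_i$, written over the common denominator $Q_i^* := \prod_{j \le i} Q_{r_j}$ (so $Q_i^*(0) = \pm 1$). A direct estimate yields $\deg Q_i^*,\,\deg P_i^* \le i\,D(r_i)$, where $D(r) := \max_{k \le r}(\deg P_k + \deg Q_k)$; note that $D(r) \to \infty$, since $P_k \to 0$ with $P_k \ne 0$ forces $\deg P_k \ge v(P_k) \to \infty$. Setting $v_j := v(P_{r_j})$, a direct expansion shows $v(R - S_i) \ge v_{i+1}$ in both cases, with $R \ne S_i$ because the difference $R - S_i$ carries a nonzero leading term $\pm L_{r_{i+1}}\,x^{v_{i+1}}$ (times $S_i$ in case (a)), where $L_{r_{i+1}}$ is the leading coefficient of $P_{r_{i+1}}$; in case (a) one also uses that $1 + P_{r_j}/Q_{r_j}$ becomes a unit in $\Z[[x]]$ once $v_j \ge 1$, so $S_i \ne 0$ and the integral-domain property of $\Z[[x]]$ gives $S_i(T_i - 1) \ne 0$.

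I then define $f(r) := \min\{K \in \N : v(P_k) > 2rD(r)\text{ and }v(P_k) > v(P_r)\text{ for all }k \ge K\}$, which is finite because $P_k \to 0$. Assume for contradiction that $R$ is rational with $\deg P \le M$, $\deg Q \le N$. For any sequence with $r_{i+1} \ge f(r_i)$, the $v_j$ are strictly increasing and $v_{i+1} > 2 r_i D(r_i) \ge 2 i D(r_i)$ (using $i \le r_i$). For $i$ large enough that $iD(r_i) > \max(M, N)$, which must eventually hold since $iD(r_i) \to \infty$, this yields $v_{i+1} > \max(M, N) + iD(r_i) \ge \max(M + \deg Q_i^*,\,N + \deg P_i^*)$, and the lemma forces $R = S_i$, contradicting $R \ne S_i$. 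The main subtlety is that $f$ must be fixed in advance of knowing the complexity $(M,N)$; this is handled by making $v_{i+1}$ dominate $iD(r_i)$ by an arbitrarily large additive margin, so any fixed $(M,N)$ is eventually accommodated.
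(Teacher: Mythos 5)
Your proof is correct and follows the same overall strategy as the paper's: compare $R$ with the partial sums/products $S_i$, show that $R-S_i$ has $x$-adic valuation exceeding every degree bound available for the rational functions involved, conclude $R=S_i$ for all large $i$, and contradict the fact that the tail is nonzero. The genuine difference is in the key algebraic step. The paper invokes Lemma \ref{recurrence} (rationality via linear recurrences) and argues that a sufficiently long run of vanishing coefficients of $QR_i$ beyond the recurrence threshold forces $QR_i=P$; you instead use the direct cross-multiplication bound $R-S=(PQ^*-P^*Q)/(QQ^*)$ with $QQ^*$ a unit, so that valuation beyond $\max(\deg P+\deg Q^*,\,\deg P^*+\deg Q)$ kills the numerator. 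This buys noticeably lighter bookkeeping: your thresholds are linear in the accumulated degree ($v(P_k)>2rD(r)$, with $\deg P_i^*,\deg Q_i^*\le iD(r_i)$), whereas the paper's recurrence argument needs a product of degrees and hence the quadratic thresholds $\max(2n_i,m_i^2)+1$ and $\max(2M,N^2)+1$ (this is where the explicit growth conditions of Remarks \ref{growth1} and \ref{growth2} come from; your version of $f$ would yield milder ones). Both arguments resolve the ``$f$ must be fixed before $(M,N)$'' subtlety in the same way, by making the valuation of the tail exceed the accumulated degrees by a margin tending to infinity; your device of building strict increase of the valuations into $f$, so that the coefficient of $x^{v_{i+1}}$ in the tail is visibly nonzero, replaces the paper's comparison of two consecutive partial sums and is equally valid. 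One caveat, which you share with the paper's own proof and which is harmless in the intended application where all $P_i$ have positive valuation: in case (a) the claim $S_i\ne 0$ (and likewise the paper's inference $R_{i_0}=R_{i_0+1}\Rightarrow P_{r_{i_0+1}}=0$) tacitly assumes that no factor $1+P_{r_j}/Q_{r_j}$ vanishes; since $P_k\to 0$, only the first factor could be problematic, and if $P_{r_1}=-Q_{r_1}$ the product is $0$ and hence rational, so strictly speaking statement (a) needs this degenerate case excluded.
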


One cannot do without selecting a subsequence in general.
For instance, \[S:=\prod_{n=1}^\infty\left(1+\frac{x^{n-1}-x^{n+1}}{1-x^{2n}}\right)\] 
is a rational power series.
(Indeed, $S=\prod_{n=1}^\infty\frac{(1+x^{n-1})(1-x^{n+1})}{(1-x^n)(1+x^n)}=\frac{2}{1-x}$.)
Also, if $c_n=\sum_{d\mid n}\mu(d)2^{n/d}$, where $\mu(n)$ is the M\"obius function%
\footnote{$\mu(n)=(-1)^k$ if $n$ is a product of $k$ distinct primes and $\mu(n)=0$ 
if $n$ divisible by a square of a prime.},
then \[S:=\sum_{n=1}^\infty\dfrac{c_nx^n}{1-x^n}\] is a rational power series.
Indeed, by the M\"obius inversion formula $\sum_{d\mid n}c_d=2^n$, whence 
$S=\sum_{n=1}^\infty c_n(x^n+x^{2n}+x^{3n}+\dots)=2x+2^2x^2+2^3x^3+{\dots}=\frac{2x}{1-2x}$.

\begin{proof} We will prove (a) and (b) simultaneously.
Let $n_i=\deg Q_1\cdots Q_i$ and
\[m_i=\begin{cases}\deg (P_1+Q_1)\cdots(P_i+Q_i)&\text{in (a),}\\
\max\{\deg Q_1\cdots Q_{j-1}P_jQ_{j+1}\cdots Q_i\mid j=1,\dots,i\}&\text{in (b).}
\end{cases}\]
Let us define $f\:\N\to\N$ as follows: let $f(i)$ be such that $P_j$ is divisible by $x^{\max(2n_i,m_i^2)+1}$ for all $j\ge f(i)$.
Let $r_1,r_2,\dots$ be any increasing sequence satisfying $r_{i+1}\ge f(r_i)$ for each $i$.
We have $R=\lim R_i$, where 
\[R_i=\begin{cases}
\Big(1+\dfrac{P_{r_1}}{Q_{r_1}}\Big)\cdots\Big(1+\dfrac{P_{r_i}}{Q_{r_i}}\Big)
=\dfrac{(P_{r_1}+Q_{r_1})\cdots(P_{r_i}+Q_{r_i})}{Q_{r_1}\cdots Q_{r_i}}&\text{in (a)}\\[15pt] 
\dfrac{P_{r_1}}{Q_{r_1}}+{\dots}+\dfrac{P_{r_i}}{Q_{r_i}}=\dfrac{P_{r_1}Q_{r_2}\cdots Q_{r_i}+
{\dots}+Q_{r_1}\cdots Q_{r_{i-1}}P_{r_i}}{Q_{r_1}\cdots Q_{r_i}}&\text{in (b).}
\end{cases}\]

Suppose that $R=P/Q$, where $\deg P=M$ and $\deg Q=N$.
Choose $i_0$ so that $R-R_i$ is divisible by $x^{\max(2M,N^2)+1}$ for all $i\ge i_0$.
Let us fix some $i\ge i_0$.
For each $j>i$ we have $r_j\ge r_{i+1}\ge f(r_i)$ and hence $P_{r_j}$ is divisible by $x^{\max(2n_{r_i},m_{r_i}^2)+1}$.
Let $n=\deg Q_{r_1}\cdots Q_{r_i}$ and let
\[m=\begin{cases}\deg (P_{r_1}+Q_{r_1})\cdots(P_{r_i}+Q_{r_i})&\text{in (a),}\\
\max\{\deg Q_{r_1}\cdots Q_{r_{j-1}}P_{r_j}Q_{r_{j+1}}\cdots Q_{r_i}\mid j=1,\dots,i\}&\text{in (b).}
\end{cases}\]
Then $n\le n_{r_i}$ and $m\le m_{r_i}$, and hence $P_{r_j}$ is divisible by $x^{\max(2n,m^2)+1}$ for each $j>i$.
It easily follows from this that $R-R_i$ is also divisible by $x^{\max(2n,m^2)+1}$. In more detail,
in (a) we note that $\Big(1+\dfrac{P_{r_{i+1}}}{Q_{r_{i+1}}}\Big)\dots\Big(1+\dfrac{P_{r_j}}{Q_{r_j}}\Big)-1$
is divisible by $x^{\max(2n,m^2)+1}$ for each $j>i$, and hence so is
$\Big(1+\dfrac{P_{r_{i+1}}}{Q_{r_{i+1}}}\Big)\Big(1+\dfrac{P_{r_{i+2}}}{Q_{r_{i+2}}}\Big)\cdots-1
=\dfrac{R}{R_i}-1=\dfrac{R-R_i}{R_i}$.
In (b), $\dfrac{P_{r_{i+1}}}{Q_{r_{i+1}}}+{\dots}+\dfrac{P_{r_j}}{Q_{r_j}}$
is divisible by $x^{\max(2n,m^2)+1}$ for each $j>i$, and hence so is
$\dfrac{P_{r_{i+1}}}{Q_{r_{i+1}}}+\dfrac{P_{r_{i+2}}}{Q_{r_{i+2}}}+{\dots}=R-R_i$.

Thus $R-R_i$ is divisible by $x^s$, where $s=\max(2M,N^2,2n,m^2)+1$.
Then $QR-QR_i$ is also divisible by $x^s$.
Let us write $QR_i=a_0+a_1x+a_2x^2+\dots$.
Since $QR=P$ is a polynomial of degree $M<s$, we obtain that $a_0+a_1x+{\dots}+a_Mx^M=P$
and $a_{M+1}={\dots}=a_{s-1}=0$.
On the other hand, by Lemma \ref{recurrence} $QR_i$ satisfies 
a linear recurrence of the form $a_j=c_1a_{j-1}+{\dots}+c_na_{j-n}$ for each $j>Nm$.
Since $s>\max(2M,2n)\ge M+n$ and $s>\max(N^2,m^2)\ge Nm$, we obtain that $0=a_s=a_{s+1}=\dots$.
Thus $QR_i=P$.
In other words, $R_i=P/Q=R$.
Now $i$ was an arbitrary integer such that $i\ge i_0$.
Thus $R_{i_0}=R=R_{i_0+1}$.
Hence $P_{i_0+1}=0$, which contradicts the hypothesis.
\end{proof}

\section{Proof of Theorem \ref{main1}} 

For a two-component PL link $L$ it will be convenient to use the shifted Conway polynomial $\varnabla_L(z):=z^{-1}\nabla_L(z)$.
Thus $\varnabla_L(x-x^{-1})=\Omega_L(x,x)$.
If we divide the shifted Conway polynomial $\varnabla_L(z)$ of $L=(K_1,K_2)$ by the product of the non-shifted Conway polynomials 
of the two components:
\[\bar\varnabla_L(z)=\frac{\varnabla_L(z)}{\nabla_{K_1}(z)\nabla_{K_2}(z)},\]
then we get an invariant of PL isotopy of $L$ (see Corollary \ref{connected sum}).%
\footnote{We do need to use both shifted and non-shifted Conway polynomials in the same fraction.
If we use the shifted Conway polynomials in the denominator, then we do not get the PL isotopy invariance.
If use the non-shifted Conway polynomial in the numerator, then we do not get the formula of Theorem \ref{splice2}.
Of course one can take the view that both the numerator and the denominator use the polynomial
$z^{m-1}\nabla_L(z)$, where $m$ is the number of components of $L$; but this approach does not apply 
in the multi-variable setting, which will be used to prove Theorems \ref{main1'} and \ref{main3}.}
Its coefficients are finite type invariants (see \cite{M24-1}*{Lemma \ref{part1:reduced-coefficients2}(a)}).
Hence by Theorem \ref{finite-type} it extends to an invariant of isotopy of topological links.

\begin{theorem} \label{splice2}
Let $\Lambda$ be the splice of two-component PL links $L$ and $L'$ with $\lk(L)=\lk(L')=1$. 
Then $\bar\Omega_\Lambda(x,y)=\bar\Omega_L(x,y)\bar\Omega_{L'}(x,y)$, where%
\footnote{Although $\bar\varnabla_L(z)$ may be understood either as a rational function or a formal power series,
$\bar\Omega_L(x,y)$ will be understood as a rational function only.
One may, of course, identify $\bar\Omega_L(x,y)$ with a formal Laurent series $\Lambda(x,y)$, but this is not
the only possibility.
If $\Omega_L(x,y)$ happens to be a polynomial in $u:=x-x^{-1}$ and $v:=y-y^{-1}$, then another possibility 
is to identify $\bar\Omega_L$ with a formal power series $\Phi(u,v)$.
In this case one cannot in general recover $\Lambda(x,y)$ from $\Phi(u,v)$ by substituting $x-x^{-1}$ for $u$ and
$y-y^{-1}$ for $v$.
For instance, if $(1-x+x^{-1})^{-1}$ is treated as a Laurent series, it equals 
$x(1-x^2+x)^{-1}=x\big(1+(x^2-x)+(x^2-x)^2+\dots\big)$, which cannot be obtained from
$(1-z)^{-1}=(1+z+z^2+\dots)$ by substituting $x-x^{-1}$ for $z$.}
\[\bar\Omega_{(K,K')}(x,y)=\frac{\Omega_{(K,K')}(x,y)}{\nabla_{K}(x-x^{-1})\nabla_{K'}(y-y^{-1})}.\]
In particular, 
$\bar\varnabla_\Lambda(z)=\bar\varnabla_L(z)\,\bar\varnabla_{L'}(z)$.
\end{theorem}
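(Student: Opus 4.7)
The plan is to apply Cimasoni's splice formula (Theorem~\ref{splice}) directly to $\Lambda$, to recognize a second instance of it that yields the satellite identity $\nabla_{K_2}(z) = \nabla_K(z)\nabla_{Q_1}(z)$, where $K_2 := h(Q_1)$ is the second component of $\Lambda$, and then to divide through by the appropriate Conway polynomials. Writing $L=(K_1,K)$ and $L'=(Q,Q_1)$, we have $\mu_1 = \lk(K,K_1) = 1$ and $\nu_1 = \lk(Q,Q_1) = 1$, so Theorem~\ref{splice} specializes to
\[
\Omega_\Lambda(x,y) = \Omega_L(x,y^{\nu_1})\,\Omega_{L'}(x^{\mu_1},y) = \Omega_L(x,y)\,\Omega_{L'}(x,y).
\]

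To handle the discrepancy in the denominators defining $\bar\Omega$, I would note that $K_2$ is itself the splice of the one-component link $(K)$ (the case $m=0$) with $L'=(Q,Q_1)$. The hypotheses of Theorem~\ref{splice} are still met because $\nu_1=1\neq 0$, and the factor $x_1^{\mu_1}\cdots x_m^{\mu_m}$ becomes an empty product equal to~$1$, so the formula reads
\[
\Omega_{K_2}(y) = \Omega_K(y)\,\Omega_{L'}(1,y).
\]
Applying Theorem~\ref{torres} to $L'$ with its components reordered so that the unknotted $Q$ plays the role of the ``last'' component, and using $\lk(Q,Q_1)=1$ together with $\nabla_Q = 1$, gives $\Omega_{L'}(1,y) = \nabla_{Q_1}(y-y^{-1})$. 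Combined with $\Omega_K(y) = \nabla_K(y-y^{-1})/(y-y^{-1})$ and the analogous identity for $K_2$, this yields the satellite formula $\nabla_{K_2}(z) = \nabla_K(z)\,\nabla_{Q_1}(z)$.

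Finally, since $\nabla_Q = 1$, the combined denominator splits as
\[
\nabla_{K_1}(x-x^{-1})\,\nabla_{K_2}(y-y^{-1}) = \bigl(\nabla_{K_1}(x-x^{-1})\nabla_K(y-y^{-1})\bigr)\bigl(\nabla_Q(x-x^{-1})\nabla_{Q_1}(y-y^{-1})\bigr),
\]
and dividing the first display by this product delivers $\bar\Omega_\Lambda = \bar\Omega_L\,\bar\Omega_{L'}$. The ``in particular'' statement about $\bar\varnabla$ is then the specialization $y=x$. I do not foresee a serious obstacle: the whole proof reduces to two applications of Theorem~\ref{splice} and one application of Theorem~\ref{torres}, and the only subtlety is recognizing that the ratio $\nabla_{K_2}/(\nabla_K\nabla_{Q_1})$ is identically~$1$, which is exactly the satellite version of the splice formula.
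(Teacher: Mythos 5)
Your proposal is correct and follows essentially the same route as the paper's proof: two applications of Cimasoni's splice formula (Theorem \ref{splice}), once to $\Lambda$ and once to recognize that the second component of $\Lambda$ is itself a splice, combined with the Torres--Hartley formula (Theorem \ref{torres}) to obtain the satellite identity $\nabla_{K_2}=\nabla_K\nabla_{Q_1}$, and then $\nabla_Q=1$ to split the denominator. No gaps.
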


\begin{proof} By Theorem \ref{splice} $\Omega_\Lambda(x,y)=\Omega_L(x,y)\Omega_{L'}(x,y)$.
Let $\Lambda=(K_1,K_2)$, $L=(K_1,J)$ and $L'=(B,Q)$.
The knot $K_2$ is itself the splice of $J$ with $(B,Q)$.
Then by Theorem~ \ref{splice} $\Omega_{K_2}(y)=\Omega_J(y)\,\Omega_{(B,Q)}(1,y)$.
Also by Theorem \ref{torres} $\Omega_{(B,Q)}(1,y)=\nabla_Q(y-y^{-1})$.
Hence $\nabla_{K_2}(z)=\nabla_J(z)\,\nabla_Q(z)$.
Since $\nabla_B(z)=1$, the assertion follows.
\end{proof}

Let $X$ be the Bing sling contained in the solid torus $T:=S^1\x D^2$. 
Let us assume that $T$ is standardly embedded in $S^3=S^1\x D^2\cup D^2\x S^1$, and let $B$ be a meridian of $T$
and $C$ the core circle of $T$.

The Bing sling $X$ is defined as the limit (in the space of maps) of PL knots $X_0,X_1,\dots$, all contained in $T$, 
where $X_0=C$ and each $X_{i+1}$ is the image of a periodic PL knot $J_{r_{i+1}}$ contained in $T$ under 
an untwisted homeomorphism $h_i$ between $T$ and a regular neighborhood $T_i$ of $X_i$.
Here the knots $J_r$ are as in \S\ref{geometry}, and, as discussed in \S\ref{intro}, the sequence $r_1,r_2,\dots$
needs to be chosen appropriately in order for $X$ to be homeomorphic to $S^1$.

{\bf Step I.} Let us first show that $Y:=(B,X)$ is not isotopic to any PL link, provided that the $r_i$ increase 
sufficiently rapidly. 

Let $Y_i=(B,X_i)$ and $L_i=(B,J_{r_i})$.%
\footnote{Let us note that the unknotted component $B$ comes first here, and therefore will correspond 
to the variable $x$; whereas in \S\ref{geometry} it came last in $M_r=(J_r,B)$ and therefore 
corresponded to the variable $y$.
There are reasons for both choices, and it seems that reversing either of them would result in a greater
inconvenience than the present inconvenience of switching from one choice to another.}
Each $Y_{i+1}$ is clearly the splice of $Y_i$ with $L_{i+1}$.
Then by Theorem \ref{splice2} we get
$\bar\varnabla_{Y_{i+1}}(z)=\bar\varnabla_{Y_i}(z)\,\bar\varnabla_{L_{i+1}}(z)$.
Since $Y_0=(B,C)$ is the Hopf link, $\bar\varnabla_{Y_0}=1$, and we get by induction that 
$\bar\varnabla_{Y_n}=\bar\varnabla_{L_1}\cdots\bar\varnabla_{L_n}$.

Since the links $Y_n$ converge to $Y$ in the $C^0$ topology, their reduced shifted Conway polynomials
$\bar\varnabla_{Y_0},\bar\varnabla_{Y_1},\dots$ converge to $\bar\varnabla_Y$ 
in the ring of formal power series.
Since each $\bar\varnabla_{Y_n}$ is the finite product $\bar\varnabla_{L_1}\cdots\bar\varnabla_{L_n}$,
their limit $\bar\varnabla_Y$ is the infinite product $\bar\varnabla_{L_1}\bar\varnabla_{L_2}\cdots$
whose factors $\bar\varnabla_{L_i}$ converge to $1$.%
\footnote{There is also a more geometric way to see that $\bar\varnabla_{L_i}$ converge to $1$.
The knots $J_{r_1},J_{r_2},\dots$, or rather their equivalent versions in thinner neighborhoods of $C$ in $T$, 
have to converge to $C$ in order for $X_1,X_2,\dots$ to converge to a topological embedding of a circle. 
Since $Y_0=(B,C)$ is the Hopf link, each $L_i=(B,J_{r_i})$ is $k_i$-quasi-isotopic to the Hopf link, where $k_i\to\infty$ 
as $i\to\infty$.
Hence the coefficients of $\bar\varnabla_{L_i}$ at degrees $\ge 2k_i$ are zero.}
By Corollary \ref{conway-MJ-degrees} $\bar\varnabla_{L_i}\ne 1$, and hence by
Theorem \ref{convergence}(a) this infinite product is not a rational power series,
as long as the sequence $r_i$ increases sufficiently rapidly.
Hence $Y$ is not isotopic to any PL link.

{\bf Step II.} Next we prove Theorem \ref{main1} in the general case.
Thus we are going to show that $Z:=(K,X)$ is not isotopic to any PL link.

Since $K$ lies in the complement to the Bing sling $X$, it is disjoint from one of the regular neighborhoods $T_i$ 
of the knots $X_i$ such that $X=\bigcap_{i=0}^\infty T_i$.
Say $K\cap T_k=0$.
Then $Z_i:=(K,X_i)$ is a PL link for each $i\ge k$.
Moreover, each $Z_{i+1}$ is the splice of $Z_i$ and $L_{i+1}$ for all $i\ge k$.
Since $\lk Z_i=\lk Z=1$, arguing as before we obtain that 
$\bar\varnabla_{Z_i}=\bar\varnabla_{Z_k}\bar\varnabla_{L_{k+1}}\cdots\bar\varnabla_{L_i}$,
and consequently $\bar\varnabla_Z=\bar\varnabla_{Z_k}\bar\varnabla_{L_{k+1}}\bar\varnabla_{L_{k+2}}\cdots$.
Like before, $\bar\varnabla_{L_{k+1}}\bar\varnabla_{L_{k+2}}\cdots$ is not a rational power series,
as long as the sequence $r_i$ increases sufficiently rapidly.
Also $\bar\varnabla_{Z_k}$ is a rational power series, and it is not identically zero due to $\lk Z_k=\lk Z=1$.
Hence the power series $\bar\varnabla_Z$ cannot be rational and thus $Z$ is not isotopic to any PL link.

\begin{remark} \label{growth1}
Let us write down an explicit condition on the $r_i$ that was used in the proof.
We have $\bar\varnabla_{L_i}=\frac{z^{-1}\nabla_{M_r}}{\nabla_{J_r}}$, where $r=r_i$.
Here $\deg\nabla_{J_r}=2r-2$ and $\deg z^{-1}\nabla_{M_r}=2r+2$ (see the proof of Corollary \ref{conway-MJ-degrees}).
If $n_i$ and $m_i$ are as in the proof of Theorem \ref{convergence}(a),
then $n_i=\sum_{j=1}^i(2j-2)=i(i-1)$ and $m_i=\sum_{j=1}^i(2j+2)=i(i+3)$, and hence $\max(2n_i,m_i^2)+1=i^2(i+3)^2+1$.
The polynomial $P_i$ in the proof of Theorem \ref{convergence}(a) is $z^{-1}\nabla_{M_i}-\nabla_{J_i}=(-1)^{i+1}z^{i+1}L_{i+1}(z)$ 
(see Proposition \ref{conway-MJ'}(d)).
The function $f$ in the proof of Theorem \ref{convergence}(a) is defined by: $f(i)$ is such that $P_j$ is divisible by 
$x^{\max(2n_i,m_i^2)+1}$ for all $j\ge f(i)$.
Since $L_{i+1}$ contains a constant term for odd $i$ (and a linear term for even $i$, so we will not get much by omitting the odd $i$),
the divisibility condition is saying that $j+1\ge i^2(i+3)^2+1$.
Thus $f(i)=i^2(i+3)^2$.
That is, the condition used in the proof that $\bar\varnabla_{L_1}\bar\varnabla_{L_2}\cdots$ is not rational
is that each $r_{i+1}\ge r_i^2(r_i+3)^2$.
Since $\bar\varnabla_{L_1}\cdots\bar\varnabla_{L_k}$ is rational, to prove that $\bar\varnabla_{L_{k+1}}\bar\varnabla_{L_{k+2}}\cdots$ 
is not rational we may use the same condition.
\end{remark}

\section{Proof of Theorem \ref{main2}}

\begin{lemma} \label{anti-F-isotopy}
Let $X$ be a knot and let $K$ and $K'$ be PL knots each linking $X$ with $\lk=1$.
If there exists a PL solid torus $T$, disjoint from both $K$ and $K'$ and containing $X$ so that 
$[X]$ is a generator of $H_1(T)$, then $\bar\varnabla_{(K',X)}=R\bar\varnabla_{(K,X)}$ for some rational power series $R$.
\end{lemma}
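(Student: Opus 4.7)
The plan is to use the splice formula (Theorem~\ref{splice}) to express $\bar\varnabla_{(K,X)}$ as a product of a factor depending only on the pair $(K,\hat C)$, where $\hat C$ is the PL core of $T$, and a second factor depending only on how $X$ sits inside $T$; the analogous identity for $(K',X)$ with the same second factor then yields the lemma upon dividing. Note that $(K,\hat C)$ and $(K',\hat C)$ are PL two-component links of linking number one.

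First I treat the case where $X=X_0$ is itself a PL knot in $T$ representing the generator of $H_1(T)$. Let $V=S^1\x D^2\subset S^3$ be the standard unknotted solid torus with complementary unknot $U$, and fix a homeomorphism $h\colon V\to T$ sending the longitude of $V$ (which is a meridian of $U$) onto the Seifert longitude of $\hat C$. Setting $X_0^V:=h^{-1}(X_0)$, the link $(K,X_0)$ is exactly the splice of $(K,\hat C)$ along $\hat C$ with $(U,X_0^V)$ along $U$, using the regular neighborhood $T$ of $\hat C$ in $S^3\but K$. Since $\lk(\hat C,K)=\lk(U,X_0^V)=1$, Theorem~\ref{splice} gives
\[\Omega_{(K,X_0)}(x,y)=\Omega_{(K,\hat C)}(x,y)\,\Omega_{(U,X_0^V)}(x,y);\]
applying the same theorem to the splice expressing the knot $X_0$ as a satellite of $\hat C$ yields $\nabla_{X_0}(z)=\nabla_{\hat C}(z)\nabla_{X_0^V}(z)$. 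Specializing $y=x$, dividing by $\nabla_K(z)\nabla_{X_0}(z)$, and using $\nabla_U=1$, I obtain
\[\bar\varnabla_{(K,X_0)}(z)=\bar\varnabla_{(K,\hat C)}(z)\,\bar\varnabla_{(U,X_0^V)}(z).\]
The identical argument with $K'$ in place of $K$ --- crucially, using the same solid torus $T$ as the regular neighborhood of $\hat C$ (which is allowed since $T$ is disjoint from both $K$ and $K'$), and hence the same gluing $h$ and the same pattern $X_0^V$ --- produces the companion identity. Dividing, and noting that $\bar\varnabla_{(K,\hat C)}$ has constant term $\lk(K,\hat C)=1$ by Corollary~\ref{lk} and Theorem~\ref{alexander} and is therefore a unit in $\Z[[z]]$, I get
\[\bar\varnabla_{(K',X_0)}(z)=R(z)\,\bar\varnabla_{(K,X_0)}(z),\qquad R(z):=\frac{\bar\varnabla_{(K',\hat C)}(z)}{\bar\varnabla_{(K,\hat C)}(z)},\]
with $R$ a rational power series depending only on $K$, $K'$, and $T$ (in particular, independent of $X_0$).

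To remove the PL hypothesis on $X$, I would approximate $X$ uniformly by PL knots $X_n\subset T$; for all large $n$ the class $[X_n]$ equals $[X]$ in $H_1(T)$, so the PL case gives $\bar\varnabla_{(K',X_n)}=R\cdot\bar\varnabla_{(K,X_n)}$ with this same $n$-independent $R$. Since the coefficients of $\bar\varnabla$ are finite-type PL-isotopy invariants, Theorem~\ref{finite-type} implies coefficientwise convergence $\bar\varnabla_{(K,X_n)}\to\bar\varnabla_{(K,X)}$ in $\Z[[z]]$ and likewise for $K'$, so the identity passes to the limit. The main point needing care is the assertion that the same splice gluing $h$, and hence the same pattern $X_0^V$, appears in both factorizations, so that the $X_0$-dependent factor cancels in the ratio; this is essentially automatic, since $h$ is pinned down by $T$ together with the Seifert framing of $\hat C$, data that are independent of $K$ and $K'$.
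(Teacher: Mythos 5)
Your argument is correct and is essentially the paper's own proof: both decompose $(K,X_n)$ (for PL approximations $X_n\subset T$ of $X$) as the splice of $(K,\text{core of }T)$ with a fixed pattern link in the unknotted solid torus, apply the multiplicativity of $\bar\varnabla$ under splicing, pass to the limit via the continuity coming from Theorem \ref{finite-type}, and divide using that $\bar\varnabla_{(K,\hat C)}$ has constant term $\lk=1$ and is hence invertible in $\Z[[z]]$. The only cosmetic differences are that you re-derive the splice formula for $\bar\varnabla$ from Theorem \ref{splice} and Theorem \ref{torres} rather than citing Theorem \ref{splice2}, use the complementary unknot instead of a meridian as the pattern's first component, and divide before taking the limit rather than after.
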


\begin{proof} Let $X_0$ be a core circle of $T$, and $X_1,X_2,\dots$ be PL knots in the interior of $T$ converging to $X$.
Let $h$ be a homemorphism between $T$ and the standard unknotted solid torus $S^1\x D^2\subset S^3$.
Let $M\subset S^3$ be a meridian $pt\x\partial D^2$ of the standard unknotted solid torus, $Q=h(X)$ and $Q_n=h(X_n)$.
Then each link $Z_n:=(K,X_n)$ is a splice of $Z_0$ and $L_n:=(M,Q_n)$.
Hence $\bar\varnabla_{Z_n}=\bar\varnabla_{Z_0}\bar\varnabla_{L_n}$.
Since $Z_n\to Z:=(K,X)$ and $L_n\to L:=(M,Q)$, we have $\bar\varnabla_{Z_n}\to\bar\varnabla_Z$ and $\bar\varnabla_{L_n}\to\bar\varnabla_L$.
The latter clearly implies $\bar\varnabla_{Z_0}\bar\varnabla_{L_n}\to\bar\varnabla_{Z_0}\bar\varnabla_L$.
Hence $\bar\varnabla_Z=\bar\varnabla_{Z_0}\bar\varnabla_L$.
Similarly $\bar\varnabla_Y=\bar\varnabla_{Y_0}\bar\varnabla_L$, where $Y=(K',X)$ and $Y_0=(K',X_0)$.
Since $\lk(Z_0)=\lk(Z)=1$ and $\lk(Z_0)$ is the free term of $\bar\varnabla_{Z_0}$, we have $\bar\varnabla_{Z_0}\ne 0$.
Then $\bar\varnabla_Y=\bar\varnabla_{Y_0}\bar\varnabla_{Z_0}^{-1}\bar\varnabla_Z$, where $\bar\varnabla_{Y_0}\bar\varnabla_{Z_0}^{-1}$
is a rational power series.
\end{proof}

\begin{lemma} \label{colimit}
Let $\K$ be a knot in $S^3$ which is the intersection of solid tori $T_1\supset T_2\supset\dots$.
Then $\K$ represents a generator of $H_1(T_i)$ for all but finitely many $i$.
\end{lemma}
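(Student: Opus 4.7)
Let $d_i \in \Z$ be the image of the fundamental class of $\K$ in $H_1(T_i) \cong \Z$ under the inclusion $\K \emb T_i$ (with the generator of $H_1(T_i)$ given by the core circle); the goal is to show $|d_i|=1$ for all but finitely many $i$. Let $m_i \in \Z$ be such that $T_{i+1} \emb T_i$ induces multiplication by $m_i$ on $H_1$. Naturality of the inclusions yields the compatibility $d_i = m_i\, d_{i+1}$, from which $|d_i| \ge |d_{i+1}|$ whenever $d_{i+1} \neq 0$, and $d_{i+1}=0 \Rightarrow d_i=0$.

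The key input will be the continuity of \v Cech cohomology: since $\K = \bigcap_i T_i$ is a nested intersection of compact ANR neighborhoods in $S^3$, we have
\[
\check H^1(\K) = \varinjlim H^1(T_i),
\]
where the transition maps are induced by the inclusions $T_{i+1}\emb T_i$. Identifying each $H^1(T_i)$ with $\Z$ via the bundle projection $T_i \cong S^1 \x D^2 \to S^1$, the transition map $H^1(T_i) \to H^1(T_{i+1})$ becomes multiplication by $m_i$. Since $\K \cong S^1$, we also have $\check H^1(\K) = H^1(S^1) = \Z$; and under these identifications, the canonical map $\phi_i \colon H^1(T_i) \to \check H^1(\K)$ to the colimit is multiplication by $d_i$, because the generator of $H^1(T_i)$ (represented by the bundle projection $T_i \to S^1$) restricts to $\K$ as a map $S^1\to S^1$ of degree $d_i$.

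The conclusion will then come from examining the images in the colimit. The image of $\phi_i$ is the subgroup $d_i\Z\subseteq\Z$, and the relation $d_i = m_i d_{i+1}$ yields $d_i\Z \subseteq d_{i+1}\Z$, so these images form an ascending chain of subgroups of $\Z$ whose union must be all of $\Z$. If every $d_i$ were $0$, then all $\phi_i$ would vanish and the colimit would be $0$, contradicting $\check H^1(\K)=\Z$. Otherwise, because $d_{i+1}=0 \Rightarrow d_i=0$, one has $d_i \ne 0$ for all sufficiently large $i$; the weakly decreasing positive sequence $|d_i|$ then stabilizes at some integer $d \ge 1$, giving $d_i\Z = d\Z$ for all large $i$, and the requirement $\bigcup_i d_i\Z = \Z$ forces $d=1$.

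The only delicate point is pinning down the colimit map $\phi_i$ as precisely multiplication by $d_i$ (with the generator of $H^1(T_i) = [T_i,S^1]$ chosen to be the bundle projection consistent with the chosen generator of $H_1(T_i)$); once this identification is fixed, the rest is routine bookkeeping about ascending chains of subgroups of $\Z$. The edge case in which $\K$ happens to be null-homologous in some initial $T_i$'s is handled automatically by passing to the cofinal sub-sequence starting from the first index where $d_i \ne 0$, which exists since the full colimit cannot be zero.
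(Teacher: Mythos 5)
Your proof is correct, and it takes a route that is dual to, but genuinely distinct from, the paper's. The paper passes to complements: by Alexander duality $H_1(S^3\but\K)\simeq\Z$ and $H_1(S^3\but T_i)\simeq\Z$, and since $S^3\but\K=\bigcup_i(S^3\but T_i)$ (an increasing union, where singular homology commutes with the colimit automatically), the observation that $\colim(\Z\to\Z\to\dots)\not\simeq\Z$ when infinitely many maps fail to be isomorphisms forces all but finitely many winding numbers to be $\pm1$, after which the statement about $[\K]$ is read off rather tersely. You instead stay inside the tori, invoking continuity of \v Cech cohomology for the nested intersection, $\check H^1(\K)\simeq\varinjlim H^1(T_i)$, identifying the transition maps with multiplication by the winding numbers $m_i$ and the canonical maps to the colimit with multiplication by $d_i=[\K]\in H_1(T_i)$, and then running an ascending-chain argument on the images $d_i\Z\subseteq\Z$. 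Both arguments ultimately rest on the same input, namely $\check H^1(\K)\simeq\Z$ (which the paper accesses as $H_1(S^3\but\K)\simeq\Z$ via Alexander duality with \v Cech cohomology, exactly as in its Lemma \ref{torsion}), so neither is more elementary; but your version buys two things: it avoids the detour through complements (at the cost of needing continuity of \v Cech cohomology for a decreasing intersection of compacta, which does hold here), and by tracking $d_i$ explicitly it makes the final step --- that $[\K]$ itself generates $H_1(T_i)$ for large $i$, not merely that the inclusion-induced maps are eventually isomorphisms --- completely explicit, a point the paper leaves implicit (and justifies elsewhere, in the proof of Theorem \ref{main6}, via Milnor's $\lim^1$ sequence). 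Your identification of $\phi_i$ with multiplication by $d_i$ is the only place needing care, and your justification via the degree of the restricted bundle projection (equivalently, dualizing $H_1(\K)\to H_1(T_i)$) is sound.
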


\begin{proof}
By the Alexander duality $H_1(S^3\but\K)\simeq\Z$ and each $H_1(S^3\but T_i)\simeq\Z$.
Since $S^3\but\K=\bigcup_i S^3\but T_i$, we have $H_1(S^3\but\K)\simeq\colim H_1(S^3\but T_i)$.
It is easy to see that $\colim(\Z\xr{f_1}\Z\xr{f_2}\dots)\not\simeq\Z$ if infinitely many of the $f_i$ 
are not isomorphisms.
Hence only finitely many of the $f_i$ are not isomorphisms, and then without loss of generality all of them 
are isomorphisms.
Then the inclusion induced maps $H_1(T_i)\to H_1(T_{i+1})$ are also isomorphisms.
Then $\K$ represents a generator of $H_1(T_i)$ for each $i$.
\end{proof}

Theorem \ref{main2} is a consequence of the following

\begin{theorem} Suppose that the Bing sling $X_0$ is isotopic to a knot $X_1$ through 
knots $X_t$ such that when $t$ belongs to a dense subset $S$ of $[0,1]$,
$X_t$ is the intersection of a nested sequence of PL solid tori.
Then $X_1$ is a wild knot.
\end{theorem}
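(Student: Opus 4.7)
The plan is to argue by contradiction: assuming $X_1$ is PL, I would use the density of $S$ to interpolate between the two endpoints of the isotopy by auxiliary PL knots, each of which forms a $2$-component PL link of linking number~$1$ with $X_{t_i}$, and then propagate rationality of $\bar\varnabla$ from $X_1$ back along the isotopy to $X_0$, contradicting Step~II of the proof of Theorem~\ref{main1}.

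First I would apply Lemma~\ref{dense} to the isotopy $X_t$ with the given dense subset $S\subset[0,1]$. This yields times $0=t_0<t_1<\dots<t_n=1$ with $t_1,\dots,t_{n-1}\in S$, together with PL knots $K_0,\dots,K_{n-1}$ such that $K_i$ is disjoint from $X_t$ for all $t\in[t_i,t_{i+1}]$ and $\lk(K_i,X_{t_i})=1$. For each $i=1,\dots,n-1$, the knot $X_{t_i}$ is by hypothesis the intersection of a nested sequence of PL solid tori $T_1^{(i)}\supset T_2^{(i)}\supset\dots$. By Lemma~\ref{colimit}, for $j$ large enough $[X_{t_i}]$ generates $H_1(T_j^{(i)})$; and since both $K_{i-1}$ and $K_i$ are compact and disjoint from $X_{t_i}=\bigcap_j T_j^{(i)}$, they also avoid $T_j^{(i)}$ once $j$ is large enough. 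Applying Lemma~\ref{anti-F-isotopy} to such a $T:=T_j^{(i)}$ produces a rational power series $R_i$ with
\[\bar\varnabla_{(K_i,X_{t_i})}=R_i\cdot\bar\varnabla_{(K_{i-1},X_{t_i})};\]
since the constant term of each side equals the linking number~$1$, both $\bar\varnabla_{(K_{i-1},X_{t_i})}$ and $\bar\varnabla_{(K_i,X_{t_i})}$ are units in $\Z[[z]]$, so $R_i$ is rational and invertible in~$\Z[[z]]$.

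On the other hand, for each $i=0,\dots,n-1$ the family $(K_i,X_t)_{t\in[t_i,t_{i+1}]}$ is a topological isotopy of $2$-component links, so Theorem~\ref{finite-type} applied to the coefficients of $\bar\varnabla$ (which are finite-type invariants of PL links, invariant under PL isotopy) gives
\[\bar\varnabla_{(K_i,X_{t_i})}=\bar\varnabla_{(K_i,X_{t_{i+1}})}.\]
Telescoping these $n$ isotopy equalities with the $n-1$ splice equalities provided by Lemma~\ref{anti-F-isotopy} yields
\[\bar\varnabla_{(K_0,X_0)}=R_1^{-1}R_2^{-1}\cdots R_{n-1}^{-1}\cdot\bar\varnabla_{(K_{n-1},X_1)}.\]
If $X_1$ were PL, then $(K_{n-1},X_1)$ would be a PL link and $\bar\varnabla_{(K_{n-1},X_1)}$ would be a Laurent polynomial divided by a product of Conway polynomials, hence in particular a rational power series. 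But then $\bar\varnabla_{(K_0,X_0)}$ would also be rational, contradicting Step~II of the proof of Theorem~\ref{main1}, which shows that for any PL knot $K$ with $\lk(K,X_0)=1$, the power series $\bar\varnabla_{(K,X_0)}$ is not rational (under the rapid-growth hypothesis on the $r_i$ built into the definition of the Bing sling in this paper).

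The main obstacle is really the simultaneous applicability of the three lemmas at each intermediate time: namely, that the nested sequence of PL solid tori witnessing $X_{t_i}$ can be cut off deep enough to simultaneously avoid both $K_{i-1}$ and $K_i$ and to realize $[X_{t_i}]$ as a generator of $H_1$. Once this geometric selection is made, the remaining work is purely bookkeeping, verifying that rationality propagates through the multiplicative chain and that the use of Theorem~\ref{finite-type} is legitimate at each link in the chain.
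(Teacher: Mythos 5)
Your proposal is correct and takes essentially the same route as the paper's own proof: Lemma~\ref{dense} applied with the dense set $S$, non-rationality of $\bar\varnabla_{(K_0,X_0)}$ from the proof of Theorem~\ref{main1}, isotopy invariance of $\bar\varnabla$ on each subinterval, and Lemmas~\ref{colimit} and~\ref{anti-F-isotopy} at each intermediate time $t_i$. You merely make explicit some details the paper leaves implicit, namely cutting the nested sequence of solid tori deep enough to avoid $K_{i-1}\cup K_i$ and the invertibility (via constant term $=\lk=1$) of the rational factors $R_i$, which is what allows non-rationality to propagate along the telescoping chain.
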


\begin{proof} Let $0=t_0<\dots<t_n=1$ and $K_0,\dots,K_{n-1}$ be given by Lemma \ref{dense}.
By the proof of Theorem \ref{main1}, $\bar\varnabla_{(K_0,X_0)}$ is not rational.
Then it follows from Lemmas \ref{colimit} and \ref{anti-F-isotopy} that $\bar\varnabla_{(K_{n-1},X_1)}$ 
is also not rational.
Hence $X_1$ is wild.
\end{proof}

\section{Two-variable Conway polynomial} \label{uvw}

For a $2$-component PL link $L=(K_1,K_2)$ of linking number $\lambda$ it is not hard to show that $\Omega_L(x,y)$ can be
uniquely rewritten in the form 
\[\varnabla_L(u,v,w):=\sum_{i\equiv j\not\equiv\lambda\,(\text{mod }2)}a_{ij}u^iv^j+\sum_{i\equiv j\equiv\lambda\,(\text{mod }2)}a_{ij}u^iv^jw,\]
where $u=x-x^{-1}$, $v=y-y^{-1}$ and $w=xy^{-1}+x^{-1}y$ and each $a_{ij}\in\Z$ (see \cite{M24-2}). 

\begin{remark} Let $\bar w=xy+x^{-1}y^{-1}$.
Then $w-\bar w=-uv$ and $w\bar w=u^2+v^2+4$.
So by Vieta's theorem $w$ and $-\bar w$ are the roots of the equation $w^2+uvw-(u^2+v^2+4)=0$.
Thus $w=-\frac{uv}2+\sqrt{\frac{u^2v^2}4+u^2+v^2+4}$, in the sense that
$(w+\frac{uv}2)^2=\frac{u^2v^2}4+u^2+v^2+4$ in $\Q[x^{\pm1},y^{\pm1}]$.
In fact, $w+\frac{uv}2=\frac12(x+x^{-1})(y+y^{-1})$.
Thus the $\Q$-subalgebra of $\Q[x^{\pm1},y^{\pm1}]$ generated by $u$, $v$ and $w$
can be described as $\Q[u,v]\Big[\sqrt{u^2v^2+4(u^2+v^2+4)}\Big]$, where
$\sqrt{u^2v^2+4(u^2+v^2+4)}$ can be identified with the element 
$(x+x^{-1})(y+y^{-1})$ of $\Q[x^{\pm1},y^{\pm1}]$. 
(But there is no such description over $\Z$.)
Of course, $\Q[x^{\pm1},y^{\pm1}]$ itself is generated by its elements 
$u$, $v$, $x+x^{-1}$ and $y+y^{-1}$.
\end{remark}

Let us note that upon equating $x$ and $y$ we get $u=v$ and $w=2$, with $\varnabla_L(z)=\varnabla_L(z,z,2)$.
In general, we will call polynomials (resp.\ formal power series) in $u,v,w$ that are linear in $w$ and include only 
those monomials where the degrees of all three variables have the same parity {\it $w$-polynomials} 
(resp.\ $w$-series).
Thus when $\lk(L)$ is odd, $\varnabla_L$ is a $w$-polynomial.
When multiplying two or more $w$-polynomials we need to remember that there is an algebraic relation between 
$u$, $v$ and $w$, namely, $w^2=u^2+v^2+4-uvw$.
Therefore a product of finitely many $w$-polynomials will again be a $w$-polynomial (if we take the product 
in $\Z[u,v,w]/(w^2+uvw-u^2-v^2-4)$), and a product of finitely many $w$-series will again be a $w$-series 
(if we take the product in $\Z[[u,v,w]]/(w^2+uvw-u^2-v^2-4)$).
This also works for infinitely many $w$-series as long as they converge to $1$ in the usual topology 
on $\Z[[u,v,w]]$.
It is easy to see that the following are equivalent for a $w$-series $R$:
\begin{enumerate}
\item $R$ is rational (as a power series in $\Z[[u,v,w]]$);%
\footnote{A power series in $\Z[[x_1,\dots,x_n]]$ is called {\it rational} if it is of the form $P/Q$, where 
$P,Q\in\Z[x_1,\dots,x_n]$ and $Q\in\Z[[x_1,\dots,x_n]]^\times$, i.e.\ $Q(0,\dots,0)=\pm1$; compare \cite{HLP}.}
\item $R$ is of the form $P(u,v,w)/Q(u,v)$, where $P$ and $Q$ are $w$-polynomials with $Q$ constant in $w$ and 
$Q(0,0)=1$;
\item $R$ is of the form $R'(u,v)+wR''(u,v)$, where $R'$ and $R''$ are rational power series in $u$ and $v$.
\end{enumerate}

\section{Proof of Theorem \ref{main1'}} \label{proof of th2}

For a $2$-component PL link $L=(K_1,K_2)$ let \[\bar\varnabla_L(u,v,w)=\frac{\varnabla_L(u,v,w)}{\nabla_{K_1}(u)\,\nabla_{K_2}(v)}.\]
Then $\bar\varnabla_L(u,v,w)$ is a rational $w$-series if $\lk(L)$ is odd.
By \cite{M24-2}*{Theorem \ref{part2:extension}(a)} each coefficient of $\bar\varnabla_L$ assumes the same value on all sufficiently
close PL approximations of a topological link; consequently $\bar\varnabla_L$ extends to an invariant of isotopy of topological links
(which is, of course, a $w$-series that no longer needs to be rational).

\begin{remark} The rational function $\bar\Omega_L$ does not seem to have a reasonable extension to an invariant 
of topological links (with respect to some completion of the field of rational functions).
On the other hand, if $L$ and $L'$ are topologically isotopic PL links, then $\bar\varnabla_L=\bar\varnabla_{L'}$,
and if we understand $\bar\varnabla_L$ and $\bar\varnabla_{L'}$ as fractions whose denominator does not involve $w$
(rather than power series) and substitute $x-x^{-1}$, $y-y^{-1}$ and $xy^{-1}+x^{-1}y$ for $u$, $v$ and $w$
separately in the numerator and in the denominator, then we obtain that $\bar\Omega_L=\bar\Omega_{L'}$.
\end{remark}

The definition of $\varnabla_L$ along with the fact that the Conway polynomial of a knot has no odd exponents imply
the following representation for $\bar\varnabla_L$:
\[\bar\varnabla_L(u,v,w)=\begin{cases}
\sum_{i=0}^\infty u^{2i+1}vP_{2i+1}(v^2)+w\sum_{i=0}^\infty u^{2i}P_{2i}(v^2),&\text{if $\lk(L)$ is even;}\\
\sum_{i=0}^\infty u^{2i}P_{2i}(v^2)+w\sum_{i=0}^\infty u^{2i+1}vP_{2i+1}(v^2),&\text{if $\lk(L)$ is odd,}
\end{cases}\]
where each $P_k$ is a rational power series.
It follows from Theorem \ref{torres} that $\bar\varnabla_L(0,v,y+y^{-1})=\bar\Omega_L(1,y)=\frac{y^{\lk(L)}-y^{-\lk(L)}}{y-y^{-1}}$.
Therefore $P_0$ is a function of $\lk(L)$.%
\footnote{In fact, it is easy to see from Example \ref{fibonacci-lucas} that when $\lk(L)$ is even, $P_0(v)=v^{-1}F_{\lk(L)}(v)$;
and when $\lk(L)$ is odd, $P_0(v)=v^{-1}L_{\lk(L)}(v)$.}

Let us denote $P_1$ by $C_L$.
When $\lk(L)=0$, $C_L(z)=\sum_{k=0}^\infty(-1)^k\beta_{k+1} z^{2k}$, where $\beta_k$ are Cochran's derived invariants 
(see \cite{M24-2}*{Theorem \ref{part2:main-jin}}).
By the above, $C_L$ extends to an invariant of isotopy of topological links and by \cite{M24-2}*{Theorem \ref{part2:extension}(b)} 
the extension invariant under sufficiently close $C^0$-perturbation of the first component.

\begin{theorem} \label{cochran-splice} Let $\Lambda$ be the splice of two-component PL links $L$ and $L'$ with $\lk L=\lk L'=1$.
Then $C_\Lambda(z)=C_L(z)+C_{L'}(z)$.
\end{theorem}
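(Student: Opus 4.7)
\medskip

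\textbf{Proof proposal.} The plan is to reduce the claim to an algebraic extraction of a single coefficient from the splice formula of Theorem \ref{splice2}.

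First I would apply Theorem \ref{splice2} to get $\bar\varnabla_\Lambda(u,v,w)=\bar\varnabla_L(u,v,w)\,\bar\varnabla_{L'}(u,v,w)$, using that the splice of two links with linking numbers $1$ again has linking number $1$ (since in the splice formula the new linking number between the ``old'' component of $L$ and the corresponding ``old'' component of $L'$ is the product of the two old linking numbers). So all three series have the same parity pattern, namely the ``$\lk$ odd'' pattern given in the expansion after the definition of $C_L$.

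Next I would write each factor in the form
\[
\bar\varnabla_L(u,v,w)=A_L(u,v)+wB_L(u,v),
\]
where
\[
A_L(u,v)=\sum_{i\ge 0}u^{2i}P^L_{2i}(v^2),\qquad B_L(u,v)=\sum_{i\ge 0}u^{2i+1}v\,P^L_{2i+1}(v^2),
\]
and similarly for $L'$ and $\Lambda$. Using the defining relation $w^2=u^2+v^2+4-uvw$ to multiply out,
\[
(A_L+wB_L)(A_{L'}+wB_{L'})=A_LA_{L'}+(u^2+v^2+4)B_LB_{L'}+w\bigl(A_LB_{L'}+B_LA_{L'}-uv\,B_LB_{L'}\bigr),
\]
so comparing the $w$-part with $B_\Lambda(u,v)$ yields
\[
B_\Lambda=A_LB_{L'}+B_LA_{L'}-uv\,B_LB_{L'}.
\]

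Now $C_L=P_1^L$, and by inspection of $B_L$ the coefficient of $u$ in $B_L(u,v)$, viewed as a power series in $u$, equals $v\,C_L(v^2)$; moreover $B_L(0,v)=0$. Extracting the coefficient of $u^1$ from the displayed formula for $B_\Lambda$ therefore gives
\[
v\,C_\Lambda(v^2)=A_L(0,v)\cdot v\,C_{L'}(v^2)+v\,C_L(v^2)\cdot A_{L'}(0,v),
\]
since the term $uv\,B_LB_{L'}$ already has a factor $u^2$ (as $B_L$ and $B_{L'}$ each vanish at $u=0$). The final step is to evaluate $A_L(0,v)=P_0^L(v^2)$. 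Specializing the Torres identity recorded after the definition of $\bar\varnabla_L$ to $\lk(L)=1$ gives $\bar\varnabla_L(0,v,w)=1$, hence $P_0^L\equiv 1$, and likewise $P_0^{L'}\equiv 1$. Dividing by $v$ then yields $C_\Lambda(v^2)=C_L(v^2)+C_{L'}(v^2)$, which is the desired identity.

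The only nontrivial step is the bookkeeping around the relation $w^2+uvw-u^2-v^2-4=0$; once this is used to reduce $(A+wB)(A'+wB')$ to an $A_\Lambda+wB_\Lambda$ expression, everything else is a clean linear extraction of the $u^1$-coefficient, and the hypothesis $\lk(L)=\lk(L')=1$ enters only through the normalization $P_0\equiv 1$ provided by Theorem \ref{torres}.
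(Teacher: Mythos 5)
Your proposal is correct and follows essentially the same route as the paper: multiplicativity of $\bar\varnabla$ under splicing (Theorem \ref{splice2}) plus the Torres normalization $P_0\equiv 1$ for $\lk=1$, with the cross term killed because it is divisible by $u^2$. The paper simply compresses your $A+wB$ bookkeeping by computing modulo $(u^2)$, where $\bar\varnabla_L\equiv 1+uvw\,C_L(v^2)$, so the product immediately gives $1+uvw\,(C_L+C_{L'})(v^2)$.
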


\begin{proof} Let $L=(K_1,K_2)$.
By Theorem \ref{torres} $\Omega_L(1,y)=\nabla_{K_2}(y-y^{-1})$ and $\nabla_{K_1}(0)=1$.
Hence $\bar\Omega_L(1,y)=1$.
Therefore $P_0=1$.
Thus $\bar\varnabla_L(u,v,w)\equiv 1+uvw\,C_L(v)\pmod{(u^2)}$.
And similarly for $L'$ and $\Lambda$.
By Theorem \ref{splice2} $\bar\varnabla_\Lambda(u,v,w)=\bar\varnabla_L(u,v,w)\,\bar\varnabla_{L'}(u,v,w)$, and the assertion follows.
\end{proof}

\begin{proof}[Proof of Theorem \ref{main1'}]
Suppose that $\K$ is a topological knot in the complement of the Bing sling $X$ such that $\lk (\K,X)=1$.
Since $C_L$ is invariant under sufficiently close $C^0$-perturbation of the first component, $C_{(\K,X)}=C_{(K,X)}$ 
for any PL knot $K$ sufficiently close to $\K$.
Let $Z=(K,X)$.
Since $K$ lies in $S^3\but X$, it is disjoint from one of the regular neighborhoods $T_i$ of the knots $X_i$ 
such that $X=\bigcap_{i=0}^\infty T_i$.
Say $K\cap T_k=0$.
Then $Z_i:=(K,J_{r_i})$ is a PL link for each $i\ge k$.
Then similarly to the proof of the PL case of Theorem \ref{main1}, using Theorem \ref{cochran-splice} 
instead of Theorem \ref{splice2}, we get that $C_Z=C_{Z_k}+C_{L_{k+1}}+C_{L_{k+2}}+\dots$.
By Proposition \ref{conway-MJ'}(b,c) $C_{L_i}=C_{(B,J_{r_i})}=z^rF_r(z)/\nabla_{J_r}(z)\ne 0$, where $r=r_i$.
Hence by Theorem \ref{convergence}(b) we get that $C_{L_{k+1}}+C_{L_{k+2}}+\dots$ is not a rational power series, 
as long as the sequence $r_i$ increases sufficiently rapidly.
Also $C_{Z_k}$ is a rational power series.
Hence the power series $C_{(\K,X)}=C_Z$ cannot be rational.
Since $C_L$ is invariant under isotopy of $L$, we conclude that $(K,X)$ is not isotopic to any PL link.
\end{proof}

\begin{remark} \label{growth2}
Let us write down an explicit condition on the $r_i$ that was used in the proof.
We have $C_{L_i}=z^rF_r(z)/\nabla_{J_r}(z)$, where $r=r_i$.
Here $\deg\nabla_{J_r}=2r-2$ (see the proof of Corollary \ref{conway-MJ-degrees}) and 
$\deg z^rF_r=2r-1$.
If $n_i$ and $m_i$ are as in the proof of Theorem \ref{convergence}(b),
then $n_i=\sum_{j=1}^i(2j-2)=i(i-1)$ and $m_i=n_i+1=i^2-i+1$, and hence $\max(2n_i,m_i^2)+1=(i^2-i+1)^2+1$.
The polynomial $P_i$ in the proof of Theorem \ref{convergence}(b) is $z^iF_i(z)$.
The function $f$ in the proof of Theorem \ref{convergence}(b) is defined by: $f(i)$ is such that $P_j$ is divisible by 
$x^{\max(2n_i,m_i^2)+1}$ for all $j\ge f(i)$.
Since $F_i$ contains a constant term for odd $i$ (and a linear term for even $i$, so we will not get much by omitting the odd $i$),
the divisibility condition is saying that $j\ge(i^2-i+1)^2+1$.
Thus $f(i)=(i^2-i+1)^2+1$.
That is, the condition used in the proof that $C_{L_1}+C_{L_2}+\dots$ is not rational
is that each $r_{i+1}\ge(r_i^2-r_i+1)^2+1$.
Since $C_{L_1}+\dots+C_{L_k}$ is rational, to prove that $C_{L_{k+1}}+C_{L_{k+2}}+\dots$
is not rational we may use the same condition.
\end{remark}

\begin{lemma} \label{anti-F-isotopy2} Let $(K,X)$ and $(K',X)$ be links with $\lk(K,X)=\lk(K',X)=1$.
Suppose that there exists a PL solid torus $T\subset S^3$, disjoint from both $K$ and $K'$ and containing $X$ 
so that $H_1(T)$ is generated by $[X]$.
Then $C_{(K,X)}-C_{(K',X')}$ is a rational power series.
\end{lemma}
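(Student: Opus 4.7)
The plan is to imitate the proof of Lemma \ref{anti-F-isotopy}, replacing the multiplicative splice identity $\bar\varnabla_\Lambda=\bar\varnabla_L\bar\varnabla_{L'}$ of Theorem \ref{splice2} by the additive identity $C_\Lambda=C_L+C_{L'}$ of Theorem \ref{cochran-splice}, which is available precisely because all spliced factors have linking number $1$.

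Concretely, I would fix a PL core circle $X_0$ of $T$ and a sequence of PL knots $X_n\subset T$ converging uniformly to $X$, together with a PL homeomorphism $h\:T\to S^1\x D^2$ onto a standardly embedded unknotted solid torus in $S^3$ that sends a longitude of $T$ to a longitude of $S^1\x D^2$. Let $M\subset S^3$ be a meridian of $S^1\x D^2$, and set $Q=h(X)$, $Q_n=h(X_n)$. Since $[X_n]=[X]$ generates $H_1(T)$, the PL link $Z_n:=(K,X_n)$ is the splice of $Z_0:=(K,X_0)$ with $L_n:=(M,Q_n)$, and $\lk(Z_n)=\lk(L_n)=1$. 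Theorem \ref{cochran-splice} therefore gives $C_{Z_n}=C_{Z_0}+C_{L_n}$. Since $Z_n\to Z:=(K,X)$ and $L_n\to L:=(M,Q)$ uniformly, the $C^0$-continuity of $C$ on topological links (established just before Theorem \ref{cochran-splice} via \cite{M24-2}*{Theorem \ref{part2:extension}}) allows passing to the limit to obtain $C_Z=C_{Z_0}+C_L$. The same argument with $K'$ in place of $K$ yields $C_Y=C_{Y_0}+C_L$, where $Y:=(K',X)$ and $Y_0:=(K',X_0)$. Subtracting cancels the common term $C_L$, giving $C_Z-C_Y=C_{Z_0}-C_{Y_0}$; since $Z_0$ and $Y_0$ are PL links, both $C_{Z_0}$ and $C_{Y_0}$ are rational power series by construction, hence so is their difference.

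The main subtleties are bookkeeping rather than substance: one must verify that the splice is set up so that $Z_n$ really equals the splice of $Z_0$ with $L_n$ (which pins down the choice of $h$); confirm the linking numbers so that Theorem \ref{cochran-splice} is applicable (using that $[X_n]$ generates $H_1(T)$ and that $K$, $K'$ are disjoint from $T$); and invoke $C^0$-continuity of the extended invariant $C$ under simultaneous approximation of both components. No new analytic or algebraic ingredient beyond what is already present in the paper is required.
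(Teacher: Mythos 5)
Your proposal is correct and follows essentially the same route as the paper: the paper's proof of Lemma \ref{anti-F-isotopy2} is literally "similarly to the proof of Lemma \ref{anti-F-isotopy}", i.e.\ the same splice decomposition $Z_n=\mathrm{splice}(Z_0,L_n)$ and passage to the limit, with the additive splice formula of Theorem \ref{cochran-splice} replacing the multiplicative one of Theorem \ref{splice2}, exactly as you do. Your cancellation $C_Z-C_Y=C_{Z_0}-C_{Y_0}$ is the intended conclusion, so there is nothing to add.
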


\begin{proof} Similarly to the proof of Lemma \ref{anti-F-isotopy}.
\end{proof}

Let us call links $(K,X)$ and $(K',X)$ as in Lemma \ref{anti-F-isotopy2} {\it anti-F-isotopic}.
Then we get

\begin{theorem} \label{anti-F-isotopy3} Let $X$ be the Bing sling and $K$ be a knot linking $X$ with $\lk=1$.
Suppose that $(K,X)$ is related to a link $(K',X')$ by the equivalence relation generated by isotopy and
anti-F-isotopy.
Then $X'$ is a wild knot.
\end{theorem}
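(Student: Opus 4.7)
The plan combines two ingredients: $C_L$ transforms in a controlled, ``additively rational'' way along each of the two generating relations, while $C_{(K',X')}$ is automatically a rational power series as soon as $X'$ is PL.

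First I would argue that $C_{(K,X)}-C_{(K',X')}$ is a rational power series. Fix a finite chain $(K,X)=(K_0,X_0),\dots,(K_n,X_n)=(K',X')$ realizing the equivalence, with each consecutive pair related either by an isotopy of $2$-component links or by an anti-F-isotopy. Each isotopy step contributes $0$ to the difference of successive $C$'s, since $C_L$ is a coefficient of $\bar\varnabla_L$, which was extended in Section~\ref{proof of th2} to an invariant of isotopy of topological $2$-component links. Each anti-F-isotopy step contributes a rational power series by Lemma~\ref{anti-F-isotopy2}. Since rational power series form an additive subgroup of $\Z[[z]]$, the telescoping sum
\[C_{(K,X)}-C_{(K',X')}=\sum_{i=1}^n\bigl(C_{(K_{i-1},X_{i-1})}-C_{(K_i,X_i)}\bigr)\]
is rational. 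Combined with the fact that $C_{(K,X)}$ is not a rational power series --- which is essentially Theorem~\ref{main1'}, valid for topological $K$ via the $C^0$-invariance of $C_L$ in the first component --- this forces $C_{(K',X')}$ to also be non-rational.

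Finally, suppose for contradiction that $X'$ is a PL knot. Then $S^3\setminus X'$ is a PL $3$-manifold in which the topological knot $K'$ admits arbitrarily close PL approximations $K'_n$. By the invariance of $C_L$ under sufficiently close $C^0$-perturbation of its first component, $C_{(K',X')}=C_{(K'_n,X')}$ for all sufficiently large $n$. But $(K'_n,X')$ is a PL link, so $C_{(K'_n,X')}$ is manifestly a rational power series, contradicting the previous paragraph. Hence $X'$ must be wild.

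The one point to treat with care is this last step: one must appeal specifically to the $C^0$-invariance of $C_L$ under perturbation of only the first component (with the wild $X'$ held fixed), rather than to any weaker ``limit of rational series'' statement, since the limit of a convergent sequence of rational power series need not itself be rational. Everything else is a matter of assembling the already-available isotopy invariance of $C_L$, Lemma~\ref{anti-F-isotopy2} and the non-rationality extracted from the proof of Theorem~\ref{main1'}.
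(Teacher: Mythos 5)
Your proposal is correct and follows essentially the same route the paper intends (the paper states Theorem \ref{anti-F-isotopy3} as an immediate consequence of Lemma \ref{anti-F-isotopy2}, the isotopy invariance of $C_L$, and the non-rationality of $C_{(K,X)}$ from the proof of Theorem \ref{main1'}, exactly mirroring the explicit chain argument given for the analogous theorem before Theorem \ref{main2}). Your closing caution — that the final step must use the $C^0$-invariance of $C_L$ under perturbation of the first component with $X'$ fixed, not a limit-of-rational-series argument — is precisely how the paper handles it as well.
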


\section{A disappointing example}

The proofs of Theorem \ref{main2} and Theorem \ref{anti-F-isotopy3} raise the following two questions:

\begin{problem} Let $X$ be a knot and let $K$ and $K'$ be PL knots each linking $X$ with $\lk=1$.
Is it true that 

(a) $C_{(K',X)}(z)$ is a rational power series if and only if so is $C_{(K,X)}(z)$?

(b) $\bar\varnabla_{(K',X)}(z)$ is a rational power series if and only if so is $\bar\varnabla_{(K,X)}(z)$?
\end{problem}

A positive solution to either (a) or (b) would imply that the Bing sling is not isotopic to the unknot.

We will show, however, that (a) has a negative solution, and so does the two-variable version of (b).
This does not yet provide a solution of (b), but there does not seem to be much hope that it can be positive.

\begin{theorem} \label{non-invariance}
There exist $2$-component links $L'=(K',X)$ and $L^*=(K^*,X)$ such that $K'$ and $K^*$ are PL knots, 
$\lk(L')=\lk(L^*)=1$ and

(a) $C_{L^*}(z)$ is rational but $C_{L'}(z)$ is not rational; also,

(b) $\bar\varnabla_{L^*}(u,v,w)$ is rational but $\bar\varnabla_{L'}(u,v,w)$ is not rational.
\end{theorem}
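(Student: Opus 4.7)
The plan is as follows. I would start from the usual Bing sling $X_{\mathrm{Bing}}$ lying inside its outer solid torus $T \subset S^3$, and construct $X$ as a modification obtained by removing a short wild sub-arc of $X_{\mathrm{Bing}}$ and replacing it with a PL ``detour arc'' $\tilde\alpha$ that leaves $T$ into a tame region of $S^3$ before coming back. The cut-and-paste is taken at two wild points of $X_{\mathrm{Bing}}$ close enough that the detour can be chosen disjoint from the rest of $X_{\mathrm{Bing}}$; the resulting knot $X$ then consists of a wild Bing-sling-like part together with a tame PL sub-arc $\tilde\alpha$. The PL approximations $X_i := (X_{i,\mathrm{Bing}} \setminus \text{short arc}) \cup \tilde\alpha_i$, where $\tilde\alpha_i$ is a PL arc close to $\tilde\alpha$ connecting points of $X_{i,\mathrm{Bing}}$ near the cut, converge to $X$ in $C^0$. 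For $K^*$ take a small PL meridian of $\tilde\alpha$ disjoint from $T$, and for $K'$ take a PL meridian of $T$; both are PL knots with linking number $1$ with $X$.

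For $(K^*, X)$: the PL disk bounded by $K^*$ meets each PL knot $X_i$ transversely in a single point of $\tilde\alpha$, so $K^*$ is a PL meridian of $X_i$ in the strong sense. The standard formula for the Conway potential of a ``PL meridian $+$ knot'' link, a direct consequence of Corollary \ref{connected sum} together with Theorem \ref{torres}, gives $\bar\varnabla_{(K^*, X_i)}(u,v,w) = 1$ and $C_{(K^*, X_i)}(z) = 0$ for every $i$. Theorem \ref{finite-type} then lets us pass to the $C^0$-limit, yielding $\bar\varnabla_{L^*} = 1$ and $C_{L^*} = 0$; both are rational.

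For $(K', X)$: since $K'$ lies outside every solid torus $T_j$ of the Bing construction, the proofs of Theorems \ref{main1} and \ref{main1'} apply to the links $(K', X_i)$ almost verbatim. Using Theorems \ref{splice2} and \ref{cochran-splice} and passing to the limit via Theorem \ref{finite-type} we get
\[ \bar\varnabla_{L'} = \bar\varnabla_{(K', X_k)} \cdot \prod_{i > k} \bar\varnabla_{L_i}, \qquad C_{L'} = C_{(K', X_k)} + \sum_{i > k} C_{L_i}, \]
where $L_i$ is the $i$-th fake Mazur link of the Bing construction. The PL detour modifies $\bar\varnabla_{(K', X_k)}$ and $C_{(K', X_k)}$ only by rational factors, respectively summands, so the non-rationality of the tails -- guaranteed by Theorem \ref{convergence}(a), (b) together with Corollary \ref{conway-MJ-degrees} and Proposition \ref{conway-MJ'}, once the growth hypothesis on $r_i$ is imposed -- forces both $\bar\varnabla_{L'}$ and $C_{L'}$ to be non-rational.

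The hardest step will be the cut-and-paste at wild points of $X_{\mathrm{Bing}}$: one needs to verify that removing a short wild arc at two wild points and gluing in $\tilde\alpha$ really produces a well-defined wild knot whose PL approximations $X_i$ behave as claimed, and that for $i$ large enough $K^*$ really is a PL meridian of $X_i$ (so that the strong-meridian formula applies uniformly in $i$). Once this geometric bookkeeping is done, every subsequent step is a mechanical transcription of the arguments already given for Theorem \ref{main1'}, combined with the ``PL meridian $+$ knot'' Conway-potential formula and the non-rationality criterion of Theorem \ref{convergence}.
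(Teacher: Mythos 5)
The half of your argument dealing with $L^*=(K^*,X)$ is fine: if $K^*$ bounds a small PL disk meeting each approximation $X_i$ transversely in one point of the tame arc, then Corollary \ref{connected sum} gives $\bar\varnabla_{(K^*,X_i)}=1$ for all large $i$, hence $\bar\varnabla_{L^*}=1$ and $C_{L^*}=0$ in the limit. The genuine gap is in the other half. Your key claim --- that for the \emph{modified} knot $X$ the proofs of Theorems \ref{main1} and \ref{main1'} apply ``almost verbatim'', so that $\bar\varnabla_{L'}$ and $C_{L'}$ are the old infinite product/sum corrected only by a rational factor/summand --- is exactly the step that is not justified, and it is not a matter of bookkeeping. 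Those proofs rest entirely on the splice structure: $X$ is the intersection of the nested tori $T_i$ disjoint from $K'$, and each link $(K',X_{i+1})$ is the splice of $(K',X_i)$ with $L_{i+1}$, which is what feeds Theorems \ref{splice2} and \ref{cochran-splice}. After your cut-and-paste this structure is destroyed: the detour leaves $T_1$, so the modified $X$ is not contained in the $T_i$ at all; and the modified approximation $X'_{i+1}$ is not contained in a regular neighborhood of $X'_i$ (the strands of $K_{i+1}$ running through the tube around the \emph{removed} short arc of $K_i$ survive in the ``long arc'' of $X'_{i+1}$ but lie outside any neighborhood of $X'_i$), so no splice relation holds between consecutive modified links. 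Consequently the displayed product and sum formulas for $\bar\varnabla_{L'}$ and $C_{L'}$ have no proof, and the sentence ``the PL detour modifies things only by rational factors/summands'' is precisely the kind of statement that cannot be waved through here: the very content of Theorem \ref{non-invariance} (and the refuted additivity announcement mentioned in the footnote to Theorem \ref{main4}) is that for linking number $1$ these quantities are \emph{not} stable under such seemingly innocuous modifications --- they can jump from non-rational to rational. So as written, non-rationality of $C_{L'}$ and $\bar\varnabla_{L'}$ for your modified $X$ is an open claim, not a corollary of the earlier theorems.

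For contrast, the paper avoids the Bing sling entirely: it takes the linking-number-$0$ link $L=(K,X)$ of \cite{M21}, whose second component is wild but locally flat except at one point and whose $C_L$ is already known to be non-rational, and produces $K'$ and $K''$ by the two resolutions of a single inter-component crossing (Conway's identity (\ref{conwayII})). Reversing orientation of $K''$ gives $K^*$ with $\lk(K^*,X)=1$, and since $X$ is locally flat except at one point, $(K^*,X)$ is isotopic to the Hopf link, so $\bar\varnabla_{L^*}=1$; the skein identity together with Proposition \ref{orientation} then yields $\bar\varnabla_{L'}=w\,\bar\varnabla_L$ and $C_{L'}=C_L$, which is non-rational. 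If you want to keep a Bing-sling-based construction, you would need to either restore a genuine splice decomposition compatible with your modification or compute the effect of the arc replacement by some independent means (e.g.\ a C-complex or Seifert-surface argument); neither is supplied by your proposal.
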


\begin{proof} Let $L=(K,X)$ be one of the links constructed in \cite{M21}, with $\lk(L)=0$
and $C_L$ not being rational (see Figure \ref{w-infty0}).
\begin{figure}[h]
\includegraphics[width=12cm]{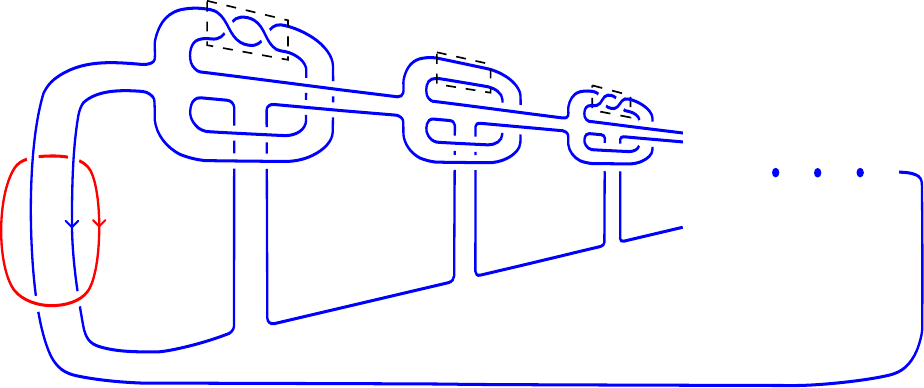}
\caption{The link $L=W_{n_1,n_2,n_3,\dots}$ where $(n_1,n_2,n_3)=(1,0,-1)$.}
\label{w-infty0}
\end{figure}
Let $K'$ and $K''$ be PL knots in the complement of $X$ such that the links $L=(K,X)$, $L':=(K',X)$ 
and $L'':=(K'',X)$ are related as in the second Conway identity; see Figure \ref{w-infty+-}.
\begin{figure}[h]
\includegraphics[width=12cm]{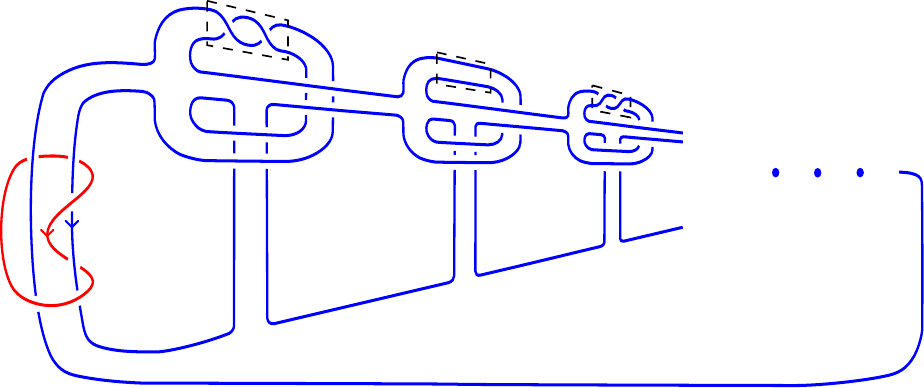}
\hspace{10pt}

\includegraphics[width=12cm]{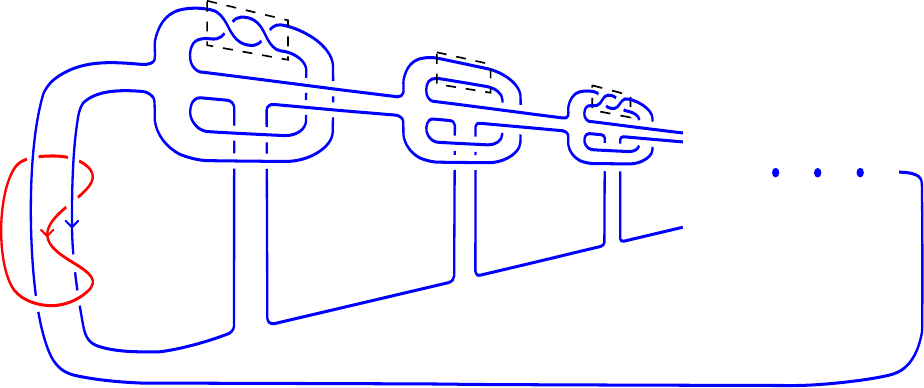}
\caption{The links $L'$ (top) and $L''$ (bottom).}
\label{w-infty+-}
\end{figure}

Since $K$ is unknotted in $S^3$ (see \cite{M21}), so are $K'$ and $K''$.
If $Q$ is a sufficiently close PL approximation of $X$, then $\Lambda:=(K,Q)$, $\Lambda':=(K',Q)$ and 
$\Lambda'':=(K'',Q)$ are also related as in the second Conway identity, and so we have
$\Omega_{\Lambda'}(x,y)+\Omega_{\Lambda''}(x,y)=(xy+x^{-1}y^{-1})\Omega_{\Lambda}(x,y)$.
Writing $\Lambda^*=(K^*,Q)$, where $K^*=-K''$, that is, $K''$ with orientation reversed, by
Proposition \ref{orientation} we get
$\Omega_{\Lambda'}(x,y)-\Omega_{\Lambda^*}(x^{-1},y)=(xy+x^{-1}y^{-1})\Omega_\Lambda(x,y)$. 
Upon substituting $x$ with $x^{-1}$ we get
$\Omega_{\Lambda'}(x^{-1},y)-\Omega_{\Lambda^*}(x,y)=(x^{-1}y+xy^{-1})\Omega_\Lambda(x^{-1},y)$ 
or equivalently
$\varnabla_{\Lambda'}(-u,v,w)-\varnabla_{\Lambda^*}(u,v,w)=w\varnabla_\Lambda(-u,v,w)$.
Since $K$, $K'$ and $K^*$ are unknotted, we get 
$\bar\varnabla_{\Lambda'}(-u,v,w)-\bar\varnabla_{\Lambda^*}(u,v,w)=w\bar\varnabla_\Lambda(-u,v,w)$.
Since $Q$ can be any sufficiently close approximation of $X$, we obtain
$\bar\varnabla_{L'}(-u,v,w)-\bar\varnabla_{L^*}(u,v,w)=w\bar\varnabla_L(-u,v,w)$, 
where $L^*=(K^*,X)$.
Now since $X$ is locally flat except at one point, it is easy to see that $L^*$ is isotopic to the Hopf link.
Hence $\bar\varnabla_{L^*}(u,v,w)=1$, and in particular $C_{L^*}=0$.
Therefore $\bar\varnabla_{L'}(u,v,w)=w\bar\varnabla_L(u,v,w)$.
Hence also $C_{L'}(z)=C_L(z)$, which is not rational.
Then in particular $\bar\varnabla_{L'}(u,v,w)$ itself cannot be rational.
\end{proof}

\begin{remark}
One might wonder whether the fact that $\bar\varnabla_{(K,X)}(z)$ and $\bar\varnabla_{(K,X)}(u,v,w)$ are represented
as infinite products of rational power series for any PL knot $K$ that links the Bing sling $X$ with linking number $1$
is remarkable as a property of the Bing sling.
It is not.
An arbitrary power series of the form $1+a_1z+a_2z^2+\dots$ factors in $\Z[[z]]$ into
a product of polynomials of the form $(1+b_1z)(1+b_2z^2)(1+b_3z^3)\cdots$, where each $b_n$ is uniquely determined,
assuming that $b_1,\dots,b_{n-1}$ have already been found, by equating the degree $n$ term of $\prod_{i=1}^n(1+b_iz^i)$ 
with that of the given power series.
Similarly, every $w$-series $\sum_{i,j=0,\,i\equiv j(\text{mod}\,2)}^\infty a_{ij}u^iv^jw^{i\bmod 2}$ with $a_{00}=1$ 
factors in $\Z[[u,v,w]]/(w^2+uvw-u^2-v^2-4)$ into a product of polynomials of the form 
$\prod_{i,j=0,\,i\equiv j(\text{mod}\,2)}^\infty (1+b_{ij}u^iv^jw^{i\bmod 2})$ with $b_{00}=0$.
\end{remark}

\section{Proofs of Theorems \ref{main3} and \ref{main4'}}

Theorem \ref{main4'} is a reformulation of the following

\begin{theorem} \label{eta-function}
There exists a $2$-component link $\L$ in $S^3$ with $\lk(\L)=0$ and second component being PL, which is not equivalent to 
any PL link under the equivalence relation generated by (i) isotopy and (ii) replacing a link $(Q,\K)$ by a link $(Q',\K)$, provided that 
$Q$ and $Q'$ are PL knots homotopic in $S^3\but\K$ or more generally represent the same conjugacy class in the quotient of $\pi_1(S^3\but\K)$ 
by the second commutator subgroup.
\end{theorem}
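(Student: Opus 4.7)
The plan is to take as the desired link $\L$ one of the examples furnished by Theorem \ref{lk0}, namely the link constructed in \cite{M21} as a geometric limit of a nested sequence of Bing-type doublings. This $\L$ already has $\lk(\L)=0$ and second component a PL unknot, and its Cochran-type generating function $C_{\L}(z)$ is shown in \cite{M21} to fail to be a rational power series. Recall that $C_L(z)$ is (up to signs and the reindexing explained in \S\ref{proof of th2}) the $P_1$-coefficient of the reduced two-variable Conway series $\bar\varnabla_L(u,v,w)$ of \S\ref{uvw}; its coefficients are finite-type invariants of PL links, so by Theorem \ref{finite-type} it extends by continuity to an isotopy invariant of topological links, and it is this extension that must be controlled under the equivalence relation appearing in the statement.

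The argument then consists of two complementary halves. First, for any PL link $L$ with $\lk(L)=0$, the full reduced series $\bar\varnabla_L(u,v,w)$ is a rational function of its variables, which immediately forces $C_L$ to be a rational power series. Second, I claim that if $L=(Q,\K)$ and $L'=(Q',\K)$ are PL links of linking number $0$ sharing the same PL second component and $Q$, $Q'$ represent the same conjugacy class in $G/[G',G'']$ with $G=\pi_1(S^3\but\K)$, then $C_L-C_{L'}$ is a rational power series. Granting the claim, the theorem is immediate: if $\L$ were equivalent to a PL link $\L_0$ under the generating relations (i) and (ii), then chaining isotopy invariance of $C_L$ with the claim at each instance of (ii) would force $C_{\L}-C_{\L_0}$ to be rational, and combined with rationality of $C_{\L_0}$ this would contradict the construction of $\L$.

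To prove the claim, note that since $\lk(L)=\lk(L')=0$, both $Q$ and $Q'$ lie in $G'$. The assumption of equality of conjugacy classes in the metabelian quotient $G/G''$ reduces, via a Lemma \ref{metabelian}-type manipulation, to an identity of the form $[Q']=t^k[Q]+(1-t)a$ in the Alexander module $A=G'/G''$, for some $k\in\Z$ and some $a\in A$. On the geometric side I would then use the Seifert-surface description of $C_L$ available from \cite{M24-2}: a Seifert surface $\Sigma$ for $\K$ lifts to the infinite cyclic cover $\widetilde{S^3\but\K}$, and the $\beta_i$ may be read off as iterated intersection-form pairings between the lift of $Q$ and a derived sequence of bounding surfaces, ultimately controlled by the Blanchfield form on $A$. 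The effect of the substitution $[Q]\mapsto t^k[Q]+(1-t)a$ on this data is a $\Z[t^{\pm1}]$-linear perturbation whose matrix entries are Laurent polynomials; after collecting terms and using that multiplication by $\Delta_{\K}(t)$ kills $A$, the resulting change in $C_L(z)$ should be a finite rational combination of the original $\beta_i$'s and the coefficients of $\Delta_{\K}$, and therefore rational.

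The principal obstacle is pinning down this last rationality computation in closed form. The purely qualitative statement that ``$C_L$ depends on $[Q]$ only through $\Z[t^{\pm1}]$-module invariants'' is not enough, since individual $\beta_i$ need not be invariant under the substitution; what is needed is a quantitative formula showing that the deviation lies in the rational subring of $\Z[[z]]$. A plausibly cleaner route is to realize the algebraic relation (ii) by an explicit geometric cobordism in $S^3\x I$ whose singularities bound height-$1$ gropes (as in the geometric interpretation of $[G',G']$) and to compute the contribution of each such singularity to the Blanchfield pairing directly on the infinite cyclic cover. This is also the step that consumes the full technical strength of the results in \cite{M24-2}.
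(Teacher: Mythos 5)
Your overall architecture is the same as the paper's: take for $\L$ the link of \cite{M21}, use the Cochran power series $C_L$ (extended to topological links by Theorem \ref{finite-type} / \cite{M24-2}), note that it is an isotopy invariant which is rational for PL links and irrational for $\L$, and reduce everything to the claim that a move of type (ii) changes $C$ only by a rational series. But your proof of that claim has a genuine gap, and it is exactly at the step you yourself flag as "not pinned down". At the moment a move (ii) is applied, the second component $\K$ is in general a \emph{wild} knot (only the initial link has PL second component; after an isotopy $\K$ can be arbitrary), and for wild knots the knot module $G'/G''$ need not be a torsion $\Z[t^{\pm1}]$-module, nor even finitely generated --- this is precisely the phenomenon behind the McPherson and Kojima--Yamasaki examples recalled in the introduction, and Proposition \ref{torsion-free} exhibits a wild knot whose knot module is torsion-free. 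Hence the pivotal step "multiplication by $\Delta_{\K}(t)$ kills $A$" is meaningless or false in the generality needed, and the Blanchfield-form computation you sketch does not go through. A second, independent problem is that even for PL $\K$ the series $C_{(Q,\K)}$ is a self-pairing of $Q$ with its pushoff and is not determined by the class $[\tilde Q]\in H_1(\X)$; so the effect of the substitution coming from conjugacy (which, incidentally, is just $[\tilde Q']=t^k[\tilde Q]$ --- your extra $(1-t)a$ term does not arise) cannot be computed purely module-theoretically, only the \emph{difference} of the two pairings can be controlled.

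The paper's proof supplies the missing mechanism and sidesteps the wild-knot issue at the same time. The hypothesis of (ii) is used only to choose lifts with $\tilde Q-\tilde Q'=\partial\zeta$ for a $2$-chain $\zeta$ in the infinite cyclic cover; then the additivity and symmetry of the Cochran pairing (\cite{M24-2}) give
\[\eta_{(Q,K)}(t)-\eta_{(Q',K)}(t)=\sum_{n=-\infty}^{\infty}\big(\zeta\cdot t^n\tilde Q_+ +\zeta_+\cdot t^{-n}\tilde Q'\big)\,t^n,\]
a finite Laurent polynomial, symmetric under $t\mapsto t^{-1}$ and hence a polynomial in $z$ via $t^n+t^{-n}=L_{2n}(z)$. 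Crucially, $\zeta$, $Q$, $Q'$ all lie in the preimage $Y$ of the complement of a polyhedral neighborhood $N$ of $\K$, and this $Y$ is the same for $\K$ and for any PL approximation $K$ homotopic to $\K$ within $N$; so the formula is independent of the approximation, and the $C^0$-stability of the coefficients of $C_L$ transfers it to the topological link. This intersection-number computation (close in spirit to the grope/cobordism route you mention but do not carry out) is what you would need in place of the $\Delta_{\K}$-annihilation argument.
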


In fact, the proof also works for the following more general version (ii$'$) of the transformation (ii): 
replace a link $(Q,\K)$ by a link $(Q',\K)$, provided that $Q$ and $Q'$ represent the same conjugacy class 
in the quotient of $G:=\pi_1(S^3\but\K)$ by the preimage in $G'$ of the torsion submodule of the knot module $G'/G''$.%
\footnote{Equivalently, $Q'=Q\#_bR$ for some knot $R$ in $S^3\but\K$ such that the Kojima--Yamasaki $\lambda$-polynomial \cite{KY} 
$\lambda(R,\K)\ne 0$ and for some band $b\subset S^3\but\K$.}

\begin{proof}
The proof of Theorem \ref{lk0} in \cite{M21} shows that for $2$-component links $L$ with $\lk(L)=0$ the Cochran power series $C_L$ 
is invariant under isotopy of $L$, is rational for PL links, and is not rational for a certain link $\L$ whose second component is PL.
To complete the proof it suffices to show that for any transformation of the type (ii), $C_{(Q,\K)}-C_{(Q',\K)}$ is always rational.
In fact we will prove that this difference is a polynomial.
(For transformations of type (ii$'$) this difference will only be rational, not polynomial.)

Since $Q$ and $Q'$ represent the same conjugacy class in the quotient of $\pi_1(S^3\but\K)$ by 
the second commutator subgroup, they admit lifts $\tilde Q$, $\tilde Q'$ to the infinite cyclic cover 
$\X$ of $S^3\but\K$ such that the $1$-cycle $\tilde Q-\tilde Q'$ bounds a $2$-chain $\zeta$ in $\X$.
The image of $\zeta$ in $S^3\but\K$ as well as $Q$ and $Q'$ are disjoint from some polyhedral
neighborhood $N$ of $\K$.
Hence $\zeta$ as well as $\tilde Q$ and $\tilde Q'$ lie in the preimage $Y$ of $S^3\but N$ in $\X$.
This $Y$ can also be described as the cover of $S^3\but N$ corresponding to (the kernel of) the homomorphism
$\pi_1(S^3\but N)\to\Z$ given by $[Z]\mapsto\lk(Z,\K)$.
(In other words, this homomorphism is determined by the class in $H^1(S^3\but N)$ which is Alexander dual 
to $[\K]\in H_1(N)$.)
If $K$ is a sufficiently close PL approximation of $\K$, then it is homotopic to $\K$ within $N$,
and then the preimage of $S^3\but N$ in the infinite cyclic cover $X$ of $S^3\but K$ is the same cover $Y$
of $S^3\but N$.
Thus $\zeta$ as well as $\tilde Q$ and $\tilde Q'$ can be regarded as lying in $X$.

For PL links we have $C_L(z)=\eta_L(t)$, where $\eta_L$ is Kojima's $\eta$-function, $t=x^2$ and $z=x-x^{-1}$.
By definition $\eta_{(Q,K)}(t)=\big\langle\tilde Q,\tilde Q_+\big\rangle$, where $\left<\cdot,\cdot\right>$ is 
the Cochran pairing in the infinite cyclic cover $X$ of $S^3\but K$ and $\tilde Q$ and $\tilde Q_+$ are nearby lifts 
of $Q$ and of its parallel pushoff.
By \cite{M24-2}*{Lemma \ref{part2:cochran-pairing}(a)}
$\big\langle\tilde Q,\tilde Q_+\big\rangle-\big\langle\tilde Q',\tilde Q'_+\big\rangle=
\big\langle\partial\zeta,\tilde Q_+\big\rangle+\big\langle\tilde Q',\partial\zeta_+\big\rangle$, where 
$\partial\zeta=\tilde Q-\tilde Q'$ and $\partial\zeta_+=\tilde Q_+-\tilde Q'_+$.
By \cite{M24-2}*{Lemma \ref{part2:cochran-pairing}(c)}
$\big\langle\tilde Q',\partial\zeta_+\big\rangle(t)=\big\langle\partial\zeta_+,\tilde Q'\big\rangle(t^{-1})$.
Thus \[\eta_{(Q,K)}(t)-\eta_{(Q',K)}(t)=
\big\langle\partial\zeta,\,\tilde Q_+\big\rangle(t)+\big\langle\partial\zeta_+,\,\tilde Q'\big\rangle(t^{-1})=
\sum_{n=-\infty}^\infty\big(\zeta\cdot t^n\tilde Q_++\zeta_+\cdot t^{-n}\tilde Q'\big)t^n.\]
The right hand side is a Laurent polynomial (so, only finitely many summands are nonzero) which depends only on 
the mutual position of $\zeta$ and $Q_+$ and of $\zeta_+$ and $Q'$ in $Y$.
Thus it does not depend on the choice of the PL approximation $K$, as long as it is homotopic to $\K$ within $N$.
Since $\eta_L(t^{-1})=\eta_L(t)$, this Laurent polynomial is invariant under the involution $t\mapsto t^{-1}$.
So it must be of the form $\sum_{n=0}^\infty a_n(t^n+t^{-n})$.
It is not hard to check that $t^n+t^{-n}=L_{2n}(z)$ for $n\ge 0$, where $L_n$ is the Lucas polynomial, 
defined by $L_0(z)=2$, $L_1(z)=z$ and $L_{n+1}(z)=zL_n(z)+L_{n-1}(z)$.
(Thus $L_{2n}$ contains only even powers of $z$.)
Since each coefficient of $C_L(z)$ is invariant under sufficiently close approximation \cite{M21}*{Theorem 2.3},
$C_{(Q,\K)}(z)-C_{(Q',\K)}(z)=C_{(Q,K)}(z)-C_{(Q',K)}(z)=\sum_{n=0}^\infty a_nL_{2n}(z)$, which is a polynomial.
\end{proof}

Theorem \ref{main3} follows by the proof of Theorem \ref{main1'} if we additionally use the following

\begin{lemma} \label{link homotopy}
Let $\K$ be a knot in $S^3$ and let $Q_1$, $Q_2$ be PL knots in $S^3\but\K$ such that $\lk(Q_i,K)=1$.
If $Q_1$ and $Q_2$ are homotopic in $S^3\but\K$, then $C_{(Q_1,\K)}-C_{(Q_2,\K)}$ is a polynomial.
\end{lemma}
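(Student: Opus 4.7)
The plan is to reduce Lemma~\ref{link homotopy} to a PL statement and then deduce it from a local analysis of a single self-crossing change via Conway's first identity.

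First I approximate $\K$ by a PL knot $K$ disjoint from $Q_1\cup Q_2$ and from the (compact) image of the homotopy. By \cite{M24-2}*{Theorem \ref{part2:extension}(b)}, every coefficient of $C_{(Q_i,K)}$ agrees with the corresponding coefficient of $C_{(Q_i,\K)}$ once $K$ is sufficiently close. Hence it suffices to show that $C_{(Q_1,K)}-C_{(Q_2,K)}$ is a polynomial whose degree admits a bound depending only on the homotopy (and not on the approximation $K$); taking the limit then yields the desired polynomial.

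Next I put the homotopy $h\colon S^1\times I\to S^3\setminus K$ in general position, so that it decomposes as a finite sequence of PL isotopies of $Q_t$ interrupted by finitely many transverse self-crossings. Since isotopy preserves $C_L$, the problem reduces to estimating the jump in $C_L$ across a single self-crossing change $Q\rightsquigarrow Q'$ of the first component.

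For this local step I apply Conway's identity (\ref{conwayI}) to $(Q,K)$ versus $(Q',K)$, yielding $\Omega_{(Q,K)}-\Omega_{(Q',K)}=u\,\Omega_{L^0}(x,x,y)$, where $L^0=(Q^0_1,Q^0_2,K)$ is the $3$-component smoothing and $u=x-x^{-1}$. The analogous knot identity $\nabla_Q-\nabla_{Q'}=u\nabla_{Q^0}(u)$, combined with the divisibility $\nabla_{Q^0}(u)\in u\Z[u]$ (since $Q^0$ has two components), gives $\nabla_Q\equiv\nabla_{Q'}\pmod{u^2}$. Dividing by $\nabla_Q\nabla_K$ and extracting coefficients modulo $u^2$, the $u$-coefficient of $\bar\varnabla_{(Q,K)}-\bar\varnabla_{(Q',K)}$ reduces to $\Omega_{L^0}(1,1,y)/\nabla_K(v)$; two applications of Theorem~\ref{torres} rewrite this as $(y^{l_1}-y^{-l_1})(y^{l_2}-y^{-l_2})/v$ with $l_i=\lk(Q^0_i,K)$. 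The constraint $l_1+l_2=\lk(Q,K)=1$ forces one of $l_1,l_2$ to be odd and the other even, and the Fibonacci/Lucas identities of Example~\ref{fibonacci-lucas} then produce the cancellations needed to express this as a polynomial in $v$ of degree at most $|l_1|+|l_2|$. Since $\lk(Q^0_i,K)=\lk(Q^0_i,\K)$ for $K$ sufficiently close to $\K$, this bound depends only on the smoothings $Q^0_i$ and $\K$, not on $K$; summing the contributions over the finite set of crossings completes the argument.

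The main obstacle I foresee is the bookkeeping between the $\Omega$-form and the $\varnabla$-form under the substitution $u=x-x^{-1}$, $v=y-y^{-1}$, $w=xy^{-1}+x^{-1}y$; in particular, one must verify that the $1/v$ factor coming from $\Omega_K(y)=\nabla_K(v)/v$ is absorbed by the $v$-divisibility of the Lucas polynomial $L_{l_i}(v)$ (for odd~$l_i$) and of the Fibonacci polynomial $F_{l_j}(v)$ (for even~$l_j$), so that the $u$-coefficient is genuinely a polynomial rather than a Laurent series in $v$.
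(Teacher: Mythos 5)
Your proposal is correct and follows essentially the same route as the paper's proof of Lemma \ref{link homotopy}: replace $\K$ by a nearby PL knot $K$, reduce the homotopy to finitely many self-crossing changes, apply Conway's identity (\ref{conwayI}) to each change, evaluate the smoothed $3$-component term via two applications of Theorem \ref{torres} together with the Fibonacci/Lucas identities of Example \ref{fibonacci-lucas} (using $l_1+l_2=1$), and pass to the wild knot using stability of the coefficients of $C_L$ under close PL approximation. The bookkeeping issue you flag is resolved in the paper exactly as you anticipate: the $u$-free part of $\Omega_{(L,K)}(x,y)$ is $w\cdot F_n(v)\frac{L_{1-n}(v)}{v}\nabla_K(v)$ with $n$ the even linking number, so the factor $y+y^{-1}$ appearing in your expression $(y^{l_1}-y^{-l_1})(y^{l_2}-y^{-l_2})/v$ at $x=1$ is just the specialization of $w$ and drops out when extracting $C$, while the $1/v$ is absorbed by $F_n(v)L_{1-n}(v)$, leaving the polynomial jump $F_n(v)L_{1-n}(v)/v$ independent of the approximation $K$.
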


\begin{proof} Let $N$ be a polyhedral neighborhood of $\K$ such that the given homotopy takes place in $S^3\but N$.
Let $K$ be a PL knot homotopic to $\K$ within $N$.
It suffices to consider the case where $Q_1$ is obtained from $Q_2$ by a single crossing change.
Then for the $2$-variable Conway polynomial of \cite{M24-2} we have
\[\varnabla_{(Q_1,K)}(u,v)-\varnabla_{(Q_2,K)}(u,v)=u\varnabla_{(L,K)}(u,v),\] where $L=(L_1,L_2)$ is the $2$-component link
obtained by smoothing the crossing.
If we divide both sides of the equation by $\nabla_K(v)$, then differentiate both sides with respect to $u$ and
finally set $u=0$, then we get \[C_{(Q_1,K)}(v^2)-C_{(Q_2,K)}(v^2)=\frac{\varnabla_{(L,K)}(0,v)}{\nabla_K(v)}.\]
Let us note that $\lk(L_1,K)+\lk(L_2,K)=\lk(Q_i,K)=1$.
So precisely one of the integers $\lk(L_1,K)$, $\lk(L_2,K)$ is even,
and if we denote it by $n$, then the other one equals $1-n$.
Then by Theorem \ref{torres} and Example \ref{fibonacci-lucas}
\[\Omega_{(L,K)}(1,y)=\frac{(y^n-y^{-n})(y^{1-n}-y^{n-1})}{y-y^{-1}}\nabla_K(y-y^{-1})=
(y+y^{-1})F_n(v)\frac{L_{1-n}(v)}v\nabla_K(v),\]
where $v=y-y^{-1}$.
Then the part of $\Omega_{(L,K)}(x,y)$ which involves no powers of $u=x-x^{-1}$
must be $(x^{-1}y+xy^{-1})F_n(v)\frac{L_{1-n}(v)}v\nabla_K(v)$.
Therefore $\varnabla_{(L,K)}(0,v)=F_n(v)\frac{L_{1-n}(v)}v\nabla_K(v)$.
Hence \[C_{(Q_1,K)}(v^2)-C_{(Q_2,K)}(v^2)=F_n(v)\frac{L_{1-n}(v)}v.\]
Here the left hand side converges to $C_{(Q_1,\K)}-C_{(Q_2,\K)}$ as $K\to\K$ in $C^0$ topology,
whereas the right side does not depend on $K$.
Thus $C_{(Q_1,\K)}(v^2)-C_{(Q_2,\K)}(v^2)=F_n(v)\frac{L_{1-n}(v)}v$, which is a polynomial in $v^2$.
(Note that since $n$ is even, $F_n(v)$ involves only even powers of $v$ and $L_{1-n}$ only odd ones.)
\end{proof}

\section{Proof of Theorem \ref{main4}}

\begin{proposition} \label{torsion-jump}
Let $\K$ be a knot in $S^3$ and $Q$ a PL knot in $S^3\but\K$ such that $\lk(Q,\K)=1$.
Let $S$ be a Seifert surface bounded by $Q$ in $S^3$ and let $F$ be an embedded PL surface 
in $S^3\but\K$, disjoint from $S$ and such that every knot in $F$ is null-homologous in $S^3\but\K$.
Let $(A_1,\dots,A_{2g})$ be a symplectic basis for $F$, and let $\tilde A_i$ be some lifts of $A_i$ 
to the infinite cyclic cover $\X$ of $S^3\but\K$.
Finally let $Q'=Q\#_b\partial F$, where $b$ is a band in $S^3\but\K$, connecting $Q$ to $\partial F$ 
and otherwise disjoint from $S$ and $F$.
If each $[\tilde A_{2i}]$ is annihilated by some Laurent polynomial in $H_1(\X)$, then 
$C_{(Q,\K)}(z)-C_{(Q',\K)}(z)$ is a rational power series.
\end{proposition}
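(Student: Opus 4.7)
The plan is to mimic the Cochran--pairing strategy used in the proof of Theorem \ref{eta-function}, but now exploiting the torsion hypothesis to obtain rationality rather than polynomiality. First, replace $\K$ by a PL knot $K$ sufficiently close in the $C^0$ topology; the invariance of the coefficients of $C_L$ under small approximations of the second component lets one compute $C_{(Q,\K)}(z)-C_{(Q',\K)}(z)$ as a limit of $C_{(Q,K)}(z)-C_{(Q',K)}(z)$. The hypothesis that every knot in $F$ is null-homologous in $S^3\but\K$ means the inclusion $F\hookrightarrow S^3\but K$ (for $K$ close enough to $\K$) lifts to the infinite cyclic cover $p\:\X\to S^3\but K$; pick such a lift $\tilde F$, and lift $Q$, $Q'$ to $\tilde Q$, $\tilde Q'$ so that $\tilde Q'-\tilde Q=\partial(\tilde F+\tilde b)$ as $1$-chains in $\X$, where $\tilde b$ is the (compact) lift of the band $b$.

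Next, as in the proof of Theorem \ref{eta-function}, one expresses $C_{(Q,K)}-C_{(Q',K)}$ in terms of Cochran-type pairings. Using the additivity identity \cite{M24-2}*{Lemma \ref{part2:cochran-pairing}(a)} applied to the $2$-chain $\zeta=\tilde F+\tilde b$, one obtains a formula of the schematic shape
\[
C_{(Q,K)}(z)-C_{(Q',K)}(z)=\big\langle\partial\zeta,\,\tilde Q_+\big\rangle+\big\langle\tilde Q',\,\partial\zeta_+\big\rangle,
\]
rewritten via the identity $\langle\partial\zeta,\tilde\gamma\rangle=\langle\tilde F,\tilde\gamma\rangle$ (modulo a bounded contribution from the band $\tilde b$, which is polynomial in $t^{\pm1}$ and causes no harm). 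The resulting expression depends only on the position of $\tilde F$, $\tilde Q_+$ and $\tilde Q'$ in the part of $\X$ lying over a fixed neighborhood complement of $\K$, so it stabilizes as $K\to\K$.

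Finally, one uses the symplectic basis to reduce the $\tilde F$--pairings to pairings with the $\tilde A_i$. Via the intersection pairing of $F$ and the standard description of a closed surface by its symplectic basis, any Cochran pairing of the form $\langle\tilde F,\tilde\gamma\rangle$ can be expanded as a finite $\Z$-linear combination of products $\langle\tilde A_{2i-1},\cdot\rangle\cdot\langle\cdot,\tilde A_{2i}\rangle$ (with a Laurent polynomial in $t$ as the intermediate factor coming from how $\tilde\gamma$ threads the lifts of $F$). Since each $[\tilde A_{2i}]$ is annihilated in $H_1(\X)$ by some nonzero $p_i(t)\in\Z[t^{\pm1}]$, each pairing $\langle\cdot,\tilde A_{2i}\rangle$ is a rational function with denominator dividing $p_i$, so the total sum is rational. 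The main obstacle is a bookkeeping one: verifying that for the two-variable Conway polynomial underlying $C_L$ in the case $\lk=1$, the change $C_{(Q,K)}-C_{(Q',K)}$ really does assemble into Cochran pairings in which $[\tilde A_{2i}]$ (rather than $[\tilde A_{2i-1}]$) appears as one argument; this is where the asymmetric torsion hypothesis is used, and where the proof must invoke the specific version of the pairing formula worked out in \cite{M24-2}.
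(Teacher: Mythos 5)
There is a genuine gap, and it sits exactly where you call it ``bookkeeping''. Your central displayed identity is transplanted from the proof of Theorem \ref{eta-function}, which is the $\lk=0$ case: there $C_{(Q,K)}$ coincides with Kojima's $\eta$-function and hence with the Cochran self-pairing $\langle\tilde Q,\tilde Q_+\rangle$ of a closed lift of $Q$, and the additivity lemma from \cite{M24-2} is applied to closed lifts. In the present situation $\lk(Q,\K)=1$, so $Q$ and $Q'$ do not lift to loops in $\X$ (their preimages are lines), the chain equation $\tilde Q'-\tilde Q=\partial(\tilde F+\tilde b)$ has no meaning with honest lifts, and $C_{(Q,K)}$ is not a self-pairing of anything in the infinite cyclic cover. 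So the formula you start from cannot even be written down, and the subsequent expansion of $\langle\tilde F,\cdot\rangle$ over the symplectic basis --- the place where $[\tilde A_{2i}]$ must appear as an argument and the denominators $p_i$ must come out --- is not bookkeeping but precisely the nontrivial $\lk=1$ content. The paper itself warns that this transplantation is treacherous: Theorem \ref{non-invariance} (and the footnote about the refuted 2005 announcement) shows that for $\lk=1$ a height-$2$ relation alone does not keep the difference of $C$'s rational, so no argument that merely imitates the $\lk=0$ pairing formalism can be correct as stated.

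The paper's actual proof uses a different mechanism that your proposal never sets up. One chooses a Seifert surface $\Sigma$ for the PL approximation $K$, makes it disjoint from $F$ (using that every knot in $F$ has zero linking with $K$, via the surgery argument in the proof of (3)$\Rightarrow$(1) of Theorem \ref{disjoint seifert}), and normalizes $\Sigma\cap S$ to a single clasp arc as in Lemma \ref{C-complex}, so that $S'=S\cup b\cup F$ meets $\Sigma$ in that same arc. Then \cite{M24-2}*{Theorem \ref{part2:mainmain}} applied to the pairs $(\Sigma,S)$ and $(\Sigma,S')$ expresses $C_{(Q',K)}-C_{(Q,K)}$ as a series in the invariants $\beta_\Sigma^{n-1,\,n}(A_{2i},A_{2i-1})$. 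Rationality is then free --- the left-hand side is a difference of $C$'s of PL links --- and the torsion hypothesis is used only to produce the $2$-chains $\zeta_i$ bounding $p_i\tilde A_{2i}$ inside the $K$-independent cover $Y$ of $S^3\but N$, which makes the right-hand side independent of the approximation $K$ and hence equal to $C_{(Q',\K)}-C_{(Q,\K)}$; there is no need to compute pairings with denominators dividing $p_i$. To salvage your outline you would have to replace your displayed identity by this Seifert-surface comparison (or prove an $\lk=1$ pairing formula of your own) and carry out the disjointness and clasp normalization of $\Sigma$, $S$, $F$ and $b$; without that, the argument does not start.
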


\begin{proof} By the hypothesis there exist $p_1,\dots,p_g\in\Z[t^{\pm1}]$ such that each 
$p_i\tilde A_{2i}$ bounds a $2$-chain $\zeta_i$ in $\X$.
The images of the chains $\zeta_i$ in $S^3\but\K$ as well as the surface $F$ and the knot $Q$ 
are disjoint from some polyhedral neighborhood $N$ of $\K$.
Hence the $\zeta_i$ and the $\tilde A_{2i-1}$ lie in the preimage $Y$ of $S^3\but N$ in $\X$.
This $Y$ can also be described as the cover of $S^3\but N$ corresponding to (the kernel of) the homomorphism
$\pi_1(S^3\but N)\to\Z$ given by $[Z]\mapsto\lk(Z,\K)$.
If $K$ is a sufficiently close PL approximation of $\K$, then it is homotopic to $\K$ within $N$,
and then the preimage of $S^3\but N$ in the infinite cyclic cover $X$ of $S^3\but K$ is the same cover $Y$
of $S^3\but N$.
Thus the $\zeta_i$ and the $A_{2i-1}$ can be regarded as lying in $X$.

Let $\Sigma$ be a Seifert surface bounded by $K$ in $S^3$. 
Since $K$ is homotopic to $\K$ within $N$, which is disjoint from $F$, $\lk(R,K)=\lk(R,\K)=0$ for every knot $R$ in $F$.
Then by the proof of (3)$\Rightarrow$(1) for $g=2$ in Theorem \ref{disjoint seifert}, we may assume that 
$\Sigma$ is disjoint from $F$.
If $\Sigma$ intersects the band $b$, we can eliminate any such intersections by pushing them into $S$.
Now by the proof of Lemma \ref{C-complex} we can eliminate all intersections between $\Sigma$ and $S$ except
for one clasp arc $J$ with one endpoint in $K$ and another in $Q$.
(This elimination consists of modifications of $\Sigma$ and $S$ which take place in a small neighborhood of $\Sigma\cup S$
and therefore create no intersections of either $\Sigma$ or $S$ with $F$.
Also the innermost arc argument which is used to eliminate the intersections between $\Sigma$ and $Q$ can be carried out 
in the arc $Q\but b$ rather than in the entire knot $Q$, and therefore will create no intersections of $\Sigma$ with $b$.)
Thus $S':=S\cup b\cup F$ is an embedded surface, and its intersection with $\Sigma$ consists only of the arc $J\subset S$.

Now \cite{M24-2}*{Theorem \ref{part2:mainmain}} applies to each of the pairs $(\Sigma,S)$ and $(\Sigma,S')$, which yields
\[C_{(Q',K)}(s)-C_{(Q,K)}(s)=(-1)^g\sum_{n=1}^\infty(-1)^n\sum_{i=1}^g\beta_\Sigma^{n-1,\,n}(A_{2i},A_{2i-1})\,s^{n-1}.\]
The left hand side is a rational power series since both $(Q,K)$ and $(Q',K)$ are PL links.
On the other hand, by \cite{M24-2}*{Theorem \ref{part2:cochran expansion}(a)} each $\beta_\Sigma^{n-1,\,n}(A_{2i},A_{2i-1})$
depends only on $\left<A_{2i},A_{2i-1}\right>_\Sigma$, which in turn depends only on 
the mutual position of the $\zeta_i$ and the $A_{2i-1}$ in $Y$.
Thus it does not depend on the choice of the PL approximation $K$, as long as it is homotopic to $\K$ within $N$.
Therefore $C_{(Q',\K)}(s)-C_{(Q,\K)}(s)$ is rational.
\end{proof}

\begin{theorem} \label{torsion-jump2}
Let $\K$ be a knot in $S^3$ and let $Q_1$, $Q_2$ be PL knots in $S^3\but\K$ such that $\lk(Q_i,K)=1$.
Let $G=\pi_1(S^3\but K)$, let $T_0$ be the torsion submodule of the knot module $G'/G''$, and let
$T$ be the preimage of $T_0$ in $G'$.
If $Q_1$ and $Q_2$ represent the same conjugacy class of $G/[G',T]$, then $C_{(Q_1,\K)}-C_{(Q_2,\K)}$ 
is a rational power series.
\end{theorem}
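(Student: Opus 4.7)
The plan is to deduce Theorem \ref{torsion-jump2} from Proposition \ref{torsion-jump} combined with Lemma \ref{link homotopy}. Since $C_{(Q,\K)}$ is an isotopy invariant of the topological link $(Q,\K)$ and in particular unchanged by basepoint changes, after adjusting basepoints the hypothesis becomes $Q_2\cdot Q_1^{-1}\in[G',T]$ in $\pi_1(S^3\but\K)$. The subgroup $[G',T]$ is generated by commutators of the form $[a,b]^{\pm1}$ with $a\in G'$ and $b\in T$, so one can write $Q_2\cdot Q_1^{-1}=\tau_1\cdots\tau_n$ with $\tau_i=[a_i,b_i]^{\epsilon_i}$, $a_i\in G'$, $b_i\in T$, and $\epsilon_i\in\{\pm1\}$.

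Set $Q^{(0)}=Q_1$. Given a PL knot $Q^{(i-1)}$ with $\lk(Q^{(i-1)},\K)=1$, I construct an embedded PL genus-$1$ surface $F_i\subset S^3\but\K$ with connected boundary whose symplectic basis $(A_1^{(i)},A_2^{(i)})$ is freely homotopic in $S^3\but\K$ to $(a_i^{\epsilon_i},b_i)$, by starting with a small embedded once-punctured torus near the basepoint and dragging its two standard generators along disjoint simple-closed-curve representatives of $a_i^{\epsilon_i}$ and $b_i$ (which exist since every element of $\pi_1$ of a 3-manifold is represented by an embedded loop) via tubes, with self-intersections of the resulting immersed surface resolved by standard PL surgery. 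Since $a_i^{\epsilon_i},b_i\in G'$, every loop in $F_i$ is null-homologous in $S^3\but\K$, and since $b_i\in T$, the class $[\tilde A_2^{(i)}]\in H_1(\X)$ is annihilated by any Laurent polynomial annihilating $[b_i]\in T_0$. Now put $F_i$ in general position with a Seifert surface $S^{(i-1)}$ of $Q^{(i-1)}$ and eliminate intersection circles by tubing (as in the proof of Lemma \ref{C-complex}), using that these circles are null-homologous in $S^3\but\K$. Choose a band $\beta_i$ connecting $Q^{(i-1)}$ to $\partial F_i$, disjoint from $S^{(i-1)}$ and from the interior of $F_i$, with its core chosen so that the based loop $[\partial F_i]$ represents $\tau_i$ in $\pi_1(S^3\but\K)$; set $Q^{(i)}=Q^{(i-1)}\#_{\beta_i}\partial F_i$. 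Proposition \ref{torsion-jump} then gives that $C_{(Q^{(i-1)},\K)}-C_{(Q^{(i)},\K)}$ is a rational power series.

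After $n$ steps, the PL knot $Q^{(n)}$ represents in $\pi_1(S^3\but\K)$ the element $Q_1(Q_2Q_1^{-1})=Q_1Q_2Q_1^{-1}$, which is conjugate to $Q_2$. Hence $Q^{(n)}$ is freely homotopic to $Q_2$ in $S^3\but\K$, and by Lemma \ref{link homotopy} the difference $C_{(Q^{(n)},\K)}-C_{(Q_2,\K)}$ is a polynomial. Telescoping the inductive rational differences together with this polynomial yields that $C_{(Q_1,\K)}-C_{(Q_2,\K)}$ is rational, as desired. The main obstacle is the embedded-surface construction in the inductive step: the loops $a_i,b_i\in G'$ may be represented by very complicated simple closed curves, and building an embedded once-punctured torus $F_i$ whose symplectic generators genuinely realize the prescribed free homotopy classes, and that is disjoint from the Seifert surface of the current intermediate knot $Q^{(i-1)}$, requires careful iterated PL surgery. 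What makes this possible is that every loop on $F_i$ lies in $G'$, so every intersection circle and double curve encountered is null-homologous in $S^3\but\K$ and can be eliminated by tubing without introducing further algebraic obstructions.
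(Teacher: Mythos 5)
Your overall strategy is the same as the paper's (realize the product of commutators by the boundary of an embedded surface disjoint from a Seifert surface of the knot, band-sum, apply Proposition \ref{torsion-jump}, and finish the conjugation with a free homotopy and Lemma \ref{link homotopy}); the only structural difference is that you process one commutator at a time with genus-one surfaces, while the paper uses a single genus-$n$ surface and one application of Proposition \ref{torsion-jump}. However, two steps in your execution have genuine gaps. First, your construction of the embedded once-punctured torus $F_i$ --- tubing the generators of a small standard torus along disjoint embedded representatives of $a_i^{\epsilon_i}$ and $b_i$ and then ``resolving self-intersections by standard PL surgery'' --- does not work as stated: surgery along double curves changes the genus and destroys control over the symplectic basis, so after surgery you can no longer assert that there is a basis $(A_1^{(i)},A_2^{(i)})$ realizing the prescribed classes, nor that the hypothesis of Proposition \ref{torsion-jump} (the $A_{2i}$'s lift to torsion classes) still holds. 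The missing idea, which makes the problem disappear, is the paper's: represent $c_i\in G'$ and $t_i\in T$ by \emph{embedded} PL knots $A_i$, $B_i$ meeting transversely in exactly one point, and take thin ribbon neighborhoods; their union $\tau_i$ is automatically an embedded punctured torus with $[\partial\tau_i]=[c_i,t_i]$, with no immersed surfaces or surgeries ever appearing.

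Second, your justification for making $F_i$ disjoint from a Seifert surface $S^{(i-1)}$ of $Q^{(i-1)}$ invokes the wrong condition. That the intersection curves are null-homologous in $S^3\but\K$ only controls linking with $\K$; what you need is that every curve on $F_i$ has zero linking number with $Q^{(i-1)}$ --- otherwise no Seifert surface of $Q^{(i-1)}$ disjoint from $F_i$ (nor even a surface $F_i$ disjoint from $Q^{(i-1)}$, if $\lk(\partial F_i,Q^{(i-1)})\ne 0$) can exist. This is not automatic from $a_i,b_i\in G'$ and must be arranged by normalizing the representatives, exactly as the paper does (``we may assume $\lk(A_i,Q_1)=\lk(B_i,Q_1)=0$'', achievable by summing the representatives with meridians of the knot, which changes nothing in $\pi_1(S^3\but\K)$), after which the disjoint Seifert surface and the disjointness from the bands are constructed explicitly (tubing $S$ and attaching punctured parallel copies of boundaries of regular neighborhoods). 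In your iterative scheme this normalization must moreover be redone at each step relative to the evolving knot $Q^{(i-1)}$, and the band $\beta_i$ cannot simply be ``chosen'' disjoint from $S^{(i-1)}$ without such an argument. With these two repairs your induction would go through, but as written the two central geometric steps are unsupported.
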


Theorem \ref{main4} follows from Theorem \ref{torsion-jump2} and the proof of Theorem \ref{main1'}.

\begin{proof}
We may assume that $Q_1$ and $Q_2$ pass through the basepoint of $S^3\but\K$ and hence determine some 
classes $a,b\in G$.
Then by the hypothesis there exists a $g\in G$ such that $a=b^g$ modulo $[G',T]$.
Hence $a^{-1}b^g=[c_1,t_1]\cdots[c_n,t_n]$ for some $c_i\in G'$ and some $t_i\in T$.
Let $O$ be a small $3$-ball in $S^3\but\K$ containing the basepoint of $S^3\but\K$.
We may represent $c_1,\dots,c_n$ and $t_1,\dots,t_n$ by PL knots $A_1,\dots,A_n$ and $B_1,\dots,B_n$ 
in $S^3\but(\K\cup Q_1)$ which are all pairwise disjoint except that each $A_i$ meets each $B_i$
in a single point $p_i\in O$. 
We may assume that $\lk(A_i,Q_1)=\lk(B_i,Q_1)=0$ for each $i$.
Each of the wedges $A_i\vee B_i$ is a deformation retract of a punctured torus 
$\tau_i:=\alpha_i\cup\beta_i$, where $\alpha_i$ and $\beta_i$ are thin ribbons which deformation 
retract onto $A_i$ and $B_i$ and intersect in a $2$-disk (a ``square'') which deformation retracts 
onto $p_i$.
Thus each $[\partial\tau_i]=[c_i,t_i]\in G$.
We may assume that the $\tau_i$ are pairwise disjoint and also are disjoint from $\K$ and from $Q_1$.
For $i=1,\dots,n-1$ let $b_i$ be a band in $O$ connecting $\partial\tau_i$ to $\partial\tau_{i+1}$ and 
let $b_0$ be a band in $O$ connecting $Q_1$ to $\partial\tau_1$ so that the $n$ bands are pairwise 
disjoint from each other and also from the $\tau_i$ and from $Q_1$ apart at their attaching arcs.
These attaching arcs can be chosen so that the class of 
$Q_1':=Q_1\#_{b_0}\partial\tau_1\#_{b_1}\dots\#_{b_{n-1}}\partial\tau_n$ in $G$ equals 
$a[c_1,t_1]\cdots[c_n,t_n]$.
Since $\lk(A_i,Q_1)=\lk(B_i,Q_1)=0$ for each $i$, it is easy to see that $Q_1$ bounds a Seifert
surface $S$ in $S^3$ disjoint from each $\tau_i$. 
Also we can make $S$ disjoint from the bands $b_i$ by attaching to $S$ punctured parallel copies of 
the surfaces $\partial N_i$, where $N_n$ is a small regular neighborhood of $\tau_n$ and
each $N_i$ is a small regular neighborhood of $\tau_i\cup b_i\cup N_{i+1}$.
In particular, $S$ is disjoint from the surface $F:=\tau_1\cup b_1\cup\tau_2\cup\dots\cup b_{n-1}\cup\tau_n$
and we have $Q_1'=Q_1\#_{b_0}\partial F$.
Then by Proposition \ref{torsion-jump} $C_{(Q_1',\K)}-C_{(Q_1,\K)}$ is a rational power series.
On the other hand, since $b^g=a[c_1,t_1]\cdots[c_n,t_n]$, there is a free homotopy between $Q_1'$ and $Q_2$
in $S^3\but\K$. 
Then by Lemma \ref{link homotopy} $C_{(Q_2,\K)}-C_{(Q_1',\K)}$ is a polynomial.
\end{proof}

\begin{corollary} \label{main6'}
The Bing sling is not isotopic to any PL knot though knots whose knot module is a torsion module.
\end{corollary}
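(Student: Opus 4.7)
The plan is to argue by contradiction along the lines of the proof of Theorem \ref{main4}, replacing the $2.5$-bordism hypothesis by a direct appeal to Theorem \ref{torsion-jump2}, which becomes available thanks to the torsion hypothesis on the knot modules along the isotopy. So I would begin by supposing the Bing sling $\B = X_0$ is isotopic to a PL knot $X_1$ through knots $X_t$ ($t\in[0,1]$) each of whose knot module is a torsion $\Z[t^{\pm 1}]$-module, and then apply Lemma \ref{dense} to this isotopy to produce times $0 = t_0 < t_1 < \dots < t_n = 1$ and PL knots $K_0,\dots,K_{n-1}$ such that $K_i$ is disjoint from $X_t$ for every $t\in[t_i,t_{i+1}]$ and $\lk(K_i,X_{t_i}) = 1$.

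Within each subinterval $[t_i,t_{i+1}]$ the family $(K_i,X_t)$ is an isotopy of $2$-component topological links with linking number $1$, so invariance of Cochran's power series $C_L$ (the isotopy-invariant extension of the first off-diagonal coefficient of $\bar\varnabla_L$) will give $C_{(K_i,X_{t_i})} = C_{(K_i,X_{t_{i+1}})}$. At each junction time $t_{i+1}$ with $0 \le i < n-1$, both $K_i$ and $K_{i+1}$ are PL knots in $S^3\setminus X_{t_{i+1}}$ with linking number $1$, so Theorem \ref{second commutator} will place them in the same conjugacy class of $G/G''$, where $G := \pi_1(S^3\setminus X_{t_{i+1}})$. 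The key observation is that the torsion hypothesis on $X_{t_{i+1}}$ says exactly that the knot module $G'/G''$ equals its own torsion submodule, so the subgroup $T$ of Theorem \ref{torsion-jump2} is all of $G'$ and hence $[G',T] = G''$; Theorem \ref{torsion-jump2} therefore applies and yields that $C_{(K_i,X_{t_{i+1}})} - C_{(K_{i+1},X_{t_{i+1}})}$ is a rational power series.

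Telescoping across the $n$ subintervals (on which the differences of $C$-series vanish) and the $n-1$ junctions (on which they are rational) will express $C_{(K_0,X_0)} - C_{(K_{n-1},X_1)}$ as a finite sum of rational power series, hence itself rational. Since $X_1$ is PL, $(K_{n-1},X_1)$ is a PL link and $C_{(K_{n-1},X_1)}$ is a rational power series, forcing $C_{(K_0,\B)}$ to be rational as well --- which contradicts the irrationality of $C_{(K,\B)}$ for every PL knot $K$ with $\lk(K,\B) = 1$ that is established inside the proof of Theorem \ref{main1'} via Theorem \ref{convergence}(b) and Proposition \ref{conway-MJ'}. There is no serious obstacle here beyond assembling the pieces: the one thing to notice is that the torsion hypothesis collapses $[G',T]$ all the way down to $G''$, so that Theorem \ref{second commutator} already supplies the conjugacy condition required to invoke Theorem \ref{torsion-jump2} at each junction, and no extra $2.5$-bordism data is needed.
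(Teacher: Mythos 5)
Your proposal is correct and follows essentially the same route as the paper's proof: Lemma \ref{dense} to discretize the isotopy, isotopy invariance of $C_L$ on each subinterval, Theorem \ref{second commutator} plus the observation that the torsion hypothesis forces $T=G'$ (so $[G',T]=G''$) to invoke Theorem \ref{torsion-jump2} at each junction, and the irrationality of $C_{(K,\B)}$ from the proof of Theorem \ref{main1'} to reach the contradiction. No meaningful differences from the paper's argument.
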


\begin{proof} Suppose that such an isotopy $X_t$ exists.
Let $0=t_0<\dots<t_n=1$ and $K_0,\dots,K_{n-1}$ be given by Lemma \ref{dense}.
Let $G_i=\pi_1(S^3\but X_{t_i})$.
By Theorem \ref{second commutator} $K_{i-1}$ and $K_i$ represent the same conjugacy class
in $G_i/[G_i',G_i']$.
Let $T_i\subset G_i'$ be the preimage of the torsion submodule of the knot module $G_i'/G_i''$.
By the hypothesis $T_i=G_i'$.
Hence $K_{i-1}$ and $K_i$ represent the same conjugacy class in $G_i/[G_i',T_i]$.
Then by Theorem \ref{torsion-jump2} $C_{(K_{i-1},X_{t_i})}-C_{(K_i,X_{t_i})}$ is a rational power series.
By the proof of Theorem \ref{main1} each $C_{(K_i,X_{t_i})}=C_{(K_i,X_{t_{i+1}})}$, and
$C_{(K_0,X_0)}$ is not rational.
Then $C_{(K_{n-1},X_1)}$ is not rational, contradicting our hypothesis that $X_1$ is a PL knot.
\end{proof}

\section{Boundary-link-like handlebodies}

\begin{lemma} \label{scc} Let $\Sigma$ be a closed orientable $2$-manifold. 

(a) \cite{Mey}, \cite{Sch}, \cite{Ben} {\rm (see also \cite{Ed}*{\S3}, \cite{Pu}*{\S2})} 
A nonzero class $\alpha\in H_1(\Sigma)$ is realizable by an embedded circle if and only if $\alpha$ is primitive.%
\footnote{An element of an abelian group is called {\it primitive} if it is not divisible by an integer greater than $1$.}

(b) \cite{Ed} {\rm (see also \cite{MeP}*{Corollary in \S2})}
A linearly independent collection of classes $\alpha_1,\dots,\alpha_k\in H_1(\Sigma)$ is realizable by
$k$ pairwise disjoint embedded circles if and only if the subgroup $\left<\alpha_1,\dots,\alpha_k\right>$
of $H_1(\Sigma)$ is its direct summand and $\alpha_i\cdot\alpha_j=0$ for $i\ne j$.

(c) \cite{Pu}*{Lemma 6.2} A collection of classes $\alpha_1,\dots,\alpha_k\in H_1(\Sigma)$ is realizable by
$k$ pairwise disjoint embedded circles whose union does not separate $\Sigma$ if and only if they form a basis
of a direct summand of $H_1(\Sigma)$ and $\alpha_i\cdot\alpha_j=0$ for $i\ne j$.
\end{lemma}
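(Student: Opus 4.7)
The plan is to prove (a) first, then derive (c) from (a) via symplectic bases on $H_1(\Sigma)$, and finally observe that (b) follows from (c) by forgetting the non-separation requirement. Note that linear independence of $\alpha_1,\dots,\alpha_k$ together with their span being a direct summand of $H_1(\Sigma)$ already implies that the $\alpha_i$ form a basis of that summand (since $H_1(\Sigma)$ is free abelian); hence (b) and (c) carry the same algebraic hypothesis, and only the geometric conclusion of (c) is strictly stronger.

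For (a), necessity is an easy intersection argument: a nonzero embedded simple closed curve $C\subset\Sigma$ is non-separating (since otherwise $[C]=0$), so by completing a transverse arc at a point of $C$ to an embedded circle $D$ meeting $C$ transversely in a single point we obtain $[C]\cdot[D]=\pm 1$, whence $[C]$ is primitive. For sufficiency, I would extend the primitive class $\alpha$ first to a $\Z$-basis of $H_1(\Sigma)$ and then to a symplectic basis having $\alpha$ as its first element; by the classical transitivity of $\mathrm{Diff}^+(\Sigma)$ on symplectic bases of $H_1(\Sigma)$, some orientation-preserving self-homeomorphism of $\Sigma$ carries this basis to the standard symplectic basis arising from a handle decomposition, thereby carrying the evident embedded representative of the first standard basis element to an embedded representative of $\alpha$.

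For (c), the vanishing of intersections is immediate from disjointness, while $\Z$-linear independence and the direct-summand property follow from the observation that, when $\Sigma\but C$ is connected (where $C=C_1\cup\dots\cup C_k$), the Mayer--Vietoris sequence for $\Sigma=\nu(C)\cup\overline{\Sigma\but\nu(C)}$ exhibits the inclusion-induced map $H_1(C)\to H_1(\Sigma)$ as split injective with image a direct summand. For sufficiency, I would extend $\alpha_1,\dots,\alpha_k$ to a symplectic basis of $H_1(\Sigma)$ (using a Witt-type extension lemma for isotropic direct summands of $\Z$-symplectic lattices) and again invoke transitivity to transport the standard pairwise-disjoint non-separating multicurve realizing the first $k$ ``$a$-type'' basis elements onto an embedded realization of $\alpha_1,\dots,\alpha_k$; (b) then follows by weakening the conclusion.

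The main obstacle is the Witt-extension step used in the sufficiency of (c), namely the passage from an isotropic direct summand of rank $k$ in $\big(H_1(\Sigma),\cdot\big)$ to a symplectic basis containing it. One can circumvent this algebraic step by induction on $k$: realize $\alpha_1$ as a non-separating embedded circle using (a), cut $\Sigma$ open along it and cap the two new boundary circles with disks to obtain a surface $\Sigma'$ of lower genus, verify that the images of $\alpha_2,\dots,\alpha_k$ in $H_1(\Sigma')$ still generate a direct summand with pairwise trivial intersection (this uses that $\alpha_i\cdot\alpha_1=0$ for $i\ge 2$ to keep the capping disks disjoint from representatives in general position), and apply the inductive hypothesis. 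This reduces the entire algebraic content of the proof to part (a).
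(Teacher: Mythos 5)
The paper does not actually prove (a) and (b): they are quoted from the literature, and the only argument it supplies is the easy deduction of (c) from (b) via Lemma \ref{separation}. You go the other way around and prove everything from scratch: (a) and the sufficiency half of (c) by extending to a symplectic basis and invoking transitivity of orientation-preserving homeomorphisms on symplectic bases of $H_1(\Sigma)$ (surjectivity of the mapping class group onto $Sp(2g,\Z)$), and then (b) as a corollary of (c). In outline this is exactly the argument in the sources the paper cites (Edmonds, Putman), so the route is sound. Moreover, the Witt-type extension you single out as the main obstacle is routine: since $\left<\alpha_1,\dots,\alpha_k\right>$ is a direct summand and the intersection form is unimodular, one finds $\beta_i$ with $\alpha_j\cdot\beta_i=\delta_{ij}$, and then replaces $\beta_i$ by $\beta_i+\sum_l c_{il}\alpha_l$ to make the $\beta_i$ pairwise isotropic; so the detour by induction is unnecessary. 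Note also that the ``only if'' half of (b), as you reduce it to (c), silently uses the implication that linearly independent classes of disjoint circles have non-separating union --- which is precisely the paper's Lemma \ref{separation}, so you are using the same key lemma without naming it.

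Two steps need repair. First, in the ``only if'' direction of (c) the Mayer--Vietoris sequence does not, as stated, exhibit $H_1(C)\to H_1(\Sigma)$ as split injective: it controls the image of $H_1(\nu(C))\oplus H_1(\overline{\Sigma\setminus\nu(C)})$, not of $H_1(C)$ alone. The clean argument uses connectedness of $\Sigma\setminus C$ to construct dual curves $D_i$ meeting $C_i$ transversely once and missing $C_j$ for $j\ne i$; then $x\mapsto\big(x\cdot[D_1],\dots,x\cdot[D_k]\big)$ splits the inclusion of $\left<[C_1],\dots,[C_k]\right>$, giving both linear independence and the direct-summand property (this is Lemma \ref{separation} upgraded by unimodularity). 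Second, in your cut-and-cap induction the inductive output lives in $\Sigma'$ and only determines classes in $\alpha_1^{\perp}/\left<\alpha_1\right>\cong H_1(\Sigma')$; when the resulting curves are viewed back in $\Sigma$ they represent $\alpha_i+n_i\alpha_1$ for unknown integers $n_i$, not $\alpha_i$. This lift ambiguity must be corrected, e.g.\ by band-summing each curve with $|n_i|$ suitably oriented parallel copies of $C_1$ along bands in the (connected) complement of the whole system; isotoping the curves off the capping disks is not the real issue. Both repairs are standard, so the proposal is right in spirit, but as written these two points are genuine gaps.
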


Let us note that (c) follows easily from (b), by applying the following lemma.

\begin{lemma} \label{separation}
Let $\Sigma$ be a closed $2$-manifold and $M$ its closed $1$-submanifold.
Then $M$ does not separate $\Sigma$ if and only if the classes of 
the connected components of $M$ are linearly independent in $H_1(\Sigma)$.
\end{lemma}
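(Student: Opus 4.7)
The plan is to deduce this from the long exact sequence of the pair $(\Sigma,\Sigma\setminus M)$ combined with Poincar\'e duality, working in the orientable case (to which the setting of the paper---boundaries of handlebodies in $S^3$---reduces, and where $H_1(\Sigma)$ is torsion-free). Without loss of generality I will assume that $\Sigma$ is connected: each connected component of $\Sigma$ is either disjoint from $M$ (so contributes nothing to either side) or contains some $C_i$'s, and the $[C_i]$'s supported in distinct components of $\Sigma$ form linearly independent subsystems of $H_1(\Sigma)$ iff each subsystem is linearly independent within its own component.

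Write $M=C_1\sqcup\dots\sqcup C_k$, with each $C_i$ two-sided (by orientability) and with tubular neighborhood $N_i\cong C_i\x(-1,1)$. Excision plus the direct computation $H_1(N_i,N_i\setminus C_i)\cong\Z$ (generated by a transverse arc through $C_i$) yields $H_1(\Sigma,\Sigma\setminus M)\cong\Z^k$. The long exact sequence of the pair reads
\[H_1(\Sigma)\xr{j_*}H_1(\Sigma,\Sigma\setminus M)\cong\Z^k\xr{\partial}H_0(\Sigma\setminus M)\to H_0(\Sigma)=\Z\to 0,\]
so if $c$ denotes the number of connected components of $\Sigma\setminus M$, then $c-1=\rk(\partial)=k-\rk(j_*)$. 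Hence $M$ is non-separating, i.e.\ $c=1$, iff $j_*$ has rank $k$, iff $j_*$ becomes surjective after tensoring with $\Q$.

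The remaining task is to identify this condition with linear independence of the $[C_i]$'s. Under the excision isomorphism above, $j_*$ is given by $\alpha\mapsto(\alpha\cdot[C_1],\dots,\alpha\cdot[C_k])$, the signed intersection counts: represent $\alpha$ by a cycle transverse to $M$, and read off its image in $H_1(N_i,N_i\setminus C_i)\cong\Z$ as the number of signed crossings with $C_i$. Poincar\'e duality for the closed orientable surface $\Sigma$ makes the intersection form on $H_1(\Sigma)$ unimodular, so the map $H_1(\Sigma)\to\Z^k$, $\alpha\mapsto(\alpha\cdot[C_i])_i$, has rank $k$ exactly when the linear functionals $(\cdot)\cdot[C_i]$ are linearly independent on $H_1(\Sigma)$, which (again by unimodularity of the pairing) is equivalent to linear independence of the classes $[C_1],\dots,[C_k]$ in $H_1(\Sigma)$.

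The only subtlety worth flagging is a convention check in the non-orientable case, where a one-sided circle $C$ is $2$-torsion in $H_1(\Sigma;\Z)$ yet non-separating; here ``linear independence'' must be read mod~$2$, or the statement restricted to the orientable setting. In the applications of the lemma in this paper $\Sigma=\partial V$ for a handlebody $V\subset S^3$, so orientability is automatic and the argument above applies without modification; this is expected to be the main (and only) point requiring comment rather than a genuine obstacle.
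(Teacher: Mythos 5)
Your argument is correct (in the orientable setting, which covers every use of the lemma in the paper), but it is packaged differently from the paper's proof. The paper treats the two implications separately: if $\Sigma\setminus M$ is disconnected, the closure of a complementary component, taken as a relative $2$-chain, exhibits an explicit linear dependence among the $[C_i]$; conversely, if $\Sigma\setminus M$ is connected, it uses the surjection $H^0(\Sigma)\to H^0(\Sigma\setminus M)$ and duality $H^0(\Sigma\setminus M)\cong H_2^\infty(\Sigma\setminus M)\cong H_2(\Sigma,M)$ to conclude that $\partial\colon H_2(\Sigma,M)\to H_1(M)$ vanishes, so $H_1(M)\to H_1(\Sigma)$ is injective. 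You instead work with the ``dual'' pair $(\Sigma,\Sigma\setminus M)$: excision/Thom identification gives $H_1(\Sigma,\Sigma\setminus M)\cong\mathbb{Z}^k$, the long exact sequence yields the rank identity $c-1=k-\mathrm{rk}\,j_*$, and unimodularity of the intersection form converts full rank of the intersection functionals $\alpha\mapsto\alpha\cdot[C_i]$ into independence of the $[C_i]$. What each buys: the paper's first direction is completely elementary (a bounding chain) and its second avoids identifying $j_*$ with intersection numbers, while your version proves both directions at once and gives a quantitative refinement (the number of complementary components is $1+k-\mathrm{rk}\,j_*$), at the cost of transversality/Thom-class bookkeeping and the nondegeneracy of the pairing. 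Your orientability caveat is well taken and in fact cuts both ways: for a non-orientable $\Sigma$ the statement over $\mathbb{Z}$ literally fails (a fiber of the Klein bottle is two-sided and non-separating but is $2$-torsion in $H_1$), and the paper's own proof also tacitly uses orientations (both for the complementary $2$-chain and for the duality step), so the lemma should be read either for orientable $\Sigma$ or with $\mathbb{Z}/2$ coefficients; since $\Sigma=\partial V$ with $V\subset S^3$ in all applications, nothing is lost.
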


\begin{proof}
If $\Sigma\but M$ is disconnected, then the closure of any of its components, regarded as a $2$-chain in $M$, 
provides a nontrivial linear dependence between the homology classes of (some of) the components of $M$.
Conversely, if $\Sigma\but M$ is connected, then the restriction map $H^0(\Sigma)\to H^0(\Sigma\but M)$ is onto.
Then by the Poincar\'e duality so is the homomorphism $H_2(\Sigma)\to H_2^\infty(\Sigma\but M)\simeq H_2(\Sigma,M)$.
Then the boundary map $H_2(\Sigma,M)\to H_1(M)$ is zero, and hence the homomorphism $H_1(M)\to H_1(\Sigma)$ 
is injective.
Therefore the homology classes of the components of $M$ are linearly independent in $H_1(\Sigma)$. 
\end{proof}

By a {\it handlebody} in $S^3$ we mean a regular neighborhood of a PL embedded connected graph, 
or in other words a connected PL submanifold of $S^3$ which admits a handle decomposition with handles of
indices $0$ and $1$ only.

Let $V\subset S^3$ be a handlebody and let $W=\overline{S^3\but V}$.
Let $M_V$ and $L_V$ be the kernels of the inclusion induced homomorphisms 
$\phi\:H_1(\partial V)\to H_1(V)$ and $\psi\:H_1(\partial V)\to H_1(W)$.

\begin{lemma} \label{handlebody} (a) $H_i(W)\simeq H_i(V)$ (thus $W$ is a ``homology handlebody'');

(b) $H_1(\partial V)=M_V\oplus L$ and $\phi|_{L_V}\:L_V\to H_1(V)$ and $\psi|_{M_V}\:M_V\to H_1(W)$ are isomorphisms;

(c) The intersection form $H_1(\partial V)\x H_1(\partial V)\to\Z$ vanishes on $L_V\x L_V$ and on $M_V\x M_V$. 
\end{lemma}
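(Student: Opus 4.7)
For (a), I would invoke Alexander duality in $S^3$. Since $V$ is a regular neighborhood of a PL embedded connected graph of genus $g$, it collapses onto a wedge of $g$ circles, so $H_0(V)\simeq\Z$, $H_1(V)\simeq\Z^g$, and $H_i(V)=0$ for $i\ge 2$. Alexander duality gives $\tilde H_i(S^3\setminus V)\simeq\tilde H^{2-i}(V)$, and since $H_*(V)$ is free, universal coefficients reduce this to $\tilde H_i(S^3\setminus V)\simeq\tilde H_{2-i}(V)$. Because $W$ deformation retracts onto $S^3\setminus\mathrm{int}\,V$, which is homotopy equivalent to $S^3\setminus V$, this yields $H_i(W)\simeq H_i(V)$ for all $i$.

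For (b), I would apply the Mayer--Vietoris sequence to $S^3=V\cup W$ with $V\cap W=\partial V$. Since $H_1(S^3)=H_2(S^3)=0$ and $H_2(V)\oplus H_2(W)=0$ by (a), the sequence collapses to an isomorphism $(\phi,-\psi)\colon H_1(\partial V)\xrightarrow{\simeq}H_1(V)\oplus H_1(W)$. From this, any $\alpha\in M_V\cap L_V$ satisfies $\phi(\alpha)=\psi(\alpha)=0$ and hence vanishes; and every $\alpha\in H_1(\partial V)$ decomposes uniquely as the sum of the preimages of $(\phi(\alpha),0)$ and $(0,-\psi(\alpha))$, which lie in $L_V$ and $M_V$ respectively. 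The same isomorphism shows immediately that $\phi|_{L_V}$ and $\psi|_{M_V}$ are isomorphisms onto $H_1(V)$ and $H_1(W)$.

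For (c), I would use the classical ``half lives, half dies'' argument. The long exact sequence of the pair $(V,\partial V)$ identifies the image of the connecting map $H_2(V,\partial V)\to H_1(\partial V)$ with $M_V$; so given $\alpha,\beta\in M_V$, I pick relative $2$-cycles $A,B$ with $\partial A=\alpha$ and $\partial B=\beta$, represented geometrically by (singular) surfaces in $V$ meeting $\partial V$ in $\alpha$ and $\beta$. After a transversality perturbation done inside $V$ in a collar of $\partial V$ (so that the boundaries $\alpha,\beta$ are made transverse on $\partial V$ and $A,B$ transverse in $V$), the intersection $A\cap B$ is a compact $1$-manifold properly embedded in $V$ whose boundary on $\partial V$ equals the signed transverse intersection of $\alpha$ and $\beta$. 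Since the signed count of endpoints of a compact $1$-manifold is zero, $\alpha\cdot\beta=0$; the vanishing on $L_V$ is proved identically using $2$-chains in $W$. The only point requiring any care is this transversality argument, but it is entirely standard, so no genuine obstacle is foreseen---the lemma is a compendium of classical facts about knotted handlebodies.
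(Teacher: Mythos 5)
Your proposal is correct and follows essentially the same route as the paper: Alexander duality (plus freeness of $H_*(V)$) for (a), the Mayer--Vietoris sequence for $S^3=V\cup W$ for (b), and for (c) the standard ``half lives, half dies'' count of boundary points of a $1$-manifold obtained from transverse surfaces bounded by the given classes, which is exactly the paper's fiber-product argument. The only nitpick is that since your $2$-chains are allowed to be singular surfaces, the intersection is not a ``properly embedded $1$-manifold in $V$'' but the fiber product of the two transverse maps, a compact $1$-manifold mapped into $V$; its signed boundary count still vanishes, so the argument is unaffected.
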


\begin{proof}[Proof. (a)]
By the Alexander duality $\tilde H_i(W)\simeq\tilde H^{3-i-1}(V)$.
Since $V$ is a regular neighborhood of a connected graph, $\tilde H^{3-i-1}(V)=0=\tilde H_i(V)$ for $i\ne 1$ 
and $H^1(V)\simeq\Hom\big(H_1(V),\Z)\simeq H_1(V)$.
\end{proof}

\begin{proof}[(b)] Since $H_1(S^3)=0=H_2(S^3)$, by the Mayer--Vietoris exact sequence
the homomorphism $\phi+\psi\:H_1(\partial V)\to H_1(V)\oplus H_1(W)$ is an isomorphism.
Hence the restriction $(\phi+\psi)|_{L_V}$ is an isomorphism onto its image, but clearly
it coincides with $\phi_{L_V}$ and so its image is $H_1(V)$.
Thus $\phi|_{L_V}\:L_V\to H_1(V)$ is an isomorphism, and similarly so is $\psi|_{M_V}\:M_V\to H_1(W)$.
Also we get that the preimage of $H_1(V)$ under $\phi+\psi$ is $L_V$, and similarly
the preimage of $H_1(W)$ under $\phi+\psi$ is $M_V$.
Hence $H_1(\partial V)=M_V\oplus L_V$.
\end{proof}

\begin{proof}[(c)] Let $\alpha,\beta\in L_V$.
These can be represented by elements of $\pi_1(\partial W)$, which in turn can be represented by loops $f,g\:S^1\to\partial W$ 
which are transverse to each other.
Since $f$ and $g$ are null-homologous in $W$, they bound maps $F\:\Sigma\to W$ and $G\:\Sigma'\to W$ of some oriented surfaces,
which may also be assumed to be transverse to each other.
Then the oriented 0-manifold $f\cap g:=\{(x,y)\in S^1\x S^1\mid f(x)=g(y)\}$ is the oriented boundary of
the oriented 1-manifold $F\cap G:=\{(x,y)\in \Sigma\x\Sigma'\mid F(x)=G(y)\}$.
Hence $\alpha\cdot\beta=0$.
Similarly $\gamma\cdot\delta=0$ for any $\gamma,\delta\in M_V$.
\end{proof}

An oriented PL embedded circle in $\partial V$ representing a nonzero element of $L_V$ 
will be called a {\it longitude} of $V$.
A collection of $\rk L_V$ pairwise disjoint longitudes of $V$ whose union is non-separating in $\partial V$
will be called a {\it system of longitudes} for $V$.

\begin{proposition} Every basis of $L_V$ can be represented by a system of longitudes of $V$,
and every system of longitudes of $V$ represents a basis of $L_V$.
\end{proposition}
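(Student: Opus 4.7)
The plan is to deduce both halves of the proposition from Lemma \ref{scc}(c) applied to the closed surface $\partial V$ of genus $g = \rk L_V$, with the hypotheses of that lemma verified on the $L_V$-side via Lemma \ref{handlebody}(b,c).

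For the first assertion, let $\alpha_1,\ldots,\alpha_g$ be a basis of $L_V$. By Lemma \ref{handlebody}(b), $L_V$ is a direct summand of $H_1(\partial V)$, so $\alpha_1,\ldots,\alpha_g$ form a basis of a direct summand of $H_1(\partial V)$; by Lemma \ref{handlebody}(c), $\alpha_i \cdot \alpha_j = 0$ for all $i,j$. The two hypotheses of Lemma \ref{scc}(c) are thus in place, producing pairwise disjoint embedded circles $\ell_1,\ldots,\ell_g\subset\partial V$ with non-separating union realizing the $\alpha_i$. Orient each $\ell_i$ so that $[\ell_i]=\alpha_i$; by definition, the collection $\ell_1,\ldots,\ell_g$ is then a system of longitudes of $V$.

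For the second assertion, let $\ell_1,\ldots,\ell_g$ be a system of longitudes of $V$. By definition each $[\ell_i]\in L_V$, and $\ell_1\cup\cdots\cup\ell_g$ is non-separating in $\partial V$, so Lemma \ref{separation} implies that $[\ell_1],\ldots,[\ell_g]$ are linearly independent in $H_1(\partial V)$, hence in $L_V$. The only mildly subtle point is promoting linear independence to spanning, for which I would invoke the forward direction of Lemma \ref{scc}(c): the existence of this disjoint non-separating realization forces $H:=\langle [\ell_1],\ldots,[\ell_g]\rangle$ to be a direct summand of $H_1(\partial V)$. Since $H\subset L_V$ and both have rank $g$, the quotient $L_V/H$ is torsion; but $H_1(\partial V)/H$ is torsion-free, and $L_V/H$ embeds into it, so $L_V/H=0$ and $H=L_V$. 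Thus $[\ell_1],\ldots,[\ell_g]$ is a basis of $L_V$.

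The argument is essentially a bookkeeping exercise once Lemmas \ref{scc}(c), \ref{handlebody}, and \ref{separation} are invoked; I expect no serious obstacle beyond remembering to extract the ``direct summand'' clause of Lemma \ref{scc}(c) in order to close the spanning step in the second direction.
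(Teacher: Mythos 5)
Your proposal is correct and follows essentially the same route as the paper, whose proof simply cites Lemmas \ref{handlebody} and \ref{scc}: you verify the hypotheses of Lemma \ref{scc}(c) via Lemma \ref{handlebody}(b,c) for the first assertion and use the converse direction of Lemma \ref{scc}(c) (your appeal to Lemma \ref{separation} being subsumed by it) plus a short rank/torsion argument for the second.
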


Of course, here one can speak of ``bases of $H_1(V)$'' instead of ``bases of $L_V$''.

\begin{proof} This follows immediately from Lemmas \ref{handlebody} and \ref{scc}.
\end{proof}

\begin{proposition} Suppose that a basis of $L_V$ is represented by a collection of $g:=\rk L_V$ 
pairwise disjoint embedded closed $1$-manifolds $M_i\subset\partial V$ whose
connected components are non-separating in $\partial V$.
Then the following are equivalent:
\begin{enumerate}
\item each $M_i$ is connected;
\item $M_1\cup\dots\cup M_g$ is non-separating in $\partial V$;
\item $(M_1,\dots,M_g)$ is a system of longitudes of $V$.
\end{enumerate}
\end{proposition}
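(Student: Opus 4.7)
The surface $\partial V$ is closed and orientable of genus $g$, so the intersection form $\cdot$ on $H_1(\partial V)$ is a non-degenerate alternating $\Z$-bilinear form of rank $2g$. By Lemma \ref{handlebody}(b) and (c), $L_V$ has rank $g$ and is isotropic, hence is a Lagrangian subgroup of $H_1(\partial V)$. This single observation is the engine of everything that follows.

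The implication (1)$\Rightarrow$(2) is immediate: if each $M_i$ is a single circle, then $[M_1],\dots,[M_g]$ is the given basis of $L_V$, in particular linearly independent in $H_1(\partial V)$, so by Lemma \ref{separation} the union $M_1\cup\dots\cup M_g$ is non-separating in $\partial V$.

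For (2)$\Rightarrow$(1), write $M_i=C_{i,1}\cup\dots\cup C_{i,n_i}$ and set $N=n_1+\dots+n_g$. By Lemma \ref{separation}, hypothesis (2) says that the $N$ classes $[C_{i,j}]\in H_1(\partial V)$ are linearly independent. They are realized by a family of $N$ pairwise disjoint embedded circles, so Lemma \ref{scc}(b) gives $[C_{i,j}]\cdot[C_{i',j'}]=0$ whenever $(i,j)\ne(i',j')$; together with the vanishing of self-intersections (each $C_{i,j}$ is an embedded circle in the orientable surface $\partial V$), this shows that the subgroup $H:=\langle[C_{i,j}]\rangle\subseteq H_1(\partial V)$ is isotropic, so $\rk H\le g$. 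Conversely, $[M_i]=\sum_j[C_{i,j}]\in H$ for each $i$, hence the basis $[M_1],\dots,[M_g]$ of $L_V$ lies in $H$, giving $\rk H\ge g$. Therefore $N=\rk H=g$, which forces $n_i=1$ for every $i$, i.e., each $M_i$ is connected.

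Finally, (3)$\Leftrightarrow$(1)$\wedge$(2) is immediate from the definition of a system of longitudes, since the hypothesis that the classes $[M_i]$ form a basis of $L_V$ already guarantees that each $[M_i]$ is a nonzero element of $L_V$, so the only remaining requirements are that each $M_i$ be a single circle and that their union be non-separating. The only substantive step is the isotropic/Lagrangian dimension count in (2)$\Rightarrow$(1), which is routine symplectic linear algebra once the above framework is in place; I anticipate no serious obstacle.
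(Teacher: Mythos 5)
Your proof is correct and follows essentially the same route as the paper: (1)$\Rightarrow$(2) via Lemma \ref{separation}, and (2)$\Rightarrow$(1) by noting the component classes are linearly independent and span an isotropic subgroup, hence at most $g$ components in total. The only cosmetic difference is that you bound the number of components below by embedding the $L_V$-basis in $H$, where the paper simply uses that each of the $g$ manifolds $M_i$ is nonempty; both counts give the same conclusion.
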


\begin{proof} Since (3) is the conjunction of (1) and (2), it suffices to show that (1)$\Leftrightarrow$(2).

{\it (1)$\Rightarrow$(2).} This follows from Lemma \ref{separation}.

{\it (2)$\Rightarrow$(1).} By Lemma \ref{separation} the classes of the components $M_{ij}$ of the $M_i$ 
are linearly independent in $H_1(\Sigma)$.
On the other hand, since the $M_{ij}$ are pairwise disjoint, they have zero pairwise intersection numbers.
Hence the $M_{ij}$ form a basis of a subspace $\Lambda$ of $H_1(\partial V)$ such that
the intersection form vanishes on $\Lambda\x\Lambda$.
Then $\rk\Lambda\le\frac12\rk H_1(\partial V)=g$, and therefore there are at most $g$ of the $M_{ij}$.
Hence each $M_i$ is in fact connected.
\end{proof}

\begin{lemma} \label{longitude extension} Every longitude of $V$ extends to a system of longitudes of $V$.
\end{lemma}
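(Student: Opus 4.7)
First, $[\ell]$ is primitive in $L_V$: by Lemma~\ref{scc}(a) the class $[\ell]$ is primitive in $H_1(\partial V)$, and since $L_V$ is a direct summand of $H_1(\partial V)$ by Lemma~\ref{handlebody}(b), any element of $L_V$ that is primitive in the ambient group is primitive in $L_V$. Hence $[\ell]$ extends to a basis $[\ell]=\alpha_1,\alpha_2,\ldots,\alpha_g$ of $L_V$, where $g=\rk L_V$ is the genus of $\partial V$.

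The strategy is to cut $\partial V$ along $\ell$ and apply Lemma~\ref{scc}(c) on a surface of smaller genus. Since $[\ell]\ne 0$, Lemma~\ref{separation} shows that $\ell$ is non-separating, so the cut surface $\Sigma_0$ is connected of genus $g-1$ with two boundary circles $\ell^+,\ell^-$. Cap these with disks to obtain a closed genus-$(g-1)$ surface $\hat\Sigma_0$. A Mayer--Vietoris computation identifies the inclusion-induced map $H_1(\Sigma_0)\to H_1(\partial V)$ with an isomorphism onto $\ell^\perp:=\{\alpha\in H_1(\partial V):\alpha\cdot[\ell]=0\}$; since $L_V$ is Lagrangian by Lemma~\ref{handlebody}(c), we have $L_V\subset\ell^\perp$, so I may lift the basis of $L_V$ to classes $\tilde\alpha_i\in H_1(\Sigma_0)$ with $\tilde\alpha_1=[\ell^+]$. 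Projecting to $H_1(\hat\Sigma_0)=H_1(\Sigma_0)/\langle[\ell^+]\rangle$ yields classes $\bar\alpha_2,\ldots,\bar\alpha_g$ spanning a rank-$(g-1)$ subgroup $\hat L$; this $\hat L$ is a direct summand of $H_1(\hat\Sigma_0)$ (inherited from $L_V$ being a direct summand of $\ell^\perp$, which in turn is a direct summand of $H_1(\partial V)$) and satisfies $\bar\alpha_i\cdot\bar\alpha_j=0$, since intersections of cycles in $\hat\Sigma_0$ avoiding the capping disks coincide with their intersections in $\partial V$, where they vanish by the Lagrangian property.

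By Lemma~\ref{scc}(c) applied to $\hat\Sigma_0$, I realize $\bar\alpha_2,\ldots,\bar\alpha_g$ by pairwise disjoint PL simple closed curves $C_2,\ldots,C_g$ whose union is non-separating in $\hat\Sigma_0$. A small isotopy moves each $C_i$ off the two capping disks, so that $C_2,\ldots,C_g$ lies in the interior of $\Sigma_0$, hence in $\partial V\but\ell$. In $H_1(\partial V)$ we have $[C_i]=\alpha_i+n_i[\ell]$ for some $n_i\in\Z$ (the ambiguity introduced by the quotient), so $[\ell],[C_2],\ldots,[C_g]$ is still a basis of $L_V$. For non-separation, observe that $\partial V\but(\ell\cup C_2\cup\cdots\cup C_g)$ is homeomorphic to $\hat\Sigma_0$ with the non-separating union $C_2\cup\cdots\cup C_g$ and two disjoint closed capping disks deleted; removing a closed disk from a connected surface preserves connectedness, so the complement remains connected. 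Hence $\ell,C_2,\ldots,C_g$ is a system of longitudes of $V$ extending $\ell$.

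The main technical obstacle is the bookkeeping around the identifications $H_1(\Sigma_0)\cong\ell^\perp$ and $H_1(\hat\Sigma_0)\cong\ell^\perp/\langle[\ell]\rangle$, together with checking that the direct-summand and symplectic-orthogonality hypotheses of Lemma~\ref{scc}(c) descend correctly under the quotient by $[\ell^+]$; once this is in place, everything else reduces to a routine invocation of Lemma~\ref{scc}(c) and Lemma~\ref{separation}.
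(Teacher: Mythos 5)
Your proof is correct, but it takes a different route from the paper's. The paper also begins by extending the primitive class $[\ell]$ to a basis of $L_V$ (using Lemma \ref{scc}(a) and the splitting of $L_V/\langle[\ell]\rangle$), but then it realizes the complementary basis elements by disjoint circles \emph{disjoint from $\ell$} by citing the proof of Proposition 4.2 in Edmonds' paper \cite{Ed} — i.e.\ it outsources the relative realization step — and concludes non-separation directly from Lemma \ref{separation}. You instead make the argument self-contained: you cut $\partial V$ along $\ell$, cap off to a closed surface of genus $g-1$, identify $H_1(\Sigma_0)\cong\ell^\perp$ and $H_1(\hat\Sigma_0)\cong\ell^\perp/\langle[\ell]\rangle$, check that the direct-summand and orthogonality hypotheses descend (this does use the Lagrangian property of $L_V$, e.g.\ to see that $L_V$ is a direct summand of $\ell^\perp$ and that $L_V\subset\ell^\perp$ at all, but the verifications are routine as you say), and then apply Lemma \ref{scc}(c) on the capped surface before pushing the curves back into $\partial V\but\ell$ and correcting the classes by multiples of $[\ell]$. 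What your approach buys is independence from the relative statement in \cite{Ed}, at the cost of the cut-and-cap homological bookkeeping; what the paper's approach buys is brevity. Two small remarks: your final geometric non-separation argument can be replaced by a one-line appeal to Lemma \ref{separation}, since $[\ell],[C_2],\dots,[C_g]$ are linearly independent in $H_1(\partial V)$ (being a basis of $L_V$); and in the case $g=1$ your collection of classes to realize is empty, so the statement is trivially true and your argument degenerates correctly.
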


\begin{proof} By Lemma \ref{scc}(a) the given longitude $\lambda$ represents 
a primitive homology class.
Hence $L_V/\left<\lambda\right>$ is torsion free, and therefore (being finitely generated) is free abelian.
Then the short exact sequence $0\to\left<\lambda\right>\to L_V\to L_V/\left<\lambda\right>\to 0$ splits,
and so $\left<\lambda\right>$ is a direct summand of $L_V$.
Let $\lambda_1,\dots,\lambda_{g-1}$ be a basis of a complementary summand.
Then $\lambda_1,\dots,\lambda_{g-1},\lambda$ is a basis of $L_V$.
By Lemma \ref{handlebody} $L_V$ is a direct summand of $H_1(\partial V)$ and
the intersection form vanishes on $L_V\x L_V$.
Then by \cite{Ed}*{Proof of Proposition 4.2} $\lambda_1,\dots,\lambda_{g-1}$ can be represented by 
PL embedded circles in $\partial V$, pairwise disjoint from each other and disjoint from $\lambda$.
By Lemma \ref{separation} these form a system of longitudes of $V$.
\end{proof}

\begin{lemma} \label{meridian} Let $m\in M_V$ be a nonzero primitive homology class.
Then $m$ can be represented by the boundary of a properly embedded disk in $V$.
\end{lemma}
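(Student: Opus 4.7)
The plan is to start with a complete system of properly embedded disks cutting $V$ into a $3$-ball, and then modify it by a sequence of band-sum moves until one of the disks has boundary representing $m$. Since $V$ is a regular neighborhood of a PL embedded connected graph $\Gamma$, fixing a spanning tree of $\Gamma$ yields such a cut system $F_1,\dots,F_g$, one disk dual to each non-tree edge; by Lemma \ref{handlebody}(b), the boundaries $e_i:=[\partial F_i]$ form a basis of the free abelian group $M_V$ of rank $g$. Writing $m=\sum a_i e_i$, the hypothesis that $m$ is primitive becomes $\gcd(a_1,\dots,a_g)=1$.

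The key geometric move is a band sum of cut-system disks: given an arc $b\subset\partial V$ from $\partial F_i$ to $\partial F_j$ with $i\ne j$, otherwise disjoint from $\bigcup\partial F_k$, I will produce a new properly embedded disk $F_i'\subset V$, disjoint from $F_k$ for $k\ne i$, such that $(F_1,\dots,F_i',\dots,F_g)$ is again a cut system and $[\partial F_i']=e_i\pm e_j$ in $H_1(\partial V)$, with sign determined by the orientation of $b$. The construction proceeds by cutting $V$ along the disks $F_k$ for $k\ne i$; the result $V'$ is a genus-$1$ handlebody (solid torus), since the $F_k$ form a partial cut system, and $F_i$ is still a meridian disk in $V'$. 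On the torus $\partial V'$, the relevant image of $\partial F_j$ bounds one of the scar disks $F_j^\pm$ produced by the cut, so it is null-homologous; hence the band sum $\partial F_i\#_b\partial F_j$ is homologous to the meridian class $[\partial F_i]$ on $\partial V'$. Because isotopy classes of simple closed curves on a torus are classified by primitive homology, this band sum is isotopic in $\partial V'$ to a meridian of $V'$ and therefore bounds a disk $F_i'\subset V'$. By construction $F_i'$ is disjoint from the remaining $F_k$, and cutting along the new collection still yields a ball.

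Using this move in both signs (realized by reversing the band, or the orientation of $F_j$), together with the obvious operations of reindexing and of orientation reversal $F_i\mapsto -F_i$, I can implement any generator of $\mathrm{GL}_g(\Z)$ acting on the basis $(e_i)$ of $M_V$. Applying the Euclidean algorithm to the coefficient vector $(a_1,\dots,a_g)$, whose entries have $\gcd=1$, I bring it to $(\pm1,0,\dots,0)$ in finitely many steps; the corresponding sequence of band-sum moves transforms the original cut system into a new one $(F_1',\dots,F_g')$ with $[\partial F_1']=m$, and then $F_1'$ (with appropriate orientation) is the desired properly embedded disk. The main technical obstacle is the band-sum construction itself — specifically, exhibiting $F_i'$ inside $V'$ with the required boundary and disjointness properties — and this is precisely what the reduction to the solid-torus case and the torus-curve classification take care of.
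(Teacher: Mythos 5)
Your argument is correct, but it takes a genuinely different route from the paper. The paper realizes $m$ directly by an embedded circle $\mu_1\subset\partial V$ via Lemma \ref{scc}(a), extends $\mu_1$ to a full system of curves representing a basis of $M_V$ (as in the proof of Lemma \ref{longitude extension}), constructs a homeomorphism of $\partial V$ carrying the standard meridians $\partial D_i$ to these curves, and then invokes the theorem of Birman \cite{Bir} (or Hirose \cite{Hir}) that a symplectic automorphism of $H_1(\partial V)$ preserving $M_V$ is induced by a homeomorphism of the handlebody $V$; the image of $D_1$ is then the desired disk. You instead stay entirely inside the elementary ``disk slide'' calculus: band sums of the disks of a complete cut system realize the elementary transvections $e_i\mapsto e_i\pm e_j$ on the meridian basis of $M_V$, and together with permutations and orientation reversals these generate $\mathrm{GL}_g(\Z)$, so the Euclidean algorithm moves the primitive coefficient vector of $m$ to $(\pm1,0,\dots,0)$. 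What your approach buys is self-containedness: the only geometric inputs are the classification of essential simple closed curves on a torus and the fact that cutting along $g-1$ disks of a cut system leaves a solid torus, so no appeal to realization of curve systems on higher-genus surfaces or to the mapping class group of the handlebody is needed. What the paper's approach buys is brevity given the cited results, and it reuses machinery (Lemma \ref{scc}, the longitude-extension argument) already set up for the neighboring statements.

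Two small points of hygiene in your slide move. First, the curve you should band-sum with is a parallel pushoff of the scar boundary $\partial F_j^\pm$ into $\partial V'\but(\text{scars})$, not $\partial F_j^\pm$ itself; otherwise the boundary of the new disk runs along $\partial F_j$ and the reglued disk is not disjoint from $F_j$. The pushoff is still null-homologous in $\partial V'$ (it bounds the scar plus an annulus), so the homology bookkeeping $[\partial F_i']=e_i\pm e_j$ and the isotopy to the meridian are unaffected; this is just the standard handle slide (band sum with a parallel copy of $F_j$). Second, the fact that the boundaries of a complete cut system form a basis of $M_V$ deserves a word: it follows from Lemma \ref{handlebody}(b)--(c), since $M_V$ is a Lagrangian direct summand of $H_1(\partial V)$ and the $[\partial F_i]$ pair unimodularly with the classes of dual curves meeting them once. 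Neither point affects the validity of your proof.
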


\begin{proof} There is a collection of pairwise disjoint properly embedded disks 
$D_1,\dots,D_g$ in $V$ such that $V$ cut open along these disks is the $3$-ball.
On the other hand, by Lemma \ref{scc}(a) $m$ can be realized by an embedded circle $\mu_1$
in $\partial V$, and by the proof of Lemma \ref{longitude extension} this extends to
a collection of embedded circles $\mu_1,\dots,\mu_g$ whose union is non-separating in $\partial V$
and whose homology classes form a basis of $M_V$.
Then it is easy to find embedded circles $\lambda_1,\dots,\lambda_g$ such that
each $\lambda_i$ meets $\mu_i$ transversely in a single point and is disjoint from
$\mu_j$ for $j\ne i$.
Arguing by induction, it is easy to see that the $\lambda_i$ can be chosen pairwise disjoint.
A regular neighborhood $T_i$ of $\lambda_i\cup\mu_i$ is a punctured torus, and we may assume 
that the $T_i$ are pairwise disjoint.
By counting the Euler characteristic $\partial V\but T$ is a sphere with $g$ punctures.
If we repeat the same construction starting from the $\partial D_i$ in place of the $\mu_i$,
we will again end up with punctured tori and a sphere with $g$ punctures.
Using this, it is easy to construct a homeomorphism $\phi$ of $\partial V$ onto itself sending 
each $\partial D_i$ onto $\mu_i$. 

In particular, we get a symplectic automorphism $\phi_*$ of $H_1(\partial V)$ sending each 
$[\partial D_i]$ onto $[\mu_i]$.
Since $\phi_*(M)=M$, by \cite{Bir}*{Lemma 2.2} or \cite{Hir}*{Theorem 2.1} $\phi_*$ is 
realized by a homeomorphism $\psi$ of $V$ onto itself.
The latter sends $\partial D_1$ onto an embedded circle which represents the homology class $m$
and bounds the disk $\psi(D_1)$.
\end{proof}

\begin{lemma} \label{steenrod-realization}
If $M$ is a $3$-manifold (possibly noncompact) and $Q$ is a smooth knot in $\partial M$ which is 
null-homologous in $M$, then $Q$ bounds a smooth embedded surface in $M$.
\end{lemma}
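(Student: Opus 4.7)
The plan is a Pontryagin--Thom argument: construct a smooth map $f\colon M\to S^1$ with $Q=f^{-1}(1)\cap\partial M$ as a regular preimage, and take the desired surface to be $F:=f^{-1}(1)$. First, fix a closed tubular neighborhood $A\cong Q\x[-1,1]$ of $Q$ in $\partial M$ and define $f_0\colon\partial M\to S^1$ by sending $(q,t)\in A$ to $e^{i\pi t}$ and collapsing $\partial M\but\mathring A$ to $-1\in S^1$. Then $f_0$ is smooth and $1$ is a regular value with $f_0^{-1}(1)=Q$.

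The core step is to extend $f_0$ across $M$. Using $[X,S^1]=H^1(X;\Z)$, an extension exists iff $[f_0]\in H^1(\partial M;\Z)$ maps to $0$ under the coboundary $\delta\colon H^1(\partial M;\Z)\to H^2(M,\partial M;\Z)$ in the long exact sequence of $(M,\partial M)$. When $M$ is oriented and compact, Poincar\'e--Lefschetz duality identifies $H^1(\partial M)\cong H_1(\partial M)$ and $H^2(M,\partial M)\cong H_1(M)$, sending $[f_0]\mapsto[Q]$ and $\delta$ to the inclusion-induced map $H_1(\partial M)\to H_1(M)$. The hypothesis $[Q]=0$ in $H_1(M)$ thus supplies the extension; the homotopy extension property then lets me arrange $f|_{\partial M}=f_0$ literally, and a small interior perturbation makes $1\in S^1$ a regular value. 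This yields $F=f^{-1}(1)$, a properly smoothly embedded 2-submanifold with $\partial F=Q$, orientable by pullback of the coorientation from $S^1$.

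The main obstacle is the duality identification in the middle step, which requires $M$ to be compact and orientable. Noncompactness is handled by replacing $M$ with a compact codimension-zero submanifold $M_0\subset M$ that contains $Q$ in its boundary together with a singular 2-chain witnessing the nullhomology (which has compact support); the argument then runs inside $M_0$, and $F\subset M_0\subset M$ works. Non-orientability of $M$ is handled by running the same argument with $\Z/2$ coefficients---using that $[Q]=0$ in $H_1(M;\Z)$ implies $[Q]=0$ in $H_1(M;\Z/2)$---which yields a possibly non-orientable embedded surface, as permitted by the stated conclusion.
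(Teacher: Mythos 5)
Your proposal is correct in the orientable case and is essentially the paper's own argument: both are Pontryagin--Thom constructions that represent the class dual to $Q$ by a map $\partial M\to S^1$, use the nullhomology of $Q$ (via duality) to extend the map over $M$, and take the preimage of a regular value; the only real difference is that the paper handles noncompactness with compactly supported cohomology $H^1_c$ and a map constant near infinity, while you restrict to a compact codimension-zero piece $M_0$ carrying the bounding $2$-chain --- a minor variation, not a different route. One caveat: your final remark about non-orientable $M$ does not work as stated, since $[X,S^1]=H^1(X;\Z)$ and maps to $S^1$ cannot realize a class that is only dual to $[Q]$ mod $2$; to make that case go through you would have to replace the target by $\R P^N$ (taking the transverse preimage of $\R P^{N-1}$), or simply assume $M$ orientable --- which the paper's proof tacitly does as well (its duality references are for the orientable case), and which holds in every application, where moreover an orientable surface is wanted.
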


Presumably this is well-known, but we include a proof for completeness.

\begin{proof}
Since $[Q]\in H_1(\partial M)$ maps to $0\in H_1(M)$, it is the image of some $\alpha\in H_2(M,\partial M)$.
Then the Poincar\'e dual%
\footnote{See for instance \cite{Hat}*{Theorem 3.35} for the required version of the Poincar\'e duality.}
$D[Q]\in H^1_c(\partial M)$ is the image of $D(\alpha)\in H^1_c(M)$.
The Pontryagin construction (see for instance \cite{Mi3}) 
yields a smooth map $f\:\partial M\to S^1$ such that $f(U)=1$ 
for some neighborhood of infinity $U$ (=complement of a compact set) and $Q=f^{-1}(-1)$.
Its homotopy class corresponds to $D[Q]$.
Since $D[Q]$ is the image of $\alpha$, $f$ extends to a smooth map $\bar f\:M\to S^1$ such that 
$\bar f(\bar U)=1$ for some neighborhood of infinity $\bar U$.
Then $\bar f^{-1}(-1)$ is the desired embedded surface.
\end{proof}

\begin{lemma}\label{Mayer-Vietoris} 
Let $K=\bigvee_{i=1}^n K_i$ be a wedge of circles embedded in $S^3$ and $Q$ be a knot in $S^3\but K$.
If $Q$ is null-homologous in each $S^3\but K_i$, then $Q$ is null-homologous in $S^3\but K$.
\end{lemma}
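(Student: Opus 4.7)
The plan is to prove this by induction on $n$ via a straightforward Mayer--Vietoris argument. The case $n=1$ is immediate. For the inductive step, let $w$ denote the wedge point and write $K = A \cup K_n$ where $A = K_1 \cup \dots \cup K_{n-1}$ is itself a wedge of circles (meeting $K_n$ only at $w$), so the inductive hypothesis applies to $A$.

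The key step is to set $U = S^3 \setminus A$ and $V = S^3 \setminus K_n$, which are open in $S^3$. By De Morgan, $U \cap V = S^3 \setminus K$, while $U \cup V = S^3 \setminus (A \cap K_n) = S^3 \setminus \{w\}$, which is homeomorphic to $\R^3$ and hence contractible. The Mayer--Vietoris sequence
\[
H_2(U \cup V) \to H_1(S^3 \setminus K) \xrightarrow{(i_*,\,j_*)} H_1(S^3 \setminus A) \oplus H_1(S^3 \setminus K_n) \to H_1(U \cup V),
\]
with both outer terms vanishing, therefore yields that the inclusion-induced map $(i_*, j_*)$ is an isomorphism (in particular injective).

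Now $[Q] \in H_1(S^3 \setminus K)$ maps to $0$ in $H_1(S^3 \setminus K_n)$ directly by hypothesis, and it maps to $0$ in $H_1(S^3 \setminus A)$ by the induction hypothesis (since $Q$ is null-homologous in each $S^3 \setminus K_i$ for $i < n$). Hence $[Q]$ lies in the kernel of $(i_*, j_*)$, so $[Q] = 0$ in $H_1(S^3 \setminus K)$, completing the induction.

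There is no serious obstacle here: the only mild subtlety is making sure $U$ and $V$ are genuinely open (which they are, since $A$ and $K_n$ are closed subsets of $S^3$) so that Mayer--Vietoris applies verbatim. Alternatively, one could give a one-shot proof by invoking Alexander duality, identifying $\tilde H_1(S^3 \setminus K) \cong \tilde H^1(K) \cong \Z^n$ and observing that the restriction maps $\tilde H^1(K) \to \tilde H^1(K_i)$ are the coordinate projections, so the collective map $H_1(S^3 \setminus K) \to \bigoplus_i H_1(S^3 \setminus K_i)$ is injective; but the inductive Mayer--Vietoris argument is more elementary and avoids appealing to naturality of Alexander duality.
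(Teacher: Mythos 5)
Your proof is correct and is essentially the paper's own (algebraic) proof, which the paper states in one line as an inductive application of the Mayer--Vietoris sequence using $H_2(S^3\setminus pt)=0$; you have simply spelled out the decomposition $U=S^3\setminus A$, $V=S^3\setminus K_n$ with $U\cup V=S^3\setminus\{w\}$ contractible. (The paper also records an alternative geometric proof, tubing a Seifert surface for $Q$ along arcs of the $K_i$, but that is a second route, not a deficiency of yours.)
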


\begin{proof}[Algebraic proof] 
By an inductive application of the Mayer--Vietoris exact sequence, using that $H_2(S^3\but pt)=0$.
\end{proof}

\begin{proof}[Geometric proof] $Q$ bounds a Seifert surface $\Sigma$ which is disjoint from the wedge point 
and transverse to each $K_i$.
Since $\lk(Q,K_i)=0$, we have $\Sigma\cdot K_i=0$. 
Then $K_i$ contains an arc $J$ such that $\partial J$ consists of two intersection points of $\Sigma$ and $K_i$ 
of opposite signs and $J$ itself is disjoint from the other intersection points and from the wedge point.
By attaching a tube to $\Sigma$ going along $J$ we remove the two intersection points.
Proceeding in this fashion we obtain a new Seifert surface $\Sigma'$ for $Q$ which is disjoint from $K$.
\end{proof}

\begin{theorem} \label{disjoint seifert}
Let $g=\rk H_1(V)$. If $g=2$, then the following are equivalent:
\begin{enumerate}
\item some system of longitudes of $V$ bounds disjoint Seifert surfaces in $W$;
\item every nonzero primitive class of $L_V$ is represented by a longitude of $V$ which bounds 
a Seifert surface $\Sigma$ in $W$ such that every knot in $\Sigma$ is null-homologous in $W$;
\item some longitude of $V$ bounds a Seifert surface $\Sigma$ in $W$ 
such that every knot in $\Sigma$ is null-homologous in $W$.
\end{enumerate}
For arbitrary $g$ we still have (1)$\Rightarrow$(2)$\Rightarrow$(3).
\end{theorem}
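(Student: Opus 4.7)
The implication (2)$\Rightarrow$(3) is immediate, since every longitude of $V$ realizes a nonzero primitive class of $L_V$ (Lemma \ref{scc}(a)).

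For (1)$\Rightarrow$(2) (for any $g$), given disjoint Seifert surfaces $\Sigma_1,\dots,\Sigma_g$ for a system of longitudes $\lambda_1,\dots,\lambda_g$ and a primitive class $\alpha=\sum a_i[\lambda_i]\in L_V$, the plan is to build a Seifert surface $\Sigma$ for a longitude representing $\alpha$ by taking $|a_i|$ parallel copies of $\Sigma_i$ (with sign $\operatorname{sgn}(a_i)$) and banding their boundary circles together inside a collar of $\partial V$; the parallel copies and bands can be arranged so that $\Sigma$ is disjoint from every $\Sigma_j$ in the interior of $W$. For any knot $K\subset\Sigma$, a small normal pushoff $K'$ remains disjoint from every $\Sigma_j$, whence $\lk(K,\lambda_j)=K'\cdot\Sigma_j=0$ for all $j$; since the pairing $H_1(W)\x H_1(V)\to\Z$ by $\lk$ is perfect by Alexander duality and the $[\lambda_j]$ form a basis of $H_1(V)$, this forces $[K]=0$ in $H_1(W)$.

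The substantive implication is (3)$\Rightarrow$(1) for $g=2$. Given $\lambda$ and $\Sigma$ as in (3), first extend $\lambda$ to a system of longitudes $(\lambda,\lambda')$ of $V$ via Lemma \ref{longitude extension}, with $\lambda'$ disjoint from $\lambda$. Let $W_0$ be the closure of $W$ minus an open regular neighborhood of $\Sigma$; since $[\lambda]$ is primitive in $L_V$, $\Sigma$ is non-separating in $W$, so $W_0$ is connected with boundary $(\partial V\setminus\text{collar of }\lambda)\cup\Sigma^+\cup\Sigma^-$. The goal is to show $[\lambda']=0$ in $H_1(W_0)$: then Lemma \ref{steenrod-realization} applied to $W_0$ yields a Seifert surface $\Sigma'$ for $\lambda'$ in $W_0\subset W\setminus\Sigma$, establishing (1). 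By Lemma \ref{steenrod-realization}, $\lambda'$ already bounds some Seifert surface $\Sigma'_0$ in $W$; after a small perturbation assume $\Sigma$ and $\Sigma'_0$ meet transversely, so that $\Sigma\cap\Sigma'_0$ is a disjoint union of circles (no arcs, as $\lambda\cap\lambda'=\emptyset$). For any $\alpha\in H_1(\Sigma)$, hypothesis (3) forces $\lk(\alpha,\lambda')=0$; this linking number equals the algebraic intersection $\alpha\cdot_W\Sigma'_0$, and since $\alpha\subset\Sigma$ every intersection point of $\alpha$ with $\Sigma'_0$ lies on $\Sigma\cap\Sigma'_0$, so a standard local sign check gives $\alpha\cdot_W\Sigma'_0=\alpha\cdot_\Sigma[\Sigma\cap\Sigma'_0]$. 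Thus $[\Sigma\cap\Sigma'_0]$ pairs trivially with every class in $H_1(\Sigma)$, and since $\Sigma$ has a single boundary component its intersection form is the non-degenerate standard symplectic form; hence $[\Sigma\cap\Sigma'_0]=0$ in $H_1(\Sigma)$. This is where $g=2$ is genuinely used: only one extra surface needs to be produced.

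To conclude, cutting $\Sigma'_0$ along $\Sigma\cap\Sigma'_0$ produces a surface $\Sigma'_0\cap W_0\subset W_0$ bounded by $\lambda'$ together with two parallel copies $C^+\subset\Sigma^+$ and $C^-\subset\Sigma^-$ of $\Sigma\cap\Sigma'_0$. The classes $[C^\pm]\in H_1(\Sigma^\pm)=H_1(\Sigma)$ equal $\pm[\Sigma\cap\Sigma'_0]=0$, so each $C^\pm$ bounds a subsurface $F^\pm$ of $\Sigma^\pm$; attaching $F^+$ and $F^-$ to $\Sigma'_0\cap W_0$ along $C^+$ and $C^-$ yields an oriented 2-chain in $W_0$ whose boundary is only $\lambda'$, proving $[\lambda']=0$ in $H_1(W_0)$ and completing the argument. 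The main subtlety I expect is the transversality identity $\alpha\cdot_W\Sigma'_0=\alpha\cdot_\Sigma(\Sigma\cap\Sigma'_0)$, but this is a local computation at each triple transverse point where $\alpha\subset\Sigma$ meets $\Sigma'_0$, using that $T_p\Sigma'_0=T_p(\Sigma\cap\Sigma'_0)\oplus\nu_p\Sigma$ and the compatibility of orientations of $W=\Sigma\oplus\nu\Sigma$.
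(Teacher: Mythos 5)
Your implication (3)$\Rightarrow$(1) for $g=2$ looks correct, and it is a genuinely different route from the paper's: instead of retracting $\Sigma$ to a spine, invoking the Mayer--Vietoris lemma and killing the intersection circles by innermost surgeries, you show directly that $[\lambda']=0$ in $H_1$ of $W$ cut along $\Sigma$ (using that $[\Sigma\cap\Sigma'_0]$ pairs trivially with $H_1(\Sigma)$ and that the intersection form of a one-boundary-component surface is unimodular) and then apply Lemma \ref{steenrod-realization} to the cut manifold; a pleasant by-product is that your second surface automatically lies in $W$. The trivial implication and the null-homology mechanism via the perfect linking pairing $H_1(W)\x H_1(V)\to\Z$ are also fine.

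The genuine gap is in (1)$\Rightarrow$(2). Your construction needs two things simultaneously: the boundary of the banded surface must be a \emph{longitude}, i.e.\ an embedded circle on $\partial V$ representing $\alpha=\sum a_i[\lambda_i]$, and the surface must be disjoint from every $\Sigma_j$ so that $\lk(K,\lambda_j)=0$ for all knots $K$ in it. These two requirements are incompatible as soon as some $|a_i|\ge 2$ (e.g.\ $\alpha=2[\lambda_1]+3[\lambda_2]$, which is primitive). Indeed, any embedded circle on $\partial V$ disjoint from $\lambda_1\cup\dots\cup\lambda_g$ lives in the surface obtained by cutting $\partial V$ along the $\lambda_i$, which is planar (genus $0$ with $2g$ holes); an embedded circle there encircles a subset of the holes, so its class in $H_1(\partial V)$ is $\sum\eps_i[\lambda_i]$ with all $\eps_i\in\{-1,0,1\}$. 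Hence for such $\alpha$ every longitude representing it must cross some $\lambda_j$, and then any Seifert surface it bounds necessarily meets $\Sigma_j$ (intersection arcs emanate from the boundary crossing points), so the disjointness-based argument for null-homology cannot be run at all. (Relatedly, already the banding step is obstructed if attempted on $\partial V$: two same-oriented parallel copies of $\lambda_i$ admit no coherent band, since $2[\lambda_i]$ is not represented by an embedded circle by Lemma \ref{scc}(a); and if you take the bands in a $3$-dimensional collar, as you write, the resulting curve is not on $\partial V$ and is not a longitude.) So a different mechanism is needed for general primitive classes: the paper derives from (1), via Theorem \ref{boundary-retractable}, a map $f\:W\to U$ onto a handlebody extending a boundary homeomorphism, represents the class by a circle bounding a disk $D$ in $U$ (Lemma \ref{meridian}), takes $\Sigma=f^{-1}(D)$ after making $f$ transverse, and concludes that knots in $\Sigma$ lie in $\ker f_*\subset\gamma_\omega\pi_1(W)$ by the Jaco--McMillan theorem, hence are null-homologous. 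Your proposal is missing this idea (or some substitute for it), so as written it only proves (1)$\Rightarrow$(2) for classes with coordinates in $\{-1,0,1\}$ relative to the given system of longitudes.
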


The proof of (3)$\Rightarrow$(1) for $g=2$ is found in \cite{Wr}*{Theorem 3.3}, of which the author was unaware when writing it up.
The idea of proof of (1)$\Rightarrow$(2) comes from \cite{JM}*{proof of Theorem 5}.

\begin{proof} {\it (1)$\Rightarrow$(2).}
Let $l\in L_V$ be the given primitive class.
By Theorem \ref{boundary-retractable} there exists a map $f$ from $W$ to a handlebody $U$ which restricts 
to a homeomorphism $\partial W\to\partial U$.
It is clear that $L_U=M_V$ and $M_U=L_V$ (see Lemma \ref{handlebody}).
Then by Lemma \ref{meridian} $l\in L_V=M_U$ is realized by an embedded circle $\lambda$ in 
$\partial V=\partial W=\partial U$ which bounds a disk $D$ in $U$.
We may assume that $f$ is transverse to $D$.
Then $\Sigma:=f^{-1}(D)$ is a Seifert surface for $\lambda$ in $W$.
Moreover, every knot $Q\subset\Sigma$ is sent by $f$ into $D$, and hence represents a conjugacy class
in the kernel of $f_*\:\pi_1(W)\to\pi_1(U)$.
Then by \cite{JM}*{Corollary to Theorem 1} $Q$ represents a conjugacy class in $\gamma_\omega\pi_1(W)$,
and in particular in $\gamma_2\pi_1(W)$.
Hence it is null-homologous in $W$.

{\it (2)$\Rightarrow$(3).} This is trivial.

{\it (3)$\Rightarrow$(1) for $g=2$.}
The hypothesis implies that each knot $Q$ in the interior of $\Sigma$ is null-homologous in 
$S^3\but V$.
On the other hand, by Lemma \ref{longitude extension} $\lambda$ extends to 
a system of longitudes $\lambda,\lambda'$.
Since $\lambda'\subset V$, we have $\lk(Q,\lambda')=0$.
Hence $\lambda'$ is null-homologous in the complement to every knot in $\Sigma$.
Now $\Sigma$ deformation retracts onto a wedge of circles $X=\bigvee_{k=1}^{2g_1} Q_j$.
By Lemma \ref{Mayer-Vietoris} $\lambda'$ is null-homologous in $S^3\but X$.
By Lemma \ref{steenrod-realization} $\lambda'$ bounds a Seifert surface $F$ in $S^3\but X$.
We may assume that $F$ intersects $\Sigma$ transversely.
It is easy to see that $\Sigma\but X$ is homeomorphic to $S^1\x [0,1)$.
Hence every component $C$ of $F\cap\Sigma$ bounds a surface $F_C\subset\Sigma$.
If $C$ is innermost on $\Sigma$ among all such components, then by surgering $F$ along $F_C$ we 
can get rig of the component $C$.
In the end we obtain a Seifert surface $\Sigma'$ for $\lambda'$ in $S^3\but\Sigma$.
\end{proof}

\begin{remark} If $V$ is of boundary type, it is not true that {\it every} longitude of $V$ bounds 
a Seifert surface $\Sigma$ in $W$ such that every knot in $\Sigma$ is null-homologous in $W$.
For instance, let $V$ be the unknotted handlebody of genus $g\ge 2$.
Thus $\pi_1(\partial V)\simeq\left<m_1,\dots,m_g,l_1,\dots,l_g\mid [m_1,l_1]\cdots[l_g,m_g]=1\right>$
and the map $\pi_1(\partial V)\to\pi_1(V)$ sends each $m_i$ to $1$ and each $l_1$ to the $i$th free generator;
and the map $\pi_1(\partial V)\to\pi_1(W)$ sends each $l_i$ to $1$ and each $m_1$ to the $i$th free generator.
Then any longitude of $V$ representing $[m_i,m_j]l_k$, where $i\ne j$, is not going to be represented by an element of 
$\pi_1(W)''$.
Hence any such longitude does not bound a Seifert surface $\Sigma$ in $W$ such that every knot in $\Sigma$ 
is null-homologous in $W$.
\end{remark}

\begin{theorem} \label{boundary-retractable2}
Let $V$ be a genus $g$ PL handlebody in $S^3$ and let $W=\overline{S^3\but V}$. 
Then the following conditions are equivalent to those of Theorem \ref{boundary-retractable}:
\begin{enumerate}
\item[(1$'$)] $W$ admits a retraction onto a PL embedded wedge $Q\subset\partial W$ of 
$g$ whiskered circles,%
\footnote{A {\it whiskered circle} is the graph ``letter {\sf P}'', endowed with a basepoint, 
which is its bottom point.} 
whose circles bound pairwise disjoint PL embedded disks in $V$;
\item[(2$'$)] $\pi_1(W)$ admits a homomorphism to the free group $F_g$ such that for some 
$Q\subset\partial W$ as in (1$'$) the composition $\pi_1(Q)\to\pi_1(W)\to F_g$ is surjective;
\item[(3$'$)] $\pi_1(W)/\gamma_\omega\pi_1(W)$ is a free group and it is the image of $\pi_1(Q)$
for some $Q\subset\partial W$ as in (1$'$);
\item[(4$'$)] $W$ contains pairwise disjoint properly
PL embedded surfaces $\Sigma_1,\dots,\Sigma_g$ and $V$ contains pairwise disjoint properly 
PL embedded disks $D_1,\dots,D_g$ such that each $\partial\Sigma_i$ is connected and meets 
$\partial D_i$ transversely in a single point, and $\partial\Sigma_i\cap\partial D_j=\emptyset$ 
for $i\ne j$.
\end{enumerate}
\end{theorem}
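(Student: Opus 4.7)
The plan is to first observe that the forgetful implications (1$'$) $\Rightarrow$ (1), (2$'$) $\Rightarrow$ (2), (3$'$) $\Rightarrow$ (3), (4$'$) $\Rightarrow$ (4) are immediate by discarding the compressing disks $D_i$ (or, equivalently, the whiskers and the disk-bounding property in (1$'$)--(3$'$)). It therefore remains to produce one primed condition from a condition of Theorem \ref{boundary-retractable} and to verify the equivalences (1$'$) $\Leftrightarrow$ (2$'$) $\Leftrightarrow$ (3$'$) $\Leftrightarrow$ (4$'$). I would organize the argument as (4) $\Rightarrow$ (4$'$) $\Rightarrow$ (1$'$) $\Rightarrow$ (2$'$) $\Rightarrow$ (3$'$) $\Rightarrow$ (4$'$), paralleling the cyclic proof of Theorem \ref{boundary-retractable}.

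The key implication is (4) $\Rightarrow$ (4$'$). Starting with disjoint Seifert surfaces $\Sigma_1,\dots,\Sigma_g$ in $W$ with $\partial\Sigma_1\cup\dots\cup\partial\Sigma_g$ non-separating in $\partial V$, Lemma \ref{separation} forces the classes $[\partial\Sigma_i]$ to be linearly independent in $H_1(\partial V)$; they all lie in $L_V$, and $\rk L_V=g$ by Lemma \ref{handlebody}, so they form a basis of $L_V$. The intersection form on $H_1(\partial V)$ pairs $L_V$ with $M_V$ non-degenerately (Lemma \ref{handlebody} again), so there exist unique primitive dual classes $m_1,\dots,m_g\in M_V$ with $m_i\cdot [\partial\Sigma_j]=\delta_{ij}$. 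By Lemma \ref{scc}(c) the $m_i$'s are realizable by pairwise disjoint simple closed curves in $\partial V$, and bigon reductions in $\partial V$ then allow me to move these curves into \emph{standard position} with the $\partial\Sigma_j$'s, meeting $\partial\Sigma_i$ in a single transverse point and missing $\partial\Sigma_j$ for $j\ne i$; performing all the reductions simultaneously (arguing by induction on $g$ or using the standard combinatorial configuration of a symplectic basis) preserves pairwise disjointness of the $\gamma_i$'s.

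The remaining and most delicate task is to realize these disjoint curves $\gamma_1,\dots,\gamma_g$ as boundaries of a complete system of compressing disks in $V$: each $\gamma_i$ is null-homologous in $V$ (since $m_i\in M_V$), but I need null-homotopic to invoke Dehn's lemma. I expect this to be the main obstacle. The resolution is to arrange $\gamma_1\cup\dots\cup\gamma_g$ to be a cut system on $\partial V$, i.e.\ with planar complement; by the handlebody analogue of Waldhausen's theorem on Heegaard splittings of $S^3$ (whose unrestricted form invokes Perelman's resolution of the Poincar\'e Conjecture, as exploited in \cite{BeF}), such a cut system bounds a complete system of pairwise disjoint PL compressing disks $D_1,\dots,D_g\subset V$. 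This is precisely where the rigidification of the abstract topological/homological data into embedded disks requires the Perelman input alluded to just before the statement.

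The mutual equivalences (4$'$) $\Rightarrow$ (1$'$) $\Rightarrow$ (2$'$) $\Rightarrow$ (3$'$) $\Rightarrow$ (4$'$) then follow by mimicking the proof of Theorem \ref{boundary-retractable}, keeping the disk data $D_i$ along for the ride. For (4$'$) $\Rightarrow$ (1$'$), I would build the wedge $Q\subset\partial W$ by placing its basepoint in $\partial W$, taking its whiskers to be arcs in $\partial W$ running from the basepoint to the transverse intersection points $\partial D_i\cap\partial\Sigma_i$, and taking its circles to be slight pushoffs of the $\partial D_i$'s; the retraction $W\to Q$ is then assembled from the Seifert surfaces $\Sigma_i$ as in \cite{JM}. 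The implications (1$'$) $\Rightarrow$ (2$'$) and (2$'$) $\Leftrightarrow$ (3$'$) are formal (using that free groups are Hopfian and that the rank of a free group is detected by its abelianization), while (3$'$) $\Rightarrow$ (4$'$) follows from the enhancement of the argument of \cite{J1}*{Theorem 2} that extracts properly embedded surfaces from a map to $F_g$, with the compressing disks $D_i\subset V$ propagated through the construction as the disks bounded by the meridians of $V$ representing the free generators under the given homomorphism.
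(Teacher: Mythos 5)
There is a genuine gap, and it sits exactly at the step you yourself flag as the crux. After producing disjoint curves $\gamma_1,\dots,\gamma_g\subset\partial V$ dual to the $\partial\Sigma_i$'s, you assert that once $\gamma_1\cup\dots\cup\gamma_g$ is a cut system (planar complement), ``the handlebody analogue of Waldhausen's theorem'' forces it to bound a complete system of disjoint compressing disks in $V$. No such statement is true: already in the solid torus the longitude (or any $(p,q)$-curve) is a cut system on $\partial V$, lies in no way near bounding a disk, and in higher genus even a simple closed curve whose homology class is a primitive element of $M_V$ need not be null-homotopic in $V$. Bounding a compressing disk is a $\pi_1$-condition, not a homological one, so neither Lemma \ref{scc} nor intersection-form duality can deliver it; this is precisely why Lemma \ref{meridian} only produces \emph{some} embedded representative of a class in $M_V$ bounding a disk, with no control on its geometric position relative to the given $\partial\Sigma_j$'s. (A secondary issue: realizing the dual classes $m_i$ by curves meeting $\partial\Sigma_i$ in a single point is also not a matter of bigon removal, since minimal geometric intersection can exceed the algebraic intersection number; and the surfaces supplied by condition (4) need not even have connected boundaries.)

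The paper's actual mechanism runs in the opposite direction and starts from condition (5) of Theorem \ref{boundary-retractable}, not (4): the map $f\:W\to U$ extending a homeomorphism $\partial W\to\partial U$ is glued with $\id_V$ to a degree-one map $S^3\to M:=U\cup_h V$; $M$ is a homology sphere by Lemma \ref{handlebody}, the degree-one map forces $M$ to be simply connected, Perelman gives $M\cong S^3$, and Waldhausen's theorem (for Heegaard splittings of $S^3$, via \cite{Schu}) makes $U\cup V$ the standard genus-$g$ splitting. The standard splitting supplies the disks $D_i\subset V$ and $\Delta_i\subset U$ in geometric duality, and the surfaces of (4$'$) are then \emph{defined} as the transverse preimages $\Sigma_i=f^{-1}(\Delta_i)$ — they are not the surfaces of (4). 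So Perelman and Waldhausen enter only after the gluing, to rigidify the splitting of $S^3$, not inside $V$; your plan of building disks in $V$ to match prescribed Seifert surfaces has no known substitute for this and, as stated, rests on a false claim. The remaining items of your outline ((4$'$)$\Rightarrow$(4) via a regular-neighborhood/Euler-characteristic count, and the cycle of primed equivalences following Smythe/Jaco--McMillan) are consistent with the paper, but they do not repair the central implication.
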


\begin{proof}
{\it (5)$\Rightarrow$(4$'$).} This is similar to \cite{BeF}*{proof of Theorem 7.3}.
The given map $f\:W\to U$ extends to $f\cup\id_V\:S^3\to U\cup_h V$, where $h$
is the homeomorphism $f|_{\partial W}\:\partial V\to\partial U$.
Clearly $M:=U\cup_h V$ is a homology sphere (see Lemma \ref{handlebody}).
Since $S^3$ is simply-connected, $f$ factors through the universal covering $p\:\tilde M\to M$.
Since $M$ is orientable and $f$ has degree $1$, it induces an isomorphism on integral $3$-homology,
and hence so does $p$.
It follows that $M$ is simply-connected.
Thus by Perelman's theorem $M\cong S^3$.
Then $U\cup V$ is a Heegaard splitting of $S^3$, and hence by Waldhausen's theorem (see \cite{Schu}) 
it must be the standard one of genus $g$.
Then $V$ contains pairwise disjoint properly PL embedded $2$-disks $D_1,\dots,D_g$ 
and $U$ contains pairwise disjoint properly PL embedded $2$-disks $\Delta_1,\dots,\Delta_g$ 
such that each $\partial D_i$ meets $\partial\Delta_i$ transversely 
in a single point and $\partial D_i\cap\partial\Delta_j=\emptyset$ for $i\ne j$.
We may assume that the map $f\:W\to U$ is transverse to each $\Delta_i$, and then it remains to set
$\Sigma_i=f^{-1}(\Delta_i)$.

{\it (4$'$)$\Rightarrow$(4).} A regular neighborhood $T_i$ of $\partial\Sigma_i\cup\partial D_i$ 
is a punctured torus, and we may assume that the $T_i$ are pairwise disjoint.
Then $T:=T_1\cup\dots\cup T_g$ is non-separating in $\partial V$ (in fact, by counting the 
Euler characteristic $\partial V\but T$ is a sphere with $g$ punctures).
In particular, $\Sigma_1\cup\dots\cup\Sigma_g$ is non-separating in $\partial V$.

{\it (1$'$)$\Leftrightarrow$(2$'$)$\Leftrightarrow$(3$'$)$\Leftrightarrow$(4$'$).}
Similarly to \cite{Sm} (see also \cite{JM}, \cite{J1}).
\end{proof}

\section{Proof of Theorem \ref{main6}}

\begin{theorem} \label{alexander-module}
Let $\K$ be a knot in $S^3$ which is the intersection of a sequence of handlebodies $V_0\supset V_1\supset\dots$ 
and let $K_i\subset\partial V_i$ be knots such that each $K_i$ is homologous to $\K$ in $V_i$.
Suppose that each $K_i$ bounds a Seifert surface $\Sigma_i$ in $W_i:=\overline{S^3\but V_i}$
such that each knot in $\Sigma_i$ is null-homologous in $W_i$.
Then the knot module of $\K$ is a torsion module.
\end{theorem}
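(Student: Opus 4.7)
My plan is to proceed in four steps.

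\emph{Step 1 (linking vanishes).} Every knot $Q\subset\Sigma_i$ satisfies $\lk(Q,\K)=0$. Indeed, $[K_i]=[\K]$ in $H_1(V_i)$ gives a $2$-chain in $V_i$ cobounding $K_i$ and $\K$; since $Q\subset W_i$ is disjoint from $V_i$, this yields $\lk(Q,K_i)=\lk(Q,\K)$, and the assumed null-homology of $Q$ in $W_i$ forces $\lk(Q,K_i)=0$. Applied to $Q=K_i=\partial\Sigma_i$, the same argument gives $\lk(K_i,\K)=0$.

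\emph{Step 2 (lifting and exhaustion).} Every loop in $\Sigma_i$ has trivial image in the abelianization $\pi_1(X)\to\Z$, where $X:=S^3\but\K$, so $\Sigma_i$ lifts to an embedded surface $\widetilde\Sigma_i$ in the infinite cyclic cover $\widetilde X$. Setting $\widetilde W_i:=p^{-1}(W_i)$, the cover $\widetilde W_i\to W_i$ is infinite cyclic (a meridian of $K_i$ in $W_i$ has $\lk=1$ with $\K$), and the translates $t^n\widetilde\Sigma_i$ lie in $\widetilde W_i$ and are pairwise disjoint. Since $V_i$ shrinks to $\K$, the $W_i$ exhaust $X$, so $\widetilde X=\bigcup_i\widetilde W_i$ and $H_1(\widetilde X)=\colim_i H_1(\widetilde W_i)$.

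\emph{Step 3 (annihilation).} A colimit of torsion $\Lambda$-modules (with $\Lambda:=\Z[t^{\pm1}]$) is torsion, so it suffices to show that every class $[\widetilde Z]\in H_1(\widetilde W_i)$ is annihilated by some nonzero element of $\Lambda$ after mapping to $H_1(\widetilde W_j)$ for some $j\ge i$. Cutting $W_j$ along $\Sigma_j$ produces a compact $3$-manifold $Y_j$, and $\widetilde W_j$ is obtained by stacking countably many deck-translates of $Y_j$ glued along translates of $\widetilde\Sigma_j$. The resulting Mayer--Vietoris sequence presents
\[
H_1(\widetilde W_j)=\mathrm{coker}\bigl((i_+)_*-t\cdot(i_-)_*\:H_1(\Sigma_j)\otimes\Lambda\to H_1(Y_j)\otimes\Lambda\bigr),
\]
where $i_\pm\:\Sigma_j\hookrightarrow Y_j$ are the two inclusions into the cut-open manifold. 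The null-homology hypothesis identifies this with the $\Lambda$-valued Seifert-type pairing of $K_j$ relative to $W_j$, and a classical Seifert-matrix argument (matching ranks via Poincar\'e--Lefschetz duality on $Y_j$, and using the intersection form on $\Sigma_j$ at $t=1$) shows that the torsion part of $H_1(\widetilde W_j)$ has nonzero Alexander polynomial and contains the image of $[\widetilde Z]$.

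\emph{Main obstacle.} The chief difficulty is that $H_1(\widetilde W_j)$ may carry a free $\Lambda$-summand coming from handles of $W_j$ not cut by $\Sigma_j$. For instance, with $V_j$ a standard genus-$2$ handlebody and $\Sigma_j$ a disk, one computes $H_1(\widetilde W_j)\cong\Lambda$. Such free summands cannot be annihilated inside $\widetilde W_j$, but die in the colimit: a free generator is a loop $\gamma$ encircling a handle of $V_j$ that becomes absorbed into $W_{j'}$ as $V_{j'}\subset V_j$ shrinks around $\K$, and the meridional disk of that handle (originally lying in $V_j\but V_{j'}$) then provides a bounding $2$-chain for $\gamma$ in $W_{j'}$, killing $[\gamma]$ in $H_1(\widetilde W_{j'})$. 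Making this absorption precise in full generality, using the nested-handlebody data together with the compatible Seifert surfaces $\Sigma_{j'}$, is where the bulk of the argument lies.
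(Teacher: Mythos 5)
There is a genuine gap, and you have named it yourself: the entire content of the theorem is concentrated in the step you defer (``making this absorption precise \dots is where the bulk of the argument lies''). Your Step 3 as stated is not correct: $H_1(\widetilde W_j)$ is in general \emph{not} a torsion module (as your own genus-$2$ example shows), so no Seifert-matrix argument inside a single $\widetilde W_j$ can show that an arbitrary class becomes torsion there, and the proposed rescue mechanism is unsound in exactly the relevant situations. The loop generating a free summand need not ``encircle a handle'' whose meridional disk lies in $V_j\but V_{j'}$: when the nested handlebodies are homotopically essential in one another (the case of interest --- e.g.\ Bing's solid tori, where each $K_i$ has winding number $1$ in $T_i$), every meridional disk of a handle of $V_j$ meets $V_{j'}$, so the claimed bounding $2$-chain in $W_{j'}$ does not exist. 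Nothing in your outline uses the specific hypothesis on $\Sigma_i$ beyond linking numbers, and linking numbers alone cannot suffice.

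The paper's proof avoids the issue entirely by never trying to show that any $H_1(\widetilde W_n)$, or even a definite submodule of it, is torsion; it only handles the given class. Represent the class by a knot $\tilde Q_0$ projecting to $Q_0\subset S^3\but\K$; a Seifert surface $F_0$ for $Q_0$ in $S^3\but\K$ is compact, hence disjoint from some $V_n$, and one works with this single index $n$ from then on. Intersecting $F_0$ transversely with $\Sigma_n$ gives a link $Q_1\subset\Sigma_n$, which by hypothesis is null-homologous in $W_n$, hence bounds $F_1\subset W_n$; iterating produces $Q_2,Q_3,\dots\subset\Sigma_n$. Cutting along $\Sigma_n$ shows that in $H_1(\widetilde W_n)$ one has $[\tilde Q_i]=(1-t)[\tilde Q_{i+1}]$ for all $i$, so the cyclic submodules $\langle[\tilde Q_0]\rangle\subset\langle[\tilde Q_1]\rangle\subset\dots$ form an ascending chain; since $H_1(\widetilde W_n)$ is finitely generated over the Noetherian ring $\Z[t^{\pm1}]$, the chain stabilizes, giving $[\tilde Q_k]=p\,[\tilde Q_{k-1}]=p(1-t)[\tilde Q_k]$ and hence the explicit nonzero annihilator $\big(p(t-1)-1\big)$ of $[\tilde Q_0]$. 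So the missing ingredient in your plan is this iterated intersection-with-$\Sigma_n$ construction together with the Noetherian stabilization trick; with it, neither the colimit decomposition nor any control of free summands of $H_1(\widetilde W_j)$ is needed.
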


\begin{proof} The knot module of $\K$ is the $\Z[t^{\pm1}]$-module $H_1(X)$, 
where $X$ is the infinite cyclic cover of $S^3\but K$.
Every element of $H_1(X)$ can be represented by a knot $\tilde Q_0\subset X$.
We may assume that $\tilde Q_0$ projects to a knot $Q_0$ (i.e.\ $Q_0$ has no self-intersections) in $S^3\but K$.
Since $Q_0$ is null-homologous in $S^3\but K$, it bounds a Seifert surface $F_0$ in $S^3\but K$.
Then $F_0$ is disjoint from some $V_n$.
We may assume that $F_0$ intersects $\Sigma_n$ transversely.
Their intersection $Q_1$ is null-homologous in $W_n$, and hence bounds a Seifert surface $F_1$
in $W_n$.
We may assume that $F_1$ intersects $\Sigma_n$ transversely.
Their intersection $Q_2$ is null-homologous in $W_n$
Continuing in this fashion, we obtain links $Q_1,Q_2,\dots\subset\Sigma_n$ which are null-homologous in $W_n$
and in particular in $S^3\but K$.
Let $\tilde Q_1,\tilde Q_2,\dots$ be their lifts to $X$.
They lie in the preimage $\tilde W_n$ of $W_n$ in $X$.
This $\tilde W_n$ can also be described as the cover of $W_n$ corresponding to (the kernel of) the homomorphism
$\pi_1(W_n)\to\Z$ given by $[Z]\mapsto\lk(Z,\K)$.
(In other words, this homomorphism is determined by the class in $H^1(W_n)$ which is Alexander dual 
to $[\K]\in H_1(V_n)$.)
Hence a fundamental domain of the action of $\Z$ on $\tilde W_n$ is a $3$-manifold with boundary
whose interior as a subset of $\tilde W_n$ is homeomorphic to $W_n\but\Sigma_n$; two consecutive copies of 
the fundamental domain meet along a copy of $\Sigma_n$.
Then each $\tilde Q_i$ (starting from $i=0$) is homologous in $\tilde W_n$ to $(1-t)\tilde Q_{i+1}$.
Namely, $\tilde Q_i-(1-t)\tilde Q_{i+1}=\partial\tilde F_i$, where $\tilde F_i$ is a surface with
boundary whose interior as a subset of $\tilde W_n$ is homeomorphic to $F_i\but\Sigma_n$.
Thus each $[\tilde Q_i]=(1-t)[\tilde Q_{i+1}]\in H_1(\tilde W_n)$.
Since $W_n$ is a compact $3$-manifold, $H_1(\tilde W_n)$ is finitely generated over $\Z[t^{\pm1}]$.
Since $\Z[t^{\pm1}]$ is Noetherian, so is $H_1(\tilde W_n)$.
Hence its submodules $\left<[Q_0]\right>\subset\left<[Q_1]\right>\subset\dots$ stabilize at some point.
Thus there exist a $k$ and a Laurent polynomial $p$ such that $[Q_k]=p[Q_{k-1}]$. 
Since $[Q_{k-1}]=(1-t)[Q_k]$, we get $[Q_k]=p(1-t)[Q_k]$, or $\big(p(t-1)-1\big)[Q_k]=0$.
Hence also $\big(p(t-1)-1\big)[Q_0]=\big(p(t-1)-1\big)(1-t)^k[Q_k]=0$.
\end{proof}

\begin{corollary} \label{main6''}
If a knot $\K\subset S^3$ is the intersection of a nested sequence of boundary-link-like handlebodies, 
then the knot module of $\K$ is a torsion module.
\end{corollary}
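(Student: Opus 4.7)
The plan is to deduce Corollary~\ref{main6''} from Theorem~\ref{alexander-module} by producing, for each handlebody $V_i$ in the nested sequence, a knot $K_i\subset\partial V_i$ homologous to $\K$ in $V_i$ together with a Seifert surface $\Sigma_i$ for $K_i$ in $W_i:=\overline{S^3\but V_i}$ all of whose knots are null-homologous in $W_i$.

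First I would verify that $[\K]\in H_1(V_i)$ is primitive for all sufficiently large $i$. Since $S^3\but\K=\bigcup_i(S^3\but V_i)$, direct limits give $H_1(S^3\but\K)=\colim H_1(S^3\but V_i)$; because the colimit is $\Z$, the inclusion-induced map $H_1(S^3\but V_i)\to\Z$ (which is the linking-with-$\K$ homomorphism) is surjective for some $i_0$, and therefore for all $i\ge i_0$, since the maps $H_1(S^3\but V_i)\to H_1(S^3\but V_{i+1})\to\Z$ compose to it. By Alexander duality, which identifies $H_1(V_i)$ with $\Hom(H_1(S^3\but V_i),\Z)$ sending $[\K]$ to this linking homomorphism, surjectivity is equivalent to $[\K]$ being primitive in $H_1(V_i)$. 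Discarding the first $i_0$ terms of the nested sequence, I may assume this holds for all $i$.

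Next, since each $V_i$ is boundary-link-like, condition~(4) of Theorem~\ref{boundary-retractable} gives pairwise disjoint properly PL embedded surfaces in $W_i$ whose union of boundaries is non-separating in $\partial W_i$; by Lemma~\ref{handlebody} and Lemma~\ref{separation} these boundaries form a system of longitudes of $V_i$ bounding disjoint Seifert surfaces in $W_i$, which is precisely condition~(1) of Theorem~\ref{disjoint seifert}. Applying the implication (1)$\Rightarrow$(2) of Theorem~\ref{disjoint seifert}, which as noted there is valid in arbitrary genus, to the primitive class $\phi_i^{-1}([\K])\in L_{V_i}$ (where $\phi_i\:L_{V_i}\to H_1(V_i)$ is the inclusion-induced isomorphism of Lemma~\ref{handlebody}(b)), I obtain a longitude $K_i\subset\partial V_i$ representing $[\K]$ in $H_1(V_i)$ together with a Seifert surface $\Sigma_i\subset W_i$ for $K_i$ all of whose knots are null-homologous in $W_i$.

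With these $K_i$ and $\Sigma_i$ in hand, Theorem~\ref{alexander-module} immediately yields that the knot module of $\K$ is a torsion $\Z[t^{\pm1}]$-module. The only genuinely technical input is the arbitrary-genus implication (1)$\Rightarrow$(2) of Theorem~\ref{disjoint seifert}, which the paper already supplies via Theorem~\ref{boundary-retractable} and \cite{JM}; the rest is routine assembly. I expect the primitivity reduction in the first paragraph to be the only step where one has to pay a bit of attention to the setup.
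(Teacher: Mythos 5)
Your proposal is correct and follows essentially the same route as the paper: lift $[\K]$ to a primitive class of $L_{V_i}$, invoke the arbitrary-genus implication (1)$\Rightarrow$(2) of Theorem \ref{disjoint seifert} (valid for boundary-link-like handlebodies via Theorem \ref{boundary-retractable}) to get the longitudes $K_i$ and Seifert surfaces $\Sigma_i$, and feed these into Theorem \ref{alexander-module}. The only divergence is the preliminary primitivity step: the paper deduces that $[\K]\in H_1(V_i)$ is nonzero and primitive for large $i$ from Milnor's $\lim^1$ exact sequence applied to the $V_i$, whereas you argue dually via $H_1(S^3\but\K)\simeq\colim H_1(S^3\but V_i)\simeq\Z$ and Alexander duality for $V_i$ (in the spirit of Lemma \ref{colimit}); both arguments work, and yours is arguably the more elementary of the two. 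Your passage from condition (4) of Theorem \ref{boundary-retractable} to condition (1) of Theorem \ref{disjoint seifert} glosses over the connectedness of the boundary curves, but this is harmless since the proof of (1)$\Rightarrow$(2) only uses boundary-link-likeness itself.
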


Theorem \ref{main6} follows from Corollaries \ref{main6'} and \ref{main6''}.

\begin{proof} Let $V_1\supset V_2\supset\dots$ be the given handlebodies.
Since $H_2(V_i)=0$, we have $\lim^1 H_2(V_i)=0$ and consequently by Milnor's short exact sequence
$\lim H_1(V_i)\simeq H_1(\K)$.
Moreover, this isomorphism sends the thread consisting of the classes $[\K]\in H_1(V_i)$
to the fundamental class in $H_1(\K)$, which is nonzero.
Hence $[\K]\in H_1(V_i)$ is nonzero for all sufficiently large $i$.
By omitting a finite number of initial handlebodies we may assume that all of these classes are nonzero.
Moreover, all of them are primitive, since so is the fundamental class of $H_1(\K)$.

Let $W_i=\overline{S^3\but V_i}$.
Since $L_V=\ker\big(\psi\:H_1(\partial V_i)\to H_1(W_i)\big)$ is mapped isomorphically onto $H_1(V)$ 
by the restriction of $\phi\:H_1(\partial V_i)\to H_1(V_i)$ (see Lemma \ref{handlebody}(b)),
each $[\K]\in V_i$ is the image of some nonzero primitive class $l_i\in L_{V_i}$.
By Lemma \ref{disjoint seifert} this class $l_i$ is represented by a longitude $K_i$ of $V$ which bounds 
a Seifert surface $\Sigma$ in $W$ such that every knot in $\Sigma$ is null-homologous in $W$.
Then by Theorem \ref{alexander-module} the knot module of $\K$ is a torsion module.
\end{proof}

\appendix

\section{Every knot is $I$-equivalent to the unknot}\label{app}

This Appendix contains a proof of the following result, due largely to C. Giffen.

\begin{theorem} \label{i-eq} Every topological knot is I-equivalent to the unknot.
\end{theorem}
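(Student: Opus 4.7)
The plan is to reduce to Giffen's Theorem by placing $K$ inside an unknotted solid torus in a way that makes it homologically a core. First I would pick a point $p \in S^3 \setminus K$ and use an ambient isotopy of $S^3 \cong S^3$ supported in a neighborhood of $p$ to compress $K$ into a small open ball $B \subset S^3$. Inside $S^3$ I then choose an unknotted PL solid torus $U$ with core circle $c$, chosen so that $B \subset U$ and $c \cap B = \emptyset$. At this point $K \subset B \subset U$ is null-homologous in $U$, so Giffen's Theorem does not yet apply, but $c$ is an unknot in $S^3$.

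Next I would form the band connected sum $K' := K \#_b c$ using a PL band $b \subset U$ that joins $K$ to $c$ and is otherwise disjoint from both. Since $[K] = 0$ in $H_1(U)$ and $[c]$ is a generator, the class $[K'] = [K] + [c]$ is a generator of $H_1(U) \cong \Z$, so $K'$ is a knot in $U$ representing the core class. Giffen's Theorem now applies: $K'$ is $I$-equivalent (via an annulus in $U \times I \subset S^3 \times I$) to $c$, and $c$ is the unknot. Since $I$-equivalence is transitive (annuli can be stacked in $S^3 \times I$), it only remains to verify that $K$ and $K'$ are $I$-equivalent.

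For this last step I would prove that $K$ and $K'$ are in fact ambient isotopic, which gives the $I$-equivalence for free (the track of an isotopy in $S^3 \times I$ is an embedded annulus). The key observation is that connect-summing with a tame unknot along a tame band is an isotopy, even when the other summand is wild. Concretely, the band $b$ meets $K$ in a short arc $\alpha$ lying on a tame portion of $K$; the union $\alpha \cup b \cup c$ is a tame arc on $K$ with a tame ``balloon'' attached. The balloon $b \cup c$ can be shrunk and absorbed into a tubular neighborhood of $\alpha$ by an ambient isotopy supported in a neighborhood of $\alpha \cup b \cup c$, which is disjoint from the rest of $K$ (in particular from all its wild points). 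Concatenating the resulting annulus in $S^3 \times I$ (from isotopy of $K$ to $K'$) with the Giffen annulus (from $K'$ to $c$) produces the desired topological annulus from $K$ to the unknot.

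The main obstacle I anticipate is the verification that $K \#_b c$ is ambient isotopic to $K$ for a wild knot $K$, because the standard proofs for tame knots often proceed by an ambient isotopy supported in a ball that contains the whole connect-sum region, and one must check that this can be arranged to avoid the wild points of $K$. This is handled by keeping the entire construction (the ball $B$ containing $K$, the core $c$, and the band $b$) inside $U$ and running the isotopy only in a neighborhood of the tame object $b \cup c$ together with the tame attaching arc $\alpha \subset K$; since the wild set of $K$ is compact and contained in $B \setminus \alpha$, it is automatically disjoint from the support of the isotopy. Once that local check is in place, transitivity of $I$-equivalence finishes the argument.
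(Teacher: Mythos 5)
Your overall strategy (reduce to Giffen's Theorem by exhibiting the knot as a curve in an unknotted solid torus in the core's free homotopy class) is the same as the paper's, but the way you get there — band-summing $K$ with the core $c$ of a large solid torus $U$ — has a fatal gap. To form $K'=K\#_b c$ you need an embedded band $b$ meeting $K$ in an arc $\alpha\subset\partial b$, and you explicitly require $\alpha$ to lie "on a tame portion of $K$". A wild knot need not have any tame subarc; worse, the attaching arc $\alpha$ lies on the disk $b$, and Gillman proved that \emph{no} subarc of the Bing sling lies on a disk. So for exactly the knots this theorem is about (every point wild, piercing no disk), the band $b$ does not exist and $K'$ cannot even be defined. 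In addition, even when a band exists, your justification that $K'$ is ambient isotopic to $K$ is incorrect as stated: a regular neighborhood $N$ of $\alpha\cup b\cup c$ is a solid torus with $c$ as its core, and inside $N$ the arc of $K'$ winds once around that core while $\alpha$ does not, so no isotopy supported in $N$ can carry $K'$ to $K$. The correct statement needs a spanning disk $D$ for $c$ with interior disjoint from $K$ and $b$ (arrangeable in your configuration since $K$ sits in a small ball), with the isotopy supported near $b\cup D$; but this repair does not help with the nonexistence of the band itself.

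For comparison, the paper never modifies $K$: it proves (Proposition \ref{insertion}) that $K$ already lies in an unknotted solid torus $T$ with $K\hookrightarrow T$ a homotopy equivalence, and then applies Giffen's Theorem to $K$ directly. The unknotted solid torus is obtained as the complement of a regular neighborhood of a PL \emph{unknot} $Q\subset S^3\setminus K$ with $\lk(Q,K)=1$; the existence of such $Q$ uses the fact that the compactum $K$ is tame in the sense of Shtan'ko (embedding dimension $1$), which lets one change crossings of a generator-representing PL curve away from $K$ and unknot it in the complement (Lemma \ref{conway'}), plus the symmetry of linking numbers for topological links (Lemma \ref{symmetry}). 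That is the ingredient your proposal is missing: the wildness of $K$ must be handled through tameness of $K$ as a $1$-dimensional compactum, not through tame subarcs or bands attached to $K$, which may simply not exist.
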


Theorem \ref{i-eq} follows from Proposition \ref{insertion} and Theorem \ref{giffen}.

An alternative proof of Theorem \ref{i-eq} can be found in Ancel's paper \cite{An0}.

In fact, the reason why I wrote down a proof of Theorem \ref{i-eq} was that I had previously stated this result as known,
with attribution to Giffen \cite{M21}. 
In August 2021 I was contacted by M. Freedman who mentioned among other things that he and R. Ancel are wondering about 
the details of the proof.
In a few days I've sent him (Freedman) an email with a writeup of the proof of Theorem \ref{giffen} (essentially the same as below),
suggesting that he forward it to Ancel.
I've also sent the same proof to Ancel himself in response to his email in early September, but for some reason he did not 
get my reply.
Eventually he got my email in December 2021, but by that time he already wrote down his own account \cite{An0}.

\subsection{F-isotopy implies I-equivalence}

\begin{theorem}[Giffen \cite{Gi1}, \cite{Gi2}] \label{giffen} Every topological knot in $S^1\x D^2$ that is homotopic to the core circle
is $I$-equivalent to the core circle.
\end{theorem}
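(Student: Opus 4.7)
The goal is to produce an embedded (not necessarily locally flat) topological annulus $A\subset (S^1\x D^2)\x I$ cobounded by $K\x\{0\}$ and $C\x\{1\}$, where $C$ denotes the core. The plan is to work in the infinite cyclic cover of $S^1\x D^2$, which is topologically $\R\x D^2$ and is simply connected. Since $K$ is homotopic to $C$, both represent the generator of $\pi_1(S^1\x D^2)$, so they lift to properly embedded topological lines $\tilde K$ and $\tilde C=\R\x\{0\}$ in $\R\x D^2$. A proper $\Z$-equivariant embedded strip from $\tilde K\x\{0\}$ to $\tilde C\x\{1\}$ in $(\R\x D^2)\x I$ descends under the $\Z$-quotient to the desired annulus.

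To build such a strip, I would start with a singular one supplied by the hypothesis. The given homotopy from $K$ to $C$ lifts equivariantly to a proper homotopy $\tilde h\:\R\x I\to\R\x D^2$ from $\tilde K$ to $\tilde C$; combined with the $I$-factor in the target it yields a proper equivariant map $H\:\R\x I\to(\R\x D^2)\x I$ with the correct boundary. After a small equivariant general-position perturbation (which in the topological category can be arranged by first replacing $K$ with a PL approximation and controlling the limit at the end) one may assume $H$ is an embedding except at a discrete set of transverse double points, finitely many per fundamental domain of the $\Z$-action. These can be signed and paired into pairs of opposite sign, since $(\R\x D^2)\x I$ is simply connected and the algebraic self-intersection per fundamental domain of a proper homotopy joining the two properly homotopic boundary lines vanishes.

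The heart of the proof is then to perform topological Whitney moves to remove each such pair. For every oppositely signed pair of double points I would choose a Whitney arc in the source strip and its image Whitney disk $W$ in $(\R\x D^2)\x I$. The decisive feature of working in the infinite cover is the unbounded $\R$-direction: by small deformations and the simple connectivity of the ambient space, the Whitney disks can be pushed far apart so that, after an equivariant choice, they are pairwise disjoint and their interiors are disjoint from $\mathrm{im}\,H$. Whitney moves along these disks then yield an equivariant proper embedded strip, which projects to the desired annulus $A$. The main obstacle---and the delicate point in Giffen's original argument---is that $K$ is generally wild, so neither the general-position approximations nor the Whitney moves can be carried out in the locally flat category; the construction must be performed either in a PL category on approximations and then passed to a topological limit, or directly with possibly wild outputs, and throughout one must carry the equivariance under the $\Z$-action.
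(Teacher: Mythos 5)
There is a genuine gap, and it is located exactly where you place the ``heart'' of your argument: the claim that the Whitney disks can be chosen with interiors disjoint from $\mathrm{im}\,H$ ``by small deformations and the simple connectivity of the ambient space.'' The ambient manifold $(\R\times D^2)\times I$ is $4$-dimensional, so a generic $2$-disk meets the $2$-dimensional strip in isolated points, and general position can never remove these intersections; this is precisely the failure of the Whitney trick in dimension $4$. Simple connectivity only provides \emph{immersed} Whitney disks. Nor does the unbounded $\R$-direction help: your strip is $\Z$-equivariant and proper, so it occupies every fundamental domain and there is nowhere to ``push the disks far apart'' to. Doing the moves equivariantly amounts to the disk embedding problem in the quotient $(S^1\times D^2)\times I$ with $\pi_1=\Z$, where the relevant obstructions live in $\Z[t^{\pm1}]$ (essentially the Blanchfield/Seifert data of $K$), not in the integers you propose to cancel ``per fundamental domain,'' and they do not vanish in general. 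In fact the output your scheme aims at is too strong to exist: if the strip could be embedded by Whitney moves (PL on approximations, or locally flatly), you would obtain a locally flat concordance from $K$ to the core; taking $K$ to be the core with a local trefoil tied in and embedding $S^1\times D^2$ unknottedly in $S^3$, this would make the trefoil topologically slice, contradicting Fox--Milnor. So any correct proof must produce a wild annulus, and the ``pass to a topological limit'' step, where all the actual content would have to sit, is exactly what is missing.

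The paper's (Giffen's) proof is of a completely different nature and involves no general position at all: one takes the track of $K$ under an ever-faster rotation in the $S^1$-direction, i.e.\ the image of $S^1\times[0,\infty)$ under $(\alpha,x,t)\mapsto(\alpha-[t],x,t)$ composed with $k\times\id$, inside $S^1\times CD^2\subset S^1\times D^2\times I$. The closure of this ``swirl'' adds the core circle at the cone point end, and the key observation is that this closure is homeomorphic to the mapping torus of the covering translation restricted to the two-point compactification of the lift $\tilde K\subset\R\times D^2$ --- a self-homeomorphism of an arc, hence isotopic to the identity --- so the closure is an annulus. All the wildness is concentrated along the core end, which is exactly what the concordance obstructions require.
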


Ancel's alternative exposition of what appears to be more or less the same construction \cite{An0} is more detailed and more thoughtful, 
but also much longer.
 
\begin{proof} Let $k\:S^1\to S^1\x D^2$ be the given knot.
The identity $\id\:S^1\x D^2\to S^1\x D^2\x\{0\}$ and the core circle embedding $S^1\x\{\infty\}\to S^1\x D^2\x\{1\}$ 
cobound an embedding $E\:S^1\x CD^2\to S^1\x D^2\x [0,1]$, where $CD^2$ is the cone over $D^2$ with apex $\infty$.
We may identify $CD^2$ with the one-point compactification of $D^2\x[0,\infty)$, with $\infty$ being the added point.
Let $\tilde w$ denote the composition $S^1\x\R\xr{k\x\id_{\R}} S^1\x D^2\x\R\xr{\tilde\phi} S^1\x D^2\x\R$, 
where $\tilde\phi$ is the level-preserving homeomorphism given by $\tilde\phi(\alpha,x,t)=\big(\alpha-[t],x,t\big)$;
hereafter we identify $S^1$ with $\R/\Z$.
The closure $\bar A$ of $A:=\tilde w\big(S^1\x[0,\infty)\big)$ in $S^1\x CD^2$ is the union of $A$ and
$S^1\x\{\infty\}$.
We will show that $\bar A$ is homeomorphic to $S^1\x I$; this easily implies that $E(\bar A)$ is the image of 
an $I$-equivalence between $k$ and the core circle.

Let $K=k(S^1)$ and let $\tilde W=\tilde w(S^1\x\R)$.
Let $\phi$ be the self-homeomorphism of $S^1\x D^2\x S^1$ given by $\phi(\alpha,x,\beta)=(\alpha-\beta,x,\beta)$,
and let $W=\phi(K\x S^1)$.
Then $\tilde W$ is the preimage of $W$ under the covering $\id_{S^1\x D^2}\x q\:S^1\x D^2\x\R\to S^1\x D^2\x S^1$,
where $q$ is the quotient map $\R\to S^1$.
On the other hand, let $\psi$ be the self-homeomorphism of $S^1\x D^2\x S^1$ given by
$\psi(\alpha,x,\beta)=(\alpha,x,\alpha-\beta)$.
Then $\psi$ takes $K\x S^1$ onto itself.
Thus $W=\chi(K\x S^1)$, where $\chi=\phi\circ\psi$. 
Let us note that $\chi(\alpha,x,\beta)=(\beta,x,\alpha-\beta)$.

Let $\tilde K$ be the preimage of $K$ under $q\x\id_{D^2}\:\R\x D^2\to S^1\x D^2$.
The covering translation $\sigma\:\R\x D^2\to\R\x D^2$, given by $\sigma(t,x)=(t+1,x)$, restricts to
a homeomorphism $\sigma_K\:\tilde K\to\tilde K$.
Since $\sigma$ is isotopic to $\id_{\R\x D^2}$, its mapping torus $MT(\sigma)$ is homeomorphic to
$\R\x D^2\x S^1$.
Writing $MT(\sigma)$ as $\R\x D^2\x[0,1]/_{(t,\,x,\,0)\sim(t+1,\,x,\,1)}$, a specific homeomorphism 
$\tilde\chi\:MT(\sigma)\to S^1\x D^2\x\R$ is given by $\tilde\chi\big([(t,x,s)]\big)=([s],x,t-s)$.
Since $\sigma$ is a lift of $\id_{S^1\x D^2}$, there is a covering
$p\:MT(\sigma)\to MT(\id_{S^1\x D^2})$ and a commutative diagram
\[\begin{CD}
MT(\sigma)@>\tilde\chi>>S^1\x D^2\x\R\\
@VpVV@V\id_{S^1\x D^2}\x qVV\\
S^1\x D^2\x S^1@>\chi>>S^1\x D^2\x S^1.
\end{CD}\]
It is easy to see that $MT(\sigma_K)=p^{-1}(K\x S^1)$.
Since $K\x S^1=\chi^{-1}(W)$, we obtain that $MT(\sigma_K)=\tilde\chi^{-1}(\tilde W)$.

We may regard the suspension $\Sigma D^2$ as the compactification of $\R\x D^2$ obtained by
adding the two suspension points $\pm\infty$.
Then $\sigma$ uniquely extends to a homeomorphism $\bar\sigma\:\Sigma D^2\to\Sigma D^2$.
In turn, $\bar\sigma$ restricts to a homeomorphism $\bar\sigma_K\:\bar K\to\bar K$, where 
$\bar K=\tilde K\cup\{\pm\infty\}$.
Since $\tilde\chi$ commutes with the ``projection'' $MT(\sigma)\to S^1$, $[(t,x,s)]\mapsto [s]$, and
the projection $S^1\x D^2\x\R\to S^1$, it extends to a homeomorphism
$\bar\chi\:MT(\bar\sigma)\to S^1\x\Sigma D^2$.
(To be precise, $\Sigma D^2$ is regarded here as a compactification of $D^2\x\R$ rather than $\R\x D^2$.)
Clearly, $MT(\bar\sigma_K)=\bar\chi^{-1}(\bar W)$, where $\bar W=\tilde W\cup S^1\x\{\pm\infty\}$.
On the other hand, since $\bar\sigma_K$ is isotopic to $\id_{\bar K}$, its mapping torus
is homeomorphic to $\bar K\x S^1$.
Thus $\bar W$ is homeomorphic to $S^1\x [-1,1]$.
Let us note that this homeomorphism sends $\tilde W$ onto $S^1\x (-1,1)$.
Now $\bar A=\bar W\cap S^1\x CD^2$, where $\bar W\cap S^1\x D^2$ is a copy of $S^1$ contained in $\tilde W$,
so it is clear that $\bar A$ is homeomorphic to $S^1\x [0,1]$.
\end{proof}

\subsection{Every knot is F-isotopic to the unknot}

\begin{proposition} \label{insertion} 
Every oriented topological knot $K$ in $S^3$ lies in the interior of an unknotted solid torus $T$ 
such that the inclusion $K\to T$ is a homotopy equivalence. 
\end{proposition}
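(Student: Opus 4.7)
The plan is to reduce the proposition to the existence of a PL unknotted circle $\mu\subset S^3\setminus K$ with $\lk(\mu,K)=\pm 1$. Given such a $\mu$, I set $T:=\overline{S^3\setminus N(\mu)}$ where $N(\mu)$ is a PL tubular neighborhood of $\mu$ chosen thin enough so that $N(\mu)\cap K=\varnothing$ (possible since $\mu$ and $K$ are disjoint compacta). Because $\mu$ is PL unknotted, $N(\mu)$ is an unknotted PL solid torus, and therefore $T$ is also an unknotted PL solid torus via the standard genus-$1$ Heegaard splitting of $S^3$. By construction $K\subset\mathrm{int}(T)$, and the class $[K]\in H_1(T)\cong\Z$ equals $\lk(K,\mu)=\pm 1$, a generator of $H_1(T)$. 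Hence the inclusion $K\hookrightarrow T$ induces an isomorphism on $\pi_1\cong H_1$, and since both $K$ and $T$ are homotopy equivalent to $S^1$, this is a homotopy equivalence.

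To construct $\mu$, I approximate $K$ by a PL knot $K_0\subset S^3$ with $|K_0-K|_{C^0}<\epsilon$ (such PL approximations exist for any topological embedding $S^1\hookrightarrow S^3$), where $\epsilon$ is small enough that $K_0$ has a PL closed tubular neighborhood $U$ of some radius $r>\epsilon$ that is a (possibly knotted) PL solid torus with core $K_0$. Then $K\subset\mathrm{int}(U)$. Let $\mu$ be a meridian of $U$: a PL circle in $\partial U$ bounding a PL meridional disk $D\subset U$ that meets $K_0$ transversally in a single point. Then $\mu$ is PL unknotted in $S^3$ (as it bounds the PL embedded disk $D$), and $\mu\cap K=\varnothing$ since $\mu\subset\partial U$ while $K\subset\mathrm{int}(U)$. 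By construction $\lk(\mu,K_0)=\pm 1$. A $C^0$-small homotopy from $K$ to $K_0$ (moving each $K(s)$ to $K_0(s)$ along a short arc in a local chart, which stays inside $\mathrm{int}(U)$) avoids $\mu\subset\partial U$, so by homotopy invariance of linking number we obtain $\lk(\mu,K)=\lk(\mu,K_0)=\pm 1$.

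The main obstacle is the PL approximation step: one must arrange that the PL approximation $K_0$ has a PL solid torus tubular neighborhood of radius exceeding its $C^0$-distance to $K$. This is achievable by standard PL-topological techniques (choosing $K_0$ so that edges not adjacent in the parameterization are separated by more than the approximation error), and reflects the fact that although a topological knot need not have any solid torus neighborhood of its own, its PL approximations always do. Once this is granted, the remaining arguments are essentially formal manipulations with the inclusion $K\subset U$ and the fact that linking numbers are homotopy invariants for disjoint cycles.
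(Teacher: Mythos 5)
Your opening reduction is correct and is exactly how the paper deduces the proposition from its Lemma \ref{insertion1}: once one has a PL unknot $\mu$ disjoint from $K$ whose complementary solid torus $T$ satisfies that $[K]$ generates $H_1(T)\cong H_1(S^3\setminus\mu)$, everything else is formal; and your device of computing $[K]$ directly in the singular homology of the complement of the \emph{tame} curve $\mu$ legitimately sidesteps the paper's Lemma \ref{symmetry}. The proof collapses, however, at the step you yourself call the main obstacle and then wave away: the existence of a PL approximation $K_0$ of $K$ whose embedded tubular neighborhood has radius exceeding the $C^0$-approximation error. This is not a standard PL fact, and no standard technique supplies it. General position gives a PL $K_0$ with arbitrarily small error, and $K_0$, being PL, has \emph{some} regular neighborhood, but there is no lower bound on the thickness (normal injectivity radius) of $K_0$ in terms of the error: a curve that follows a wild knot to within $\epsilon$ may be forced to have parameter-nonadjacent strands far closer together than $\epsilon$. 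Your parenthetical recipe (``choosing $K_0$ so that edges not adjacent in the parameterization are separated by more than the approximation error'') is precisely the unproved assertion: as the subdivision is refined, both the diameters of the images of the subdivision arcs and the minimal distance between images of parameter-separated arcs tend to $0$, and nothing forces the latter to dominate the former at any common scale. Nor can you retreat to one large scale: what you need at scale $\epsilon$ is that $K$ admits a degree-one map onto a PL circle of thickness about $\epsilon$ moving each point a distance less than that thickness --- i.e.\ that at the scale $\epsilon$ the knot sits like the core of a round solid torus --- and a wild knot can be ``crumpled'' at every scale, so no theorem guarantees that any scale works. In short, the entire content of the proposition has been packed into an unsubstantiated metric taming claim about arbitrary wild knots.

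For contrast, the paper's proof is organized precisely to avoid such metric control: it takes an \emph{arbitrary} PL curve representing the Alexander-dual generator of $H_1(S^3\setminus K)$, then homotopes it to an unknot \emph{within} $S^3\setminus K$ (Lemma \ref{conway'}); that step carries the real content and rests on the McMillan--Bothe--Shtan'ko theorem that a wild knot is tame in the sense of Shtan'ko (has embedding dimension one), which is what allows the unknotting crossing changes to be pushed off $K$; finally Lemma \ref{symmetry} converts $\lk(K,Q)=1$ into $\lk(Q,K)=1$. If you wish to keep your thick-tube picture, you must import taming input of comparable strength; it cannot be replaced by a careful general-position choice of the PL approximation.
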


Ancel's proof of Proposition \ref{insertion} in \cite{An0} is decidedly simpler than what follows.
But perhaps the approach below might still be of some interest, as it answers some questions that naturally arise in connection with
this statement.

\begin{lemma} \label{conway'} If $K$ is a topological knot in $S^3$, then every PL knot $Q$ in $S^3\but K$ is
homotopic in $S^3\but K$ to an unknot.
\end{lemma}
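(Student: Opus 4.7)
The plan is to construct a PL homotopy of $Q$ in $X:=S^3\but K$ ending at an unknot by realizing inside $X$ a finite sequence of crossing changes that would unknot $Q$ in $S^3$. First, since $Q$ is a PL knot in $S^3$, it has a finite unknotting number, so I would fix a sequence of crossing changes $c_1,\dots,c_u$ together with intermediate knots $Q=Q_0,Q_1,\dots,Q_u=$ unknot, where each $c_i\:Q_{i-1}\rightsquigarrow Q_i$ is the local homotopy through one double point in which two arcs of $Q_{i-1}$ pass through each other.

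I would realize each $c_i$ by a PL finger-move-and-pass supported in a polyhedral $3$-ball contained in $X$, constructed as follows. Let $q_1^{(i)},q_2^{(i)}$ be marked points on the two strands of $Q_{i-1}$ involved in $c_i$. Choose a PL arc $\gamma_i$ from $q_1^{(i)}$ to $q_2^{(i)}$ whose interior lies in $S^3\but(K\cup Q_{i-1})$; such an arc exists because the complement in $S^3$ of the topological circle $K$ together with the disjoint PL circle $Q_{i-1}$ is an open subset of $S^3$ which is connected by the \v Cech version of Alexander duality (its reduced zeroth homology vanishes, since each of $K$ and $Q_{i-1}$ is connected), hence arcwise connected in the PL structure of $S^3$. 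A sufficiently thin regular neighborhood of $\gamma_i$ is a polyhedral $3$-ball $B_i\subset X$ meeting $Q_{i-1}$ only in short subarcs near $q_1^{(i)}$ and $q_2^{(i)}$. Inside $B_i$ I would push the short arc of $Q_{i-1}$ near $q_1^{(i)}$ along $\gamma_i$ and through the short arc near $q_2^{(i)}$; this is a PL homotopy in $B_i$, and hence in $X$, that passes through a single double point and realizes the crossing change $c_i$, producing a knot PL-isotopic in $S^3$ to $Q_i$. Composing these homotopies in sequence yields the desired PL homotopy $H\:S^1\x I\to X$ from $Q$ to an unknot.

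The main obstacle is to justify that the crossing change effected by finger-pushing along an arbitrarily routed arc $\gamma_i\subset X$ produces a knot that is PL-isotopic in $S^3$ to $Q_i$, rather than to some other knot obtained from $Q_{i-1}$ by ``crossing change plus extra winding around $K$''. I expect this to follow because, modulo a PL ambient isotopy of $S^3$ that slides the finger-pushed arc back near its original location (an isotopy that need not stay in $X$, but which preserves the knot type in $S^3$), the finger-and-pass operation agrees with the local move $c_i$; only the local data at the self-intersection point governs the resulting $S^3$-knot type. Once this is granted, the construction uses only that $X$ is an open neighborhood in $S^3$ of every PL arc it contains, together with the connectedness of the complement in $S^3$ of two disjoint topological $1$-manifolds.
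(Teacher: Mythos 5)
Your construction is fine up to the step you yourself flag as ``the main obstacle,'' but that step is a genuine gap, and the heuristic you offer for it is false. The knot type produced by a finger-push along $\gamma_i$ followed by a pass through the other strand is \emph{not} governed only by the local data at the double point: it is the crossing change of $Q_{i-1}$ performed along the guiding arc $\gamma_i$, and it depends on the isotopy class of the pair $(S^3,\,Q_{i-1}\cup\gamma_i)$. (For instance, starting from the unknot, finger-push-throughs along suitably chosen arcs produce every knot of unknotting number one; the same crossing of the same diagram changed ``along'' different arcs gives different knots.) The ambient isotopy you invoke, ``sliding the finger-pushed arc back near its original location,'' does not do what you want: once the tip has passed through the strand near $q_2^{(i)}$ it is hooked on that strand, so retracting it drags a loop of the $q_2^{(i)}$-strand back along $\gamma_i$, and the resulting configuration near the original crossing is the local change $c_i$ only when $\gamma_i$ is isotopic, rel $Q_{i-1}$, to the short crossing chord --- which an arbitrary arc chosen merely in $S^3\but(K\cup Q_{i-1})$ need not be. Consequently the knot you obtain after the first move need not be $Q_i$, the prescribed later changes $c_{i+1},\dots,c_u$ no longer apply, and after $u$ moves there is no reason to have reached an unknot.

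This is exactly the crux that the paper's proof has to address: the ``correct'' crossing-change arcs are the chords $[p,q]$ over the double points of a generic planar projection of $Q$, and one cannot assume these avoid $K$ --- by Borsuk's example, the projection of a wild knot can have nonempty interior in every plane, so every crossing of every diagram may a priori sit over $\pi(K)$. The paper resolves this not by rerouting the arcs (which, as above, changes the outcome) but by an ambient isotopy $h_t$ of $S^3$ keeping $Q$ fixed which moves $K$ off each chord $[p,q]$: for each double point one takes the join of two small arcs of $Q$ at $p$ and $q$, notes that its boundary sphere $\Sigma$ is not separated by $\Sigma\cap K$, finds an arc from $p$ to $q$ in $\Sigma\but K$, and isotopes it onto $[p,q]$ rel the arcs of $Q$ (this is where the $1$-dimensionality/tameness of $K$ enters). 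The diagrammatic crossing changes are then realized by a homotopy in $\R^3\but h_1(K)$, yielding an unknot, and one pulls back by $h_1^{-1}$. To repair your argument you would need to produce $\gamma_i$ disjoint from $K$ \emph{and} isotopic rel $Q_{i-1}$ to the local crossing chord; that is precisely what this isotopy argument supplies, and without it the proposal does not prove the lemma.
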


It turns out that this lemma is proved in the literature: McMillan proved that if $X$ is a compact 
$1$-dimensional ANR in $S^3$, then $X$ is the intersection of a nested sequence of handlebodies \cite{McM}*{Lemmas 2 and 3}, 
and Bothe noted that this property implies that every knot $Q$ in $S^3\but X$ is homotopic in $S^3\but X$ to a knot which 
is unknotted as a knot in $S^3$ \cite{Bo2}*{assertion ``(A) implies (C)'' on p.\ 176}.

\begin{proof}
Let us pick some point in $S^3\but(K\cup Q)$ and identify its complement in $S^3$ with $\R^3$ by a 
PL homeomorphism.
By general position there exists a linear surjection $\pi\:\R^3\to\R^2$ whose restriction to $Q$
has only finitely many double points and no triple points.
If $J\subset Q$ is a small arc whose image does not contain any double points, and $f\:Q\to\R$ 
is a PL map which is monotone on $Q\but J$, then $\pi\x f\:Q\to\R^2\x\R$ is easily seen to be
a PL unknot.
Hence we can unknot $Q$ (in $\R^3$) by changing some of the crossings in its plane diagram $\pi(Q)$. 
Unfortunately, we may not assume that these crossings avoid $\pi(K)$.%
\footnote{K. Borsuk, answering a question of R. H. Fox, constructed a knot in $\R^m$ for each $m\ge 3$ 
whose projection to every hyperplane has a nonempty interior \cite{Bor}.}
However, by a well-known theorem asserting that $K$ is tame in the sense of Shtan'ko (or has embedding
dimension $1$ in another terminology) \cite{McM}*{Lemma 3}, \cite{Bo}*{Satz 4} (see also \cite{Sh2}*{Theorem 4}), there exists 
an ambient isotopy $h_t\:\R^3\to\R^3$, $h_0=\id$, keeping $Q$ fixed and such that for every double point 
$\pi(p)=\pi(q)$, 
$p,q\in Q$, the straight line segment $[p,q]$ between $p$ and $q$ is disjoint from $h_1(K)$.%
\footnote{The construction of $h_t$ is quite easy. 
Let $J_{pp'}$ (resp.\ $J_{qq'}$) be a small arc of $Q$ with $p$ (resp.\ $q$) as one of its endpoints.
The join $J_{pp'}*J_{qq'}$ is a ball, and its boundary sphere $\Sigma$ is not separated by 
$\Sigma\cap K$ since $\tilde H_0(\Sigma\but K)\simeq H^1(\Sigma\cap K)=0$ due to $K\not\subset\Sigma$.
Hence $p$ and $q$ are joined by a PL arc $J_{pq}$ in $\Sigma\but K$.
We may assume that $J_{pq}$ meets $J_{pp'}$ only in $p$ and $J_{qq'}$ only in $q$.
Then there is a PL ambient isotopy of $\Sigma$ keeping $J_{pp'}\cup J_{qq'}$ fixed from $\id_\Sigma$
to a homeomorphism sending $J_{pq}$ onto $[p,q]$.
It extends to a PL ambient isotopy $h_t^{pq}$ of $\R^3$ with support in a small neighborhood of 
$J_{pp'}*J_{qq'}$ and keeping $Q$ fixed.
The composition $h_t^{p_1q_1}h_t^{p_2q_2}\dots$ of all such isotopies is the desired isotopy $h_t$.}
Therefore the desired crossing changes can be realized by a PL homotopy $K_t$ within $\R^3\but h_1(K)$.
The resulting PL knot $K_1$ is unknotted in $\R^3$ and homotopic to $Q$ in $\R^3\but h_1(K)$.
Hence $h_1^{-1}(K_1)$, which is still an unknot, is homotopic to $h_1^{-1}(Q)=Q$ in $\R^3\but K$.
\end{proof}

Given disjoint oriented topological knots $K$ and $Q$ in $S^3$, let $\lk(K,Q)$ denote
the integer $[Q]\in H_1(S^3\but K)\overset{A}\simeq H^1(K)\overset{P}\simeq H_0(K)=\Z$, 
where $A$ is the Alexander duality (involving \v Cech cohomology and assuming a fixed orientation 
of $S^3$) and $P$ is the Poincare duality $c\mapsto [K]\smallfrown c$.

\begin{lemma} \label{symmetry} $\lk(K,Q)=\lk(Q,K)$.
\end{lemma}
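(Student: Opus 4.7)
The plan is to reduce to the classical PL case via $C^0$-approximation. Since $K$ and $Q$ are disjoint compacta in $S^3$, for any sufficiently close PL approximations $K'$ of $K$ and $Q'$ of $Q$, the pair $(K',Q')$ remains disjoint, and moreover $K'$ lies in any prescribed neighborhood of $K$ in $S^3\but Q$ (and similarly for $Q'$). The first step is to check that $\lk(K,Q) = \lk(K',Q)$ for such $K'$. Concretely, $K$ admits a neighborhood basis of closed polyhedral neighborhoods $U\subset S^3\but Q$; one has $\check H^1(K)\simeq\colim_U H^1(U)$ on the Čech side, and $S^3\but K=\bigcup_U(S^3\but U)$, hence $H_1(S^3\but K)\simeq\colim_U H_1(S^3\but U)$, on the complement side. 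For $K'$ close enough that $K'\subset\operatorname{int} U$, the inclusions $S^3\but U\hookrightarrow S^3\but K'$ and $S^3\but U\hookrightarrow S^3\but K$ exhibit $[Q]\in H_1(S^3\but K)$ and $[Q]\in H_1(S^3\but K')$ as images of a single class in $H_1(S^3\but U)$; naturality of Alexander duality (and of the Poincaré-duality identification $P$) then forces $\lk(K,Q)=\lk(K',Q)$. Applying the same argument with the roles of the knots swapped, $\lk(K',Q)=\lk(K',Q')$ for a sufficiently close PL approximation $Q'\subset S^3\but K'$ of $Q$; and symmetrically $\lk(Q,K)=\lk(Q',K)=\lk(Q',K')$.

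It then remains to verify the classical symmetry $\lk(K',Q')=\lk(Q',K')$ in the PL category. Choose transverse Seifert surfaces $\Sigma_{K'}$ and $\Sigma_{Q'}$ for $K'$ and $Q'$. By the standard Seifert-surface interpretation of Alexander duality, $\lk(K',Q')$ equals the algebraic intersection number $K'\cdot\Sigma_{Q'}$, and $\lk(Q',K')$ equals $Q'\cdot\Sigma_{K'}$. Both of these equal the algebraic count of endpoints of the oriented $1$-manifold $\Sigma_{K'}\cap\Sigma_{Q'}$ lying on $K'$ (respectively on $Q'$), and these counts agree up to the agreed orientation conventions because each component of the intersection either is a closed loop (contributing nothing to either count) or is an arc with one endpoint on $K'$ and the other on $Q'$ of opposite signs, so the two endpoint-sums coincide. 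Combining, $\lk(K,Q)=\lk(K',Q')=\lk(Q',K')=\lk(Q,K)$.

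The main obstacle will be the continuity step in the first paragraph. Even though the statement that close PL approximations preserve the linking number is intuitive, a clean verification requires: (i) choosing a neighborhood basis $\{U\}$ of $K$ that produces a cofinal compatible system for both the Čech cohomology $\check H^1(K)$ and the singular homology $H_1(S^3\but K)$; (ii) checking naturality of the Alexander duality isomorphism $A$ with respect to shrinking these neighborhoods, and of the Poincaré-duality identification $P\:\check H^1(K)\xr{\sim}\check H_0(K)$ under replacing $K$ by a homotopy-equivalent subset $K'\subset U$; and (iii) arranging that $[Q]$ is represented by the \emph{same} $1$-cycle in the intermediate group $H_1(S^3\but U)$ for both $K$ and $K'$. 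Once these pieces are assembled the approximation runs smoothly and the proof reduces to the PL assertion, which is classical.
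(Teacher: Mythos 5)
Your proposal is correct and follows essentially the same route as the paper: reduce to the classical PL symmetry by showing that $\lk$ is unchanged under sufficiently close PL approximation, which the paper implements by writing the Alexander duality $A$ as a direct limit over derived polyhedral neighborhoods $N_i$ of $K$, choosing $N_r$ disjoint from $Q$, and using that the inclusions of $K$ into $N_r$ and of $Q$ into $S^3\setminus N_r$ are homotopic (as maps of circles) to PL embeddings and hence induce the same maps---precisely the naturality/homotopy-invariance mechanism you flag as the ``main obstacle.'' Two small touch-ups: the comparison of $K$ with $K'\subset U$ should be phrased via homotopic parameterizations $S^1\to K\subset U$ and $S^1\to K'\subset U$ rather than a ``homotopy-equivalent subset,'' and in the PL step an arc of $\Sigma_{K'}\cap\Sigma_{Q'}$ may have both endpoints on the same knot (those endpoints then have opposite signs and cancel), so the endpoint counts still agree---and in any case the PL symmetry is classical, which the paper simply quotes.
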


\begin{proof}
It is easy and well-known that $\lk(K,Q)=\lk(Q,K)$ for PL links $(K,Q)$.
Also it is easy to show that any two sufficiently close PL approximations of a given topological link
$(K,Q)$ are link homotopic (that is, homotopic by a homotopy that keeps the two components disjoint), 
and therefore have the same linking number.
This yields an extension $\lk_{PL}$ of $\lk$ from PL to topological links, which is symmetric.
It remains to show that $\lk=\lk_{PL}$.

Let $T$ be some triangulation of $S^3$ and let $N_i$ be the union of all simplexes of the $i$th 
derived subdivision $T^{(i)}$ that meet $K$.
Then each $N_i$ is a closed neighborhood of $K$ which is a $3$-manifold with boundary,
$N_1\supset N_2\supset\dots$ and $\bigcap_i N_i=K$.
The Alexander duality isomorphism $A\:H_1(S^3\but K)\xr{\simeq} H^1(K)$ is the direct limit of the
Alexander duality isomorphisms $A_i\:H_1(S^3\but N_i)\xr{\simeq} H^1(N_i)$.
There exists an $r$ such that $Q\cap N_r=\emptyset$.
Let $i\:K\to N_r$ and $i'\:Q\to S^3\but N_r$ be the inclusion maps.
As long as we fix homeomorphisms of $K$ and $Q$ with $S^1$, it makes sense to say that $i$ and $i'$
are homotopic to PL embeddings $e\:K\to N_r$ and $e'\:Q\to S^3\but N_r$.

Now $\lk(K,Q)$ is the image of $i'_*[Q]\in H_1(S^3\but N_r)$ in $H_1(S^3\but K)\overset{A}
\simeq H^1(K)\overset{P}\simeq H_0(K)=\Z$.
This is the same as the image of $A_r(i'_*[Q])\in H^1(N_r)$ in $H^1(K)\overset{P}\simeq H_0(K)=\Z$.
Thus $\lk(K,Q)=i^*\big(A_r(i'_*[Q])\big)\CAP [K]\in H_0(K)=\Z$.
Writing $K'=e(K)$ and $Q'=e'(Q)$, we similarly have 
$\lk(K',Q')=e^*\big(A_r(e'_*[Q])\big)\CAP [K]\in H_0(K)=\Z$.
But $i^*=e^*$ and $i'_*=e'_*$.
Thus $\lk(K,Q)=\lk(K',Q')$, and it follows that $\lk(K,Q)=\lk_{PL}(K,Q)$.
\end{proof}

\begin{remark}
A direct algebraic (but rather long) proof of Lemma \ref{symmetry} can be found in \cite{M00}.
It shows that each of $\lk(K,Q)$ and $\lk(Q,K)$ is equal to $\lk_S(K,Q):=\big<S[Q],\,i_*[K]\big>$, 
where $i\:K\to S^3\but Q$ is the inclusion and $S\:H_1(Q)\to H^1(S^3\but Q)$ is the Steenrod duality.
In more detail, the equality $\lk(K,Q)=\lk_S(K,Q)$ is an easy consequence of the Sitnikov duality (for arbitrary
subsets of $S^3$, not necessarily open or closed).
To prove that $\lk_S(K,Q)=\lk(Q,K)$, one defines an evaluation pairing 
$H^k(X)\otimes H_k(X)\to\Z$ between \v Cech cohomology and Steenrod homology of the compactum $X$, 
and shows that when $X\subset S^n$, it is determined by the usual evaluation pairing
$H^k(S^n\but X)\otimes H_k(S^n\but X)\to\Z$.
\end{remark}

\begin{lemma} \label{insertion1}
If $K$ is an oriented topological knot in $S^3$, then there exists an oriented PL knot $Q$ 
in $S^3\but K$ such that $\lk(Q,K)=1$ and $Q$ is unknotted (in $S^3$).
\end{lemma}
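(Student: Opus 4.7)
The plan is to combine the Alexander duality isomorphism $H_1(S^3\but K)\cong\check{H}^1(K)\cong\Z$ (as recalled in the proof of Lemma~\ref{symmetry}) with Lemma~\ref{conway'}. First one produces \emph{some} PL embedded knot $Q_0\subset S^3\but K$ with $\lk(Q_0,K)=1$; then Lemma~\ref{conway'} is invoked to deform $Q_0$ to an unknot without affecting the linking number.

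To produce $Q_0$, I would exploit that $K$ has embedding dimension $1$: by \cite{McM}*{Lemmas 2, 3} (see also \cite{Sh2}*{Theorem 4}), $K=\bigcap_{i=1}^\infty V_i$ for a nested sequence of PL handlebodies $V_1\supset V_2\supset\cdots$ of $S^3$. Setting $W_i=\overline{S^3\but V_i}$ and noting that $S^3\but K$ is the increasing union of the open sets $S^3\but V_i$ (each homotopy equivalent to $W_i$), continuity of singular homology gives $H_1(S^3\but K)=\varinjlim H_1(W_i)$. Hence the generator of $H_1(S^3\but K)\cong\Z$, which under Alexander duality corresponds to $\lk(\cdot,K)=1$ with respect to the chosen orientation of $K$, lifts to a class $\alpha\in H_1(W_i)$ for some sufficiently large $i$. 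Since $W_i$ is a compact orientable PL $3$-manifold, standard PL general position represents $\alpha$ by a PL embedded closed $1$-submanifold, and by successive PL band sums along arcs in $W_i$ this can be made connected, yielding a PL knot $Q_0\subset W_i\subset S^3\but K$ with $\lk(Q_0,K)=1$ (possibly after reversing orientation).

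For the second step, Lemma~\ref{conway'} supplies a PL knot $Q\subset S^3\but K$ that is unknotted in $S^3$ and homotopic to $Q_0$ through $S^3\but K$. Such a homotopy preserves $[Q_0]\in H_1(S^3\but K)$, and $\lk(\cdot,K)$ factors through this class by definition, so $\lk(Q,K)=\lk(Q_0,K)=1$, completing the argument.

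The conceptual crux is the first step: converting the \v Cech-theoretic Alexander duality generator, which a priori is not realized by anything tangible near the wild knot $K$, into an honest PL embedded loop in $S^3\but K$. The passage through a PL handlebody neighborhood $V_i$ is exactly what makes this concrete, after which only PL $3$-manifold general position is needed. Once $Q_0$ is in hand, Lemma~\ref{conway'} does all the geometric work of unknotting; no additional geometric input is required.
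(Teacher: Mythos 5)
Your construction controls the wrong linking number, and bridging the two is exactly the nontrivial point that your write-up skips. What your argument produces is a PL unknot $Q$ whose class $[Q]$ generates $H_1(S^3\but K)$, i.e.\ (in the conventions of this paper, where $\lk(K,Q)$ denotes $[Q]\in H_1(S^3\but K)\simeq\check H^1(K)\simeq\Z$) the equality $\lk(K,Q)=1$. The lemma, however, asserts $\lk(Q,K)=1$, i.e.\ that $[K]$ generates $H_1(S^3\but Q)$ --- and this is the form actually needed downstream, since Proposition \ref{insertion} takes $T$ to be the solid torus complementary to $Q$ and requires $K\hookrightarrow T$ to be a homotopy equivalence. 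A homotopy of $Q$ in $S^3\but K$ preserves $[Q]\in H_1(S^3\but K)$, as you say, but it gives no direct control of $[K]\in H_1(S^3\but Q)$; and when $K$ is wild the symmetry $\lk(Q,K)=\lk(K,Q)$ is not a formality (one side lives in \v Cech-dual terms), which is why the paper isolates it as Lemma \ref{symmetry} and proves it by comparing with PL approximations (the remark after it notes that a direct duality-theoretic proof is rather long). So your final sentence ``$\lk(\cdot,K)$ factors through this class by definition'' proves $\lk(K,Q)=1$ but not the stated conclusion; the fix is simply to invoke Lemma \ref{symmetry} at the end, as the paper does: $\lk(Q,K)=\lk(K,Q)=\lk(K,Q_0)=1$.

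Apart from this, your argument follows the paper's route (represent the Alexander-dual generator by a PL knot, then unknot it inside $S^3\but K$ via Lemma \ref{conway'}), except that your first step is heavier than necessary: since $S^3\but K$ is a connected open $3$-manifold, the generator of $H_1(S^3\but K)$ is represented by a single loop (abelianization of $\pi_1$), which general position makes a PL embedded knot; the detour through McMillan's handlebody neighborhoods and $\varinjlim H_1(W_i)$ is correct but not needed.
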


Proposition \ref{insertion} is an easy consequence of Lemma \ref{insertion1}.

\begin{proof} The positive generator of 
$H_1(S^3\but K)\overset{A}\simeq H^1(K)\overset{P}\simeq H_0(K)=\Z$ can certainly be represented
by a PL knot $Q'$ in $S^3\but K$.
By Lemma \ref{conway'} $Q'$ is homotopic to a PL unknot $Q$ in $S^3\but K$.
Hence by Lemma \ref{symmetry} $\lk(Q,K)=\lk(K,Q)=\lk(K,Q')=1$.
\end{proof}

\section*{Acknowledgements}

I would like to thank P. M. Akhmetiev, F. D. Ancel, M. H. Freedman, L. Funar, M. Il'insky and A. Zastrow 
for useful discussions and thoughtful remarks. 
Also I'm grateful to R. D. Edwards, M. Farber, J. Keesling and S. P. Novikov for stimulating my interest 
in isotopy of wild knots (around 2005).



\end{document}